\setlist[enumerate,1]  {label={\rm (\roman*)}, leftmargin=1.5em}   %{label={(\arabic*)}}
\setlist{noitemsep} % no spacing between items
\patchcmd{\section}{\normalfont}{\bfseries}{}{}
\renewcommand{\@secnumfont}{\bfseries}
\newcommand{\defhighlighter}[3][]{%
  \tikzset{every highlighter/.style={color=#2, fill opacity=#3, #1}}%
}
\newcommand{\highlight@DoHighlight}{
  \fill [ decoration = {random steps, amplitude=1pt, segment length=15pt}
        , outer sep = -15pt, inner sep = 0pt, decorate
        , every highlighter, this highlighter ]
        ($(begin highlight)+(0,8pt)$) rectangle ($(end highlight)+(0,-3pt)$) ;
}
\newcommand{\highlight@BeginHighlight}{
  \coordinate (begin highlight) at (0,0) ;
}
\newcommand{\highlight@EndHighlight}{
  \coordinate (end highlight) at (0,0) ;
}
\newdimen\highlight@previous
\newdimen\highlight@current
\DeclareRobustCommand*\highlight[1][]{%
  \tikzset{this highlighter/.style={#1}}%
  \SOUL@setup
  \def\SOUL@preamble{%
    \begin{tikzpicture}[overlay, remember picture]
      \highlight@BeginHighlight
      \highlight@EndHighlight
    \end{tikzpicture}%
  }%
  \def\SOUL@postamble{%
    \begin{tikzpicture}[overlay, remember picture]
      \highlight@EndHighlight
      \highlight@DoHighlight
    \end{tikzpicture}%
  }%
  \def\SOUL@everyhyphen{%
    \discretionary{%
      \SOUL@setkern\SOUL@hyphkern
      \SOUL@sethyphenchar
      \tikz[overlay, remember picture] \highlight@EndHighlight ;%
    }{%
    }{%
      \SOUL@setkern\SOUL@charkern
    }%
  }%
  \def\SOUL@everyexhyphen##1{%
    \SOUL@setkern\SOUL@hyphkern
    \hbox{##1}%
    \discretionary{%
      \tikz[overlay, remember picture] \highlight@EndHighlight ;%
    }{%
    }{%
      \SOUL@setkern\SOUL@charkern
    }%
  }%
  \def\SOUL@everysyllable{%
    \begin{tikzpicture}[overlay, remember picture]
      \path let \p0 = (begin highlight), \p1 = (0,0) in \pgfextra
        \global\highlight@previous=\y0
        \global\highlight@current =\y1
      \endpgfextra (0,0) ;
      \ifdim\highlight@current < \highlight@previous
        \highlight@DoHighlight
        \highlight@BeginHighlight
      \fi
    \end{tikzpicture}%
    \the\SOUL@syllable
    \tikz[overlay, remember picture] \highlight@EndHighlight ;%
  }%
  \SOUL@
}
\newcommand{\R}{{\mathbb R}}
\newcommand{\N}{{\mathbb N}}
\newcommand{\eps}{\varepsilon}
\newcommand{\D}{\displaystyle}
\newcommand{\Car}{\mbox{$\mathcal{X}$}} %% Characteristic function
\newcommand{\mydef}{\ensuremath{\stackrel{\text{def}}{:=}}}
\newtheorem{theorem}{Theorem}[section]
\newtheorem{remark}[theorem]{Remark}
\newtheorem{lemma}[theorem]{Lemma}
\newtheorem{definition}[theorem]{Definition}
\newtheorem{proposition}[theorem]{Proposition}
\newtheorem{corollary}[theorem]{Corollary}
\numberwithin{equation}{section}
\renewenvironment{proof}[1][Proof]{\noindent \textbf{#1.} }
{\  \rule{0.5em}{0.5em}\par \medskip}
\title{
Evolution equation with fractional Schr\"odinger operators:
monotonicity and exponential decay of solutions  in Morrey spaces
}
\thanks{Partially supported by Project PID2022-137074NB-I00, MINECO, Spain.
\newline{$\phantom{ai}$ Key words and phrases: fractional Schr\"odinger operator, fractional partial differential equations, linear evolution equations, positive solutions, stability}.
\newline{$\phantom{ai}$ Mathematical Subject Classification
2020:\ 35J10, 35R11,47D06,35B09,35B35}.
}
\thanks{${}^{*}$Partially supported by Severo
Ochoa Grant  CEX2023-001347-S funded by MICIU/AEI/10.13039/501100011033.}
\begin{document}

\date{\bf \today}
\maketitle

\begin{center}
{\sc Jan W. Cholewa}${}^1\ $  \ {\sc and} \ {\sc Anibal Rodriguez-Bernal}${}^\text{2}$
\end{center}

\makeatletter
\begin{center}
${}^{1}$Institute of
Mathematics\\
University of Silesia in Katowice\\ 40-007 Katowice, Poland\\ {E-mail:
jan.cholewa@us.edu.pl}
\\ \mbox{}
\\
${}^{2}$Departamento de Análisis Matemático y  Matem\'atica Aplicada\\ Universidad
  Complutense de Madrid\\ 28040 Madrid, Spain\\ and \\
  Instituto de Ciencias Matem\'aticas \\
CSIC-UAM-UC3M-UCM${}^{*}$, Spain  \\ {E-mail:
arober@ucm.es}
\end{center}
\makeatother

\setcounter{tocdepth}{3}
\makeatletter
\def\l@subsection{\@tocline{2}{0pt}{2.5pc}{5pc}{}}
\def\l@subsubsection{\@tocline{2}{0pt}{5pc}{7.5pc}{}}
\makeatother

\begin{abstract}

We consider evolution equation with fractional Schr\"odinger operators
in Morrey spaces. We prove order preserving 
properties of the associated semigroup
in Morrey scale. We prove monotonicity of the semigroup with respect
to Morrey's potentials and give some precise estimates of its
exponential growth. We show that Arendt and Batty's type condition on
the potential  is
necessary for exponential decay of Morrey's norms of the semigroup and
find a large class of dissipative potentials for which it is also
sufficient. 
\end{abstract}

\section{Introduction}

In this paper we consider some fractional Schr\"odinger evolution
problems of the form
\begin{equation}\label{eq:evol-problem-intro}
  \begin{cases}
    u_{t} +  A_0^{\mu} u = V(x) u , \quad  x\in \R^{N}, \ t>0
    \\
u(x,0)= u_{0}(x), \quad  x\in \R^{N}
  \end{cases}
\end{equation}
where $A_0 = -\sum_{|\zeta|= 2} a_\zeta \partial^\zeta$
is a second order uniformly
elliptic operator with constant real coefficients and
$0< \mu \leq 1$, so $A_0^{\mu} $ describes fractional diffusion
phenomena. By a linear change of coordinates we can and shall 
reduce ourselves to the case     $A_0 = -\Delta$ without any loss of
generality.

Here we consider that   both the initial data $u_{0}$ and the potential $V$  belong  to
suitable Morrey space $M^{p,\ell}(\R^N)$, which for $1\leq p<\infty$,
$0\leq\ell\leq N$ consists of $\phi \in L_{loc}^{p} (\R^N)$ such that
there exists $C>0$ such that for all  $x_0\in\R^N$ and $R>0$
\begin{displaymath}
R^{\frac{\ell-N}{p}} \|\phi\|_{L^p(B(x_0,R))}\leq C
\end{displaymath}
endowed with the norm
\begin{displaymath}
\| \phi\|_{M^{p,\ell}(\R^N)} \mydef  \sup_{x_0\in \R^N\! ,\
  \! R>0} R^{\frac{\ell-N}{p}} \|\phi\|_{L^p(B(x_0,R))}<\infty .
  \end{displaymath}
We remark that if  $\ell=N$ then
$M^{p,N}(\R^N) = L^{p}(\R^N)$ for $1\leq p< \infty$ (taking $R\to \infty$), whereas if $\ell
=0$ then $M^{p,0}(\R^N)=L^\infty(\R^N)$ (taking $R\to 0$ and using Lebesgue's differentiation theorem).
We also set
$M^{\infty,\ell}(\R^N):=L^\infty(\R^N)$, $0\leq \ell \leq N$.
We moreover consider the space of Morrey measures, that we
denote by $\mathcal{M}^{\ell}(\R^N)$, as  the space of Radon measures $\mu$ satisfying
\begin{displaymath}
\|\mu\|_{\mathcal{M}^{\ell}(\R^N)} \mydef \sup_{x_{0} \in \R^N, R>0} \
  R^{\ell-N} \ |\mu|(B(x_{0},R))  <\infty.
\end{displaymath}
So for  any $0<\ell\leq N$, $M^{1,\ell}(\R^N) \subset
\mathcal{M}^{\ell} (\R^N)$  isometrically, where
$\mathcal{M}^{N} (\R^N)=\mathcal{M}_{BTV} (\R^N)$ is the space of Radon
measures with bounded total variation. Hence, Morrey spaces are spaces
of locally integrable functions (or measures), with some prescribed
distribution of its mass on balls of arbitrary radius and can be
considered as some sort of intermediate spaces between $L^{p}(\R^{N})$
and $L^{\infty}(\R^{N})$.

In this  paper we want to discuss two different type of important properties of
solutions of (\ref{eq:evol-problem-intro}) that are well known to hold
in more common functional settings like Lebesgue spaces, but which
seem not available in the literature for  Morrey ones. 
First, we want to prove  order preserving properties of solutions. That is, we want to show, on
the one hand, that if two initial data are ordered then the
corresponding solutions remain ordered for all times. Since
(\ref{eq:evol-problem-intro}) is linear, this amounts to prove that if
the initial data is non negative, so is the solution for all times. On
the other hand, we also want to prove that solutions are monotone with
respect to the potentials. That is, if we take two different ordered
potentials and a nonnegative initial data, then the corresponding
solutions remain ordered for all times. Second, we want to show that under suitable conditions on the
potentials, solutions converge to zero in suitable Morrey norms, as
$t\to \infty$. More precisely we want to obtain conditions on the
potentials such that all solutions converge to zero exponentially
fast. This last property requires a contribution from diffusion and a
precise balance between the negative part of $V$, which must be strong
enough to produce exponential decay, and some smallness of the
positive part of $V$, which tends to produce exponential growth of
solutions. 

This is carried out in the following way. In Section \ref{sec:homogeneous-lin-eq} we collect some properties
of Morrey spaces and some previous results on the solutions of the
homogeneous problem, that is, $V=0$, for initial data in Morrey
spaces. In particular, we recall some crucial  smoothing estimates of the
solutions between Morrey spaces, see (\ref{eq:estimates_Mpl-Mqs}) and
(\ref{eq:estimates_Mpl-Mpl}).

In Section \ref{sec:perturbed_equation_Morrey} we also recall some
previous results from \cite{C-RB-morrey_linear_perturbation},  on
the solutions of the  Schr\"odinger evolution problem
(\ref{eq:evol-problem-intro}) for initial data in Morrey
spaces.  These results require,  on the one hand,  that the potential
$V$ is \emph{admissible}, see (\ref{eq:admisible_potential}). This condition
requires $V$ to belong to a suitable Morrey space and in turn,
determines   the Morrey spaces in which the initial data can be
taken.
On the second hand, as embeddings between Morrey spaces are subtle,
considering two (or more)  different potentials is not a completely
obvious task, so for the sake of applicability, we quote some results
for the case of two admissible potentials in different Morrey
spaces. In such a case the more restrictive potential determines the
Morrey spaces of initial data. In these cases, we also have smoothing estimates of the
solutions between Morrey spaces, see
(\ref{eq:smoothing_one_potential}) and (\ref{eq:smoothing_two_potentials}). Finally, in this section we provide
different equivalent forms for the integral representation of
solutions, see Lemma \ref{lem:fixed-point-lem}, whose proof is given
in Appendix \ref{app:technicalities}. This result will be instrumental
in proving several  of the main results in the next sections.

Notice that the results in Sections \ref{sec:homogeneous-lin-eq} and
\ref{sec:perturbed_equation_Morrey} are stated 
for uniformly  elliptic $2m$ order operator of the form    $ A_0 =
\sum_{|\zeta|= 2m} (-1)^{m}a_\zeta \partial^\zeta$ 
 with constant real coefficients  $a_{\zeta}$,   in
 (\ref{eq:evol-problem-intro}). In the rest of the paper, we restrict 
 to second order operators and, as mentioned above, by a linear change
 of variables, we consider the case $A_{0}= -\Delta$.

In Section \ref{sec:order-preservation} we develop some of the main
results in this paper about the order preserving properties of
solutions. Namely, we first  prove that  for $0 <\mu \leq 1$
solutions of the pure diffusion problem, that is, 
of (\ref{eq:evol-problem-intro}) with $V=0$,  are order  preserving,  see Theorem
  \ref{thr:Smu-order-preserving}.
Using this, we show then that for admissible potentials, the complete problem
(\ref{eq:evol-problem-intro}) is also order preserving. We do this for
one potential in Theorem \ref{thm:order-pres-fo-general-Morrey-pot}
and for two in Theorem \ref{thm:order-pres-for-2-Morrey-pot}.

In Section \ref{sec:monotonicity} we show that solutions of
(\ref{eq:evol-problem-intro}) behave monotonically with respect to
potentials, see Theorem
\ref{thm:monotonicity-with-respect-to-potentials}. This and some
results in Section \ref{sec:perturbed_equation_Morrey} allows us to
obtain some exponential bounds on the solutions of
(\ref{eq:evol-problem-intro}) in terms of the sizes of the positive
parts of the admissible potentials, which are the ones pushing for
growth of solutions, see Theorem
\ref{thm:exponential-estimate-of-S{mu,V0,V1}}.

In Section \ref{sec:decay-indivual-solutions} we analyse the decay to
zero of individual solutions of (\ref{eq:evol-problem-intro}). First,
for the purely diffusive case, that is with $V=0$, the smoothing
estimates in Section \ref{sec:perturbed_equation_Morrey} imply that individual solutions will converge to zero, as
$t\to \infty$ in \emph{better} Morrey spaces than the one for the
initial data.  Convergence to zero in the same space of the initial
data requires some smallness conditions in the Morrey norm  on the initial data as  $|x|
\to \infty$, see Theorem
\ref{thm:more-pure-frac-dif-decay-of-individual-sol}. In particular,
this is always the case in Lebesgue spaces $L^{p}(\R^{N}) =
M^{p,N}(\R^{N})$. This smallness conditions are actually needed as
otherwise we can construct selfsimilar solutions with constant Morrey
norm, see Proposition \ref{prop:constant_norm_solutions}. 
From the monotonicity results in Section \ref{sec:monotonicity}, these
results translate to the case of dissipative potentials in 
(\ref{eq:evol-problem-intro}), that is,   $V\leq  0$, see Corollary
\ref{cor:from-{thm:more-pure-frac-dif-decay-of-individual-sol}}. In
this latter case we also show that in the subspace of initial data for
which the solutions decay to zero, either solutions decay
exponentially  or there exist arbitrarily slowly decaying
solutions, see Corollary \ref{cor:dicothomy_4_decaying_solutions}.

In Section \ref{sec:exponential-decay},  we address the stronger
property of exponential decay in time of solutions for the case of
dissipative potentials. First, we consider  the case of a bounded dissipative
potential, in Section \ref{sec:bounded-potential}. In this case
(\ref{eq:evol-problem-intro}) can be also considered with initial data
in Lebesgue spaces and in such a case, the condition on the potential for
exponential decay is much better understood. Actually exponential
decay in Lebesgue spaces is equivalent to the potential satisfy the  so called Arendt and Batty condition, see
(\ref{eq:Arendt-Batty-condition}) and Remark
\ref{rem:equivalent-2-AB-condition}. This condition is known to be
optimal for Schrödinger evolutions problems (that is, $\mu=1$) in
Lebesgue spaces, see \cite{Arendt-Batty}. For fractional
Schr\"odinger evolutions problems, that is, $0<\mu<1$,  it has been
recently considered in \cite{C-RB-exponential_decay} in Lebesgue
spaces for potentials in the larger class of  uniform spaces, see
Appendix \ref{sec:expon-decay-uniform}. Here we prove first that this
condition is necessary for exponential decay in Morrey spaces, see
Theorem \ref{thm:necessity-of-Arendt-Batty-condition}. Then we prove a
general relationship between the exponential bounds on solutions in
Morrey spaces and the exponential bounds in $L^{\infty}(\R^{N})$,
which in particular, imply again the the Arendt and Batty condition is
also sufficient for exponential decay in all Morrey spaces, see Theorem
\ref{thm:eponential-type-in-Morrey}. Moreover, as for the results in
\cite{C-RB-exponential_decay} in Lebesgue spaces, these results imply
that in case of exponential decay, one can find uniform exponential
rates of decay independent of the Morrey space. Also, if $\mu=1$ all
exponential bounds are the same for all Morrey spaces, while if
$0<\mu<1$ the exponential bound in $L^{\infty}(\R^{N})$ controls from
below and above the ones in Morrey spaces. The same was obtained in
\cite{C-RB-exponential_decay}  for Lebesgue spaces. See Theorem
\ref{thm:eponential-type-in-Morrey} and Remark
\ref{rem:exponential_type_in_Lebesgue}. Finally in Proposition
\ref{prop:exponential_decay_Linfty} we make a detailed analysis of the
decay rate in $L^{\infty}(\R^{N})$.

Then in Section \ref{sec:dissipative-admissible-morrey} we consider
the case of a general  dissipative admissible potential. Then we show again how
the Arendt and Batty condition provides exponential decay in Morrey
spaces, see Proposition
\ref{prop:decay_4_dissipative_morrey_potential}. Moreover, the
exponential decay is characterised by the alternative condition 
  \begin{displaymath}
 \inf\{\int_{\R^{N}} |(-\Delta)^{\frac{\mu}{2}} \phi|^2
    + \int_{\R^{N}} |V| |\phi|^2 \colon \phi\in C^\infty_c(\R^N), \
    \|\phi\|_{L^2(\R^N)}=1 \}  > 0
  \end{displaymath}
see Remark \ref{rem:exponential_type_in_Lebesgue}. 

Finally, in Section \ref{sec:expon-bounds-2-potentials}, for general
Morrey potentials,  we obtain an exponential bound on all solutions
that reflects the relative strengths of the negative  (hence
dissipative) and positive parts of the potential. This also allows us to obtain
some threshold value for the size of the positive part of the
potential for which we still have exponential decay of solutions,  see
Corollaries 
\ref{cor:growth_4_morrey_potential} and
\ref{cor:some-implication-of-Arendt-Batty-condition}. 

We have also included three appendixes where we state and prove some
auxiliary results needed in previous sections. As mentioned above, 
Appendix \ref{app:technicalities} gives a proof of the instrumental
result Lemma \ref{lem:fixed-point-lem}. Appendix
\ref{sec:result-exponential-type} proves a general result on the
exponential bound on solutions in two nested spaces. 
Finally, in Appendix \ref{sec:expon-decay-uniform} we prove that if we
consider (\ref{eq:evol-problem-intro}) with a bounded potential and
initial data in uniform spaces, then the exponential bounds on
solutions is again independent of the uniform space. In
particular, if $V$ satisfies the Arendt and Batty condition then
solutions decay exponentially to zero in all uniform spaces and at the
same exponential rate. We also prove that for dissipative bounded
potential, solutions of (\ref{eq:evol-problem-intro}) in uniform
spaces can be represented using an integral kernel. 

In this paper we denote by $c$ or $C$ generic constants that may
change from line to line, whose value is not important for the
results.

Also we will denote $A \sim B$ to denote quantities (norms or
functions, for example) such that there exist positive  constants $c_{1}, c_{2}$
such that $c_{1} A \leq B \leq c_{2}A$.
% Similarly we use $A \lesssim
% B$ to indicate $A\leq c B$. 

\section{Homogeneous fractional diffusion equation}
\label{sec:homogeneous-lin-eq}

It is known from \cite[Section 4]{C-RB-morrey_linear_perturbation} that given
a  uniformly  elliptic $2m$ order operator of the form
\begin{displaymath}
    A_0 =  \sum_{|\zeta|= 2m} (-1)^{m} a_\zeta \partial ^\zeta
    \quad
\text{ with constant real coefficients } \ a_{\zeta} ,
\end{displaymath}
the initial problem of the form
\begin{equation}\label{eq:linear-e-eq}
  \begin{cases}
    u_{t} + A_0^{\mu} u =0 ,  & x \in \R^{N}, \quad t>0, \\
    u(0,x)=u_0(x), & x\in \R^N,
  \end{cases}
\qquad \text{ where $0< \mu \leq 1$},
\end{equation}
defines a semigroup of solutions $\{S_{\mu}(t)\}_{t\geq0}$ in Morrey spaces $M^{p,\ell}(\R^N)$, $1\leq p \leq \infty$,  $0< \ell\leq N$, and in the space of measures $\mathcal{M}^{\ell}(\R^N)$. The semigroup $\{S_{\mu}(t)\}_{t\geq0}$ has a selfsimilar kernel, so for $u_0 \in M^{p,\ell}(\R^N)$, $1\leq p \leq \infty$,  $0< \ell\leq N$
\begin{equation}\label{eq:Smu-kernel}
    S_{\mu}(t) u_{0}(x) = \int_{\R^{N}} k_{\mu}(t,x,y) u_{0}(y) \, dy ,
    \quad x\in \R^{N}, \ t>0
\end{equation}
where
$k_{\mu}(t,x,y) = \frac{1}{t^\frac{N}{2\mu}}
K_{\mu}\Big(\frac{x-y}{t^{\frac{1}{2\mu}}}\Big)$.
Also, given
$1\leq p, q \leq \infty$ and $0\leq s\leq \ell\leq N$ such that
$\frac{s}q\leq \frac{\ell}p$
\begin{displaymath}
(0,\infty)\times M^{p,\ell}(\R^N) \ni (t,u_0) \to S_{\mu}(t) u_0 \in M^{q,s}(\R^{N})
\ \text{ is continuous},
 \end{displaymath}
and satisfies the smoothing estimates
  \begin{equation} \label{eq:estimates_Mpl-Mqs}
    \|S_{\mu}(t)\|_{{\mathcal L}(M^{p,\ell}(\R^N), M^{q,s} (\R^N))} =
    \frac{c}{ t^{\frac{1}{2m\mu}(\frac\ell{p}-\frac{s}q)}}, \quad t>0
  \end{equation}
    for some constant $c=c_{\mu,p,\ell,q,s}$. All these results remain
    true if for $p=1$ we replace $M^{1,\ell}(\R^N)$ by
    $\mathcal{M}^{\ell}(\R^N)$ and (\ref{eq:Smu-kernel}) reads now
    \begin{displaymath}
          S_{\mu}(t) u_{0}(x) = \int_{\R^{N}} k_{\mu}(t,x,y) \, du_{0}(y),
    \quad x\in \R^{N}, \ t>0 . 
    \end{displaymath}

    For any $ 1\leq p \leq \infty$, $0< \ell\leq N$ and $u_0\in M^{p,\ell}(\R^N)$ we moreover have
\begin{equation}\label{eq:behavior-at-0-of-S{mu}(t)}
  S_\mu(t)u_0 \to u_0 \ \text{ as } \ t\to 0^+, \quad \mbox{in
    $L^p_{loc}(\R^N)$}
\end{equation}
and $\{S_{\mu}(t)\}_{t\geq0}$ is analytic in $M^{p,\ell}(\R^N)$ with
sectorial generator provided that $0<\mu<1$, or $\mu=1$ and
$1< p \leq \infty$. For all the results above see \cite[Section
4]{C-RB-morrey_linear_perturbation} and \cite{C-RB-linear_morrey}.
As for initial data in the Morrey space of measures we have that if
$\mu\in \mathcal{M}^{\ell}(\R^N)$, $\ell\in[0,N]$, then
$ S_\mu (t)\mu \to \mu$ as $t\to 0^{+}$  in the sense of measures, that is,
\begin{displaymath}
\int_{\R^N} \phi  S_\mu (t)\mu \, dy \to \int_{\R^N} \phi \, d\mu \ \text{ as } \ t\to0^+
\ \text{ for each } \ \phi \in C_c(\R^N) ,
\end{displaymath}
see \cite[Proposition 3.4(ii)]{C-RB-morrey_linear_perturbation}.

The dotted Morrey space $\dot M^{p,\ell}(\R^N)$ is defined
as the subspace of $M^{p,\ell}(\R^N)$ in which translations
$\tau_y\phi(\cdot)=\phi(\cdot-y)$ in $\R^N$ are continuous in the
sense that
\begin{displaymath}
\tau_y \phi -\phi \to 0 \ \text{ as } \ y\to 0, \quad \text{ in } \ M^{p,\ell}(\R^N),
\end{displaymath}
and in particular  for $p=\infty$ we have   $\dot{M}^{\infty,\ell} (\R^N)= BUC(\R^N)$.

Then we have that for $1\leq p\leq \infty$ and $0<\ell \leq N$ and  $u_0\in \dot
M^{p,\ell}(\R^N)$
\begin{equation}
  \label{eq:behavior-at-0-of-S{mu}(t)-for-dotted}
S_\mu(t)u_0 \to u_0 \ \text{ as } \ t\to 0^+ \quad \text{ in } \
    M^{p,\ell}(\R^N)  ,
\end{equation}
see \cite[Proposition 4.5]{C-RB-morrey_linear_perturbation}  and
Remark \ref{rem:Smu-is-C0} below.
We also have for $1\leq  p\leq \infty$ and $0<\ell\leq N$ that
\begin{displaymath}
  S_\mu(t) ( \dot M^{p,\ell}(\R^N) ) \subset \dot M^{p,\ell}(\R^N),
  \quad t>0 ,
\end{displaymath}
see \cite[Proposition 4.1(iv)]{C-RB-morrey_linear_perturbation}. These
imply that  the semigroup $\{S_\mu(t)\}_{t\geq0}$  is strongly continuous in the dotted Morrey spaces.

Both undotted and dotted Morrey spaces possess suitable imbedding properties; namely if $1\leq p\leq q \leq \infty$,   $0\leq \ell \leq s \leq N$ satisfy $\frac{\ell}p=\frac{s}q$ then
  \begin{equation}
    \label{eq:summary-undotted-Morrey-2}
    M^{q,s}(\R^N)\subset M^{p,\ell}(\R^N)
  \end{equation}
where the spaces on both sides can be simultaneously replaced by the dotted ones
  (see \cite[Corollary 2.1]{C-RB-linear_morrey}).

Following the approach in \cite[Section 11 of Chapter IX]{1978_Yosida}
for a general bounded $C^0$ semigroup, as in  the case of
the dotted Morrey spaces, see also \cite[Appendix]{C-RB-scaling1}, and following \cite[Appendix
B]{C-RB-linear_morrey} in the case of the undotted Morrey spaces,
the semigroup $\{S_\mu(t)\}_{t\geq0}$ can be computed using
$\{S_1(t)\}_{t\geq0}$ as
\begin{equation}\label{eq:S0alpha(t)-in-MU}
S_{\mu}(t)  = \int_{0}^{\infty} f_{t,\mu}(s) S_1(s)  \, ds, \quad t> 0,
\end{equation}
where
\begin{displaymath}
0\leq   f_{t,\mu} (\lambda) =
  \begin{cases}
    \frac{1}{2\pi i} \D \int_{\sigma -i \infty}^{\sigma + i \infty}
    e^{z\lambda -tz^{\mu}} \, dz & \lambda \geq0,
    \\
    0 & \lambda <0
  \end{cases}
\end{displaymath}
(with $\sigma >0$) is continuous (see the expression
\cite[(17), p. 263]{1978_Yosida}), and the branch for $z^{\mu}$ is chosen such that
$Re(z^{\mu})>0$ if $Re(z)>0$. Also $\int_0^\infty f_{t,\mu}(s) \, ds
=1$, see \cite[(14), p. 262]{1978_Yosida}.

\begin{remark}\label{rem:Smu-is-C0}

From \cite[Section 11 of Chapter IX]{1978_Yosida}, if the semigroup
$\{S_{1}(t)\}_{t\geq0}$ is  strongly continuous so is
$\{S_\mu(t)\}_{t\geq0}$ given by  (\ref{eq:S0alpha(t)-in-MU}).

This is the case in the dotted Morrey spaces $\dot M^{p,\ell}(\R^N)$,
$1\leq  p\leq \infty$ and $0<\ell\leq N$ and  in particular in $BUC(\R^N)$. 
\end{remark}

We now show that the semigroup $\{S_{\mu}(t)\}_{t\geq0}$ smooths
solutions into  dotted Morrey spaces.

\begin{lemma}\label{lem:smoothing-to-dotted-spaces}
Assume $0<\mu \leq 1$ and $u_0\in M^{p,\ell}(\R^N)$ for some $1\leq p
\leq \infty$ and $0<\ell\leq N$ or $u_{0}\in
\mathcal{M}^{\ell}(\R^N)$.

If $1< q \leq \infty$ and $0<s\leq \ell$ satisfy $\frac{s}{q}\leq \frac{\ell}{p}$ then
\begin{equation}\label{eq:Smu(t)-in-dot{M}qs}
S_\mu(t) u_0\in \dot{M}^{q,s}(\R^N), \quad t>0 .
\end{equation}

\end{lemma}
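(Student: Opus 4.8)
The plan is to combine the smoothing estimate (\ref{eq:estimates_Mpl-Mqs}), which allows one to change the Morrey space, with the convolution structure of the kernel in (\ref{eq:Smu-kernel}), which transfers continuity of translations from $L^1$ to the output. The point is that the smoothing estimates by themselves only place $S_\mu(t)u_0$ in the \emph{undotted} space $M^{q,s}(\R^N)$; the extra information needed is continuity of translations, and this is produced by exploiting that $S_\mu(t)$ is convolution against an $L^1$ kernel. The natural device is to split the time by the semigroup property, $S_\mu(t)=S_\mu(t/2)\,S_\mu(t/2)$, letting the first factor change the space and the second factor create the translation continuity while remaining in $M^{q,s}(\R^N)$.

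Concretely, I would set $w:=S_\mu(t/2)u_0$. Since $0<s\leq\ell\leq N$, $1<q\leq\infty$, $1\leq p\leq\infty$ and $\frac{s}{q}\leq\frac{\ell}{p}$, the smoothing estimate (\ref{eq:estimates_Mpl-Mqs}) gives $w\in M^{q,s}(\R^N)$ (in the measure case one uses the version of (\ref{eq:estimates_Mpl-Mqs}) with $M^{1,\ell}$ replaced by $\mathcal{M}^{\ell}$, which still produces an honest $M^{q,s}$ function). Writing $\tilde k_\mu(t/2,\cdot)$ for the convolution kernel associated with (\ref{eq:Smu-kernel}), so that $S_\mu(t/2)\phi=\tilde k_\mu(t/2,\cdot)*\phi$, the semigroup property yields $S_\mu(t)u_0=\tilde k_\mu(t/2,\cdot)*w$. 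Since convolution commutes with translations, for every $z\in\R^N$
\[
\tau_z S_\mu(t)u_0-S_\mu(t)u_0=\big(\tau_z\tilde k_\mu(t/2,\cdot)-\tilde k_\mu(t/2,\cdot)\big)*w .
\]

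The crux is a Young-type inequality on Morrey spaces: for $g\in L^1(\R^N)$ and $v\in M^{q,s}(\R^N)$ one has $\|g*v\|_{M^{q,s}}\leq\|g\|_{L^1}\|v\|_{M^{q,s}}$. I would prove this by applying Minkowski's integral inequality to $g*v(x)=\int g(z)v(x-z)\,dz$ on each ball $B(x_0,R)$, using the translation invariance $\|v(\cdot-z)\|_{L^q(B(x_0,R))}=\|v\|_{L^q(B(x_0-z,R))}\leq R^{\frac{N-s}{q}}\|v\|_{M^{q,s}}$ uniformly in $z$ (for $q=\infty$ this is the elementary $L^\infty$ bound), and then multiplying by $R^{\frac{s-N}{q}}$ and taking the supremum over $x_0,R$. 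Applying this with $g=\tau_z\tilde k_\mu(t/2,\cdot)-\tilde k_\mu(t/2,\cdot)$ gives
\[
\|\tau_z S_\mu(t)u_0-S_\mu(t)u_0\|_{M^{q,s}}\leq\|\tau_z\tilde k_\mu(t/2,\cdot)-\tilde k_\mu(t/2,\cdot)\|_{L^1}\,\|w\|_{M^{q,s}} .
\]

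Finally I would verify $\tilde k_\mu(t/2,\cdot)\in L^1(\R^N)$: by the self-similar scaling in (\ref{eq:Smu-kernel}) a change of variables gives $\|\tilde k_\mu(t/2,\cdot)\|_{L^1}=\|K_\mu\|_{L^1}<\infty$, and indeed $\tilde k_\mu$ is a nonnegative density, being a nonnegative average of Gaussian kernels through the subordination formula (\ref{eq:S0alpha(t)-in-MU}). Since translation is continuous in $L^1(\R^N)$, the right-hand side above tends to $0$ as $z\to0$, so $\tau_z S_\mu(t)u_0\to S_\mu(t)u_0$ in $M^{q,s}(\R^N)$, which is exactly the assertion $S_\mu(t)u_0\in\dot M^{q,s}(\R^N)$. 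The main obstacle is conceptual rather than computational: recognizing that one must \emph{separately} produce translation continuity (the smoothing estimates do not give it), and that the clean way to do so is to peel off one application of the semigroup and use the space-preserving $L^1$-convolution bound together with $L^1$-continuity of translations.
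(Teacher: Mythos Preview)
Your argument is correct and takes a genuinely different, more elementary route than the paper. The paper splits into two cases: for $1<q<\infty$ it shows that the first-order derivatives $D^{1}S_\mu(t)u_0$ lie in $M^{q,s}(\R^N)$ (via \cite[Theorem~5.1(i), Theorem~1.1(vii)]{C-RB-linear_morrey}) and then invokes the Sobolev--Morrey type embedding \cite[Proposition~2.2]{C-RB-linear_morrey} to conclude membership in $\dot M^{q,s}(\R^N)$; for $q=\infty$ it argues separately that $S_1(t)$ maps $L^\infty$ into $BUC$ and transfers this to $S_\mu(t)$ by subordination. Your approach is unified: once $w=S_\mu(t/2)u_0\in M^{q,s}(\R^N)$, you exploit the convolution structure together with the Young-type bound $\|g*v\|_{M^{q,s}}\le\|g\|_{L^1}\|v\|_{M^{q,s}}$ and the $L^1$-continuity of translations of the kernel. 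This avoids derivative estimates, the case split, and the external Sobolev--Morrey embedding. In effect you are proving the general fact that convolution with an $L^1$ function sends $M^{q,s}(\R^N)$ into $\dot M^{q,s}(\R^N)$, which is exactly what is needed. The paper's route, on the other hand, yields the extra regularity information $D^1 S_\mu(t)u_0\in M^{q,s}(\R^N)$, which is stronger than mere translation continuity but is not required for the lemma.

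One small imprecision: your justification that $K_\mu\in L^1(\R^N)$ via ``a nonnegative average of Gaussian kernels'' is specific to $A_0=-\Delta$, whereas the lemma is stated in Section~\ref{sec:homogeneous-lin-eq} for a general $2m$-th order uniformly elliptic operator, whose kernel need not be nonnegative. The $L^1$ integrability of $K_\mu$ still holds in that generality and can be read off from (\ref{eq:estimates_Mpl-Mqs}) with $p=q=1$, $\ell=s=N$ (so $M^{1,N}=L^1$), since a bounded convolution operator on $L^1(\R^N)$ necessarily has an $L^1$ kernel; alternatively, use the subordination formula together with $\|k_1(s,\cdot)\|_{L^1}=\|K_1\|_{L^1}<\infty$. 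With that adjustment your argument goes through unchanged.
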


\begin{proof}
We consider two  cases.

\smallskip

\noindent
\fbox{Case $1< q< \infty$}
From (\ref{eq:estimates_Mpl-Mqs})  we have
$S_\mu\big(\frac{t}{2}\big) u_0,
S_\mu(t)
u_0\in L^\infty(\R^N)\cap M^{q,s}(\R^N)$.
Using  \cite[Theorem 5.1(i), Theorem 1.1(vii)]{C-RB-linear_morrey} we
get that taking first order derivatives  $D^{1}
S_\mu\big(\frac{t}{2}\big) S_\mu\big(\frac{t}{2}\big) u_0=
D^{1}S_\mu(t)u_0\in M^{q,s}(\R^N)$. Applying then
\cite[Proposition 2.2]{C-RB-linear_morrey} we get (\ref{eq:Smu(t)-in-dot{M}qs}).

\medskip

\noindent
\fbox{Case  $q = \infty$}
We first collect some properties of
$S_{1}(t)$.  First, from  (\ref{eq:estimates_Mpl-Mqs}), for any $t>0$,
$S_{1}(t)$ maps Morrey spaces into $L^\infty(\R^N)$. Then from
\cite[Proposition 4.9]{C-RB} $S_1 (t) $ takes
$L^\infty(\R^N)$ into $BUC(\R^N)$ for $t>0$.   This, in particular, proves
(\ref{eq:Smu(t)-in-dot{M}qs}) for $\mu =1$.
Thus, from estimates (\ref{eq:estimates_Mpl-Mqs}),  $\{S_1(t)\}_{t\geq0}$ is a
bounded semigroup in $BUC(\R^N)$ and strongly continuous  by \cite[(4.6)]{C-RB}.
Also,
this and the semigroup property implies that for $v_{0} \in
L^\infty(\R^N)$ the function $(0,\infty) \ni t \mapsto S_{1}(t) v_{0} \in
BUC(\R^N)$ is continuous.

For $0<\mu <1$ these results imply in particular that for $v_{0}\in
L^\infty(\R^N)$ and $t>0$, $S_{\mu}(t) v_{0} $ is given by
(\ref{eq:S0alpha(t)-in-MU}) and $S_{\mu}(t) v_{0} \in BUC(\R^N)$.

Now, if $u_0$ is as in the statement and $t>0$,
(\ref{eq:estimates_Mpl-Mqs}) implies
$v_0=S_\mu\big(\frac{t}{2}\big) u_0\in L^\infty(\R^N)$ and then
$S_\mu\big(\frac{t}{2}\big) v_0 =S_\mu(t) u_0 \in BUC(\R^N)$.
\end{proof}

As mentioned in the introduction, for second order elliptic operator,
by a linear change of variables we can always assume, without loss of
generality, that
\begin{displaymath}
A_0=-\Delta
\end{displaymath}
and then,  for $\mu=1$,  the kernel in (\ref{eq:Smu-kernel}) is
\begin{equation}\label{eq:kernel-heat-smgp}
0\leq k_1(t,x,y)= \frac{1}{t^{\frac{N}{2}}} K_{1}
    \left(\frac{x-y}{t^{\frac{1}{2}}} \right)= \frac{1}{(4\pi
      t)^\frac{N}{2}} e^{-\frac{|x-y|^2}{4t}} ,
\end{equation}
whereas for $0<\mu<1$
\begin{equation}\label{eq:kernel-fractional-heat-smgp}
  0\leq k_{\mu}(t,x,y)= \frac{1}{t^{\frac{N}{2\mu}}} k_{0,\mu}
    \left(\frac{x-y}{t^{\frac{1}{2\mu}}} \right)  \sim
\frac{1}{t^\frac{N}{2\mu}}
H_{\mu}\left(\frac{x-y}{t^{\frac{1}{2\mu}}}\right),
\end{equation}
where $H_{\mu}(\cdot)=  \min\left\{1, \frac{1}{|\cdot|^{N+2\mu}}\right\}
\sim I_{\mu}(\cdot) = \frac{1}{(1+|\cdot|^{2})^{\frac{N+2\mu}{2}}}$, see \cite[Section 6]{C-RB-scaling3-4},
\cite{2019Bogdan_fract_laplac_hardy,2017BonforteSireVazquez:_optimal-existence}.
Then $\{S_{\mu}(t)\}_{t\geq 0}$ in $M^{p,\ell}(\R^N)$ for $1\leq p
\leq \infty$, $0<\ell\leq N$   satisfies
(\ref{eq:estimates_Mpl-Mpl}) below.

\begin{lemma}\label{lem:about-contractions}

  Assume $0<\mu \leq 1$.
Then for any $1\leq p \leq \infty$ and $0<\ell\leq N$
  \begin{equation} \label{eq:estimates_Mpl-Mpl}
    \|S_{\mu}(t)\|_{{\mathcal L}(M^{p,\ell}(\R^N))}  =
    1, \quad t\geq 0 
  \end{equation}
and the same in $\mathcal{M}^{\ell}(\R^{N})$. 
  
\end{lemma}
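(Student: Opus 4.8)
The plan is to prove that $\|S_\mu(t)\|_{\mathcal{L}(M^{p,\ell})} = 1$ by establishing both the upper bound $\leq 1$ (contractivity) and the lower bound $\geq 1$ (which is immediate), the former being the substantive content. For the upper bound I would exploit that the semigroup acts via a nonnegative convolution kernel that integrates to one in the space variable. Indeed, from \eqref{eq:Smu-kernel} together with \eqref{eq:kernel-heat-smgp} and \eqref{eq:kernel-fractional-heat-smgp}, the kernel satisfies $k_\mu(t,x,y) = t^{-\frac{N}{2\mu}} K_\mu\big((x-y)/t^{\frac{1}{2\mu}}\big) \geq 0$, so it is a function of $x-y$ alone, and the mass-preservation property $\int_{\R^N} k_\mu(t,x,y)\,dy = 1$ follows from $\int_0^\infty f_{t,\mu}(s)\,ds = 1$ combined with the analogous normalization for the Gaussian kernel $k_1$ via the subordination formula \eqref{eq:S0alpha(t)-in-MU}. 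The key point is that convolution with a probability density is a contraction on every Morrey space.

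First I would verify the contraction property on each fixed ball. Fix $x_0 \in \R^N$, $R > 0$, and $\phi \in M^{p,\ell}$. Writing $S_\mu(t)\phi(x) = \int_{\R^N} k_\mu(t,x-y)\phi(y)\,dy$ as an average of translates of $\phi$ against the probability measure $k_\mu(t,\cdot)\,dy$, I would apply Minkowski's integral inequality (the continuous form of the triangle inequality) to estimate the $L^p(B(x_0,R))$ norm of $S_\mu(t)\phi$ by $\int_{\R^N} k_\mu(t,z)\,\|\tau_z\phi\|_{L^p(B(x_0,R))}\,dz$, where $\tau_z\phi(\cdot) = \phi(\cdot - z)$. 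Since translation by $z$ maps $B(x_0,R)$ to $B(x_0+z,R)$, the translated norm equals $\|\phi\|_{L^p(B(x_0+z,R))} \leq R^{\frac{N-\ell}{p}}\|\phi\|_{M^{p,\ell}}$ by the very definition of the Morrey norm. Pulling this uniform bound out and using $\int k_\mu(t,z)\,dz = 1$ gives $R^{\frac{\ell-N}{p}}\|S_\mu(t)\phi\|_{L^p(B(x_0,R))} \leq \|\phi\|_{M^{p,\ell}}$; taking the supremum over $x_0$ and $R$ yields $\|S_\mu(t)\phi\|_{M^{p,\ell}} \leq \|\phi\|_{M^{p,\ell}}$.

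For the reverse inequality I would test against an extremal sequence: taking $\phi$ to be (an approximation of) a constant on a large ball, or more simply the constant function $\phi \equiv 1$, which lies in $M^{p,\ell}$ with norm controlled by the measure of unit balls, one has $S_\mu(t)\mathbf{1} = \mathbf{1}$ by mass preservation, so the operator norm is at least $1$. Alternatively, using \eqref{eq:behavior-at-0-of-S{mu}(t)-for-dotted} on the dotted subspace and continuity as $t \to 0^+$ one recovers $\|S_\mu(t)\| \geq 1$ by letting the time shrink. The cases $p = \infty$ and the measure space $\mathcal{M}^\ell(\R^N)$ are handled identically, replacing the $L^p$ norm by the essential supremum or total variation respectively; for measures the Minkowski step becomes the elementary bound $|S_\mu(t)\mu|(B) \leq \int k_\mu(t,z)\,|\mu|(B - z)\,dz$.

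The main obstacle is the rigorous justification of the Minkowski/Fubini interchange and the measurability bookkeeping when $p = \infty$ or when dealing with measures, where one cannot literally write $S_\mu(t)\phi$ as a Lebesgue integral in the same way; there one argues by duality or by the explicit kernel bound, and one must be careful that the constant $c$ in the statement is exactly $1$ and not merely finite. A secondary subtlety is confirming the normalization $\int_{\R^N} k_\mu(t,z)\,dz = 1$ for all $0 < \mu \leq 1$, which for $\mu < 1$ relies on the subordination identity and is not entirely self-contained, though it is implicit in the fact that $\{S_\mu(t)\}$ is a Markov (order-preserving, mass-preserving) semigroup; I would cite \eqref{eq:S0alpha(t)-in-MU} and the normalization $\int_0^\infty f_{t,\mu}(s)\,ds = 1$ to close this gap.
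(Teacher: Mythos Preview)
Your upper-bound argument via Minkowski's inequality applied to the convolution with the nonnegative, unit-mass kernel $k_\mu(t,\cdot)$ is correct and is essentially what the paper does (the paper cites \cite{K} for $\mu=1$ and then subordinates via \eqref{eq:S0alpha(t)-in-MU} to reach $0<\mu<1$; your direct argument works uniformly in $\mu$).

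The gap is in your lower bound. The constant function $\phi\equiv 1$ is \emph{not} in $M^{p,\ell}(\R^N)$ when $1\leq p<\infty$ and $0<\ell\leq N$: indeed $R^{\frac{\ell-N}{p}}\|1\|_{L^p(B(x_0,R))}= c_N^{1/p} R^{\ell/p}\to\infty$ as $R\to\infty$. So testing against $1$ gives nothing in these spaces. Your alternative via strong continuity at $t=0$ only yields $\liminf_{t\to0^+}\|S_\mu(t)\|\geq 1$, which together with the upper bound fixes the norm for small $t$ but does not by itself propagate to all $t>0$.

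The paper closes this gap by invoking \eqref{eq:estimates_Mpl-Mqs} with $(q,s)=(p,\ell)$, which says $\|S_\mu(t)\|_{\mathcal{L}(M^{p,\ell})}=c$ is a \emph{constant} independent of $t$ (this constancy is a consequence of the self-similarity of the kernel and the homogeneity of the Morrey norm). Having shown $c\leq 1$, if $c<1$ then the semigroup property gives $c=\|S_\mu(2t)\|\leq c^2<c$, a contradiction; hence $c=1$. If you want to repair your argument without appealing to \eqref{eq:estimates_Mpl-Mqs}, a legitimate test function is $\phi(x)=|x|^{-\ell/p}$ (for $0<\ell<N$), which does lie in $M^{p,\ell}(\R^N)$ and, being homogeneous of degree $-\ell/p$, gives a self-similar solution with constant Morrey norm (cf.\ Proposition~\ref{prop:constant_norm_solutions}); this furnishes the lower bound for all $t>0$ directly.
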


\begin{proof}
If $\mu=1$ we get via \cite[(2.2) in Lemma 2.1]{K} that
(\ref{eq:estimates_Mpl-Mqs}) holds with $c=1$ when $p=q$ and
$\ell=s$ and for the spaces of measures  $\mathcal{M}^{\ell}(\R^{N})$. 

For $0<\mu <1$, for $u_0\in M^{p,\ell}(\R^N)$, $1\leq p
\leq \infty$, $0<\ell\leq N$ or $u_{0}\in \mathcal{M}^{\ell}(\R^{N})$,      $S_{\mu}(t) u_{0} $ is given by
(\ref{eq:S0alpha(t)-in-MU}) and then
\begin{displaymath}
\|S_\mu(t)u_0\|_{M^{p,\ell}(\R^N)} \leq \int_0^\infty f_{t,\mu} (s)
\|S_1(s)u_0\|_{M^{p,\ell}(\R^N)} \leq \|u_0\|_{M^{p,\ell}(\R^N)} ,
\quad t>0 .
\end{displaymath}
Therefore, using (\ref{eq:estimates_Mpl-Mqs}), we get
$\|S_{\mu}(t)\|_{{\mathcal L}(M^{p,\ell}(\R^N))} =c \leq 1$, but this
implies that  $c=1$ since
otherwise we will get  exponential decay of $
\|S_{\mu}(t)\|_{{\mathcal L}(M^{p,\ell}(\R^N))} $.
\end{proof}

\section{Perturbed equation with  Morrey potentials}
\label{sec:perturbed_equation_Morrey}

In this section, exploiting the results for the \emph{unperturbed}
problem (\ref{eq:linear-e-eq}) in Section
\ref{sec:homogeneous-lin-eq}, we solve the \emph{perturbed} problem
involving a Morrey potential, or a  superposition of such potentials,
with initial data in suitable Morrey spaces. Regarding this, all
potentials we consider below are \emph{admissible potentials} in the
sense that they belong to Morrey spaces
$M^{\tilde p,\tilde \ell}(\R^N)$ such that
  \begin{equation} \label{eq:admisible_potential}
    1\leq \tilde p\leq \infty, \quad
   0<\tilde \ell \leq N \ \text{ with }
 \tilde \kappa=\frac{\tilde \ell}{2m\mu \tilde p}<1 .
  \end{equation}

We consider first the case of one potential, that is
\begin{equation}\label{eq:case-of-one-potential}
  \begin{cases}
    u_{t} + A_{0}^{\mu} u = V^{1}(x) u , \quad  x\in \R^{N}, \ t>0
    \\
u(x,0)= u_{0}(x), \quad  x\in \R^{N} .
  \end{cases}
\end{equation}
Then
 from \cite[Theorem 7.2]{C-RB-morrey_linear_perturbation} we have that
 if we consider an admissible potential
  \begin{displaymath}
V^{1}\in M^{p_1,\ell_1}(\R^N) \text{ with  } 1\leq p_1\leq \infty,
\quad    0<\ell_{1} \leq N \ \text{ and  }
 \kappa_1=\frac{\ell_{1}}{2m\mu p_{1}}<1,
\end{displaymath}
then for   $1\leq p \leq \infty$ and $0<\ell \leq \ell_1$,
  (\ref{eq:case-of-one-potential}) defines a semigroup of solutions 
  $\{S_{\mu,V^{1}}(t)\}_{t\geq0}$ in $M^{p,\ell}(\R^N)$ such that for
  $u_0\in M^{p,\ell}(\R^N)$, $u(t) \mydef   S_{\mu,V^{1}}(t)u_0 $
  satisfies
  \begin{equation} \label{eq:VCF_one_potential}
    u(t) = S_{\mu}(t) u_{0} + \int_{0}^{t}  S_{\mu}(t-s) V^{1} u(s) \, ds,
    \quad  t>0,
  \end{equation}
 \begin{equation}  \label{eq:SV_approach_initial_data}
    \lim_{t\to 0^+} \| u(t)- S_{\mu}(t)
    u_0\|_{M^{p,\ell}(\R^N)}= 0 .
  \end{equation} 
  Also,
  \begin{equation}\label{eq:Mp,ell-estimate-of-S{mu,V}(t)}
    \|S_{\mu,V^{1}}(t) \|_{\mathcal{L}(M^{p,\ell}(\R^N))} \leq C e^{\omega
      t}, \quad t\geq 0
  \end{equation}
  for some constants $C$, $\omega$. For  $1\leq q\leq \infty$ and
$0< s\leq \ell$ satisfying  $\frac{s}{q}\leq \frac{\ell}{p}$ we have,
for any $a>\omega$ and some constant $C$,
\begin{equation}\label{eq:smoothing_one_potential}
  \| S_{\mu,V^{1}}(t)\|_{\mathcal{L}(M^{p,\ell}(\R^N), M^{q,s}(\R^N))}
  \leq
  \frac{C e^{at}}{t^{\frac{1}{2m\mu}(\frac{\ell}{p}-\frac{s}{q})}},
  \quad t>0
\end{equation}
and
\begin{displaymath}
  (0,\infty)\times M^{p,\ell}(\R^N) \ni (t,u_0) \to S_{\mu,V^{1}}(t) u_0 \in M^{q,s}(\R^{N})
  \ \text{ is continuous}.
\end{displaymath}
For $p=1$ all these  remain  true if we replace $M^{1,\ell}(\R^N)$ by
$\mathcal{M}^{\ell}(\R^N)$.

For $1 \leq  p \leq \infty$ and $0<\ell \leq \ell_1$, by
(\ref{eq:behavior-at-0-of-S{mu}(t)-for-dotted}),     if $u_{0}\in \dot
{M}^{p,\ell}(\R^N)$ then $u(t) = S_{\mu, V^{1}}(t)u_0\to u_{0}$ in
$M^{p,\ell}(\R^N)$ as $t\to 0^{+}$.

  Moreover, when $p\geq p_1'$ the exponent in
  (\ref{eq:Mp,ell-estimate-of-S{mu,V}(t)}) can be taken as
  \begin{equation}\label{eq:growth_bound}
    \omega =c \| V^{1}\|_{M^{p_1,\ell_1}(\R^N)}^{\frac{1}{1-\kappa_{1}}},
  \end{equation}
  for some positive constant $c=c(p,\ell)$. 
    Moreover, if $p\geq p_1'$, or $q\geq p_1'$, then
    (\ref{eq:smoothing_one_potential}) holds with any $a>\omega$ as in
    (\ref{eq:growth_bound}).

Similarly to Lemma \ref{lem:smoothing-to-dotted-spaces} we now show
that the perturbed semigroup $\{S_{\mu,V^{1}}(t)\}_{t\geq0}$ also
smoothes solutions into dotted Morrey spaces.

\begin{lemma}\label{lem:perturbed-smoothing-to-dotted-spaces}
  Assume $0<\mu \leq 1$ and $V^{1} \in M^{p_1,\ell_1}(\R^N)$ is an
  admissible potential. Assume also that $u_0\in M^{p,\ell}(\R^N)$ for
  some $1\leq p \leq \infty$ and $0<\ell\leq \ell_1$ or $u_{0}\in
  \mathcal{M}^{\ell}(\R^{N})$ if $p=1$.

If $1<q \leq \infty$ and $0<s\leq \ell$ satisfy
$\frac{s}{q}\leq \frac{\ell}{p}$ then
\begin{displaymath}
S_{\mu,V^{1}}(t) u_0\in \dot{M}^{q,s}(\R^N), \quad t>0.
\end{displaymath}

In particular, if $1<p\leq \infty$,  $\{S_{\mu,V^{1}}(t)\}_{t\geq0}$ is a strongly continuous
semigroup in $\dot{M}^{p,\ell}(\R^N)$.

\end{lemma}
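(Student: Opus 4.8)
The plan is to reduce the perturbed case to the unperturbed Lemma~\ref{lem:smoothing-to-dotted-spaces} through the variation of constants formula (\ref{eq:VCF_one_potential}), using that $\dot M^{q,s}(\R^N)$ is a \emph{closed} subspace of $M^{q,s}(\R^N)$. Closedness is immediate: translation is an isometry of $M^{q,s}(\R^N)$, so if $\phi_n\to\phi$ in $M^{q,s}(\R^N)$ with $\phi_n\in\dot M^{q,s}(\R^N)$, then $\|\tau_y\phi-\phi\|_{M^{q,s}}\le 2\|\phi-\phi_n\|_{M^{q,s}}+\|\tau_y\phi_n-\phi_n\|_{M^{q,s}}$ shows $\phi\in\dot M^{q,s}(\R^N)$. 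Writing $u(t)=S_{\mu,V^1}(t)u_0$, the first term $S_\mu(t)u_0$ of (\ref{eq:VCF_one_potential}) already lies in $\dot M^{q,s}(\R^N)$ by Lemma~\ref{lem:smoothing-to-dotted-spaces}, since $0<\ell\le\ell_1\le N$ and $(q,s)$ satisfies the required inequalities; so it remains to place the Duhamel term in $\dot M^{q,s}(\R^N)$, which I would do by exhibiting $u(t)$ as an $M^{q,s}(\R^N)$-limit of dotted functions.

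Fix $t>0$ and $0<h<t/2$. By the semigroup property and (\ref{eq:VCF_one_potential}),
\[
  u(t)=S_{\mu,V^1}(h)\,u(t-h)=S_\mu(h)\,u(t-h)+\int_0^h S_\mu(h-\sigma)\,V^1\,u(t-h+\sigma)\,d\sigma .
\]
Since $t-h+\sigma\in[t/2,t]$ stays in a compact subinterval of $(0,\infty)$, the smoothing estimate (\ref{eq:smoothing_one_potential}) guarantees $u(t-h+\sigma)\in M^{q,s}(\R^N)$ with $\|u(t-h+\sigma)\|_{M^{q,s}}$ bounded uniformly in $\sigma\in[0,h]$; in particular $u(t-h)\in M^{q,s}(\R^N)$, whence $S_\mu(h)u(t-h)\in\dot M^{q,s}(\R^N)$ by Lemma~\ref{lem:smoothing-to-dotted-spaces} applied with coinciding source and target indices. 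Thus $u(t)$ differs from a dotted function by the displayed integral, and by closedness it suffices to show that this integral tends to $0$ in $M^{q,s}(\R^N)$ as $h\to0^+$.

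The crux is precisely this estimate, and the subtle point is to keep the time singularity integrable. The decisive observation is that $u(t-h+\sigma)$ lies in the \emph{target} space $M^{q,s}(\R^N)$, not merely in $M^{p,\ell}(\R^N)$. Because $M^{q,s}(\R^N)$ is one of the spaces on which $\{S_{\mu,V^1}(t)\}$ is built (as $0<s\le\ell_1$ and $1<q\le\infty$), the mapping estimates underlying that construction --- equivalently, the H\"older-type inequality in Morrey spaces together with (\ref{eq:estimates_Mpl-Mqs}) --- provide the operator bound $\|S_\mu(\tau)(V^1\,\cdot\,)\|_{\mathcal{L}(M^{q,s}(\R^N))}\le c\,\tau^{-\kappa_1}$ for $\tau>0$, with $\kappa_1=\frac{\ell_1}{2m\mu p_1}<1$ by admissibility. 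Hence the integral is bounded in $M^{q,s}(\R^N)$ by $c\,(\sup_{\sigma}\|u(t-h+\sigma)\|_{M^{q,s}})\int_0^h (h-\sigma)^{-\kappa_1}\,d\sigma=O(h^{1-\kappa_1})\to0$. Had one instead used only $u(t-h+\sigma)\in M^{p,\ell}(\R^N)$ and smoothed from $M^{p,\ell}$ into $M^{q,s}$ in a single step, the exponent would have been $\frac{1}{2m\mu}(\frac{\ell_1}{p_1}+\frac\ell p-\frac sq)$, which may exceed $1$ (e.g.\ for $q=\infty$); this is the trap the argument avoids. Closedness of $\dot M^{q,s}(\R^N)$ then yields $u(t)\in\dot M^{q,s}(\R^N)$.

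For the \emph{in particular} claim, take $q=p$ and $s=\ell$, admissible when $1<p\le\infty$. The main statement gives $S_{\mu,V^1}(t)\big(\dot M^{p,\ell}(\R^N)\big)\subset S_{\mu,V^1}(t)\big(M^{p,\ell}(\R^N)\big)\subset\dot M^{p,\ell}(\R^N)$ for $t>0$, so the semigroup leaves $\dot M^{p,\ell}(\R^N)$ invariant. For strong continuity at $0^+$, let $u_0\in\dot M^{p,\ell}(\R^N)$: by (\ref{eq:SV_approach_initial_data}) one has $\|u(t)-S_\mu(t)u_0\|_{M^{p,\ell}}\to0$, and by (\ref{eq:behavior-at-0-of-S{mu}(t)-for-dotted}) $S_\mu(t)u_0\to u_0$ in $M^{p,\ell}(\R^N)$, so $u(t)\to u_0$ in $M^{p,\ell}(\R^N)$; since $u(t),u_0\in\dot M^{p,\ell}(\R^N)$ and the dotted norm is the restriction of the $M^{p,\ell}(\R^N)$ norm, this is convergence in $\dot M^{p,\ell}(\R^N)$. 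Together with the local bound (\ref{eq:Mp,ell-estimate-of-S{mu,V}(t)}) and the semigroup property, this gives strong continuity of $\{S_{\mu,V^1}(t)\}$ on $\dot M^{p,\ell}(\R^N)$.
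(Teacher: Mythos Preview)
Your strategy --- use the semigroup property to write $u(t)=S_\mu(h)u(t-h)+\int_0^hS_\mu(h-\sigma)V^1u(t-h+\sigma)\,d\sigma$, show the first term is dotted via Lemma~\ref{lem:smoothing-to-dotted-spaces}, and let the integral vanish as $h\to0^+$ --- is sound and close in spirit to the paper's argument. The paper instead writes $u(2t)$ via the variation of constants formula starting from $v_0=u(t)$ and shows the \emph{whole} Bochner integral $\int_0^tS_\mu(t-\tau)V^1v(\tau)\,d\tau$ lands in $\dot M^{q,s}(\R^N)$ (each integrand is dotted by Lemma~\ref{lem:smoothing-to-dotted-spaces}, and the integral converges absolutely in $M^{q,s}$). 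Your short-interval trick is a nice variant.

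There is, however, a real gap in your key estimate. The claimed operator bound
\[
  \|S_\mu(\tau)(V^1\,\cdot\,)\|_{\mathcal L(M^{q,s}(\R^N))}\le c\,\tau^{-\kappa_1}
\]
relies on the H\"older inequality in Morrey spaces, which requires $\frac1q+\frac1{p_1}\le 1$, i.e.\ $q\ge p_1'$. When $1<q<p_1'$ the product $V^1\phi$ need not lie in any Morrey space for generic $\phi\in M^{q,s}(\R^N)$, so the operator is not even defined on $M^{q,s}$. Your appeal to ``the mapping estimates underlying the construction'' does not rescue this: for $q<p_1'$ the semigroup in $M^{q,s}$ is built via a \emph{different} working space $M^{w,\kappa}$ with $w=p_1'$ (see (\ref{eq:omega-and-kappa}) and Appendix~\ref{app:technicalities}), not through an $\mathcal L(M^{q,s})$ bound.

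The fix is easy and is precisely the case split the paper carries out. Since $u(t-h+\sigma)=S_{\mu,V^1}(t-h+\sigma)u_0$ and $\frac{s}{p_1'}<\frac{s}{q}\le\frac{\ell}{p}$, the smoothing estimate (\ref{eq:smoothing_one_potential}) gives $u(t-h+\sigma)\in M^{p_1',s}(\R^N)$ with a bound uniform in $\sigma\in[0,h]$. Now H\"older applies ($\frac1{p_1'}+\frac1{p_1}=1$), so $V^1u(t-h+\sigma)\in M^{1,\nu}$ with $\nu=\frac{s}{p_1'}+\frac{\ell_1}{p_1}$, and $S_\mu(h-\sigma)\colon M^{1,\nu}\to M^{q,s}$ has singularity exponent $\frac1{2m\mu}(\nu-\frac sq)=\frac1{2m\mu}(\frac{s}{p_1'}+\frac{\ell_1}{p_1}-\frac sq)\le\kappa_1<1$. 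The integral is then $O(h^{1-\kappa_1})$ in $M^{q,s}$ as before. With this amendment your argument goes through; the paper handles the two ranges $q\ge p_1'$ and $1<q<p_1'$ by the same device (its Cases~1 and~2).
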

\begin{proof}
For $u_{0}$ as in the statement and $t>0$ fixed, from
(\ref{eq:smoothing_one_potential}) we have  that for all $1\leq \tilde{p}\leq \infty$ and
$0< \tilde{\ell}\leq \ell$ such that
$\frac{\tilde{\ell}}{\tilde{p}}\leq \frac{\ell}{p}$ we have
\begin{displaymath}
v_0=S_{\mu,V^{1}}(t)u_0\in M^{\tilde{p} ,\tilde{\ell}}(\R^N) \cap
M^{p,\ell}(\R^N),
\end{displaymath}
and  from  (\ref{eq:VCF_one_potential}), $v(t) = S_{\mu,V^{1}}(t)v_0 = S_{\mu,V^{1}}(2t)u_0$
satisfies
\begin{displaymath}
   v(t) = S_{\mu}(t) v_{0} + \int_{0}^{t}  S_{\mu}(t-\tau) V^{1}
   v(\tau) \, d\tau, 
\end{displaymath}
and
\begin{displaymath}
\|v(\tau) \|_{M^{\tilde{p} ,\tilde{\ell}} (\R^N)}
\leq  M(t) \|v_0\|_{ M^{\tilde{p} ,\tilde{\ell}}(\R^N)}, \quad 0\leq
\tau \leq t 
\end{displaymath}
with $M(t) = Ce^{|a|t}$. 

Assume $\tilde{p}\geq p_{1}'$ and define then  $1\leq z\leq \infty$ and $0\leq \nu \leq \ell_{1}$ by
\begin{displaymath}
\frac{1}{z}= \frac{1}{\tilde{p}}+\frac{1}{p_1}, \ \qquad \ \frac{\nu}{z}= \frac{\tilde{\ell}}{\tilde{p}}+\frac{\ell_1}{p_1}.
\end{displaymath}

Then,  multiplication properties in Morrey spaces, see \cite[Lemma
5.1]{C-RB-morrey_linear_perturbation},
 give
\begin{displaymath}
\begin{split}
\|V^{1} v(\tau) \|_{M^{z,\nu}(\R^N)}
&\leq \|V^{1}
\|_{M^{p_1,\ell_1}(\R^N)}\|v(\tau) \|_{M^{\tilde{p} ,\tilde{\ell}}(\R^N)}
\\
&
\leq C M(t) \|V^{1} \|_{M^{p_1,\ell_1}(\R^N)}  \|v_0\|_{
  M^{\tilde{p} ,\tilde{\ell}} (\R^N)}, \quad 0 <  \tau \leq t.
\end{split}
\end{displaymath}
On the other hand, when $s\leq \nu$ and $\frac{s}{q}\leq\frac{\nu}{z}$ from  (\ref{eq:estimates_Mpl-Mqs}) we get
\begin{displaymath}
    \|S_{\mu}(t-\tau)\|_{{\mathcal L}(M^{z,\nu}(\R^N), M^{q,s}
      (\R^N))} = \frac{c}{(t-
      \tau)^{\frac{1}{2m\mu}(\frac{\nu}{z}-\frac{s}{q})}}
=  \frac{c}{(t-
  \tau)^{\frac{1}{2m\mu}(\frac{\tilde{\ell}}{\tilde{p}}+\frac{\ell_1}{p_1}-\frac{s}{q})}}
, \ 0 < \tau<t.
\end{displaymath}

Then if we can take $\tilde{p}$, $\tilde{\ell}$ such that  $p_{1}' \leq \tilde{p}\leq \infty$ and
$0< \tilde{\ell}\leq \ell$,  $\frac{\tilde{\ell}}{\tilde{p}}\leq
\frac{\ell}{p}$ such that  $s\leq \nu$ and
$\frac{s}{q}\leq\frac{\nu}{z}$ and
additionally
\begin{displaymath}
  {\frac{1}{2m\mu}(\frac{\tilde{\ell}}{\tilde{p}} +\frac{\ell_1}{p_1}-\frac{s}{q})}
  <1
\end{displaymath}
then we would get
\begin{displaymath}
    \int_0^t  \|S_{\mu}(t-\tau) V^{1} v(\tau)\|_{M^{q,s}(\R^N)} \, d\tau
 <\infty.
\end{displaymath}

Since, from Lemma \ref{lem:smoothing-to-dotted-spaces}
\begin{displaymath}
  S_{\mu}(t) v_{0} \in \dot{M}^{q,s}(\R^N),   \qquad
  S_{\mu}(t-\tau) V^{1} v(\tau) \in \dot{M}^{q,s}(\R^N) , \quad
  \tau\in (0,t)
\end{displaymath}
which is a closed subspace of $M^{q,s}(\R^N)$,  then we would get
$v(t)= S_{\mu,V^{1}}(t)v_0 = S_{\mu,V^{1}}(2t)u_0 \in
\dot{M}^{q,s}(\R^N)$  and we get the result.

Now we show that we can take $\tilde{p}$, $\tilde{\ell}$ as above.

\noindent
\fbox{Case 1: $p_1' \leq q \leq \infty$}
In this case we take $\tilde{p}=q$ and $\tilde{\ell}=s$ and then
\begin{displaymath}
  \frac{1}{z}= \frac{1}{q}+\frac{1}{p_1}, \ \qquad \ \frac{\nu}{z}=
  \frac{s}{q}+\frac{\ell_1}{p_1} \geq  \frac{s}{q}
\end{displaymath}
and $ \frac{s}{z}= \frac{s}{q}+\frac{s}{p_1}\leq
\frac{s}{q}+\frac{\ell_{1}}{p_1} =  \frac{\nu}{z}$, so $s\leq \nu$
and  all conditions are satisfied since $\frac{\ell_1}{2m\mu p_1}  <1$
because $V^{1}$ is an admissible potential.

\noindent
\fbox{Case 2: $1<q<p_1'$}
In this case we take $\tilde{p}=p_{1}'$ and $\tilde{\ell}=s$ and then
\begin{displaymath}
  \frac{1}{z}= \frac{1}{p_1'}+\frac{1}{p_1}=1,  \qquad
  \nu=\frac{\nu}{z}= \frac{s}{p_1'}+\frac{\ell_1}{p_1}
\end{displaymath}
so $\frac{s}{q}<s=\frac{s}{p_1'}+\frac{s}{p_1}\leq
\frac{s}{p_1'}+\frac{\ell_{1}}{p_1} =\frac{\nu}{z}=\nu$,
\begin{displaymath}
\frac{\tilde{\ell}}{\tilde{p}}=   \frac{s}{p_1'}< \frac{s}{q}\leq \frac{\ell}{p},
\end{displaymath}
and
\begin{displaymath}
  {\frac{1}{2m\mu}(\frac{\tilde{\ell}}{\tilde{p}} +\frac{\ell_1}{p_1}-\frac{s}{q})}
 =    {\frac{1}{2m\mu}(\frac{s}{p_1'} +\frac{\ell_1}{p_1}-\frac{s}{q})}
 \leq \frac{\ell_1}{2m\mu p_1} <1
\end{displaymath}
since $V^{1}$ is an admissible potential.
\end{proof}

We now consider
\begin{equation}\label{eq:perturbed-problem}
  \begin{cases}
    u_{t} + A_{0}^{\mu} u = V^{0}(x)u + V^{1}(x)u , &  x\in \R^N , t>0,
    \\
    u(0,x)=u_0(x), &x\in \R^N
  \end{cases}
\end{equation}
with two admissible potentials
  \begin{equation}\label{eq:for-thm-with-2potentials}
V^{i}\in M^{p_i,\ell_i}(\R^N) \text{ with  } 1\leq p_i\leq \infty,
\quad    0<\ell_{0} \leq \ell_{1}\leq N \ \text{ and  }
 \kappa_i=\frac{\ell_{i}}{2m\mu p_{i}}<1 , \quad i=0,1 .
\end{equation}

From  \cite[Theorem 7.5]{C-RB-morrey_linear_perturbation} we have that
for   $1\leq p \leq \infty$ and $0<\ell \leq \ell_0$,  problem
(\ref{eq:perturbed-problem}) defines a semigroup of solutions 
$\{S_{\mu,\{V^{0}, V^{1}\}}(t)\}_{t\geq0}$  in  $M^{p,\ell}(\R^N)$ such
that for $u_0\in M^{p,\ell}(\R^N)$,  $u(t) \mydef S_{\mu,
  \{V^{0},V^{1}\}}(t)u_0$ satisfies
  \begin{equation} \label{eq:VCF_two_potentials}
u(t) = S_{\mu}(t) u_{0} + \int_{0}^{t}  S_{\mu}(t-s) V^{0} u(s) \, ds + \int_{0}^{t}  S_{\mu}(t-s) V^{1} u(s) \, ds,  \quad  t>0,
  \end{equation}
 \begin{displaymath}
    \lim_{t\to 0^+} \| u(t)- S_{\mu}(t) u_0\|_{M^{p,\ell}(\R^N)}= 0 .
  \end{displaymath}
  Also, there are constants $C$, $\omega$ such that
  \begin{equation}\label{eq:Mp,ell-estimate-of-S{mu,V0,V1}(t)}
\|S_{\mu, \{V^{0},V^{1}\}}(t)\|_{\mathcal{L}(M^{p,\ell}(\R^N))}\leq C e^{\omega t}, \quad t\geq 0.
  \end{equation}
For     $1\leq q\leq \infty$ and $0< s\leq \ell$ satisfy
$\frac{s}{q}\leq \frac{\ell}{p}$ we have, for any $a> \omega$ and some constant $C$, 
\begin{equation}\label{eq:smoothing_two_potentials}
  \| S_{\mu, \{V^{0}, V^{1}\}}(t)\|_{\mathcal{L}(M^{p,\ell}(\R^N), M^{q,s}(\R^N))}
  \leq
  \frac{Ce^{at}}{t^{\frac{1}{2m\mu}(\frac{\ell}{p}-\frac{s}{q})}},
  \quad t>0,
\end{equation}
and
\begin{displaymath}
  (0,\infty)\times M^{p,\ell}(\R^N) \ni (t,u_0) \to S_{\mu, \{V^{0}, V^{1}\}}(t) u_0 \in M^{q,s}(\R^{N})
  \ \text{ is continuous}.
\end{displaymath}
For $p=1$ this remains true if we replace $M^{1,\ell}(\R^N)$ by the
space of Morrey  measures $\mathcal{M}^{\ell}(\R^N)$.

For $1\leq p\leq \infty$ and $0<\ell\leq \ell_{0}$, by
(\ref{eq:behavior-at-0-of-S{mu}(t)-for-dotted}), if
$u_{0}\in \dot {M}^{p,\ell}(\R^N)$ then
$u(t) = S_{\mu, \{V^{0},V^{1}\}}(t)u_0\to u_{0}$ in $M^{p,\ell}(\R^N)$
as $t\to 0^{+}$.

Moreover, if $p\geq \max\{p_0',p_1'\}$ then (\ref{eq:Mp,ell-estimate-of-S{mu,V0,V1}(t)}) holds with $\omega$ satisfying
  \begin{equation} \label{eq:general_growth_bound}
    \omega =
    c\big(
    \| V^{0}\|_{M^{p_0,\ell_0}(\R^N)}^{\frac{1}{1-\kappa_{0}}}
    +
    \| V^{1}\|_{M^{p_1,\ell_1}(\R^N)}^{\frac{1}{1-\kappa_{1}}}
    \big)
  \end{equation}
for some positive constant $c=c(p,\ell)$.
  Moreover, if $p\geq \max\{p_0',p_1'\}$, or
  $q\geq \max\{p_0',p_1'\}$, then (\ref{eq:smoothing_two_potentials})
  holds with $a> \omega$ as in (\ref{eq:general_growth_bound}).

\begin{remark}
  \label{rem:adding_potentials}

  \begin{enumerate}
 \item If one potential is bounded, $V \in L^\infty(\R^N)$,
    because $M^{\infty,\ell}(\R^N)=L^\infty(\R^N)$ for $\ell\in[0,N]$,
    we have that $V$ is an admissible potential as in
    (\ref{eq:admisible_potential}) with $\tilde p = \infty$ and we
    can take any $\tilde \ell \in[0,N]$.

    In particular, if we have an admissible potential $V^{1}$ such
    that $\kappa_1=\frac{\ell_1}{2m\mu p_1}<1$ and
    $V^{0} \in L^\infty(\R^N)$ then
    (\ref{eq:for-thm-with-2potentials}) holds with $p_0=\infty$ and
    $\ell_0=\ell_1$.

  \item
    If the potentials are admissible and $p_{0}=p_{1}$ and
    $\ell_{0} = \ell_{1}$ then for $u_0\in M^{p,\ell}(\R^N)$ with
    $1\leq p \leq\infty$, $0<\ell\leq\ell_0$ we have
    \begin{displaymath}
      S_{\mu,\{V^{0}, V^{1}\}}(t)u_0= S_{\mu,  V^{0} + V^{1}}(t)u_0, \qquad t>0 .
    \end{displaymath}
    This follows from (\ref{eq:VCF_two_potentials}).
  \end{enumerate}
\end{remark}

As shown in \cite{C-RB-morrey_linear_perturbation}, the semigroup
$\{S_{\mu,\{V^{0}, V^{1}\}}(t)\}_{t\geq0}$  can be constructed as
different successive perturbations,  first of $\{S_{\mu}(t)\}_{t\geq0}$
and then of $\{S_{\mu,V^{1} }(t)\}_{t\geq0}$.  It was actually proved
in \cite[Theorem 6.8]{C-RB-morrey_linear_perturbation} that the
resulting solution family $\{S_{\mu,\{V^{0}, V^{1}\}}(t)\}_{t\geq0}$
is independent of the order the successive perturbations are applied.
In this direction, the  following technical results
imply  that we can obtain different integral representations of
$\{S_{\mu,\{V^{0}, V^{1}\}}(t)\}_{t\geq0}$  besides
(\ref{eq:VCF_two_potentials}). The proof is given in Appendix
\ref{app:technicalities}.

\begin{lemma} \label{lem:fixed-point-lem}
Assume $0<\mu \leq 1$ and admissible potentials as in
(\ref{eq:for-thm-with-2potentials}) and initial data $u_0\in
M^{p,\ell}(\R^N)$, $1\leq p \leq\infty$, $0<\ell\leq\ell_0$ or
$p=1$ and $u_{0}\in \mathcal{M}^{\ell}(\R^{N})$.

Then  for each  $T>0$, all of the following results hold.

\begin{enumerate}
\item
For $c\in \R$, the  function $e^{ct} S_{\mu,\{V^{0}, V^{1}\}}(t)u_0$
on $(0,T]$, coincides with the fixed point of the map
\begin{displaymath}
\begin{split}
\varphi\mapsto  {\mathcal F} (\varphi,u_0) (t)
= S_{\mu}(t) u_0 +  \int_0^t S_{\mu}(t-\tau)  V^{0} \varphi(\tau) \,  d\tau &+  \int_0^t S_{\mu}(t-\tau)  V^{1} \varphi(\tau)
\\
&
+ \int_0^t S_{\mu}(t-\tau)  c \varphi(\tau), \quad t\in(0,T],
\end{split}
\end{displaymath}
which is  a strict contraction in the set in
$\mathcal{K}_{T,K_0,\theta}^{p,\ell,w,\kappa}$ defined as
\begin{equation}\label{eq:space-for-fixed-point}
\mathcal{K}_{T,K_0,\theta}^{p,\ell,w,\kappa}=\{\psi\in L^{\infty}_{loc}((0,T], M^{w,\kappa}(\R^N)):
\sup_{t\in(0,T]} t^{\frac{1}{2m\mu}(\frac{\ell}{p}-\frac{\kappa}{w})}
e^{-\theta t} \|\psi(t) \|_{M^{w,\kappa}(\R^N)}\leq K_0  \}
\end{equation}
 with some $1\leq w \leq\infty$,
$0<\kappa\leq\ell$ satisfying $\frac{\kappa}{w}\leq \frac{\ell}{p}$
and all large enough $K_0,\theta>0$.
Furthermore, $w,\kappa$ can be chosen as
\begin{equation}
  \label{eq:omega-and-kappa}
  (w,\kappa)=
  \begin{cases} (p,\ell) & \text{ if }  \frac{1}{p_1}\leq \theta \\
    (\frac{1}{\theta}, \ell \theta) & \text{ if } \frac{1}{p_1}>
    \theta
    \end{cases} \quad \text{ for } \ \theta:=\min\big\{\frac{1}{p_0'},
    \frac{1}{p_1'}\big\},
\end{equation}
and so $w,\kappa$ can be chosen depending only on $p,\ell,p_0,p_1$.

\item
The  function $S_{\mu,\{V^{0}, V^{1}\}}(t)u_0$
on $(0,T]$,
coincides with the fixed point of the map
\begin{displaymath}
\varphi\mapsto  {\mathcal F} (\varphi,u_0) (t) = S_{\mu,V^{1}}(t)
u_0 +  \int_0^t S_{\mu,V^{1}}(t-\tau)  V^{0} \varphi(\tau) \,  d\tau,
\quad t\in(0,T],
\end{displaymath}
which is a strict contraction in the set
$\mathcal{K}_{T,K_0,\theta}^{p,\ell,w,\kappa}$  in part (i).

\item
If $V^{0}\in L^\infty(\R^N)$  then we take $\ell_{0}=\ell_{1}$ and the function
$S_{\mu,\{V^{0}, V^{1}\}}(t)u_0$ on $(0,T]$ coincides with the fixed point of the map
\begin{displaymath}
\varphi\mapsto  {\mathcal F} (\varphi,u_0) (t)= S_{\mu,V^{0}}(t) u_0 +  \int_0^t
S_{\mu,V^{0}}(t-\tau)  V^{1} \varphi(\tau) \,  d\tau, \quad t\in(0,T],
\end{displaymath}
which is a strict contraction in the set
$\mathcal{K}_{T,K_0,\theta}^{p,\ell,w,\kappa}$  in part (i).

\item
If $\tilde{V}^0\in M^{p_0,\ell_0}(\R^N)$ then the function
$S_{\mu,\{V^{0} + \tilde{V}^{0}, V^{1}\}}(t)u_0$ on $(0,T]$ coincides with the fixed point of the map
\begin{displaymath}
\varphi\mapsto  {\mathcal F} (\varphi,u_0) (t) = S_{\mu,\{V^{0},
  V^{1}\}}(t) u_0 +  \int_0^t S_{\mu,\{V^{0}, V^{1}\}}(t-\tau)
\tilde{V}^{0} \varphi(\tau) \,  d\tau, \ t\in(0,T],
\end{displaymath}
which is a strict contraction in the set
$\mathcal{K}_{T,K_0,\theta}^{p,\ell,w,\kappa}$  in part (i).

\item
    If $\tilde{V}^1\in M^{p_1,\ell_1}(\R^N)$ then the function
$S_{\mu,\{V^{0}, V^{1} + \tilde{V}^{1}\}}(t)u_0$ on $(0,T]$ coincides with the fixed point of the map
\begin{displaymath}
\varphi\mapsto  {\mathcal F} (\varphi,u_0) (t) = S_{\mu,\{V^{0},
  V^{1}\}}(t) u_0 +  \int_0^t S_{\mu,\{V^{0}, V^{1}\}}(t-\tau)
\tilde{V}^{1} \varphi(\tau) \,  d\tau, \ t\in(0,T],
\end{displaymath}
which is a strict contraction in the set
$\mathcal{K}_{T,K_0,\theta}^{p,\ell,w,\kappa}$  in part (i).

\end{enumerate}

\end{lemma}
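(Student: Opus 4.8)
The plan is to prove all five parts simultaneously by a single Banach fixed point argument on the weighted space $\mathcal{K}_{T,K_0,\theta}^{p,\ell,w,\kappa}$, since the five maps differ only in the base semigroup ($S_\mu$, $S_{\mu,V^1}$, $S_{\mu,V^0}$, or $S_{\mu,\{V^0,V^1\}}$) and in which potentials play the role of the perturbation. In every case $\mathcal{F}(\varphi,u_0)(t)$ splits as an inhomogeneous term $S_{\mathrm{base}}(t)u_0$ plus one or more integral terms $\int_0^t S_{\mathrm{base}}(t-\tau)\,W\varphi(\tau)\,d\tau$ with $W\in\{V^0,V^1,c,\tilde V^0,\tilde V^1\}$. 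First I would verify that the inhomogeneous term lies in $\mathcal{K}$: this is immediate from the smoothing estimates (\ref{eq:estimates_Mpl-Mqs}), (\ref{eq:smoothing_one_potential}) or (\ref{eq:smoothing_two_potentials}), which produce exactly the weight $t^{-\frac{1}{2m\mu}(\frac{\ell}{p}-\frac{\kappa}{w})}e^{at}$ appearing in (\ref{eq:space-for-fixed-point}). The choice of $(w,\kappa)$ in (\ref{eq:omega-and-kappa}) is made precisely so that $0<\kappa\leq\ell$, $\frac{\kappa}{w}\leq\frac{\ell}{p}$ and all the smoothing exponents that arise stay strictly below $1$.

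The heart of the matter is the estimate on each integral term. For a potential $W\in M^{p_i,\ell_i}(\R^N)$, the multiplication property in Morrey spaces \cite[Lemma 5.1]{C-RB-morrey_linear_perturbation} gives $\|W\varphi(\tau)\|_{M^{z,\nu}}\leq \|W\|_{M^{p_i,\ell_i}}\|\varphi(\tau)\|_{M^{w,\kappa}}$ with $\frac{1}{z}=\frac{1}{w}+\frac{1}{p_i}$ and $\frac{\nu}{z}=\frac{\kappa}{w}+\frac{\ell_i}{p_i}$, while applying $S_{\mathrm{base}}(t-\tau)$ back into $M^{w,\kappa}$ costs $(t-\tau)^{-\kappa_i}e^{a(t-\tau)}$ with $\kappa_i=\frac{\ell_i}{2m\mu p_i}<1$ (for the constant $c$ the exponent is $0$). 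Writing $\beta=\frac{1}{2m\mu}(\frac{\ell}{p}-\frac{\kappa}{w})$ for the weight exponent, the contribution of one such term to the weighted norm is bounded by a constant times the weighted norm of $\varphi$ times
\begin{displaymath}
t^{\beta}e^{-\theta t}\int_0^t \frac{e^{a(t-\tau)}}{(t-\tau)^{\kappa_i}}\,\tau^{-\beta}e^{\theta\tau}\,d\tau,
\end{displaymath}
which after the substitution $\sigma=t-\tau$ equals $t^{\beta}\int_0^t \sigma^{-\kappa_i}(t-\sigma)^{-\beta}e^{(a-\theta)\sigma}\,d\sigma$. Splitting at $\sigma=t/2$ and using $\kappa_i<1$, $\beta<1$, one checks that the piece over $(0,t/2)$ is dominated by $2^{\beta}\int_0^\infty\sigma^{-\kappa_i}e^{(a-\theta)\sigma}\,d\sigma=2^{\beta}\Gamma(1-\kappa_i)(\theta-a)^{\kappa_i-1}$, and the piece over $(t/2,t)$ by a constant multiple of $t^{1-\kappa_i}e^{(a-\theta)t/2}$; both tend to $0$ as $\theta\to\infty$ (once $\theta>a$). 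This single computation simultaneously yields self-mapping for $K_0$ large and strict contraction for $\theta$ large, for each of the five maps.

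With the contraction in hand, the Banach fixed point theorem produces a unique fixed point of $\mathcal{F}$ in $\mathcal{K}_{T,K_0,\theta}^{p,\ell,w,\kappa}$, and it remains to identify it with the stated object. For part (ii) this is exactly the assertion that $S_{\mu,\{V^0,V^1\}}$ is obtained by perturbing $S_{\mu,V^1}$ by $V^0$, which follows from (\ref{eq:VCF_two_potentials}) together with the order-independence of successive perturbations \cite[Theorem 6.8]{C-RB-morrey_linear_perturbation}; parts (iii), (iv) and (v) are the same statement with the potentials permuted or with one extra potential added. For part (i), I would note that $\tilde S(t):=e^{ct}S_\mu(t)$ is the semigroup generated by $c-A_0^\mu$ and satisfies its own Duhamel identity against $S_\mu$; setting $w(t)=e^{ct}S_{\mu,\{V^0,V^1\}}(t)u_0$ and multiplying (\ref{eq:VCF_two_potentials}) by $e^{ct}$ exhibits $w$ as the variation-of-constants solution with potential $V^0+V^1+c$, i.e. the fixed point equation of part (i); membership $w\in\mathcal{K}$ follows from (\ref{eq:smoothing_two_potentials}). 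By uniqueness of the fixed point the identification follows in every case.

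The main obstacle is twofold. Analytically, the delicate point is that the split-integral bound must yield a contraction constant that is independent of $t\in(0,T]$ and vanishes as $\theta\to\infty$ uniformly across all five base semigroups; in particular the exponential factor $e^{a(t-\tau)}$ coming from the perturbed smoothing estimates (\ref{eq:smoothing_one_potential}), (\ref{eq:smoothing_two_potentials}) must be absorbed into the weight $e^{\theta t}$, which forces $\theta$ to be taken larger than the relevant growth rate $a$. Structurally, the subtler point is the identification in part (i): naively multiplying (\ref{eq:VCF_two_potentials}) by $e^{ct}$ produces kernels $e^{c(t-s)}S_\mu(t-s)$ rather than $S_\mu(t-s)$, so one must genuinely invoke the Duhamel relation for $\tilde S(t)=e^{ct}S_\mu(t)$ (equivalently, treat the constant $c$ as an admissible bounded potential and appeal again to the order-independence result) to rewrite the equation in the required form.
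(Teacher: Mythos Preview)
Your proposal is correct. The main difference from the paper's proof (Appendix~\ref{app:technicalities}) is one of packaging rather than substance: the paper does not re-derive the contraction property at all but simply identifies $\mathcal{K}_{T,K_0,\theta}^{p,\ell,w,\kappa}$ with the abstract fixed-point set from \cite[proof of Theorem~6.5]{C-RB-morrey_linear_perturbation} and cites that result for existence and uniqueness, whereas you reprove the contraction from scratch via the explicit weighted-norm integral estimate (splitting at $t/2$ and letting $(\theta-a)^{\kappa_i-1}\to 0$). For the identification of the fixed point with the stated semigroup expression the two approaches coincide: both rely on the order-independence of successive perturbations \cite[Theorem~6.8]{C-RB-morrey_linear_perturbation} for parts (ii)--(v), and for part (i) both treat the constant $c$ as an additional admissible bounded potential and reduce to that same order-independence (the paper invokes \cite[Corollary~6.14]{C-RB-morrey_linear_perturbation} for the final equality $S_{\mu,\{V^0,V^1,c\}}(t)u_0=e^{ct}S_{\mu,\{V^0,V^1\}}(t)u_0$, which is exactly what your ``equivalently'' clause amounts to). Your route is more self-contained and makes explicit why large $\theta$ forces a contraction uniformly across the five base semigroups; the paper's route is shorter by delegating to the already-built abstract machinery.
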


We now use Lemma \ref{lem:fixed-point-lem} to prove the following useful fact about the perturbed semigroup.

\begin{proposition}
  \label{add_exponential_to_semigroup}
  Let $0<\mu \leq 1$ and $V^{1}\in M^{p_1,\ell_1}(\R^N)$ is an
  admissible potential as in (\ref{eq:admisible_potential}).

If $V^{0}\in L^\infty(\R^N)$ and $c\in\R$ then for any $u_0\in
M^{p,\ell}(\R^N)$ with $1\leq p \leq\infty$, $0<\ell\leq\ell_1$ or
$p=1$ and $u_{0}\in \mathcal{M}^{\ell}(\R^{N})$ 
\begin{displaymath}
S_{\mu,\{V^{0} + c, V^{1}\}}(t)u_0 = e^{ct} S_{\mu,\{V^{0}, V^{1}\}}(t)u_0 , \qquad t>0 .
\end{displaymath}

\end{proposition}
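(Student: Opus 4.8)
The plan is to recognize both $S_{\mu,\{V^{0}+c, V^{1}\}}(t)u_0$ and $e^{ct}S_{\mu,\{V^{0}, V^{1}\}}(t)u_0$ as fixed points of one and the same strict contraction, namely the map $\mathcal{F}(\cdot,u_0)$ from Lemma \ref{lem:fixed-point-lem}(i), and then to conclude by uniqueness of the fixed point.

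First I would note that, since $V^{0}\in L^\infty(\R^N)$ and $c$ is a constant, $V^{0}+c\in L^\infty(\R^N)$ is again a bounded, hence admissible, potential; by the first part of Remark \ref{rem:adding_potentials} we may take $p_0=\infty$ and $\ell_0=\ell_1$. Thus the semigroup $\{S_{\mu,\{V^{0}+c, V^{1}\}}(t)\}_{t\geq0}$ is well defined on $M^{p,\ell}(\R^N)$ for $0<\ell\leq\ell_1$ (and on $\mathcal{M}^{\ell}(\R^N)$ when $p=1$), and the function $u(t):=S_{\mu,\{V^{0}+c, V^{1}\}}(t)u_0$ obeys the variation of constants formula (\ref{eq:VCF_two_potentials}) with the two potentials $V^{0}+c$ and $V^{1}$. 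Splitting the term carrying $V^{0}+c$ as
\begin{displaymath}
\int_0^t S_{\mu}(t-s)(V^{0}+c)u(s)\,ds = \int_0^t S_{\mu}(t-s)V^{0}u(s)\,ds + \int_0^t S_{\mu}(t-s)\,c\,u(s)\,ds
\end{displaymath}
shows that $u$ satisfies exactly the fixed point equation $u=\mathcal{F}(u,u_0)$ of Lemma \ref{lem:fixed-point-lem}(i).

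Next I would check that $u$ belongs to the space $\mathcal{K}_{T,K_0,\theta}^{p,\ell,w,\kappa}$ on which $\mathcal{F}(\cdot,u_0)$ is a strict contraction, with $(w,\kappa)$ chosen as in (\ref{eq:omega-and-kappa}). This follows from the smoothing estimate (\ref{eq:smoothing_two_potentials}) for $S_{\mu,\{V^{0}+c, V^{1}\}}$, which gives the bound $t^{\frac{1}{2m\mu}(\frac{\ell}{p}-\frac{\kappa}{w})}\|u(t)\|_{M^{w,\kappa}(\R^N)}\leq Ce^{at}$ on $(0,T]$; choosing $\theta$ and $K_0$ large enough, the weight $e^{-\theta t}$ absorbs the factor $e^{at}$ and places $u$ in $\mathcal{K}_{T,K_0,\theta}^{p,\ell,w,\kappa}$.

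Since Lemma \ref{lem:fixed-point-lem}(i) asserts that $e^{ct}S_{\mu,\{V^{0}, V^{1}\}}(t)u_0$ is likewise the fixed point of $\mathcal{F}(\cdot,u_0)$ in this same space, and a strict contraction has at most one fixed point, the two functions must agree on $(0,T]$. As $T>0$ is arbitrary, the claimed identity holds for all $t>0$. The only real work is the verification that $u$ lies in the contraction set $\mathcal{K}_{T,K_0,\theta}^{p,\ell,w,\kappa}$; everything else is a direct comparison of integral equations, so I expect no serious obstacle beyond bookkeeping the admissibility constants.
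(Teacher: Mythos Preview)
Your proof is correct and follows essentially the same strategy as the paper—fixed-point uniqueness in the space $\mathcal{K}_{T,K_0,\theta}^{p,\ell,w,\kappa}$ via Lemma \ref{lem:fixed-point-lem}. The only difference is which part of the lemma is invoked: you use part~(i), which already identifies $e^{ct}S_{\mu,\{V^{0},V^{1}\}}(t)u_0$ as the fixed point of the map built from $S_\mu$ with the three perturbations $V^0$, $V^1$, $c$, and then you check that $S_{\mu,\{V^{0}+c,V^{1}\}}(t)u_0$ solves the same equation by splitting the $(V^0+c)$-integral; the paper instead uses part~(iv), which identifies $S_{\mu,\{V^{0}+c,V^{1}\}}(t)u_0$ as the fixed point of the map built from $S_{\mu,\{V^{0},V^{1}\}}$ perturbed by $c$, and then verifies by the explicit computation $\int_0^t c e^{c\tau}\,d\tau = e^{ct}-1$ together with the semigroup property that $e^{ct}S_{\mu,\{V^{0},V^{1}\}}(t)u_0$ is a fixed point. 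Both routes land in the same contraction set and conclude by uniqueness; your version avoids the small calculation but otherwise the arguments are interchangeable.
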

\begin{proof}
Given  $T>0$,  using part (i) in Remark \ref{rem:adding_potentials}  we take $p_{0}= \infty$ and  $\ell_0=\ell_1$  and using  part (iv) in Lemma
\ref{lem:fixed-point-lem}  we  see that $S_{\mu,\{V^{0} + c, V^{1}\}}(t)u_0$ in $(0,T]$ coincides
with the fixed point of $\varphi\mapsto  S_{\mu,\{V^{0}, V^{1}\}}(t)
u_0 +  \int_0^t S_{\mu,\{V^{0}, V^{1}\}}(t-\tau)  c \varphi(\tau) \,
d\tau$ in $\mathcal{K}_{T,K_0,\theta}^{p,\ell,w,\kappa}$.

On the other hand,  by the  smoothing estimate
(\ref{eq:smoothing_two_potentials}), the function $v(t)= e^{ct}
S_{\mu,\{V^{0}, V^{1}\}}(t)u_0$ in $(0,T]$ belongs to
$\mathcal{K}_{T,K_0,\theta}^{p,\ell,w,\kappa}$ and
\begin{displaymath}
S_{\mu,\{V^{0}, V^{1}\}}(t) u_0 +  \int_0^t S_{\mu,\{V^{0},
  V^{1}\}}(t-\tau)  c v(\tau) \,
d\tau  = S_{\mu,\{V^{0}, V^{1}\}}(t) u_0 +  \big( \int_0^t c e^{c\tau}
\, d\tau  \big) S_{\mu,\{V^{0}, V^{1}\}}(t )u_0 = v(t)
\end{displaymath}
and  we get the result.
\end{proof}

We next show that,  as in the case of a  single potential  in Lemma
\ref{lem:perturbed-smoothing-to-dotted-spaces}, the semigroup
$\{S_{\mu,\{V^{0}, V^{1}\}}(t)\}_{t\geq0}$ also smooths solutions into
 dotted Morrey spaces.

\begin{lemma}

Assume $0<\mu \leq 1$, and admissible potentials as in (\ref{eq:for-thm-with-2potentials}).
Assume also that $u_0\in M^{p,\ell}(\R^N)$ for some $1\leq p \leq
\infty$ and $0<\ell\leq \ell_0$ or
$p=1$ and $u_{0}\in \mathcal{M}^{\ell}(\R^{N})$.

If $1<q \leq \infty$ and $0<s\leq \ell$ satisfy
$\frac{s}{q}\leq \frac{\ell}{p}$ then
\begin{displaymath}
S_{\mu,\{V^{0},V^{1} \}}(t) u_0\in \dot{M}^{q,s}(\R^N), \quad t>0.
\end{displaymath}

In particular, if $1<p\leq \infty$,  $\{S_{\mu,,\{V^{0},V^{1} \}}(t)\}_{t\geq0}$ is a strongly continuous
semigroup in $\dot{M}^{p,\ell}(\R^N)$.

\end{lemma}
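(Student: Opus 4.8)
The plan is to mirror the proof of Lemma \ref{lem:perturbed-smoothing-to-dotted-spaces}, replacing the role of the free semigroup $\{S_\mu(t)\}_{t\geq0}$ by the one-potential semigroup $\{S_{\mu,V^1}(t)\}_{t\geq0}$. The crucial observation is that, by part (ii) of Lemma \ref{lem:fixed-point-lem}, the two-potential semigroup is obtained as a single admissible perturbation of $\{S_{\mu,V^1}(t)\}_{t\geq0}$ by the potential $V^0$; that is, $u(t)=S_{\mu,\{V^0,V^1\}}(t)u_0$ satisfies the variation of constants formula $u(t)=S_{\mu,V^1}(t)u_0+\int_0^t S_{\mu,V^1}(t-\tau)V^0 u(\tau)\,d\tau$. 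Since $\{S_{\mu,V^1}(t)\}_{t\geq0}$ already enjoys the smoothing estimate (\ref{eq:smoothing_one_potential}) and smooths into dotted spaces by Lemma \ref{lem:perturbed-smoothing-to-dotted-spaces}, the argument of that lemma applies essentially verbatim, with the role there of (\ref{eq:estimates_Mpl-Mqs}) and Lemma \ref{lem:smoothing-to-dotted-spaces} now played by (\ref{eq:smoothing_one_potential}) and Lemma \ref{lem:perturbed-smoothing-to-dotted-spaces}, and with $V^1$ replaced by the perturbing potential $V^0$.

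Concretely, I would first smooth the data: fix $t>0$ and set $v_0=S_{\mu,\{V^0,V^1\}}(t)u_0$, which by (\ref{eq:smoothing_two_potentials}) lies in $M^{q,s}(\R^N)$ and also in an auxiliary space $M^{\tilde p,\tilde\ell}(\R^N)$ to be chosen; then $v(\tau):=S_{\mu,\{V^0,V^1\}}(\tau)v_0$ stays bounded in $M^{\tilde p,\tilde\ell}(\R^N)$ for $\tau\in[0,t]$ by (\ref{eq:Mp,ell-estimate-of-S{mu,V0,V1}(t)}), avoiding any singularity at $\tau=0$. Applying Lemma \ref{lem:fixed-point-lem}(ii) to $v_0$ gives $v(t)=S_{\mu,V^1}(t)v_0+\int_0^t S_{\mu,V^1}(t-\tau)V^0 v(\tau)\,d\tau$. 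The free term $S_{\mu,V^1}(t)v_0$ lies in $\dot M^{q,s}(\R^N)$ directly from Lemma \ref{lem:perturbed-smoothing-to-dotted-spaces} (source $M^{q,s}$, target $M^{q,s}$). For the integral term, the multiplication estimate \cite[Lemma 5.1]{C-RB-morrey_linear_perturbation} gives $V^0 v(\tau)\in M^{z,\nu}(\R^N)$ with $\frac1z=\frac{1}{\tilde p}+\frac{1}{p_0}$ and $\frac{\nu}{z}=\frac{\tilde\ell}{\tilde p}+\frac{\ell_0}{p_0}$, and then $S_{\mu,V^1}(t-\tau)$ maps $M^{z,\nu}$ into $\dot M^{q,s}$ by Lemma \ref{lem:perturbed-smoothing-to-dotted-spaces}, with operator norm controlled by (\ref{eq:smoothing_one_potential}). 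Since $\dot M^{q,s}(\R^N)$ is closed in $M^{q,s}(\R^N)$, once the time integral converges we conclude $v(t)=S_{\mu,\{V^0,V^1\}}(2t)u_0\in\dot M^{q,s}$, and as $t>0$ is arbitrary this yields the claim (the case $p=1$, $u_0\in\mathcal{M}^{\ell}(\R^N)$ is identical, since already $v_0$ is an honest function in $M^{q,s}$).

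The only thing requiring care — and the main, though mild, obstacle — is choosing $\tilde p,\tilde\ell$ so that all the exponent constraints hold simultaneously: one needs $\tilde p\geq p_0'$ (so that $z\geq1$ in the multiplication estimate), $0<s\leq\nu$ and $\frac{s}{q}\leq\frac{\nu}{z}$ (so that $S_{\mu,V^1}(t-\tau)$ smooths $M^{z,\nu}$ into $\dot M^{q,s}$), $\nu\leq\ell_1$ (so that $M^{z,\nu}$ is an admissible domain for $S_{\mu,V^1}$), and above all $\frac{1}{2m\mu}\big(\frac{\nu}{z}-\frac{s}{q}\big)<1$ for time-integrability of the singular kernel. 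As in Lemma \ref{lem:perturbed-smoothing-to-dotted-spaces} this is handled by two cases: if $p_0'\leq q\leq\infty$ take $(\tilde p,\tilde\ell)=(q,s)$, and if $1<q<p_0'$ take $(\tilde p,\tilde\ell)=(p_0',s)$. In both cases $\frac{\nu}{z}-\frac{s}{q}\leq\frac{\ell_0}{p_0}$ and $\nu$ is a convex combination of $s$ and $\ell_0$, so the integrability exponent is bounded by $\kappa_0=\frac{\ell_0}{2m\mu p_0}<1$ by admissibility of $V^0$, and $\nu\leq\ell_0\leq\ell_1$; the remaining inequalities follow from $s\leq\ell\leq\ell_0$ and $q>1$. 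Finally, the \emph{in particular} assertion follows by taking $q=p$, $s=\ell$: this shows $S_{\mu,\{V^0,V^1\}}(t)$ leaves $\dot M^{p,\ell}(\R^N)$ invariant for $t>0$, which together with the convergence $S_{\mu,\{V^0,V^1\}}(t)u_0\to u_0$ in $M^{p,\ell}(\R^N)$ as $t\to0^+$ for $u_0\in\dot M^{p,\ell}(\R^N)$ (from (\ref{eq:behavior-at-0-of-S{mu}(t)-for-dotted})) and the semigroup property gives strong continuity on $\dot M^{p,\ell}(\R^N)$.
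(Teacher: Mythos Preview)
Your proposal is correct and follows essentially the same approach as the paper: use Lemma \ref{lem:fixed-point-lem}(ii) to write $S_{\mu,\{V^0,V^1\}}(2t)u_0$ via the variation of constants formula with $S_{\mu,V^1}$ as base semigroup and $V^0$ as perturbation, then repeat the argument of Lemma \ref{lem:perturbed-smoothing-to-dotted-spaces} with Lemma \ref{lem:perturbed-smoothing-to-dotted-spaces} itself replacing Lemma \ref{lem:smoothing-to-dotted-spaces}. Your additional care in checking $\nu\leq\ell_0\leq\ell_1$ (so that $M^{z,\nu}$ is in the domain of $S_{\mu,V^1}$) is a detail the paper leaves implicit under ``the obvious changes''.
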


\begin{proof}
Due to part (ii) of
Lemma \ref{lem:fixed-point-lem}
for each $t>0$, setting
$v_0=S _{\mu,\{V^{0},V^{1} \}}(t)u_0$, then $v(t) = S
_{\mu,\{V^{0},V^{1} \}}(t)v_0 = S _{\mu,\{V^{0},V^{1} \}}(2t)u_0$
satisfies
\begin{displaymath}
v(t)  = S_{\mu,V^{1}}(t)
v_0 +  \int_0^t S_{\mu,V^{1}}(t-\tau)  V^{0} v(\tau) \,  d\tau .
\end{displaymath}

Now we can repeat the proof in Lemma
\ref{lem:perturbed-smoothing-to-dotted-spaces} with the obvious
changes and using now that
\begin{displaymath}
  S_{\mu,V^{1}}(t) v_{0} \in \dot{M}^{q,s}(\R^N),   \qquad
  S_{\mu,V^{1}}(t-\tau) V^{0} v(\tau) \in \dot{M}^{q,s}(\R^N) , \quad \tau\in (0,t).
\end{displaymath}
\end{proof}

\section{Order preserving properties of the perturbed semigroup in
  Morrey spaces}
\label{sec:order-preservation}

In this section we study order preserving properties of the perturbed
semigroup. By this we mean that if the initial data satisfies
$u_{0}\geq 0$ then $S_{\mu,\{V^{0},  V^{1}\}}(t) u_{0} \geq 0$ for all
$t>0$. This is of course equivalent to the property of comparison of different
solutions with ordered initial data. Also, recall that any order
preserving semigroup satisfies 
  \begin{displaymath}
    |S(t) u_{0}| \leq S(t) |u_{0}| . 
  \end{displaymath}

So from now on we restrict ourselves to the case of second order
operators and, as observed in Section \ref{sec:homogeneous-lin-eq}, we
can assume without loss of generality that $A_{0}=-\Delta$. So all
results in Section \ref{sec:perturbed_equation_Morrey} will be used
below with $m=1$. 

First, we start with the unperturbed semigroup.

\begin{theorem}
\label{thr:Smu-order-preserving}

For $0< \mu \leq 1$, the semigroup $\{S_{\mu}(t)\}_{t\geq 0}$ in $M^{p,\ell}(\R^N)$ for $1\leq p \leq
\infty$, $0<\ell\leq N$, or in $\mathcal{M}^{\ell}(\R^{N})$, is order
  preserving.

  \end{theorem}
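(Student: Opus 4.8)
The plan is to reduce order preservation to the pointwise nonnegativity of the selfsimilar integral kernel. Since we work with $A_0=-\Delta$, the semigroup acts through (\ref{eq:Smu-kernel}), namely $S_\mu(t)u_0(x)=\int_{\R^N}k_\mu(t,x,y)u_0(y)\,dy$ for $u_0\in M^{p,\ell}(\R^N)$, and by the analogous formula $S_\mu(t)u_0(x)=\int_{\R^N}k_\mu(t,x,y)\,du_0(y)$ when $u_0\in\mathcal{M}^{\ell}(\R^N)$. By linearity, order preservation is equivalent to the statement that $u_0\geq 0$ (respectively, $u_0$ a positive Radon measure) forces $S_\mu(t)u_0\geq 0$ pointwise, so it suffices to show $k_\mu(t,x,y)\geq 0$ for all $t>0$ and $x,y\in\R^N$.

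For $\mu=1$ this is immediate from the explicit Gaussian expression (\ref{eq:kernel-heat-smgp}), which is manifestly nonnegative; hence $\{S_1(t)\}_{t\geq 0}$ is order preserving on every Morrey space and on $\mathcal{M}^{\ell}(\R^N)$. For $0<\mu<1$ I would argue through subordination rather than quoting the fractional profile directly. By (\ref{eq:S0alpha(t)-in-MU}) one has $S_\mu(t)=\int_0^\infty f_{t,\mu}(s)S_1(s)\,ds$ with $f_{t,\mu}(s)\geq 0$ for all $s\geq 0$. Given $u_0\geq 0$, the previous step yields $S_1(s)u_0\geq 0$ for every $s>0$, so the integrand $f_{t,\mu}(s)S_1(s)u_0$ is nonnegative, and integrating in $s$ preserves nonnegativity; thus $S_\mu(t)u_0\geq 0$. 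The same reasoning applies verbatim to a positive measure $u_0$. (Equivalently, inserting the kernel of $S_1(s)$ and invoking Tonelli exhibits $k_\mu(t,x,y)=\int_0^\infty f_{t,\mu}(s)k_1(s,x,y)\,ds\geq 0$, recovering the nonnegativity asserted in (\ref{eq:kernel-fractional-heat-smgp}).)

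The only point requiring mild care is that the subordination identity (\ref{eq:S0alpha(t)-in-MU}) holds as an equality of operators on the relevant Morrey space and on $\mathcal{M}^{\ell}(\R^N)$, and that the $s$-integral may be combined with the kernel representation; both are already guaranteed by the discussion surrounding (\ref{eq:S0alpha(t)-in-MU}), and since $f_{t,\mu}$ and $k_1$ are nonnegative, Tonelli justifies any exchange of integration order needed to identify the kernel. There is no genuine obstacle here: the entire content of the theorem is carried by the nonnegativity of the Gaussian kernel and of the subordination density $f_{t,\mu}$, both established earlier, and the order-preservation argument itself needs only that a nonnegatively weighted superposition of order-preserving operators is order preserving.
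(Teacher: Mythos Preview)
Your proof is correct and follows the same approach as the paper: the argument reduces to nonnegativity of the integral kernel $k_\mu$, and your subordination computation for $0<\mu<1$ is precisely the content of the Remark immediately following the theorem. The paper's own proof is a single line invoking (\ref{eq:kernel-heat-smgp}) and (\ref{eq:kernel-fractional-heat-smgp}) directly, so your version is simply a more detailed execution of the same idea.
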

  \begin{proof}
This is immediate since the kernels in
    (\ref{eq:kernel-heat-smgp}),
    (\ref{eq:kernel-fractional-heat-smgp}) are nonnegative.
      \end{proof}

  \begin{remark}
Observe that from (\ref{eq:S0alpha(t)-in-MU}) the kernels  in
    (\ref{eq:kernel-heat-smgp}),    (\ref{eq:kernel-fractional-heat-smgp}) are related
by
    \begin{displaymath}
      k_{\mu}(t,x,y)=   \int_{0}^{\infty} f_{t,\mu}(s)  k_{1}(s,x,y) \, ds
    \end{displaymath}
    see \cite[Corollary 4.6]{C-RB-scaling3-4}. With this we have that
    since $k_{1}\geq 0$ then $k_{\mu}\geq 0$ as well. 
  \end{remark}

For the perturbed semigroup, our  first result is about order preserving properties for
  perturbations with bounded potentials.

  \begin{proposition}
\label{prop:order_preserving_bounded_perturbation}

Consider  $0<\mu\leq 1$ and the semigroup  $\{S_{\mu}(t)\}_{t\geq 0}$ in $M^{p,\ell}(\R^N)$ for $1\leq p \leq
 \infty$, $0<\ell\leq N$ or in $\mathcal{M}^{\ell}(\R^{N})$.

 \begin{enumerate}
 \item
Assume  $V^{1} \in L^\infty(\R^N)\cap M^{p_1,\ell_1}(\R^N)$ is an admissible potential as in
(\ref{eq:admisible_potential}) with $m=1$. 

Then for $1\leq p \leq \infty$, $0<\ell\leq \ell_1$ the semigroup
$\{S_{\mu, V^{1}}(t)\}_{t\geq 0}$ in $M^{p,\ell}(\R^N)$  and  in $\mathcal{M}^{\ell}(\R^{N})$
is order preserving.

 \item
Assume  $V^{1} \in M^{p_1,\ell_1}(\R^N)$ is an admissible potential as in
(\ref{eq:admisible_potential}) with $m=1$, such that for $1\leq p \leq \infty$,
$0<\ell\leq \ell_1$ the semigroup $\{S_{\mu, V^{1}}(t)\}_{t\geq 0}$ in
$M^{p,\ell}(\R^N)$ or in  $\mathcal{M}^{\ell}(\R^{N})$
is order preserving.

Assume we have another admissible potential
  $V^{0} \in L^\infty(\R^N)\cap  M^{p_0,\ell_0}(\R^N)$ with $\ell_{0} \leq \ell_{1}$, so
  (\ref{eq:for-thm-with-2potentials}) is satisfied with $m=1$.

  Then for $1\leq p \leq \infty$, $0<\ell\leq \ell_0$ the semigroup
$\{S_{\mu, \{V^{0}, V^{1}\}} (t)\}_{t\geq 0}$ in $M^{p,\ell}(\R^N)$
or  in  $\mathcal{M}^{\ell}(\R^{N})$ is order preserving.

 \end{enumerate}
  \end{proposition}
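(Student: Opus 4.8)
The plan is to handle both parts by the same device: a nonnegativity-preserving fixed-point iteration. The only obstruction to running such an iteration directly is that the potentials may change sign, so in each part the first move is to add a large enough constant to the bounded potential, making it nonnegative at the cost of multiplying the semigroup by a positive exponential $e^{ct}$, which does not affect order. Nonnegativity is then read off from the fixed-point characterisations of Lemma \ref{lem:fixed-point-lem}, using that the base operator in each iteration ($\{S_\mu(t)\}$ in part (i), $\{S_{\mu,V^1}(t)\}$ in part (ii)) is order preserving.

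For part (i), I would set $c=\|V^1\|_{L^\infty(\R^N)}$, so that $V^1+c\geq 0$, and apply Lemma \ref{lem:fixed-point-lem}(i) with the choices $V^0=0$ and this constant $c$ (legitimate since $V^0=0$ is admissible, with $\ell_0=\ell_1$, and $S_{\mu,\{0,V^1\}}=S_{\mu,V^1}$ by comparing the variation-of-constants formulas (\ref{eq:VCF_two_potentials}) and (\ref{eq:VCF_one_potential})). The fixed point is then $e^{ct}S_{\mu,V^1}(t)u_0$, and the defining map collapses to $\varphi\mapsto S_\mu(t)u_0+\int_0^t S_\mu(t-\tau)(V^1+c)\varphi(\tau)\,d\tau$ on $(0,T]$. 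Assuming $u_0\geq 0$, this map leaves invariant the subset of $\mathcal{K}_{T,K_0,\theta}^{p,\ell,w,\kappa}$ consisting of functions that are a.e.\ nonnegative at each time: indeed $S_\mu(t)u_0\geq 0$ and, since $V^1+c\geq 0$ and $S_\mu(t-\tau)$ preserves order by Theorem \ref{thr:Smu-order-preserving}, the integral term is nonnegative whenever $\varphi$ is. As the unique fixed point of a contraction lies in every nonempty closed invariant subset, I conclude $e^{ct}S_{\mu,V^1}(t)u_0\geq 0$, hence $S_{\mu,V^1}(t)u_0\geq 0$ because $e^{-ct}>0$. The same argument covers $\mathcal{M}^{\ell}(\R^N)$ for $p=1$, all ingredients being available there.

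For part (ii), I would set $W=V^0+\|V^0\|_{L^\infty(\R^N)}\geq 0$, a bounded and hence admissible potential (with $\ell_0=\ell_1$ by Remark \ref{rem:adding_potentials}(i)). Proposition \ref{add_exponential_to_semigroup}, applied with the bounded $V^0$ and $c=\|V^0\|_{L^\infty(\R^N)}$, gives $S_{\mu,\{W,V^1\}}(t)u_0=e^{ct}S_{\mu,\{V^0,V^1\}}(t)u_0$, so it suffices to show that $\{S_{\mu,\{W,V^1\}}(t)\}$ is order preserving. Invoking Lemma \ref{lem:fixed-point-lem}(ii) for the pair $(W,V^1)$, the object $S_{\mu,\{W,V^1\}}(t)u_0$ is the fixed point of $\varphi\mapsto S_{\mu,V^1}(t)u_0+\int_0^t S_{\mu,V^1}(t-\tau)W\varphi(\tau)\,d\tau$. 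Since $\{S_{\mu,V^1}(t)\}$ is order preserving by hypothesis, $W\geq 0$, and $u_0\geq 0$, this map again preserves the cone of a.e.\ nonnegative elements of $\mathcal{K}_{T,K_0,\theta}^{p,\ell,w,\kappa}$, so its fixed point is nonnegative; dividing by $e^{ct}$ yields the claim for $0<\ell\leq\ell_0$, and likewise in $\mathcal{M}^{\ell}(\R^N)$.

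The hard part is the soft functional-analytic step that lets the iteration conclude: one must verify that the set of elements of $\mathcal{K}_{T,K_0,\theta}^{p,\ell,w,\kappa}$ taking a.e.-nonnegative values is nonempty, closed, and invariant under the relevant map, so that the contraction's unique fixed point must lie in it. Closedness is the delicate ingredient, since it requires passing from convergence in the weighted Morrey norm defining $\mathcal{K}_{T,K_0,\theta}^{p,\ell,w,\kappa}$ to a.e.\ convergence (along subsequences) at each fixed time, which preserves the sign; invariance in turn relies on multiplication by the nonnegative shifted potential and the action of an order-preserving solution operator keeping one inside the cone, including in the measure setting, where $S(t)u_0$ is already a genuine nonnegative function for $t>0$.
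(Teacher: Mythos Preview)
Your proposal is correct and, for part~(ii), essentially identical to the paper's argument: shift $V^0$ by $c=\|V^0\|_{L^\infty}$ via Proposition~\ref{add_exponential_to_semigroup}, then run the order-preserving iteration of Lemma~\ref{lem:fixed-point-lem}(ii) with the nonnegative multiplier $W=V^0+c$ and base $S_{\mu,V^1}$.

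For part~(i) you take a slightly shorter route than the paper. The paper first treats the case $V^1\geq 0$ by Picard iteration in Lemma~\ref{lem:fixed-point-lem}(i) (with $V^0=0$, $c=0$), obtaining that $\{S_{\mu,(V^1)^+}(t)\}$ is order preserving, and then bootstraps: it writes $V^1=(V^1)^+ - (V^1)^-$, perturbs $S_{\mu,(V^1)^+}$ by the bounded $W=-(V^1)^-$ via Lemma~\ref{lem:fixed-point-lem}(ii), handling the sign of $W$ through Proposition~\ref{add_exponential_to_semigroup}. You instead go directly to Lemma~\ref{lem:fixed-point-lem}(i) with $V^0=0$ and $c=\|V^1\|_{L^\infty}$, so that the single effective multiplier $V^1+c$ is nonnegative from the start. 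This is cleaner and avoids the intermediate semigroup $S_{\mu,(V^1)^+}$; the paper's detour buys nothing extra here, though its Step~2 argument does serve as a template reused verbatim in part~(ii). Your phrasing via ``the unique fixed point lies in every nonempty closed invariant subset'' and the paper's explicit Picard iterates are of course equivalent formulations of the same contraction principle, and your remark on closedness of the nonnegative cone (via $M^{w,\kappa}$-convergence $\Rightarrow$ $L^w_{\mathrm{loc}}$-convergence $\Rightarrow$ a.e.\ subsequential limits) is the right justification.
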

\begin{proof}
(i)   We consider all along the proof  initial data $u_{0}\in M^{p,\ell}(\R^N)$ for $1\leq p \leq \infty$, $0<\ell\leq
  \ell_1$ or $p=1$ and  $u_{0} \in \mathcal{M}^{\ell}(\R^{N})$. 

  \noindent
{\bf Step 1.} Assume first $V^{1}\geq 0$ and  we show that $\{S_{\mu, V^{1}}(t)\}_{t\geq 0}$ is
order preserving in $M^{p,\ell}(\R^N)$ or in
$\mathcal{M}^{\ell}(\R^{N})$. 

Indeed by part (i) in Lemma \ref{lem:fixed-point-lem} we have that for
$u_{0}\geq 0$ and $T>0$,  the sequence of Picard's
iterations
\begin{displaymath}
U_{n+1}(t)=   S_{\mu}(t)
u_0 +  \int_0^t S_{\mu}(t-\tau)  V^{1} U_{n}(\tau) \,  d\tau,
\qquad U_{1}(t) =  S_{\mu}(t) u_{0}, \qquad t\in(0,T], \ n\in \N,
\end{displaymath}
converge to  $u(t) = S_{\mu, V^{1}}(t) u_{0}$. Since
$u_{0}\geq 0$ then for all $n\in \N$ we get $U_{n}(t) \geq 0$ in
$[0,T]$ and then $u(t) = S_{\mu, V^{1}}(t) u_{0}\geq
0$.

  \noindent
  {\bf Step 2.}
Without any assumption on the sign of $V^{1}$, Step 1 above implies
$\{S_{\mu,  (V^{1})^{+}}(t)\}_{t\geq 0}$ is order preserving in
$M^{p,\ell}(\R^N)$ and in  $\mathcal{M}^{\ell}(\R^{N})$. 

  Assume now $W\! \in L^{\infty}(\R^{N})$. Then we show that
$\{S_{\mu,  \{W, (V^{1})^{+}\} }(t)\}_{t\geq 0}$ is order preserving
in $M^{p,\ell}(\R^N)$ and in  $\mathcal{M}^{\ell}(\R^{N})$. Using this, and in particular taking  $W=-(V^{1})^{-}\in L^\infty(\R^N)\cap
M^{p_1,\ell_1}(\R^N)$ we would conclude part (i) since  by Remark \ref{rem:adding_potentials}
we have
$\{S_{\mu,  \{W, (V^{1})^{+}\} }(t)\}_{t\geq 0} = \{S_{\mu,
  (V^{1})^{+} + W}(t)\}_{t\geq 0} = \{S_{\mu,  V^{1}}(t)\}_{t\geq 0} $  is order
preserving.

Assume first $W\geq 0$. Then by part (ii) in Lemma \ref{lem:fixed-point-lem} we have that for
$u_{0}\geq 0$ and $T>0$,  the sequence of Picard's
iterations
\begin{displaymath}
U_{n+1}(t)=   S_{\mu, (V^{1})^{+} }(t)
u_0 +  \int_0^t S_{\mu , (V^{1})^{+}}(t-\tau)  W U_{n}(\tau) \,  d\tau,
\quad U_{1}(t) =  S_{\mu, (V^{1})^{+} }(t) u_{0}, \quad t\in(0,T], \ n\in \N,
\end{displaymath}
converge to  $u(t) = S_{\mu,  \{W, (V^{1})^{+}\} }(t) u_{0}$. Since
$u_{0}\geq 0$ then for all $n\in \N$ we get $U_{n}(t) \geq 0$ in
$[0,T]$ and then $u(t) = S_{\mu,  \{W, (V^{1})^{+}\} }(t) u_{0}\geq
0$.

Without assumption on the sign of $W$, take $c\in \R$ such that $W+c
\geq 0$. Then by Proposition \ref{add_exponential_to_semigroup} we get
\begin{displaymath}
0\leq   S_{\mu,\{W + c, (V^{1})^{+} \}}(t)u_0 = e^{ct} S_{\mu,\{W, (V^{1})^{+} \}}(t)u_0 , \qquad t>0
\end{displaymath}
and then $u(t) = S_{\mu,  \{W, (V^{1})^{+}\} }(t) u_{0}\geq
0$.

\noindent (ii)
Now  we consider all along the proof  initial data $u_{0}\in
M^{p,\ell}(\R^N)$ for $1\leq p \leq \infty$, $0<\ell\leq   \ell_0$ or
$p=1$ and  $u_{0}\in \mathcal{M}^{\ell}(\R^{N})$, and
we follow closely the arguments in Step 2 above.

Assume first $V^{0}\geq 0$. Then by part (ii) in Lemma \ref{lem:fixed-point-lem} we have that for
$u_{0}\geq 0$ and $T>0$,  the sequence of Picard's
iterations
\begin{displaymath}
U_{n+1}(t)=   S_{\mu, V^{1} }(t)
u_0 +  \int_0^t S_{\mu , V^{1}}(t-\tau)  V^{0} U_{n}(\tau) \,  d\tau,
\quad U_{1}(t) =  S_{\mu, V^{1} }(t) u_{0}, \quad t\in(0,T], \ n\in \N,
\end{displaymath}
converge to  $u(t) = S_{\mu,  \{V^{0}, V^{1}\} }(t) u_{0}$. Since
$u_{0}\geq 0$ then for all $n\in \N$ we get $U_{n}(t) \geq 0$ in
$[0,T]$ and then $u(t) = S_{\mu,  \{V^{0}, V^{1}\} }(t) u_{0}\geq
0$.

Without assumption on the sign of $V^{0}$, take $c\in \R$ such that $V^{0}+c
\geq 0$. Then by Proposition \ref{add_exponential_to_semigroup} we get
\begin{displaymath}
  S_{\mu,\{V^{0} + c, V^{1}\}}(t)u_0 = e^{ct} S_{\mu,\{V^{0}, V^{1} \}}(t)u_0 , \qquad t>0
\end{displaymath}
and then $u(t) = S_{\mu,  \{V^{0}, V^{1}\} }(t) u_{0}\geq
0$.
\end{proof}

Now we prove our first main result in this section. Namely that a
perturbed semigroup with any admissible potential is order preserving.

\begin{theorem}
  \label{thm:order-pres-fo-general-Morrey-pot}

Consider  $0<\mu \leq 1$ and the semigroup  $\{S_{\mu}(t)\}_{t\geq 0}$ in $M^{p,\ell}(\R^N)$ for $1\leq p \leq
\infty$, $0<\ell\leq N$ or in  $\mathcal{M}^{\ell}(\R^{N})$ and $p=1$. 
Assume also  $V^{1} \in M^{p_1,\ell_1}(\R^N)$ is an admissible
potential as in (\ref{eq:admisible_potential}) with $m=1$. 

Then for $1\leq p \leq \infty$, $0<\ell\leq \ell_1$ the semigroup
$\{S_{\mu, V^{1}}(t)\}_{t\geq 0}$ in $M^{p,\ell}(\R^N)$ or in  $\mathcal{M}^{\ell}(\R^{N})$
is order preserving.

\end{theorem}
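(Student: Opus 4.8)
The plan is to reduce to $u_0\ge 0$ and prove $S_{\mu,V^1}(t)u_0\ge 0$ a.e., by first isolating the positive part of $V^1$ (where the argument of Proposition \ref{prop:order_preserving_bounded_perturbation} needs no boundedness) and then handling the possibly unbounded negative part by truncation followed by a pointwise limit. For the positive part, note that $(V^1)^+\le |V^1|$ pointwise, so $(V^1)^+$ is admissible in $M^{p_1,\ell_1}(\R^N)$, and the Picard scheme of Lemma \ref{lem:fixed-point-lem}(i) (with $V^0=0$, $c=0$) converges to $S_{\mu,(V^1)^+}(t)u_0$ in $\mathcal{K}_{T,K_0,\theta}^{p,\ell,w,\kappa}$. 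Exactly as in Step 1 of Proposition \ref{prop:order_preserving_bounded_perturbation}, boundedness is never used here: since $(V^1)^+\ge 0$, $S_\mu$ is order preserving (Theorem \ref{thr:Smu-order-preserving}) and $u_0\ge 0$, every iterate is nonnegative, whence $S_{\mu,(V^1)^+}(t)u_0\ge 0$. Expanding this same iteration in the free semigroup $S_\mu$ (whose kernel $k_\mu\ge 0$) with the nonnegative potential $(V^1)^+$ exhibits $S_{\mu,(V^1)^+}$ as an integral operator with a \emph{nonnegative} kernel, a fact I will use for monotone convergence below.

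Next I truncate the negative part: set $(V^1)^-_n=\min\{(V^1)^-,n\}\in L^\infty\cap M^{p_1,\ell_1}(\R^N)$ and $V^1_n=(V^1)^+-(V^1)^-_n$, so that $|V^1_n|\le|V^1|$ gives a uniform Morrey bound and $V^1_n\to V^1$. By Remark \ref{rem:adding_potentials}(ii), $S_{\mu,V^1_n}=S_{\mu,\{-(V^1)^-_n,(V^1)^+\}}$; since $-(V^1)^-_n$ is a bounded admissible potential and $S_{\mu,(V^1)^+}$ is order preserving, Proposition \ref{prop:order_preserving_bounded_perturbation}(ii) yields $S_{\mu,V^1_n}(t)u_0\ge 0$ for every $n$. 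Now I invoke Lemma \ref{lem:fixed-point-lem}(ii) with base potential $(V^1)^+$ and perturbation $-(V^1)^-$ (resp. $-(V^1)^-_n$): both $u(t)=S_{\mu,V^1}(t)u_0$ and $u_n(t)=S_{\mu,V^1_n}(t)u_0$ are the sums of their Neumann series
\[
u(t)=\sum_{k\ge 0}(-1)^k\Psi^{(k)}(t),\qquad u_n(t)=\sum_{k\ge 0}(-1)^k\Psi^{(k)}_n(t),
\]
where $\Psi^{(0)}=\Psi^{(0)}_n=S_{\mu,(V^1)^+}(t)u_0$ and $\Psi^{(k+1)}(t)=\int_0^t S_{\mu,(V^1)^+}(t-\tau)(V^1)^-\Psi^{(k)}(\tau)\,d\tau$, with $(V^1)^-$ replaced by $(V^1)^-_n$ in $\Psi^{(k)}_n$; all these functions are nonnegative because $(V^1)^-\ge 0$ and $S_{\mu,(V^1)^+}$ has a nonnegative kernel.

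The key finiteness fact is that $\sum_{k}\Psi^{(k)}(t)=S_{\mu,|V^1|}(t)u_0$, the Neumann series obtained by perturbing $S_{\mu,(V^1)^+}$ with the \emph{positive} potential $+(V^1)^-$ (again Lemma \ref{lem:fixed-point-lem}(ii), using $(V^1)^++(V^1)^-=|V^1|$). Since $|V^1|$ is admissible, $S_{\mu,|V^1|}(t)u_0\in M^{p,\ell}(\R^N)\subset L^p_{loc}(\R^N)$ is finite for a.e.\ $x$, and it dominates every truncation because $0\le\Psi^{(k)}_n\le\Psi^{(k)}$; this furnishes a single finite a.e.\ majorant $\sum_k\Psi^{(k)}(t,\cdot)$ for all the $u_n(t,\cdot)$.

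The main obstacle is precisely this limit passage, since Morrey-norm convergence of the truncations $(V^1)^-_n\to(V^1)^-$ is \emph{not} available in general; I circumvent it by arguing pointwise. Because $(V^1)^-_n\uparrow(V^1)^-$ and $S_{\mu,(V^1)^+}$ is order preserving with a nonnegative kernel, monotone convergence applied inductively in $k$ gives $\Psi^{(k)}_n\uparrow\Psi^{(k)}$ pointwise for each fixed $k$; hence $u_n^{(k)}(t,x)=(-1)^k\Psi^{(k)}_n(t,x)\to(-1)^k\Psi^{(k)}(t,x)=u^{(k)}(t,x)$. Dominated convergence for the series over $k$, with the finite a.e.\ majorant $\sum_k\Psi^{(k)}(t,x)=S_{\mu,|V^1|}(t)u_0(x)$, then yields $u_n(t,x)\to u(t,x)$ for a.e.\ $x$. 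As each $u_n(t,\cdot)\ge 0$, the limit satisfies $S_{\mu,V^1}(t)u_0=u(t,\cdot)\ge 0$ a.e., which is the claim; the case $p=1$ with $u_0\in\mathcal{M}^{\ell}(\R^N)$ follows by the identical argument, all the relevant representations in Lemma \ref{lem:fixed-point-lem} being valid there.
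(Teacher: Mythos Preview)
Your argument is correct and takes a genuinely different route from the paper's proof. The paper approximates the full potential by \emph{smoothing}, setting $V^1_n=S_\mu(\tfrac{1}{n})V^1\in L^\infty\cap M^{p_1,\ell_1}$, and then proves \emph{norm} convergence $S_{\mu,V^1_n}(t)u_0\to S_{\mu,V^1}(t)u_0$ in some $M^{q,s}(\R^N)$ via the variation of constants formula, the smoothing into dotted spaces (Lemma~\ref{lem:smoothing-to-dotted-spaces}), and the singular Gronwall lemma. You instead split $V^1=(V^1)^+-(V^1)^-$, truncate only the negative part, and argue entirely \emph{pointwise} through the Neumann series based at $S_{\mu,(V^1)^+}$; the clever step is recognising that the absolute series $\sum_k\Psi^{(k)}$ is the Neumann expansion of $S_{\mu,|V^1|}(t)u_0$, which is finite a.e.\ and serves as a majorant for dominated convergence. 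Your approach is more elementary (no dotted Morrey spaces, no singular Gronwall) and yields a.e.\ convergence directly; the paper's method gives stronger norm convergence and carries over verbatim to the two-potential case in Theorem~\ref{thm:order-pres-for-2-Morrey-pot}, where your positive/negative splitting of a single potential has no obvious analogue.

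One point deserves a line of justification: the claim that $S_{\mu,(V^1)^+}$ ``has a nonnegative kernel'' is essential for the monotone-convergence step $\Psi^{(k)}_n\uparrow\Psi^{(k)}$, since $(V^1)^-_n\Psi^{(k)}_n(\tau)$ need not converge in any Morrey norm and mere order preservation of $S_{\mu,(V^1)^+}$ is not enough to pass monotone pointwise limits through the operator. The kernel does exist---the Picard iterates of $S_\mu$ with the nonnegative potential $(V^1)^+$ have explicit nonnegative kernels $K_k$ that increase in $k$, and for nonnegative data the iterates increase pointwise to $S_{\mu,(V^1)^+}(t)u_0$ (monotone plus $M^{w,\kappa}$-convergence forces a.e.\ convergence), so $K_\infty=\lim_k K_k$ represents $S_{\mu,(V^1)^+}$ on nonnegative inputs---but this deduction is not written anywhere in the paper (Lemma~\ref{lem:kernel-for-SmuV0-in-L1U} only covers bounded dissipative potentials) and should be spelled out rather than asserted.
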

\begin{proof}
The result is a consequence of what we prove below: there  exists a
sequence of potentials $\{V^{1}_n\} \subset L^\infty(\R^N) \cap
M^{p_1,\ell_1}(\R^N)$
such that  $\{S_{\mu, V^{1}_n}(t)\}_{t}$ is order
preserving in $M^{p,\ell} (\R^N)$ for  $1\leq p \leq \infty$,
$0<\ell\leq \ell_1$ or in  $\mathcal{M}^{\ell}(\R^{N})$ 
and there exist $1< q \leq \infty$ and
$0<s\leq \ell$ such that for $u_{0}\in M^{p,\ell}(\R^N)$ we have
\begin{equation}
  \label{eq:convergence_of_solutions}
  \lim_{n\to\infty}\|S_{\mu, V^{1}_n}(t) u_0 - S_{\mu, V^{1}}(t) u_0\|_{M^{q,s}(\R^N)} = 0 \quad
  \text{ for  } \ t>0.
\end{equation}
Moreover, $\{V^{1}_n\}$ can be chosen such that
$V^{1}_n \to V^{1}$  as $n\to \infty$ in $L^{p_1}_{loc}(\R^N)$.
Clearly, since  $\{S_{\mu, V^{1}_{n}}(t)\}_{t\geq 0}$
is order preserving, then  (\ref{eq:convergence_of_solutions})
implies $S_{\mu, V^{1}}(t) u_0\geq 0$ if $u_{0} \geq 0$.

To prove (\ref{eq:convergence_of_solutions}), we \emph{smooth out} the
potential and define
\begin{displaymath}
V^{1}_n= S_\mu\big(\frac{1}{n}\big) V^{1}, \quad n\in\N.
\end{displaymath}
Then the  smoothing estimates (\ref{eq:estimates_Mpl-Mqs}) imply that
$\{V^{1}_n\} \subset L^\infty(\R^N) \cap M^{p_1,\ell_1}(\R^N)$,
and from  (\ref{eq:behavior-at-0-of-S{mu}(t)})  we have that $V^{1}_n \to
V^{1}$  as $n\to \infty$ in $L^{p_1}_{loc}(\R^N)$. In particular,
since $V^{1}_n \in L^\infty(\R^N)$ then from Proposition
\ref{prop:order_preserving_bounded_perturbation}  we have that
$\{S_{\mu, V^{1}_{n}}(t)\}_{t\geq 0}$
is order preserving in $M^{p,\ell} (\R^N)$ or in
$\mathcal{M}^{\ell}(\R^{N})$.

Therefore, in what follows we prove
(\ref{eq:convergence_of_solutions}) for $V^{1}_n$ as above.
Taking $1\leq p \leq \infty$, $0<\ell\leq \ell_1$ and $u_{0}\in M^{p,\ell}(\R^N)$ we denote
\begin{displaymath}
u_n (t)= S_{\mu, V^{1}_n}(t) u_0, \ \qquad \  u(t)=S_{\mu, V^{1}}(t) u_0, \quad n\in \N, \ t>0
\end{displaymath}
and using (\ref{eq:VCF_one_potential}) for both semigroups and the
definition of $V^{1}_n $ we get
\begin{equation}\label{eq:formula-for-un-u}
\begin{split}
u_n (t) - u (t)
&= \int_{0}^{t}  S_{\mu}(t- \tau) V^{1}_n u_n( \tau) \,  d\tau - \int_{0}^{t}  S_{\mu}(t- \tau) V^{1} u( \tau) \,  d\tau
\\
&
= \int_{0}^{t}  \left( S_\mu\big(\frac{1}{n}\big) S_{\mu}(t- \tau)
  V^{1} u_n( \tau) - S_{\mu}(t- \tau) V^{1} u( \tau) \right)  d\tau
\\
& =
\int_{0}^{t} \left(S_\mu\big(\frac{1}{n}\big) S_{\mu}(t- \tau) V^{1} u( \tau) - S_{\mu}(t- \tau) V^{1} u( \tau)\right)  d\tau
\\
&
\phantom{aaaaaaaaa} +
\int_{0}^{t}  S_\mu\big(\frac{1}{n}\big) S_{\mu}(t- \tau) V^{1} \left(u_n( \tau) -u(\tau) \right) d\tau, \quad t>0.
\end{split}
\end{equation}

Below we show that there exist $1< q \leq \infty$ and $0<s\leq \ell_1$ such that for any $t>0$ we have
\begin{equation}
  \label{eq:required-convergence}
\lim_{n\to \infty}\|u_n( t) -u(t) \|_{M^{q,s}(\R^N)}=0
\end{equation}
which will prove (\ref{eq:convergence_of_solutions}).
The proof of (\ref{eq:required-convergence})
follows in four  steps.

\smallskip

\noindent
{\bf Step 1.} In this step, taking $t>0$ fixed and denoting
\begin{displaymath}
U_{n} ( \tau)=S_\mu\big(\frac{1}{n}\big) S_{\mu}(t- \tau) V^{1} u( \tau) - S_{\mu}(t- \tau) V^{1} u( \tau), \quad 0<\tau< t,
\end{displaymath}
we show that if $p_1'\leq q\leq \infty$ and $0<s\leq \ell$ satisfy $\frac{s}{q}\leq \frac{\ell}{p}$ then there is a constant $C>0$ such that we have both
\begin{equation}\label{eq:estimate-in-step-1}
\sup_{n\in \N} \| U_{n}( \tau) \|_{M^{q,s}(\R^N)} \leq
C\frac{\| V^{1}\|_{M^{p_1,\ell_1}(\R^N)} \big( \|u_0\|_{M^{p,\ell}(\R^N)} + \| V^{1}\|_{M^{p_1,\ell_1}(\R^N)}\big) }{\tau^{\frac{1}{2\mu}(\frac{\ell}{p}-\frac{s}{q})}(t- \tau)^{\frac{\ell_1}{2\mu p_1}}}, \quad 0<\tau < t
\end{equation}
and
\begin{equation}
  \label{eq:convergence-in-step-2}
  \lim_{n\to \infty} \| U_{n}( \tau) \|_{M^{q,s}(\R^N)}=0, \quad 0<\tau < t.
\end{equation}
Notice that if $p=1$ and   $u_{0}\in \mathcal{M}^{\ell}(\R^{N})$ we replace
$\|u_0\|_{M^{1,\ell}(\R^N)}$ by $\|u_0\|_{\mathcal{M}^{\ell}(\R^N)}$. 

For this, the estimate from \cite[Theorem 7.4]{C-RB-morrey_linear_perturbation}
ensures that for $1\leq q\leq \infty$, $0<s\leq \ell$ satisfying
$\frac{s}{q}\leq \frac{\ell}{p}$ we have 
\begin{equation}\label{eq:integrability-of-u-in-(0,t)}
\|u ( \tau) \|_{M^{q,s}(\R^N)} \leq
\frac{C_0}{\tau^{\frac{1}{2\mu}(\frac{\ell}{p}-\frac{s}{q})}} \big(
\|u_0\|_{M^{p,\ell}(\R^N)} + \|
V^{1}\|_{M^{p_1,\ell_1}(\R^N)}\big)=:H( \tau), \quad 0 <\tau
\leq t.
\end{equation}
Again,  notice that if $p=1$ and   $u_{0}\in \mathcal{M}^{\ell}(\R^{N})$ we replace
$\|u_0\|_{M^{1,\ell}(\R^N)}$ by $\|u_0\|_{\mathcal{M}^{\ell}(\R^N)}$.

Next we use the multiplication properties in Morrey spaces,
\cite[Lemma 5.1]{C-RB-morrey_linear_perturbation} to see that if $q,s$
are as above and $q\geq p_1'$ then for $z$, $\nu$ such that
\begin{equation}\label{eq:z-and-nu}
\frac{1}{z}= \frac{1}{q}+\frac{1}{p_1}, \ \qquad \ \frac{\nu}{z}=
\frac{s}{q}+\frac{\ell_1}{p_1},
\end{equation}
we have
\begin{displaymath}
\|V^{1}u ( \tau) \|_{M^{z,\nu}(\R^N)}\leq \|V^{1}
\|_{M^{p_1,\ell_1}(\R^N)}\|u ( \tau) \|_{M^{q,s}(\R^N)},
\end{displaymath}
and from  (\ref{eq:integrability-of-u-in-(0,t)}) we get
\begin{displaymath}
\|V^{1}u ( \tau) \|_{M^{z,\nu}(\R^N)}\leq
\| V^{1}\|_{M^{p_1,\ell_1}(\R^N)}H( \tau),
\quad
0<\tau \leq t.
\end{displaymath}

Now from the  smoothing properties of the semigroup
$\{S_{\mu}(t)\}_{t\geq 0}$ in (\ref{eq:estimates_Mpl-Mqs})  we  obtain
\begin{displaymath}
  \| S_{\mu}(t- \tau) V^{1} u( \tau) \|_{M^{q,s}(\R^N)} \leq
K( \tau) \|V^{1} u( \tau) \|_{M^{z,\nu}(\R^N)}  \leq
\| V^{1}\|_{M^{p_1,\ell_1}(\R^N)}H( \tau) K(
\tau),
\end{displaymath}
with
\begin{equation}\label{eq:mathscr{K}}
  K( \tau) = \frac{c}{(t- \tau)^{\frac{1}{2\mu}(\frac{\nu}{z}-\frac{s}{q})}}
=  \frac{c}{(t- \tau)^{\frac{\ell_1}{2\mu p_1}}}.
\end{equation}
Using this and the  boundedness of the semigroup
$\{S_\mu\big(t)\}_{t\geq0}$ in $M^{q,s}(\R^N)$, see again (\ref{eq:estimates_Mpl-Mqs}),
we get (\ref{eq:estimate-in-step-1}).

Since for $z,\nu$ as in (\ref{eq:z-and-nu}) we have
$V^{1}u ( \tau)\in M^{z,\nu}(\R^N)$,
from Lemma \ref{lem:smoothing-to-dotted-spaces} we get
\begin{equation}\label{eq:here-we-exclude{q=1}}
S_{\mu}(t- \tau) V^{1} u( \tau) \in \dot M^{q,s}(\R^N)
\end{equation}
provided  $q >1$.
Then using  (\ref{eq:behavior-at-0-of-S{mu}(t)-for-dotted}) in
$M^{q,s}(\R^N)$ and  Remark \ref{rem:Smu-is-C0} if $q=\infty$, we get
(\ref{eq:convergence-in-step-2}).

\smallskip

\noindent
{\bf Step 2.} In this step taking $T>0$ we show that given $p_1'\leq
q\leq \infty$ and $0<s\leq \ell$ satisfying $\frac{s}{q}\leq
\frac{\ell}{p}$ there exists  a constant $c>0$ such that for each $0<\tau <
t < T$ and $n\in\N$
\begin{equation}\label{eq:estimate-in-Step-2}
   \| S_\mu\big(\frac{1}{n}\big) S_{\mu}(t- \tau) V^{1} \left(u_n( \tau) -u(\tau) \right) \|_{M^{q,s}(\R^N)} \leq
c\frac{\| V^{1}\|_{M^{p_1,\ell_1}(\R^N)} }{(t- \tau)^{\frac{\ell_1}{2\mu p_1}}} \|u_n( \tau) -u(\tau) \|_{M^{q,s}(\R^N)}.
\end{equation}

For this using again  \cite[Lemma
5.1]{C-RB-morrey_linear_perturbation} we get that  if $q,s$ are as
above and $z$, $\nu$ are as in (\ref{eq:z-and-nu})
then
\begin{displaymath}
\|V^{1}\left(u_n( \tau) -u(\tau) \right) \|_{M^{z,\nu}(\R^N)}\leq
\| V^{1}\|_{M^{p_1,\ell_1}(\R^N)} \|u_n( \tau) -u(\tau) \|_{M^{q,s}(\R^N)},
\quad
0<\tau \leq t.
\end{displaymath}
Again from  the  smoothing properties of the semigroup
$\{S_{\mu}(t)\}_{t\geq 0}$ in  (\ref{eq:estimates_Mpl-Mqs})  we have
\begin{displaymath}
\| S_{\mu}(t- \tau) V^{1} \left( u_n( \tau) -u(\tau) \right) \|_{M^{q,s}(\R^N)} \leq
K( \tau) \|V^{1} \left( u_n( \tau) -u(\tau) \right) \|_{M^{z,\nu}(\R^N)}
\end{displaymath}
with $K( \tau)$ as in (\ref{eq:mathscr{K}}), and we  obtain
\begin{displaymath}
\| S_{\mu}(t- \tau) V^{1} \left( u_n( \tau) -u(\tau) \right) \|_{M^{q,s}(\R^N)} \leq
K( \tau) \| V^{1}\|_{M^{p_1,\ell_1}(\R^N)}\|u_n( \tau) -u(\tau) \|_{M^{q,s}(\R^N)} ,
\end{displaymath}
and the  boundedness of the semigroup $\{S_\mu\big(t)\}_{t\geq0}$ in
$M^{q,s}(\R^N)$, see again (\ref{eq:estimates_Mpl-Mqs}),
gives (\ref{eq:estimate-in-Step-2}).

\smallskip

\noindent
{\bf Step 3.} Now we show that for some $p_1'\leq q\leq \infty$, $0<s\leq \ell$ satisfying $\frac{s}{q}\leq \frac{\ell}{p}$ we have (\ref{eq:required-convergence}).

For this, from (\ref{eq:formula-for-un-u}) and Steps 1 and 2 above,
for $p_1'\leq q\leq \infty$, $0<s\leq \ell$ satisfying
$\frac{s}{q}\leq \frac{\ell}{p}$, denote
\begin{displaymath}
a_n(t)= \int_0^t  \|U_{n}(\tau)\|_{M^{q,s}(\R^N)} \, d\tau,
\end{displaymath}
and then for $0\leq t <T$ we have
\begin{equation}\label{eq:inequality-for-norm-of-un-u}
\|u_n(t)-u(t)\|_{M^{q,s}(\R^N)} \leq a_n(t) + \int_0^t \frac{ c_1
}{(t- \tau)^{\frac{\ell_1}{2\mu p_1}}}
\|u_n(\tau)-u(\tau)\|_{M^{q,s}(\R^N)} \, d\tau,
\end{equation}
with  $c_1=c\| V^{1}\|_{M^{p_1,\ell_1}(\R^N)}$.

Therefore, if we chose $q,s$ as above and satisfying additionally
\begin{equation}\label{eq:restriction-for-q,s}
  \frac{1}{2\mu}\big(\frac{\ell}{p}-\frac{s}{q}\big)<1
\end{equation}
then  (\ref{eq:estimate-in-step-1}) yields  that for some constant
independent of $n\in \N$ we have
\begin{displaymath}
  a_{n}(t) \leq c \int_0^t  \tau^{-\frac{1}{2\mu}(\frac{\ell}{p}-\frac{s}{q})}(t-
\tau)^{-\frac{\ell_1}{2\mu p_1}} \, d\tau=
t^{1-\frac{1}{2\mu}(\frac{\ell}{p}-\frac{s}{q})-\frac{\ell_1}{2\mu
    p_1}}\int_0^1  r^{-\frac{1}{2\mu}(\frac{\ell}{p}-\frac{s}{q})}(1-
r)^{-\frac{\ell_1}{2\mu p_1}} \, d r=: W(t) .
\end{displaymath}
Since $\frac{\ell_1}{2\mu p_1}<1$ because  $V^1$ is an admissible
potential and (\ref{eq:restriction-for-q,s}),
$W(t)$ is well defined  and therefore $a_{n}$ is  integrable in $(0,T)$.

On the other hand,  observe that since, using again
(\ref{eq:estimates_Mpl-Mqs}),   $\| V^{1}_n\|_{M^{p_1,\ell_1}(\R^N)} =
\|S_\mu(\frac{1}{n})  V^{1} \|_{M^{p_1,\ell_1}(\R^N)} \leq c
\|V^{1}\|_{M^{p_1,\ell_1}(\R^N)}$,  then, arguing as above,
(\ref{eq:integrability-of-u-in-(0,t)}) also holds for $u_n$. As a
consequence, (\ref{eq:restriction-for-q,s}) implies that
$\|u_n(t)-u(t)\|_{M^{q,s}(\R^N)}$ is integrable in $(0,T)$ for any
$T>0$.

Then we show now that, given $p$ and $\ell$ as in the statement,
we can find  $q$ and $s$ satisfying $p_1'\leq q\leq \infty$, $0<s\leq \ell$ and
$\frac{s}{q}\leq \frac{\ell}{p}$, $q>1$ and
(\ref{eq:restriction-for-q,s}).

\smallskip

\noindent
\fbox{Case I: $ p_1'\leq p \leq \infty$}
If $p >1$ then we take
\begin{displaymath}
q=p \quad \text{ and } \quad s=\ell
\end{displaymath}
which clearly satisfy all conditions including (\ref{eq:restriction-for-q,s}).

On the other hand, if   $p=1$ then $p_1'=1$ and we can take
$q=1+\varepsilon$ and $s=\ell$ with
some  small  $\varepsilon>0$.

\smallskip

\noindent
\fbox{Case II: $1\leq p< p_1'$}
In this case we can take
\begin{displaymath}
q=p_1' >1 \quad \text{ and } \quad s=\ell.
\end{displaymath}
For which only  (\ref{eq:restriction-for-q,s}) remains to be
checked. Actually, the left hand side in
(\ref{eq:restriction-for-q,s}) is  $\frac{\ell}{2\mu p_1} +
\frac{\ell}{2\mu}(\frac{1}{p}- 1)$ were  first term  is
strictly less than $1$, since  $\frac{\ell}{2\mu p_1} \leq
\frac{\ell_1}{2\mu p_1} <1$ because  $V^{1}$ is an admissible
potential, whereas the second term  is
$\frac{\ell}{2\mu}(\frac{1}{p}- 1)\leq 0$.

\noindent
{\bf Step 4.}
With $q,s$ as above, using the singular Gronwall lemma, \cite[Lemma
7.1.1]{1981_Henry},  we get from
(\ref{eq:inequality-for-norm-of-un-u}) that for $t\in[0,T)$
\begin{equation}\label{eq:another-inequality-for-norm-of-un-u}
\|u_n(t)-u(t)\|_{M^{q,s}(\R^N)} \leq a_n(t) + \theta_1 \int_0^t
E'\big(\theta_1 (t-\tau)\big) a_n(\tau)\, d\tau,
\end{equation}
where $\theta_1=\left(c_1 \Gamma(b_1)\right)^\frac{1}{b_1}$ with $b_1=1-\frac{\ell_1}{2\mu p_1}$ and $E(z)=\sum_{n=0}^\infty \frac{z^{nb_1}}{\Gamma(nb_1 +1)}$.

From  \cite[(1.35)-(1.36), p. 27]{2010_Yagi}, $\mathcal{E}(z)=\sum_{n=0}^\infty
\frac{z^{n}}{\Gamma(nb_1 +1)}$  is an entire function and
$E'(\theta_1 (t-\tau)) = b_1 \theta_1^{b_1-1} (t-\tau)^{b_1-1}
\mathcal{E}'((\theta_1 (t-\tau))^{b_1})$ hence  in particular $E'(\theta_1
(t-\tau))$ is continuous for  $\tau \in [0,t)$. Using
this and recalling from \cite[Lemma 7.1.1]{1981_Henry} that
$E'(\theta_1 (t-\tau))$ behaves like a multiple of $(t-\tau)^{b_1-1}$
as $\tau\nearrow t$ we see that $E'(\theta_1 (t-\tau))$  is integrable
in $\tau\in (0,t)$.

On the other hand, from (\ref{eq:estimate-in-step-1}),
(\ref{eq:convergence-in-step-2}) and Lebesgue's theorem we get
\begin{displaymath}
 \lim_{n\to \infty}a_n(t)=0, \ t>0
\end{displaymath}
and again Lebesgue's theorem in
(\ref{eq:another-inequality-for-norm-of-un-u}) yields
(\ref{eq:required-convergence}), which completes the proof.
\end{proof}

Our next main result is for  the case of two admissible  potentials for which we
get the following.

  \begin{theorem}
    \label{thm:order-pres-for-2-Morrey-pot}

    Consider  $0<\mu \leq 1$ and the
semigroup  $\{S_{\mu}(t)\}_{t\geq 0}$ in $M^{p,\ell}(\R^N)$ for $1\leq
p \leq\infty$, $0<\ell\leq N$ or in  $\mathcal{M}^{\ell}(\R^{N})$ and
$p=1$.  Also consider   admissible potentials
as in (\ref{eq:for-thm-with-2potentials}) with $m=1$ and initial data $u_0\in
M^{p,\ell}(\R^N)$, $1\leq p \leq\infty$, $0<\ell\leq\ell_0$ or in
$\mathcal{M}^{\ell}(\R^{N})$.

 Then $\{S_{\mu,\{V^{0}, V^{1}\}}(t)\}_{t \geq 0}$ is
  order preserving  in   $M^{p,\ell}(\R^N)$ or in
  $\mathcal{M}^{\ell}(\R^{N})$. 

\end{theorem}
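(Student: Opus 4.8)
Since $V^{1}\in M^{p_1,\ell_1}(\R^N)$ is admissible, Theorem \ref{thm:order-pres-fo-general-Morrey-pot} already gives that $\{S_{\mu,V^{1}}(t)\}_{t\geq0}$ is order preserving in $M^{p,\ell}(\R^N)$ for all $1\leq p\leq\infty$, $0<\ell\leq\ell_1$, and in $\mathcal{M}^{\ell}(\R^{N})$. The plan is to keep $V^{1}$ fixed as an order preserving \emph{base} semigroup and to approximate the second potential $V^{0}$ by \emph{bounded} admissible potentials, for which order preservation of the pair is already known from Proposition \ref{prop:order_preserving_bounded_perturbation}(ii), and then to pass to the limit. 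Throughout I would fix $u_0\geq0$ with $u_0\in M^{p,\ell}(\R^N)$, $1\leq p\leq\infty$, $0<\ell\leq\ell_0$, or $u_0\in\mathcal{M}^{\ell}(\R^N)$ when $p=1$.

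First I would smooth out $V^{0}$ by setting $V^{0}_n=S_\mu\big(\frac1n\big)V^{0}$, $n\in\N$. By the smoothing estimate (\ref{eq:estimates_Mpl-Mqs}) one has $V^{0}_n\in L^\infty(\R^N)\cap M^{p_0,\ell_0}(\R^N)$ with $\|V^{0}_n\|_{M^{p_0,\ell_0}(\R^N)}\leq c\|V^{0}\|_{M^{p_0,\ell_0}(\R^N)}$ uniformly in $n$, so each $V^{0}_n$ is admissible with the same exponents $p_0,\ell_0$, and $V^{0}_n\to V^{0}$ in $L^{p_0}_{loc}(\R^N)$ (exactly as for the smoothing of $V^{1}$ in the proof of Theorem \ref{thm:order-pres-fo-general-Morrey-pot}). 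Since $\ell_0\leq\ell_1$ and $V^{0}_n\in L^\infty(\R^N)$, Proposition \ref{prop:order_preserving_bounded_perturbation}(ii) applies and shows that $\{S_{\mu,\{V^{0}_n,V^{1}\}}(t)\}_{t\geq0}$ is order preserving for $0<\ell\leq\ell_0$. Hence it suffices to prove that, for some $1<q\leq\infty$ and $0<s\leq\ell$ with $\frac{s}{q}\leq\frac{\ell}{p}$,
\begin{equation}\label{eq:2pot-convergence}
\lim_{n\to\infty}\big\|S_{\mu,\{V^{0}_n,V^{1}\}}(t)u_0-S_{\mu,\{V^{0},V^{1}\}}(t)u_0\big\|_{M^{q,s}(\R^N)}=0,\qquad t>0,
\end{equation}
because convergence in $M^{q,s}(\R^N)$ with $q>1$ forces $L^q_{loc}$ (hence a.e.\ along a subsequence) convergence, and so preserves nonnegativity, yielding $S_{\mu,\{V^{0},V^{1}\}}(t)u_0\geq0$.

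To establish (\ref{eq:2pot-convergence}) I would write both solutions through the variation of constants formula relative to the base semigroup, using Lemma \ref{lem:fixed-point-lem}(ii). Writing $u(t)=S_{\mu,\{V^{0},V^{1}\}}(t)u_0$ and $u_n(t)=S_{\mu,\{V^{0}_n,V^{1}\}}(t)u_0$, subtraction gives
\begin{equation}\label{eq:2pot-diff}
u_n(t)-u(t)=\int_0^t S_{\mu,V^{1}}(t-\tau)\big(V^{0}_n u_n(\tau)-V^{0}u(\tau)\big)\,d\tau,
\end{equation}
and I would decompose $V^{0}_n u_n-V^{0}u=V^{0}_n(u_n-u)+(V^{0}_n-V^{0})u$ into a \emph{Gronwall} term and a \emph{source} term. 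The rest follows the four step scheme of Theorem \ref{thm:order-pres-fo-general-Morrey-pot}, now with $S_{\mu,V^{1}}$ and its smoothing estimate (\ref{eq:smoothing_one_potential}) playing the role of $S_\mu$ and (\ref{eq:estimates_Mpl-Mqs}), and with $p_0,\ell_0$ replacing $p_1,\ell_1$: the multiplication properties of Morrey spaces bound the two products in an intermediate space $M^{z,\nu}$ with $\frac1z=\frac1q+\frac1{p_0}$, $\frac{\nu}{z}=\frac{s}{q}+\frac{\ell_0}{p_0}$ (for $q\geq p_0'$); the uniform bound on $\|V^{0}_n\|_{M^{p_0,\ell_0}(\R^N)}$ together with the a priori estimate (\ref{eq:smoothing_two_potentials}) for $u$ and $u_n$ (with constants uniform in $n$) produces an integrable singularity $(t-\tau)^{-\ell_0/2\mu p_0}$, with $\frac{\ell_0}{2\mu p_0}<1$ by admissibility; and the singular Gronwall lemma closes the Gronwall term. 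The exponents $q,s$ are chosen exactly as in Cases I--II there, so that in addition $q>1$ and $\frac{1}{2\mu}\big(\frac{\ell}{p}-\frac{s}{q}\big)<1$.

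The main obstacle is the vanishing of the source term $\int_0^t S_{\mu,V^{1}}(t-\tau)(V^{0}_n-V^{0})u(\tau)\,d\tau$. The difficulty is that $V^{0}_n\to V^{0}$ only in $L^{p_0}_{loc}(\R^N)$ and \emph{not} in the Morrey norm, so a direct multiplication estimate does not decay. As in Step~1 of Theorem \ref{thm:order-pres-fo-general-Morrey-pot}, this is resolved by arranging the approximation so that the operator $S_{\mu}\big(\frac1n\big)-I$ acts on a function that already lies in a dotted Morrey space $\dot M^{q,s}(\R^N)$ — guaranteed here by Lemma \ref{lem:smoothing-to-dotted-spaces} and Lemma \ref{lem:perturbed-smoothing-to-dotted-spaces} — on which strong continuity of the semigroups forces convergence to $0$ for each fixed $\tau$; combined with the uniform integrable majorant and Lebesgue's dominated convergence theorem this sends the source term to $0$ in $M^{q,s}(\R^N)$. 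Everything else is a routine transcription of the one potential argument, which delivers (\ref{eq:2pot-convergence}) and completes the proof.
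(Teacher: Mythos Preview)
Your overall strategy is the paper's strategy: keep $S_{\mu,V^{1}}$ as the order preserving base (via Theorem~\ref{thm:order-pres-fo-general-Morrey-pot}), approximate $V^{0}$ by bounded admissible potentials so that Proposition~\ref{prop:order_preserving_bounded_perturbation}(ii) applies, and pass to the limit in some $M^{q,s}$. The gap is in your choice of approximation and how you treat the source term.

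You set $V^{0}_n=S_\mu\big(\frac1n\big)V^{0}$ and then claim, ``as in Step~1'', that the source term can be rearranged so that $S_\mu(\frac1n)-I$ acts on an element of a dotted space. But with your decomposition the source integrand is
\[
S_{\mu,V^{1}}(t-\tau)\Big[\big((S_\mu(\tfrac1n)-I)V^{0}\big)\,u(\tau)\Big],
\]
and there is no way to pull $(S_\mu(\frac1n)-I)$ to the outside: convolution semigroups do not commute with pointwise multiplication by $u(\tau)$, and $S_\mu$ does not commute with $S_{\mu,V^{1}}$. Since $(S_\mu(\frac1n)-I)V^{0}$ fails to tend to $0$ in $M^{p_0,\ell_0}(\R^N)$ (only in $L^{p_0}_{loc}$), a direct Morrey multiplication estimate gives no decay, and your ``arranging'' step does not go through.

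The paper's fix is precisely to match the smoothing operator to the base semigroup in the variation of constants formula: it takes $V^{0}_n=S_{\mu,V^{1}}\big(\frac1n\big)V^{0}$. Then the algebra of (\ref{eq:formula-for-un-u}) transports verbatim with $S_{\mu,V^{1}}$ in place of $S_\mu$, and the source term becomes $(S_{\mu,V^{1}}(\frac1n)-I)\big[S_{\mu,V^{1}}(t-\tau)V^{0}u(\tau)\big]$, with the operator now acting on the \emph{outside} on a function that Lemma~\ref{lem:perturbed-smoothing-to-dotted-spaces} places in $\dot M^{q,s}(\R^N)$; strong continuity at $0$ and dominated convergence then finish Step~1. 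Everything else (your choice of $q,s$, the Gronwall step, the uniform $n$-bounds from (\ref{eq:smoothing_two_potentials})) is correct and matches the paper.
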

\begin{proof}
From Theorem \ref{thm:order-pres-fo-general-Morrey-pot} we have that
$\{S_{\mu, V^{1}}(t)\}_{t\geq 0}$ is order preserving in
$M^{p,\ell}(\R^N)$ or in $\mathcal{M}^{\ell}(\R^{N})$. 

  Now we perturb this semigroup with the  admissible potential
  $V^{0} \in M^{p_0,\ell_0}(\R^N)$ with $\ell_{0} \leq \ell_{1}$, so
  (\ref{eq:for-thm-with-2potentials}) is satisfied.
  From part (ii) in  Lemma \ref{lem:fixed-point-lem} for $u_{0}\in
  M^{p,\ell}(\R^N)$,  or $u_{0}\in \mathcal{M}^{\ell}(\R^{N})$, 
  and $T>0$
  $u(t) = S_{\mu,\{V^{0}, V^{1}\}}(t)u_0$ satisfies
  \begin{displaymath}
    u(t) = S_{\mu,V^{1}}(t)
    u_0 +  \int_0^t S_{\mu,V^{1}}(t-\tau)  V^{0} u(\tau) \,  d\tau,
    \quad t\in(0,T].
  \end{displaymath}

Then   we carry out the same arguments as in the proof of Theorem
  \ref{thm:order-pres-fo-general-Morrey-pot} with $S_{\mu,V^{1}}(t)$
  playing the role of $S_{\mu}(t)$. In particular we take as
  approximating potentials
  \begin{displaymath}
    V^{0}_n= S_{\mu,V^{1}}\big(\frac{1}{n}\big) V^{0}, \quad n\in\N.
  \end{displaymath}
  Instead of (\ref{eq:estimates_Mpl-Mqs}) we use in this case
  \begin{displaymath}
    \|S_{\mu, V^{1}}(t)\|_{{\mathcal L}(M^{p,\ell}(\R^N), M^{q,s}
      (\R^N))} \leq
    \frac{c}{ t^{\frac{1}{2\mu}(\frac\ell{p}-\frac{s}q)}}, \quad t \in (0,T)
  \end{displaymath}
  which follows from (\ref{eq:smoothing_one_potential}). Therefore
  $\{V^{0}_n\} \subset L^\infty(\R^N) \cap M^{p_0,\ell_0}(\R^N)$,
and from  (\ref{eq:behavior-at-0-of-S{mu}(t)}) and
(\ref{eq:SV_approach_initial_data}) we have that
  $V^{0}_n \to
V^{0}$  as $n\to \infty$ in $L^{p_0}_{loc}(\R^N)$. In particular,
since $V^{0}_n \in L^\infty(\R^N)$ then from Proposition
\ref{prop:order_preserving_bounded_perturbation}  we have that $\{S_{\mu, \{V^{1},
  V^{0}_{n}\}}(t)\}_{t\geq 0}$
is order preserving in $M^{p,\ell} (\R^N)$.

Therefore we consider here
\begin{displaymath}
u_n (t)= S_{\mu, \{V^{0}_{n},V^{1}
  \} }(t) u_0, \ \qquad \  u(t)=S_{\mu, \{V^{0},V^{1}\} }(t) u_0, \quad n\in \N, \ t>0
\end{displaymath}
and then in place of (\ref{eq:formula-for-un-u})  we get
\begin{displaymath}
\begin{split}
u_n (t) - u (t)
& =
\int_{0}^{t} \left(S_{\mu, V^{1}} \big(\frac{1}{n}\big) S_{\mu,
    V^{1}}(t- \tau) V^{0} u( \tau) - S_{\mu, V^{1}}(t- \tau) V^{0} u( \tau)\right)  d\tau
\\
&
\phantom{aaaaaaaaa} +
\int_{0}^{t}  S_{\mu, V^{1}} \big(\frac{1}{n}\big) S_{\mu, V^{1}}(t- \tau) V^{0} \left(u_n( \tau) -u(\tau) \right) d\tau, \quad t>0.
\end{split}
\end{displaymath}

Hence, we can proceed as in Step 1 of the proof of Theorem
\ref{thm:order-pres-fo-general-Morrey-pot} and instead of Lemma
  \ref{lem:smoothing-to-dotted-spaces} in
  (\ref{eq:here-we-exclude{q=1}}) we use Lemma
  \ref{lem:perturbed-smoothing-to-dotted-spaces}   to get
  \begin{displaymath}
    S_{\mu, V^{1}}(t- \tau) V^{0} u( \tau) \in \dot M^{q,s}(\R^N)
  \end{displaymath}
  and then by (\ref{eq:SV_approach_initial_data}) we get that, as
  $n \to \infty$,
  \begin{displaymath}
    S_{\mu,V^{1}}\big(\frac{1}{n}\big)   S_{\mu, V^{1}}(t- \tau) V^{0} u(
    \tau) \to  S_{\mu, V^{1}}(t- \tau) V^{0} u(\tau)
  \end{displaymath}
  in $M^{q,s}(\R^N)$ and we get the equivalent to
  (\ref{eq:convergence-in-step-2}).

  The rest of the proof follows with the obvious changes.
  \end{proof}

\section{Monotonicity and growth bound of the perturbed semigroups}
\label{sec:monotonicity}

With the notations in Section \ref{sec:order-preservation}, we  now
show monotonicity properties of the order preserving perturbed 
semigroup with respect to Morrey potentials.

  The main result about monotonicity with respect to potentials
  is the following.

  \begin{theorem}
    \label{thm:monotonicity-with-respect-to-potentials}

     Consider  $0<\mu\leq 1$ and the
semigroup  $\{S_{\mu}(t)\}_{t\geq 0}$ in $M^{p,\ell}(\R^N)$ for $1\leq
p \leq\infty$, $0<\ell\leq N$ or in $\mathcal{M}^{\ell}(\R^{N})$ and
$p=1$. 
Also consider  admissible potentials as in
(\ref{eq:for-thm-with-2potentials}) with $m=1$.

Assume also
\begin{displaymath}
V^{0}\leq \tilde{V}^{0}, \qquad   V^{1}\leq \tilde{V}^{1} \quad \text{
  where} \quad  V^{0}, \tilde{V}^{0}\in M^{p_{0},\ell_{0}}(\R^N), \
V^{1}, \tilde{V}^{1}\in M^{p_{1},\ell_{1}}(\R^N) .
\end{displaymath}

Then for $0\leq u_{0} \in M^{p,\ell}(\R^N)$ with $1\leq p \leq \infty$,
$0<\ell\leq \ell_0$ or $p=1$ and $u_{0}\in \mathcal{M}^{\ell}(\R^{N})$ we have
\begin{equation}\label{eq:bound_with_u0geq0}
   0 \leq S_{\mu,\{V^{0},V^{1} \}}(t) u_0 \leq S_{\mu,\{\tilde{V}^{0},\tilde{V}^{1} \}}(t) u_0, \quad t>0
\end{equation}
and for $u_{0} \in M^{p,\ell}(\R^N)$ or $u_{0}\in \mathcal{M}^{\ell}(\R^{N})$ we have
\begin{equation}\label{eq:bound_with_|u0|}
  |S_{\mu,\{V^{0},V^{1}\}}(t) u_{0}| \leq S_{\mu,\{\tilde{V}^{0},\tilde{V}^{1}\}}(t)|u_{0}|, \quad t\geq 0.
\end{equation}

\end{theorem}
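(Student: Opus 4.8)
The plan is to deduce both inequalities from the order-preserving property of Theorem~\ref{thm:order-pres-for-2-Morrey-pot} combined with the integral representations of Lemma~\ref{lem:fixed-point-lem}, perturbing one potential slot at a time. The lower bound in (\ref{eq:bound_with_u0geq0}) is immediate: since $u_0\geq 0$ and $\{S_{\mu,\{V^0,V^1\}}(t)\}_{t\geq0}$ is order preserving, we have $S_{\mu,\{V^0,V^1\}}(t)u_0\geq 0$.

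For the upper bound, first I would increase the zeroth potential. Set $W^0:=\tilde V^0-V^0\geq 0$, which lies in $M^{p_0,\ell_0}(\R^N)$ because $V^0,\tilde V^0$ do. By part (iv) of Lemma~\ref{lem:fixed-point-lem} applied with additional potential $W^0$, the function $S_{\mu,\{\tilde V^0,V^1\}}(t)u_0=S_{\mu,\{V^0+W^0,V^1\}}(t)u_0$ is the unique fixed point in the contraction space $\mathcal{K}_{T,K_0,\theta}^{p,\ell,w,\kappa}$ of
\[
\varphi\mapsto S_{\mu,\{V^0,V^1\}}(t)u_0+\int_0^t S_{\mu,\{V^0,V^1\}}(t-\tau)\,W^0\varphi(\tau)\,d\tau .
\]
Running the Picard iteration from $U_1(t)=S_{\mu,\{V^0,V^1\}}(t)u_0$, an induction shows the iterates are monotone increasing: since $W^0\geq 0$ and $S_{\mu,\{V^0,V^1\}}(t)$ is order preserving, each integral term is nonnegative, so $U_1\geq 0$ and $U_{n+1}-U_n=\int_0^t S_{\mu,\{V^0,V^1\}}(t-\tau)W^0(U_n-U_{n-1})(\tau)\,d\tau\geq 0$. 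Passing to the limit in the Morrey norm (the positive cone being closed, via $L^w_{loc}$-convergence of a subsequence) gives $S_{\mu,\{\tilde V^0,V^1\}}(t)u_0\geq S_{\mu,\{V^0,V^1\}}(t)u_0$.

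Next I would increase the first potential by the identical argument: with $W^1:=\tilde V^1-V^1\geq 0\in M^{p_1,\ell_1}(\R^N)$ and part (v) of Lemma~\ref{lem:fixed-point-lem}, now perturbing the (order-preserving) semigroup $\{S_{\mu,\{\tilde V^0,V^1\}}(t)\}_{t\geq0}$, the monotone Picard scheme yields $S_{\mu,\{\tilde V^0,\tilde V^1\}}(t)u_0\geq S_{\mu,\{\tilde V^0,V^1\}}(t)u_0$. Chaining the two inequalities gives (\ref{eq:bound_with_u0geq0}). Finally, (\ref{eq:bound_with_|u0|}) follows for arbitrary $u_0$ by combining the pointwise bound $|S_{\mu,\{V^0,V^1\}}(t)u_0|\leq S_{\mu,\{V^0,V^1\}}(t)|u_0|$, valid for any order-preserving semigroup, with (\ref{eq:bound_with_u0geq0}) applied to $|u_0|\geq 0$; the case $t=0$ is trivial.

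The main obstacle I anticipate is the monotone passage to the limit in the Picard scheme: one must verify that $U_1=S_{\mu,\{V^0,V^1\}}(t)u_0$ already belongs to $\mathcal{K}_{T,K_0,\theta}^{p,\ell,w,\kappa}$ for suitable $K_0,\theta$ --- which follows from the smoothing estimate (\ref{eq:smoothing_two_potentials}) --- so that this particular iteration does converge to the fixed point, and that norm convergence in the Morrey space respects the order, which reduces to closedness of the positive cone.
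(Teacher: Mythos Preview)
Your proof is correct and follows the same two-step architecture as the paper: use part~(iv) of Lemma~\ref{lem:fixed-point-lem} to pass from $\{V^0,V^1\}$ to $\{\tilde V^0,V^1\}$, then part~(v) to pass to $\{\tilde V^0,\tilde V^1\}$, with order preservation (Theorem~\ref{thm:order-pres-for-2-Morrey-pot}) driving each step; the deduction of (\ref{eq:bound_with_|u0|}) is also the same.

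The one difference worth noting is that your Picard-iteration argument is unnecessary. The paper reads the inequality directly from the integral equation itself rather than from its iterates: setting $u(t)=S_{\mu,\{\tilde V^0,V^1\}}(t)u_0$, part~(iv) of Lemma~\ref{lem:fixed-point-lem} gives
\[
u(t)=S_{\mu,\{V^0,V^1\}}(t)u_0+\int_0^t S_{\mu,\{V^0,V^1\}}(t-\tau)\,(\tilde V^0-V^0)\,u(\tau)\,d\tau .
\]
Since $u(\tau)\geq 0$ by order preservation of the semigroup $\{S_{\mu,\{\tilde V^0,V^1\}}(t)\}$ applied to $u_0\geq 0$, the integrand is nonnegative and the inequality $u(t)\geq S_{\mu,\{V^0,V^1\}}(t)u_0$ is immediate. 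This bypasses the monotone-limit step entirely and in particular avoids the ``obstacle'' you flagged about closedness of the positive cone under Morrey convergence. Your route works, but the direct reading is shorter and cleaner.
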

\begin{proof}
We take $0\leq u_{0} \in M^{p,\ell}(\R^N)$ with $1\leq p \leq \infty$,
$0<\ell\leq \ell_0$ or in $\mathcal{M}^{\ell}(\R^{N})$ if $p=1$. Also observe that by Theorems
\ref{thm:order-pres-fo-general-Morrey-pot} and
\ref{thm:order-pres-for-2-Morrey-pot} all semigroups appearing below are order preserving.

By part (iv) in Lemma \ref{lem:fixed-point-lem} we have that for
$T>0$, $u(t)= S_{\mu,\{\tilde{V}^{0},V^{1} \}}(t) u_0 \geq 0$
satisfies
\begin{displaymath}
u(t) =    S_{\mu,\{V^{0},
  V^{1}\}}(t) u_0 +  \int_0^t S_{\mu,\{V^{0}, V^{1}\}}(t-\tau)
\big(\tilde{V}^{0}- V^{0}\big)  u(\tau) \,  d\tau, \ t\in(0,T],
\end{displaymath}
and then $u(t) \geq  S_{\mu,\{V^{0},  V^{1}\}}(t) u_0\geq 0$ as the
integral term is nonnegative. 

Analogously, by part (v) in Lemma \ref{lem:fixed-point-lem} we have that for
$T>0$, $v(t)= S_{\mu,\{\tilde{V}^{0},\tilde{V}^{1} \}}(t) u_0 \geq 0$
satisfies
\begin{displaymath}
v(t) =    S_{\mu,\{\tilde{V}^{0},
  V^{1}\}}(t) u_0 +  \int_0^t S_{\mu,\{\tilde{V}^{0}, V^{1}\}}(t-\tau)
\big(\tilde{V}^{1}- V^{1}\big)  v(\tau) \,  d\tau, \ t\in(0,T],
\end{displaymath}
and then $v(t) \geq  S_{\mu,\{\tilde{V}^{0},  V^{1}\}}(t) u_0
=u(t)$, as the
integral term is nonnegative, and we already have   $u(t)\geq  S_{\mu,\{V^{0},
  V^{1}\}}(t) u_0\geq 0$. Hence  we get
(\ref{eq:bound_with_u0geq0}).

Then (\ref{eq:bound_with_|u0|}) follows from
(\ref{eq:bound_with_u0geq0}) and $-|u_{0}|\leq u_{0}\leq |u_{0}|$.
\end{proof}

Now we obtain growth bound for order preserving perturbed semigroup in
terms of the norms of the  positive parts of the potentials. The
estimate obtained below improves (\ref{eq:smoothing_two_potentials}),
(\ref{eq:general_growth_bound}).

\begin{theorem}\label{thm:exponential-estimate-of-S{mu,V0,V1}}

  Consider  $0<\mu \leq 1$ and the
semigroup  $\{S_{\mu}(t)\}_{t\geq 0}$ in $M^{p,\ell}(\R^N)$ for $1\leq
p \leq\infty$, $0<\ell\leq N$ or in $\mathcal{M}^{\ell}(\R^{N})$ if $p=1$. 
Also consider   admissible potentials
as in (\ref{eq:for-thm-with-2potentials}) with $m=1$. 

Then for $1\leq p,q\leq \infty$, $0<s\leq\ell\leq\ell_0$ satisfying
$\frac{s}{q}\leq \frac{\ell}{p}$ and $p\geq \max\{p_0',p_1'\}$ or $q\geq \max\{p_0',p_1'\}$ we have
\begin{displaymath}
  \| S_{\mu, \{V^{0}, V^{1}\}}(t)\|_{\mathcal{L}(M^{p,\ell}(\R^N), M^{q,s}(\R^N))}
  \leq
  \frac{Ce^{at}}{t^{\frac{1}{2\mu}(\frac{\ell}{p}-\frac{s}{q})}}
  \quad t>0,
\end{displaymath}
where for $p=1$  we can replace $M^{1,\ell}(\R^{N})$ with
$\mathcal{M}^{\ell}(\R^{N})$,  
with 
\begin{equation}\label{eq:letting-a}
  a=
  c
  \big(
  \|(V^{0})^+\|_{M^{p_0,\ell_0}(\R^N)}^{\frac{1}{1-\kappa_{0}}}
  +
  \|(V^{1})^+\|_{M^{p_1,\ell_1}(\R^N)}^{\frac{1}{1-\kappa_{1}}}
  \big)
\end{equation}
and $c$, $C$ are some positive constants.

\end{theorem}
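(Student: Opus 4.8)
The plan is to dominate the full semigroup pointwise by the semigroup generated by the \emph{positive parts} of the two potentials, and then to apply the already-established smoothing and growth estimates to this dominating semigroup, whose growth rate is controlled purely by those positive parts. This is exactly where the improvement over (\ref{eq:smoothing_two_potentials})--(\ref{eq:general_growth_bound}) comes from.

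First I would check that $(V^0)^+$ and $(V^1)^+$ are again admissible potentials with the same exponents. Indeed, $0\leq (V^i)^+\leq |V^i|$ pointwise, so $(V^i)^+\in M^{p_i,\ell_i}(\R^N)$ with $\|(V^i)^+\|_{M^{p_i,\ell_i}(\R^N)}\leq \|V^i\|_{M^{p_i,\ell_i}(\R^N)}$; in particular the numbers $\kappa_i$ are unchanged and (\ref{eq:for-thm-with-2potentials}) holds for the pair $\{(V^0)^+,(V^1)^+\}$, so $\{S_{\mu,\{(V^0)^+,(V^1)^+\}}(t)\}_{t\geq 0}$ is defined on the same Morrey spaces as $\{S_{\mu,\{V^0,V^1\}}(t)\}_{t\geq 0}$. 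Since $V^i\leq (V^i)^+$ pointwise for $i=0,1$, the monotonicity estimate (\ref{eq:bound_with_|u0|}) of Theorem \ref{thm:monotonicity-with-respect-to-potentials}, applied with $\tilde V^i=(V^i)^+$, gives for every admissible $u_0$
\[
|S_{\mu,\{V^0,V^1\}}(t)u_0| \leq S_{\mu,\{(V^0)^+,(V^1)^+\}}(t)|u_0|, \quad t>0.
\]
Because the Morrey norm sees a function only through its local $L^q$ norms, it is monotone under the pointwise ordering of nonnegative functions and satisfies $\||h|\|_{M^{q,s}(\R^N)}=\|h\|_{M^{q,s}(\R^N)}$; hence
\[
\|S_{\mu,\{V^0,V^1\}}(t)u_0\|_{M^{q,s}(\R^N)} \leq \|S_{\mu,\{(V^0)^+,(V^1)^+\}}(t)|u_0|\|_{M^{q,s}(\R^N)}.
\]

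Finally I would invoke the smoothing estimate (\ref{eq:smoothing_two_potentials}) together with the explicit growth bound (\ref{eq:general_growth_bound}), both valid under precisely the present hypothesis $p\geq\max\{p_0',p_1'\}$ or $q\geq\max\{p_0',p_1'\}$, now applied to the pair $\{(V^0)^+,(V^1)^+\}$. Since only the norms of the positive parts enter (\ref{eq:general_growth_bound}), the admissible growth rate $\omega$ has exactly the form (\ref{eq:letting-a}); enlarging the generic constant $c$ lets one take $a=\omega$ (the estimate holds for any $a>\omega$). Chaining the two displayed inequalities with this bound on the positive-part semigroup and $\||u_0|\|_{M^{p,\ell}(\R^N)}=\|u_0\|_{M^{p,\ell}(\R^N)}$ yields the asserted estimate. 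The $p=1$ case with $M^{1,\ell}(\R^N)$ replaced by $\mathcal{M}^\ell(\R^N)$ is carried along verbatim, since for $t>0$ the quantity $S_{\mu,\{(V^0)^+,(V^1)^+\}}(t)|u_0|$ is a genuine function by the smoothing of the semigroup, $|u_0|$ now denoting the total variation measure, and all quoted results hold in that setting.

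The argument is essentially a repackaging of monotonicity with the previously proved estimates, so no deep obstacle arises; the only point requiring genuine care is the monotonicity of the \emph{target} Morrey norm under the pointwise bound, namely that $0\leq f\leq g$ implies $\|f\|_{M^{q,s}(\R^N)}\leq\|g\|_{M^{q,s}(\R^N)}$, which must be verified directly from the definition of the norm through local $L^q$ norms on balls.
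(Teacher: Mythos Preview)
Your proposal is correct and follows essentially the same approach as the paper: pointwise domination by the positive-part semigroup via Theorem~\ref{thm:monotonicity-with-respect-to-potentials}, followed by the smoothing estimate (\ref{eq:smoothing_two_potentials}) with growth bound (\ref{eq:general_growth_bound}) applied to $\{(V^0)^+,(V^1)^+\}$. Your write-up is in fact more explicit than the paper's about the intermediate steps (admissibility of the positive parts, monotonicity of the Morrey norm, absorbing the ``any $a>\omega$'' into the generic constant).
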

\begin{proof}
  Observe that since $V^{0} \leq (V^{0})^+$ and $V^{1} \leq (V^{1})^+$
  then Theorem \ref{thm:monotonicity-with-respect-to-potentials}
  gives,  for $u_{0} \in M^{p,\ell}(\R^N)$ with $1\leq p \leq \infty$,
$0<\ell\leq \ell_0$ or in  $\mathcal{M}^{\ell}(\R^{N})$, 
\begin{displaymath}
    |S_{\mu,\{V^{0},V^{1}\}}(t) u_{0}| \leq S_{\mu,\{(V^{0})^+,(V^{1})^+\}}(t)|u_{0}|, \quad t\geq 0 ,
  \end{displaymath}
  while from (\ref{eq:smoothing_two_potentials}) we get
  $\| S_{\mu, \{(V^{0})^+,(V^{1})^+ \}}(t)u_0\|_{M^{q,s}(\R^N)} \leq
  \frac{Ce^{at}}{t^{\frac{1}{2\mu}(\frac{\ell}{p}-\frac{s}{q})}} \|
  u_0 \|_{M^{p,\ell}(\R^N)}$ for $t>0$ with $a$ as in
  (\ref{eq:letting-a}). Thus we get the result.
\end{proof}

\section{Decay of individual solution in Morrey spaces}
\label{sec:decay-indivual-solutions}

In this section we obtain decay properties of individual solution in Morrey scale.
We start from the results concerning pure fractional
diffusion.

Observe that from  from (\ref{eq:estimates_Mpl-Mqs}) we have for $u_0
\in M^{p,\ell}(\R^N)$ or $p=1$ and $u_{0}\in
\mathcal{M}^{\ell}(\R^N)$,  and  $1\leq p, q \leq \infty$ and $0\leq s\leq
\ell\leq N$ with $\frac{s}q < \frac{\ell}p$, 
\begin{equation}\label{eq:pure-dif-decaying-of-individual-sol}
\|S_{\mu}(t)u_0   \|_{M^{q,s}(\R^N)} \to 0 \quad \text{ as } t\to\infty.
\end{equation}

With additional assumption on the  initial data we now prove that
the convergence in (\ref{eq:pure-dif-decaying-of-individual-sol}) extends to  cases when
$\frac{s}{q}= \frac{\ell}{p}$. Notice that  (ii) or (iii) below
impose some regularity or smallness of the \emph{tails} $\big(1-\chi_{_{
    B(x,r)}}\big)u_0$ of the initial data,  respectively, where $\chi_{_{ B(x,r)}}$
denotes the characteristic function of a ball $B(x,r)\subset \R^N$. 

\begin{theorem}
\label{thm:more-pure-frac-dif-decay-of-individual-sol}
Assume $0<\mu \leq 1$ and $u_0 \in M^{p,\ell}(\R^N)$ for some $1\leq p<
\infty$, $0<\ell\leq N$.

Assume  $1<p <\infty$ if $\ell=N$  or  the initial data satisfies either one of the following conditions  
  
\begin{enumerate}

\item
   $u_0$ is the limit in $M^{p,\ell}(\R^N)$ of a sequence
   $\{u_{0n}\}\subset M^{z,\nu}(\R^N)\cap M^{p,\ell}(\R^N)$ 
  with $1\leq z\leq \infty$, $0<\ell\leq\nu \leq N$
  satisfying $\frac{\ell}{p} < \frac{\nu}{z}$.

\item
$\big(1-\chi_{_{ B(x,r)}}\big)u_0\in M^{z,\nu}(\R^N)$
for some $x\in \R^N$, $r>0$ and some $1\leq z\leq \infty$, $0<\ell\leq\nu\leq N$
  satisfying $\frac{\ell}{p} < \frac{\nu}{z}$.

  \item
   $\|\big(1-\chi_{_{ B(x,r)}}\big)u_0\|_{M^{p,\ell}(\R^N)} \to 0$ as
$r\to \infty$ for some $x\in \R^N$.

\end{enumerate}

Then
\begin{equation}\label{eq:more-pure-dif-decaying-of-individual-sol}
\|S_{\mu}(t)u_0   \|_{M^{p,\ell}(\R^N)} \to 0 \quad \text{ as } t\to\infty.
\end{equation}
Consequently, (\ref{eq:pure-dif-decaying-of-individual-sol}) holds for $1\leq q\leq \infty$ and $0<s\leq\ell$ satisfying
$\frac{s}{q}= \frac{\ell}{p}$.

\end{theorem}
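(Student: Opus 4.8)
The plan is to reduce the borderline decay in (\ref{eq:more-pure-dif-decaying-of-individual-sol}), where the target and source indices coincide (that is, $q=p$, $s=\ell$, the diagonal case $\frac{s}{q}=\frac{\ell}{p}$), to the already available \emph{strict} decay (\ref{eq:pure-dif-decaying-of-individual-sol}) together with the contraction bound (\ref{eq:estimates_Mpl-Mpl}). The one engine behind all three conditions is an approximation argument that settles case (i). Writing $S_\mu(t)u_0=S_\mu(t)(u_0-u_{0n})+S_\mu(t)u_{0n}$, the contraction estimate (\ref{eq:estimates_Mpl-Mpl}) gives $\|S_\mu(t)(u_0-u_{0n})\|_{M^{p,\ell}(\R^N)}\le\|u_0-u_{0n}\|_{M^{p,\ell}(\R^N)}$ uniformly in $t>0$, while the hypothesis $u_{0n}\in M^{z,\nu}(\R^N)$ with $\ell\le\nu$ and $\frac{\ell}{p}<\frac{\nu}{z}$ lets me apply (\ref{eq:pure-dif-decaying-of-individual-sol}) with source $M^{z,\nu}(\R^N)$ and target $M^{p,\ell}(\R^N)$ (the exponent inequality is now \emph{strict}) to obtain $\|S_\mu(t)u_{0n}\|_{M^{p,\ell}(\R^N)}\to0$ as $t\to\infty$. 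Hence $\limsup_{t\to\infty}\|S_\mu(t)u_0\|_{M^{p,\ell}(\R^N)}\le\|u_0-u_{0n}\|_{M^{p,\ell}(\R^N)}$ for every $n$, and letting $n\to\infty$ yields (\ref{eq:more-pure-dif-decaying-of-individual-sol}) under (i).

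The remaining hypotheses reduce to this engine. The elementary observation I would use repeatedly is that a compactly supported function of $M^{p,\ell}(\R^N)$ belongs to $L^p(\R^N)=M^{p,N}(\R^N)$ and, being integrable on its support, also to $L^1(\R^N)=M^{1,N}(\R^N)$. For the Lebesgue case $\ell=N$, $1<p<\infty$, I take $u_{0n}=\chi_{_{ B(0,n)}}u_0\in L^1(\R^N)\cap L^p(\R^N)$, which converges to $u_0$ in $L^p(\R^N)$, and invoke (i) with $z=1$, $\nu=N$, using that $p>1$ makes $\frac{N}{p}<N=\frac{\nu}{z}$ strict. For case (ii) I split $u_0=\chi_{_{ B(x,r)}}u_0+\big(1-\chi_{_{ B(x,r)}}\big)u_0$: the tail lies in $M^{z,\nu}(\R^N)\cap M^{p,\ell}(\R^N)$ and decays by (\ref{eq:pure-dif-decaying-of-individual-sol}), while the compactly supported head decays by (\ref{eq:pure-dif-decaying-of-individual-sol}) from $M^{p,N}(\R^N)$ when $\ell<N$, and from $M^{1,N}(\R^N)$ when $\ell=N$ (here (ii) forces $p>1$, since $\frac{N}{p}<\frac{\nu}{z}\le N$). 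For case (iii) I again split $u_0=\chi_{_{ B(x,r)}}u_0+\big(1-\chi_{_{ B(x,r)}}\big)u_0$ for each fixed $r$: the head decays as just described, and the tail is controlled in $t$ by the contraction (\ref{eq:estimates_Mpl-Mpl}), giving $\limsup_{t\to\infty}\|S_\mu(t)u_0\|_{M^{p,\ell}(\R^N)}\le\big\|\big(1-\chi_{_{ B(x,r)}}\big)u_0\big\|_{M^{p,\ell}(\R^N)}$, which tends to $0$ as $r\to\infty$ by hypothesis.

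For the final (\emph{Consequently}) assertion, once $\|S_\mu(t)u_0\|_{M^{p,\ell}(\R^N)}\to0$ is known, I would factor $S_\mu(t)u_0=S_\mu(t/2)\big(S_\mu(t/2)u_0\big)$ and apply the smoothing estimate (\ref{eq:estimates_Mpl-Mqs}) from $M^{p,\ell}(\R^N)$ to $M^{q,s}(\R^N)$; when $\frac{s}{q}=\frac{\ell}{p}$ its exponent vanishes, so the operator norm is a constant independent of $t$, and therefore $\|S_\mu(t)u_0\|_{M^{q,s}(\R^N)}\le c\,\|S_\mu(t/2)u_0\|_{M^{p,\ell}(\R^N)}\to0$.

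I expect the main obstacle to be conceptual rather than computational: on the diagonal $\frac{s}{q}=\frac{\ell}{p}$ the smoothing estimate (\ref{eq:estimates_Mpl-Mqs}) provides no time gain, so decay cannot be extracted from smoothing alone and must instead be manufactured by approximating $u_0$ by data living in a \emph{strictly better} Morrey space and absorbing the approximation error through the contraction property. The delicate bookkeeping is to verify in each reduction that the compactly supported head genuinely sits in such a strictly better space — which is automatic for $\ell<N$ but requires $p>1$ when $\ell=N$ — and that the index constraints $\ell\le\nu$ and $\frac{\ell}{p}<\frac{\nu}{z}$ needed to invoke (\ref{eq:pure-dif-decaying-of-individual-sol}) hold in every case.
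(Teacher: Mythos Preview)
Your proposal is correct and follows essentially the same approach as the paper's proof: the approximation engine for (i), the head/tail splitting for (ii) and (iii), and the reduction of the Lebesgue case $\ell=N$, $1<p<\infty$ to these are all the same. The only cosmetic differences are that for the compactly supported head the paper places it in $M^{1,N}(\R^N)$ (via an unnecessary $C_c^\infty$ approximation) rather than your cleaner $M^{p,N}(\R^N)$ when $\ell<N$, and for the ``Consequently'' part the paper invokes the Morrey embedding (\ref{eq:summary-undotted-Morrey-2}) directly instead of your equivalent factorization $S_\mu(t)=S_\mu(t/2)S_\mu(t/2)$.
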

\begin{proof}
Assuming (i), $u_0$ is the limit in $M^{p,\ell}(\R^N)$ of a sequence $\{u_{0n}\}\subset M^{z,\nu}(\R^N)\cap M^{p,\ell}(\R^N)$. Then, due to (\ref{eq:estimates_Mpl-Mqs}), for positive times
$\|S_{\mu} (t) \|_{\mathcal{L}(M^{z,\nu}(\R^N),M^{p,\ell}(\R^N))} \leq \frac{c}{t^{\frac{1}{2\mu}(\frac{\nu}{z}-\frac{\ell}{p})}} $
and $\|S_{\mu}(t)\|_{\mathcal{L}(M^{p,\ell}(\R^N))}
\leq     c $.
Using this and writing $u_0$ as $u_0-u_{0n} + u_{0n}$ we obtain
\begin{displaymath}
  \| S_{\mu} (t)u_0\|_{M^{p,\ell}(\R^N)}
    \leq
  c \| u_0-u_{0n}\|_{M^{p,\ell}(\R^N)} + \frac{c}{t^{\frac{1}{2\mu}(\frac{\nu}{z}-\frac{\ell}{p})}} \| u_{0n}\|_{M^{z,\nu}(\R^N)}, \quad t>0, \ n\in\N,
\end{displaymath}
which yields $\limsup_{t\to\infty} \| S_{\mu} (t)u_0\|_{M^{p,\ell}(\R^N)} \leq \| u_0-u_{0n}\|_{M^{p,\ell}(\R^N)}$ for every $n\in\N$.
Since this implies that $\limsup_{t\to\infty} \| S_{\mu} (t)u_0\|_{M^{p,\ell}(\R^N)} =0$, we get (\ref{eq:more-pure-dif-decaying-of-individual-sol}).

If (ii) holds then
actually $\big(1-\chi_{_{ B(x,r)}}\big)u_0\in M^{z,\nu}(\R^N) \cap M^{p,\ell}(\R^N)$, so
applying (i)  with $u_0$ replaced by $\big(1-\chi_{_{ B(x,r)}}\big)u_0$ we get \begin{displaymath}
\lim_{t\to\infty} \|S_{\mu} (t)\big(\big(1-\chi_{_{ B(x,r)}}\big)u_0\big)\|_{M^{p,\ell}(\R^N)}=0.
\end{displaymath}

On the other hand, since $\chi_{_{ B(x,r)}}u_0 \in M^{p,\ell}(\R^N)$
and has compact support, then it belongs to
$L^1(\R^N)=M^{1,N}(\R^N)$, and then  $\chi_{_{ B(x,r)}}u_0$ is a
limit in $M^{1,N}(\R^N)$ of a sequence from
$C^\infty_c(\R^N)\subset M^{p,\ell}(\R^N) \cap M^{1,N}(\R^N)$. Hence,
(i) applies  with $u_0$ replaced by $\chi_{_{ B(x,r)}}u_0$ with  $z=1$ and
$\nu=N$ (also for  $1<p<\infty$ if $\ell=N$). Therefore 
we get
\begin{displaymath}
\lim_{t\to\infty} \|S_{\mu} (t)\big(\chi_{_{ B(x,r)}}u_0\big)\|_{M^{p,\ell}(\R^N)}=0
\end{displaymath}
and thus (\ref{eq:more-pure-dif-decaying-of-individual-sol}).

If (iii) holds then, since we get as above that $\lim_{t\to\infty}
\|S_{\mu} (t)\big(\chi_{_{ B(x,r)}}u_0\big)\|_{M^{p,\ell}(\R^N)}=0$
(also  $1<p<\infty$ if $\ell=N$),
and
\begin{displaymath}
\|S_{\mu} (t)\big(\big(1-\chi_{_{
    B(x,r)}}\big)u_0\big)\|_{M^{p,\ell}(\R^N)} \leq
c\|\big(1-\chi_{_{B(x,r)}}\big)u_0 \|_{M^{p,\ell}(\R^N)} ,
\end{displaymath}
we see that
\begin{displaymath}
\limsup_{t\to \infty} \| S_{\mu} (t)u_0\|_{M^{p,\ell}(\R^N)} \leq  c\|
\big(1-\chi_{_{ B(x,r)}}\big)u_0\|_{M^{p,\ell}(\R^N)} \to
0 \ \text{ as } r\to\infty, 
\end{displaymath}
which yields (\ref{eq:more-pure-dif-decaying-of-individual-sol}). This
holds in particular if $\ell=N$ and $1<p<\infty$.  

We remark that once
(\ref{eq:more-pure-dif-decaying-of-individual-sol}) is proved, the
convergence (\ref{eq:pure-dif-decaying-of-individual-sol}) holds for
$1\leq q\leq \infty$, $0<s\leq\ell$ satisfying
$\frac{s}{q}= \frac{\ell}{p}$ due to Morrey's embedding
(\ref{eq:summary-undotted-Morrey-2}).
\end{proof}

Now we prove that in general,  without additional assumption on the
initial data,  (\ref{eq:pure-dif-decaying-of-individual-sol}) does not hold  for $(q,s) =
(p, \ell)$ since we can construct solutions of the pure fractional diffusion problem with
constant norm.

\begin{proposition}\label{prop:constant_norm_solutions}
For  $0<\mu \leq 1$, $1\leq  p \leq  \infty$ and  $0<\ell <  N$ and
for $p=1$ and $\ell=N$, there exists 
$u_{0} \in \dot{M}^{p,\ell}(\R^N)$ such that
\begin{displaymath}
\|S_{\mu}(t) u_{0}\|_{M^{p,\ell}(\R^N)} = c , \quad t>0. 
\end{displaymath}
\end{proposition}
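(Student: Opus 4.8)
The plan is to exploit the self-similar (scaling) structure of $\{S_\mu(t)\}_{t\ge0}$ recorded in (\ref{eq:Smu-kernel}) together with a homogeneous initial datum of the critical homogeneity degree. Writing $D_\lambda f(x)=f(\lambda x)$, the kernel $k_\mu(t,x,y)=t^{-N/2\mu}K_\mu((x-y)t^{-1/2\mu})$ yields the dilation identity $S_\mu(t)=D_{t^{-1/2\mu}}\,S_\mu(1)\,D_{t^{1/2\mu}}$ for $t>0$, while a change of variables in the definition of the Morrey norm gives the covariance $\|D_\lambda f\|_{M^{p,\ell}(\R^N)}=\lambda^{-\ell/p}\|f\|_{M^{p,\ell}(\R^N)}$ together with the translation invariance $\|\tau_z f\|_{M^{p,\ell}(\R^N)}=\|f\|_{M^{p,\ell}(\R^N)}$. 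For $1\le p<\infty$ and $0<\ell<N$ I would take $w_0(x)=|x|^{-\ell/p}$, homogeneous of degree $-\ell/p$; reducing the Morrey supremum to balls of radius one shows $w_0\in M^{p,\ell}(\R^N)$ precisely because $\ell<N$ makes $|x|^{-\ell}$ locally integrable near the origin.

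The constant-norm computation is then immediate: since $D_{t^{1/2\mu}}w_0=t^{-\ell/2\mu p}w_0$, the dilation identity gives $S_\mu(t)w_0=t^{-\ell/2\mu p}D_{t^{-1/2\mu}}S_\mu(1)w_0$, and applying the Morrey covariance with $\lambda=t^{-1/2\mu}$ the two powers of $t$ cancel, so that $\|S_\mu(t)w_0\|_{M^{p,\ell}(\R^N)}=\|S_\mu(1)w_0\|_{M^{p,\ell}(\R^N)}=:c$ for every $t>0$. Here $c<\infty$ because $\|S_\mu(1)\|_{\mathcal L(M^{p,\ell}(\R^N))}=1$ by Lemma \ref{lem:about-contractions}, and $c>0$ because the kernel is strictly positive and $w_0>0$, hence $S_\mu(1)w_0>0$.

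Since $w_0$ is singular at the origin its translates do not converge to it, so it is not in the dotted space; I would therefore regularize once and set $u_0:=S_\mu(1)w_0$. Then $S_\mu(t)u_0=S_\mu(t+1)w_0$ has Morrey norm equal to the same constant $c$ for all $t>0$, which is the desired conclusion, provided $u_0\in\dot M^{p,\ell}(\R^N)$. For $1<p\le\infty$ this membership is exactly Lemma \ref{lem:smoothing-to-dotted-spaces} with $(q,s)=(p,\ell)$. The uniform argument, and for $p=1$ the only available one, is to note that at $t=1$ the operator $S_\mu(1)$ is convolution with $K_\mu\in L^1(\R^N)$, so $u_0=K_\mu*w_0$ and, convolution commuting with translation, $\tau_h u_0-u_0=(\tau_hK_\mu-K_\mu)*w_0$. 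Minkowski's integral inequality together with translation invariance of the Morrey norm gives $\|g*w_0\|_{M^{p,\ell}(\R^N)}\le\|g\|_{L^1(\R^N)}\|w_0\|_{M^{p,\ell}(\R^N)}$, whence $\|\tau_hu_0-u_0\|_{M^{p,\ell}(\R^N)}\le\|\tau_hK_\mu-K_\mu\|_{L^1(\R^N)}\|w_0\|_{M^{p,\ell}(\R^N)}\to0$ as $h\to0$ by continuity of translations in $L^1(\R^N)$; thus $u_0\in\dot M^{p,\ell}(\R^N)$.

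It remains to record the two boundary cases. For $p=\infty$ and any $0<\ell<N$ the critical homogeneity degree is $0$, so I would simply take $u_0\equiv1\in BUC(\R^N)=\dot M^{\infty,\ell}(\R^N)$; since $\int_{\R^N} k_\mu(t,x,y)\,dy=1$ (subordination (\ref{eq:S0alpha(t)-in-MU}) and $\int_0^\infty f_{t,\mu}(s)\,ds=1$) one has $S_\mu(t)1\equiv1$ and $\|S_\mu(t)1\|_{M^{\infty,\ell}(\R^N)}=1$. For $p=1$, $\ell=N$ one has $M^{1,N}(\R^N)=L^1(\R^N)=\dot M^{1,N}(\R^N)$ and $|x|^{-N}$ is no longer locally integrable, so instead I would take any $0\le u_0\in L^1(\R^N)$ with $u_0\ne0$; by Fubini and $\int_{\R^N} k_\mu(t,x,y)\,dx=1$ the semigroup preserves mass, $\|S_\mu(t)u_0\|_{L^1(\R^N)}=\|u_0\|_{L^1(\R^N)}$, which is constant. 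The main obstacle is precisely the dotted-space membership of $u_0$ in the borderline range $p=1$, $\ell<N$, where Lemma \ref{lem:smoothing-to-dotted-spaces} does not reach $(q,s)=(p,\ell)$; the convolution-with-$L^1$-kernel estimate above is what resolves it.
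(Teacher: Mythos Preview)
Your proof is correct and follows the same overall strategy as the paper: both take the homogeneous profile $\phi(x)=|x|^{-\ell/p}$ for $1\le p<\infty$, $0<\ell<N$, use its self-similarity to get a constant-norm orbit, then apply $S_\mu(1)$ to land in the dotted space; both handle $p=\infty$ via $u_0\equiv1$ and $(p,\ell)=(1,N)$ via mass conservation in $L^1(\R^N)$.

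The one substantive addition in your argument is the treatment of the dotted-space membership when $p=1$ and $0<\ell<N$. The paper appeals to Lemma~\ref{lem:smoothing-to-dotted-spaces} to conclude $S_\mu(1)\phi\in\dot M^{p,\ell}(\R^N)$, but that lemma is stated only for target exponents $q>1$, so the case $q=p=1$ is not literally covered there. Your convolution estimate $\|\tau_h(K_\mu*\phi)-K_\mu*\phi\|_{M^{p,\ell}}\le\|\tau_hK_\mu-K_\mu\|_{L^1}\|\phi\|_{M^{p,\ell}}\to0$ fills this gap cleanly and in fact gives a uniform proof for all $1\le p<\infty$. This is a genuine improvement in rigor over the paper's presentation, though the underlying idea is the same.
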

\begin{proof}
  For $p=\infty$ we take  $u_{0}=1$ and then  $S_{\mu}(t)u_{0}=1$ and has constant
$L^{\infty}(\R^{N})$  norm.

Now for  $1\leq p  <\infty$ and $0<\ell < N$ the function 
  \begin{displaymath}
\phi(x)=\frac{1}{|x|^\frac{\ell}{p}}, \quad x\in \R^N \setminus \{0\} ,
\end{displaymath}
satisfies $\phi  \in M^{p,\ell}(\R^{N})$  and is   homogenous of
degree $-\frac{\ell}{p}$, see \cite[Lemma A.4]{C-RB-scaling3-4},  so the solution $S_{\mu}(t)
\phi$ is selfsimilar, see \cite[Theorem 5.2]{C-RB-scaling3-4} and
$\|S_{\mu}(t) \phi\|_{M^{p,\ell}(\R^N)}$ has constant norm, see
\cite[Corollary 5.4 and Corollary 5.5]{C-RB-scaling3-4}. Then  from  Lemma
\ref{lem:smoothing-to-dotted-spaces}
$u_{0}= S_{\mu}(1) \phi\in \dot{M}^{p,\ell}(\R^{N})$ and with this initial
data we also get a solution with constant norm. 

Finally, for  $p=1$ and $\ell =N$, that is, in $L^{1}(\R^{N})$, 
  the kernel $k_{\mu}(t,x)$ (which is the solution with initial data
  the $\delta$)   has constant $L^{1}$ norm, see Propositions 6.1 and
  6.2 in \cite{C-RB-scaling3-4}. Hence we can take $u_{0}=
  k_{\mu}(1,\cdot)$ to get the result. 
\end{proof}

\begin{remark}
The existence of solutions that do not decay to zero in
$M^{p,\ell}(\R^{N})$ is in fact related to the fact that the range of
the fractional diffusion operator  $(-\Delta)^\mu$ is not dense, see \cite[Remark
1.5]{ArendtBattyBenilan92} and \cite[Remark 4.2(iii)]{C-RB-exponential_decay}
for the case of Lebesgue spaces 
$L^{p}(\R^{N})$, $1< p< \infty$.
Actually, the latter reference contains the result in Theorem
\ref{thm:more-pure-frac-dif-decay-of-individual-sol} for the case
$1<p<\infty$ and $\ell=N$. 
\end{remark}

We now consider dissipative  Morrey potentials, for  which  we have the following result.

\begin{corollary}\label{cor:from-{thm:more-pure-frac-dif-decay-of-individual-sol}}
Assume $0<\mu \leq 1$, $V^0\leq 0$ and $V^1\leq 0$ are admissible potentials as in
(\ref{eq:for-thm-with-2potentials}) with $m=1$, and $u_0 \in M^{p,\ell}(\R^N)$ for some $1\leq p< \infty$, $0<\ell\leq \ell_0$. Assume also
$1\leq q\leq \infty$, $0<s\leq\ell$ are such that $\frac{s}{q}\leq \frac{\ell}{p}$ and if
$\frac{s}{q}= \frac{\ell}{p}$ assume moreover that either,  $\ell<N$
and we have (i), or (ii), or (iii) in Theorem
\ref{thm:more-pure-frac-dif-decay-of-individual-sol},  or $\ell=N$ and $p\not=1$.

Then
\begin{displaymath}
\|S_{\mu, \{V^{0},V^{1}\}}(t)u_0   \|_{M^{q,s}(\R^N)} \to 0 \quad \text{ as } t\to\infty.
\end{displaymath}

\end{corollary}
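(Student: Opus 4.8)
The plan is to dominate the perturbed solution by the pure–diffusion solution and then invoke Theorem \ref{thm:more-pure-frac-dif-decay-of-individual-sol}. Since $V^{0}\leq 0$ and $V^{1}\leq 0$, and the zero function is an admissible potential belonging to $M^{p_0,\ell_0}(\R^N)$ and $M^{p_1,\ell_1}(\R^N)$, I would apply the monotonicity result Theorem \ref{thm:monotonicity-with-respect-to-potentials} with $\tilde V^{0}=\tilde V^{1}=0$. Observing from (\ref{eq:VCF_two_potentials}) that $S_{\mu,\{0,0\}}(t)=S_\mu(t)$, estimate (\ref{eq:bound_with_|u0|}) then yields the pointwise bound
\begin{displaymath}
  |S_{\mu,\{V^{0},V^{1}\}}(t)u_0|\leq S_\mu(t)|u_0|, \quad t>0.
\end{displaymath}

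Because the Morrey norm depends only on the modulus of a function and is monotone on nonnegative functions, this gives
\begin{displaymath}
  \|S_{\mu,\{V^{0},V^{1}\}}(t)u_0\|_{M^{q,s}(\R^N)}\leq \|S_\mu(t)|u_0|\|_{M^{q,s}(\R^N)}, \quad t>0,
\end{displaymath}
so it suffices to show the right-hand side tends to $0$. This is precisely the pure-diffusion decay applied to the datum $|u_0|\in M^{p,\ell}(\R^N)$: when $\frac{s}{q}<\frac{\ell}{p}$ it is (\ref{eq:pure-dif-decaying-of-individual-sol}), while when $\frac{s}{q}=\frac{\ell}{p}$ it is the conclusion (\ref{eq:more-pure-dif-decaying-of-individual-sol}) of Theorem \ref{thm:more-pure-frac-dif-decay-of-individual-sol} together with its final assertion through the embedding (\ref{eq:summary-undotted-Morrey-2}). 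The case $p=1$ with $u_{0}\in\mathcal{M}^{\ell}(\R^N)$ is handled identically, replacing $M^{1,\ell}(\R^N)$ throughout.

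The only genuine obstacle is that Theorem \ref{thm:more-pure-frac-dif-decay-of-individual-sol} is being applied to $|u_0|$ rather than to $u_0$, so in the borderline case $\frac{s}{q}=\frac{\ell}{p}$ with $\ell<N$ I must verify that whichever of the hypotheses (i), (ii), (iii) is assumed for $u_0$ is inherited by $|u_0|$. This is routine and I would record it briefly: the pointwise inequality $\big||a|-|b|\big|\leq|a-b|$ shows that an approximating sequence $u_{0n}\to u_0$ in $M^{p,\ell}(\R^N)$ produces $|u_{0n}|\to|u_0|$ in $M^{p,\ell}(\R^N)$ with $\||u_{0n}|\|_{M^{z,\nu}(\R^N)}=\|u_{0n}\|_{M^{z,\nu}(\R^N)}$, which gives (i); and since $1-\chi_{_{B(x,r)}}$ is a $\{0,1\}$-valued multiplier, $\big(1-\chi_{_{B(x,r)}}\big)|u_0|=\big|\big(1-\chi_{_{B(x,r)}}\big)u_0\big|$ has the same Morrey norms as $\big(1-\chi_{_{B(x,r)}}\big)u_0$, which gives (ii) and (iii). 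When $\ell=N$ and $p\neq1$ no condition on the datum is required, and $|u_0|\in L^p(\R^N)$ suffices. This completes the argument.
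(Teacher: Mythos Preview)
Your proof is correct and follows the same route as the paper: dominate via Theorem \ref{thm:monotonicity-with-respect-to-potentials} to get $|S_{\mu,\{V^0,V^1\}}(t)u_0|\leq S_\mu(t)|u_0|$, then invoke (\ref{eq:pure-dif-decaying-of-individual-sol}) and Theorem \ref{thm:more-pure-frac-dif-decay-of-individual-sol}. Your added verification that hypotheses (i)--(iii) transfer from $u_0$ to $|u_0|$ is a point the paper's proof leaves implicit, so your version is in fact slightly more complete; the remark about $u_0\in\mathcal{M}^\ell(\R^N)$ is unnecessary since the corollary does not include that case.
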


\begin{proof}
Theorem \ref{thm:monotonicity-with-respect-to-potentials} gives
\begin{equation}\label{eq:from-{thm:monotonicity-with-respect-to-potentials}}
|S_{\mu, \{V^{0},V^{1}\}}(t)u_0 | \leq S_{\mu}(t)|u_{0}| ,
\quad  t>0.
\end{equation}
Using this, (\ref{eq:pure-dif-decaying-of-individual-sol}) and Theorem \ref{thm:more-pure-frac-dif-decay-of-individual-sol}
we get the result.
\end{proof}

We also have the following  dichotomy.

\begin{corollary} \label{cor:dicothomy_4_decaying_solutions}

  Assume $0<\mu \leq 1$, $V^0\leq 0$ and $V^1\leq 0$ are admissible potentials as in
(\ref{eq:for-thm-with-2potentials}) with $m=1$, and $\mathcal{Z}^{p,\ell}$ is given by
\begin{displaymath}
  \mathcal{Z}^{p,\ell}=
\{\phi\in M^{p,\ell}(\R^N) \colon \lim_{t\to\infty} \|S_{\mu, \{V^{0},V^{1}\}}(t) \phi   \|_{M^{p,\ell}(\R^N)} = 0 \}
\end{displaymath}
for some $1\leq p< \infty$ and $0<\ell\leq \ell_0$ with the restriction that $1<p<\infty$ if
$\ell=\ell_0=N$.

Then $\mathcal{Z}^{p,\ell}$ with the norm of  $M^{p,\ell}(\R^N)$ is a
nontrivial Banach space, positively invariant under
$\{S_{\mu,\{V^{0},V^{1}\}}(t)\}_{t\geq 0}$ and either 
\begin{displaymath}
  \|S_{\mu,\{V^{0},V^{1}\}}(t)\|_{\mathcal{L}(\mathcal{Z}^{p,\ell})} =1 \quad \text{ for all } t\geq0 ,
\end{displaymath}
or
$\{S_{\mu,\{V^{0},V^{1}\}}(t)\}_{t\geq 0}$ is exponentially decaying in $\mathcal{Z}^{p,\ell}$.

Moreover, in the former case
there exist arbitrarily slowly decaying solutions, that is,
given a continuous, positive and decreasing function $g(t)$ of variable $t\in[0,\infty)$ satisfying $\lim_{t\to\infty}g(t)=0$ there is $u_0\in \mathcal{Z}^{p,\ell}$ such that
\begin{displaymath}
      \limsup_{t\to\infty} \frac{\|S_{\mu, \{V^{0},V^{1}\}}(t)u_0\|_{\mathcal{Z}^{p,\ell}}}{g(t)}=\infty .
    \end{displaymath}

If $p=1$ we can replace $M^{1,\ell}(\R^{N})$ with
$\mathcal{M}^{\ell}(\R^{N})$. 
    
\end{corollary}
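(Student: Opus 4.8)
Throughout write $S(t)=S_{\mu,\{V^{0},V^{1}\}}(t)$. The plan is to first record the contraction property that drives everything, and then settle the four assertions in turn. Since $V^{0},V^{1}\leq 0$, applying Theorem \ref{thm:monotonicity-with-respect-to-potentials} with $\tilde V^{0}=\tilde V^{1}=0$ gives $|S(t)u_{0}|\leq S_{\mu}(t)|u_{0}|$; as the Morrey norm satisfies $\|\,|\phi|\,\|_{M^{p,\ell}(\R^{N})}=\|\phi\|_{M^{p,\ell}(\R^{N})}$ and $\|S_{\mu}(t)\|=1$ by Lemma \ref{lem:about-contractions}, this yields $\|S(t)\|_{\mathcal{L}(M^{p,\ell}(\R^{N}))}\leq 1$ for all $t\geq 0$. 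Thus $\{S(t)\}$ is a contraction semigroup on $M^{p,\ell}(\R^{N})$, and the same bound is inherited by any invariant subspace.

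For the Banach space assertion, $\mathcal{Z}^{p,\ell}$ is a linear subspace by linearity of $S(t)$ and the triangle inequality. To see it is closed I would argue as in Theorem \ref{thm:more-pure-frac-dif-decay-of-individual-sol}: if $\phi_{n}\to\phi$ in $M^{p,\ell}(\R^{N})$ with $\phi_{n}\in\mathcal{Z}^{p,\ell}$, then $\|S(t)\phi\|\leq\|\phi-\phi_{n}\|+\|S(t)\phi_{n}\|$ using $\|S(t)\|\leq 1$, so $\limsup_{t\to\infty}\|S(t)\phi\|\leq\|\phi-\phi_{n}\|$ for every $n$, forcing $\phi\in\mathcal{Z}^{p,\ell}$. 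Hence $\mathcal{Z}^{p,\ell}$ is a closed subspace of a Banach space and is itself Banach. Nontriviality follows from Corollary \ref{cor:from-{thm:more-pure-frac-dif-decay-of-individual-sol}} with $q=p$, $s=\ell$: any nonzero $u_{0}\in C^{\infty}_{c}(\R^{N})$ satisfies condition (iii) of Theorem \ref{thm:more-pure-frac-dif-decay-of-individual-sol} when $\ell<N$ (its tail vanishes for large $r$), while for $\ell=\ell_{0}=N$ the hypothesis $1<p<\infty$ places us in the $L^{p}$ case where decay holds unconditionally; either way $C^{\infty}_{c}(\R^{N})\subset\mathcal{Z}^{p,\ell}$. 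Positive invariance is immediate from the semigroup property: for $\phi\in\mathcal{Z}^{p,\ell}$ and $t_{0}>0$ we have $S(t)S(t_{0})\phi=S(t+t_{0})\phi\to 0$ as $t\to\infty$, so $S(t_{0})\phi\in\mathcal{Z}^{p,\ell}$.

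For the dichotomy set $n(t)=\|S(t)\|_{\mathcal{L}(\mathcal{Z}^{p,\ell})}$. The semigroup property and submultiplicativity of the operator norm give $n(t+s)\leq n(t)n(s)$, while the contraction bound gives $n(t)\leq 1$. If $n(t)=1$ for all $t$ we are in the first alternative. Otherwise $n(t_{0})<1$ for some $t_{0}>0$; writing $t=kt_{0}+r$ with $k=\lfloor t/t_{0}\rfloor$ and $0\leq r<t_{0}$, submultiplicativity and $n(r)\leq 1$ give $n(t)\leq n(t_{0})^{k}\leq n(t_{0})^{-1}e^{-\delta t}$ with $\delta=-t_{0}^{-1}\log n(t_{0})>0$ (the degenerate possibility $n(t_{0})=0$ gives eventual nilpotence, which is a fortiori exponential decay). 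This is the exponential decay alternative, and the two alternatives are exhaustive and exclusive.

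Finally, in the first alternative $n(t)\equiv 1$, I would produce arbitrarily slowly decaying solutions by a Banach--Steinhaus argument, which I expect to be the main point. Fix a sequence $t_{n}\to\infty$, so that $g(t_{n})\to 0$. Suppose for contradiction that every $u_{0}\in\mathcal{Z}^{p,\ell}$ had $\limsup_{t\to\infty}\|S(t)u_{0}\|/g(t)<\infty$; then for each such $u_{0}$ the numbers $\|g(t_{n})^{-1}S(t_{n})u_{0}\|$ are bounded in $n$ (the tail is controlled by the finite $\limsup$ and the finitely many early terms are finite since $g>0$), so the uniform boundedness principle on the Banach space $\mathcal{Z}^{p,\ell}$ forces $\sup_{n}g(t_{n})^{-1}\,n(t_{n})<\infty$. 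Since $n(t_{n})=1$, this contradicts $g(t_{n})\to 0$, so some $u_{0}$ realizes the claimed $\limsup=\infty$. The case $p=1$ with $M^{1,\ell}(\R^{N})$ replaced by $\mathcal{M}^{\ell}(\R^{N})$ runs identically, every quoted ingredient being available there. The only genuinely delicate points are the closedness and nontriviality of $\mathcal{Z}^{p,\ell}$ at the borderline exponents and the correct setup of the Banach--Steinhaus contradiction; the dichotomy itself is a soft consequence of submultiplicativity.
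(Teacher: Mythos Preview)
Your proof is correct and follows essentially the same route as the paper: the closedness of $\mathcal{Z}^{p,\ell}$ via the contraction bound, nontriviality from Theorem \ref{thm:more-pure-frac-dif-decay-of-individual-sol}, the dichotomy from submultiplicativity (which the paper packages as Lemma \ref{lem:about-decay}), and the slow-decay statement via the uniform boundedness principle. The only cosmetic difference is that in the Banach--Steinhaus step you pass to a sequence $\{t_n\}$ while the paper applies it to the full family $\{g(t)^{-1}S(t)\}_{t\ge 0}$; both are valid.
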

\begin{proof}
It is immediate that  $\mathcal{Z}^{p,\ell}$ is a linear  space
and we now prove it is closed in $M^{p,\ell}(\R^N)$. For this we take
$\{\phi_n\}\subset \mathcal{Z}^{p,\ell}$ convergent to $\phi$ in
$M^{p,\ell}(\R^N)$. Using (\ref{eq:estimates_Mpl-Mqs}) and
(\ref{eq:from-{thm:monotonicity-with-respect-to-potentials}}) we see
that 
\begin{displaymath}
\begin{split}
\|S_{\mu, \{V^{0},V^{1}\}}(t) \phi   \|_{M^{p,\ell}(\R^N)}  &
\leq
\|S_{\mu, \{V^{0},V^{1}\}}(t) (\phi -\phi_n)  \|_{M^{p,\ell}(\R^N)}
+
\|S_{\mu, \{V^{0},V^{1}\}}(t) \phi_n   \|_{M^{p,\ell}(\R^N)}
\\
&
\leq
c \|\phi -\phi_n  \|_{M^{p,\ell}(\R^N)}
+
\|S_{\mu, \{V^{0},V^{1}\}}(t) \phi_n   \|_{M^{p,\ell}(\R^N)} .
\end{split}
\end{displaymath}
Passing to the limit as $t\to\infty$ we get
$\limsup_{t\to\infty} \|S_{\mu, \{V^{0},V^{1}\}}(t) \phi
\|_{M^{p,\ell}(\R^N)} \leq c \|\phi -\phi_n \|_{M^{p,\ell}(\R^N)} $
for every $n\in\N$, which yields
$\lim_{t\to\infty} \|S_{\mu, \{V^{0},V^{1}\}}(t) \phi
\|_{M^{p,\ell}(\R^N)}=0$. Hence $\phi\in \mathcal{Z}^{p,\ell}$, and so
$\mathcal{Z}^{p,\ell}$ is a Banach space, which is nontrivial by
Theorem \ref{thm:more-pure-frac-dif-decay-of-individual-sol}. 

Since for each positive time $S_{\mu, \{V^{0},V^{1}\}}(t)$ takes
$\mathcal{Z}^{p,\ell}$ into itself, then 
$\{S_{\mu, \{V^{0},V^{1}\}}(t)\}_{t\geq0}$ is a semigroup  in
$\mathcal{Z}^{p,\ell}$. 

If $\|S_{\mu,\{V^{0},V^{1}\}}(t_0)\|_{\mathcal{L}(\mathcal{Z}^{p,\ell})}
\not=1$ for some $t_0>0$ then,  from 
(\ref{eq:from-{thm:monotonicity-with-respect-to-potentials}}) and
Lemma \ref{lem:about-contractions}, 
$\|S_{\mu,\{V^{0},V^{1}\}}(t_0)\|_{\mathcal{L}(\mathcal{Z}^{p,\ell})}
<1$, so $\{S_{\mu,\{V^{0},V^{1}\}}(t)\}_{t\geq 0}$ is exponentially
decaying in $\mathcal{Z}^{p,\ell}$, see Lemma \ref{lem:about-decay}
proved below. 

If this is not the case, take $g$ as in the statement and  suppose
that $\limsup_{t\to\infty} \frac{\|S_{\mu,
    \{V^{0},V^{1}\}}(t)u_0\|_{\mathcal{Z}^{p,\ell}}}{g(t)}<\infty $
for each $u_0\in \mathcal{Z}^{p,\ell}$. 
Then for each
    $u_0\in \mathcal{Z}^{p,\ell}$
    there is a constant $C=C(u_{0})$ such that
    \begin{displaymath}
      \frac{\|S_{\mu, \{V^{0},V^{1}\}}(t)u_0\|_{\mathcal{Z}^{p,\ell}}}{g(t)} \leq C(u_{0}), \quad t\geq
      0 ,
    \end{displaymath}
    so by the uniform boundedness principle we have
    \begin{displaymath}
    \sup_{t\geq0}\frac{\|S_{\mu, \{V^{0},V^{1}\}}(t)\|_{\mathcal{L}(\mathcal{Z}^{p,\ell})}}{g(t)} =:M <\infty ,
    \end{displaymath}
    and so $\|S_{\mu,
      \{V^{0},V^{1}\}}(t)\|_{\mathcal{L}(\mathcal{Z}^{p,\ell})} \leq M
    g(t)<1$ for large enough $t$ which is a contradiction. 

With the same proof, if $p=1$ we can replace 
$M^{1,\ell}(\R^{N})$ with $\mathcal{M}^{\ell}(\R^{N})$. 
  \end{proof}

\section{Exponential bounds  for  the  perturbed semigroups. The case
  of a dissipative potential}
\label{sec:exponential-decay}

 We consider first the case of a dissipative admissible  potential, that is $V\leq 0$, and our goal is to obtain
 sharp exponential bounds on the perturbed semigroup $\{S_{\mu, V}(t)\}_{t\geq 0}$ in Morrey  spaces.
Hence, if $V\leq 0$,  from
Theorem \ref{thm:monotonicity-with-respect-to-potentials}
for $u_{0} \in
M^{p,\ell}(\R^N)$ with $1\leq p \leq
\infty$, $0<\ell\leq N$ we have
\begin{equation} \label{eq:domination}
  |S_{\mu, V}(t) u_{0}| \leq S_{\mu}(t)|u_{0}|, \quad t\geq 0
\end{equation}
and from Lemma \ref{lem:about-contractions} we get
\begin{equation}\label{eq:contracting-property-in-Mpl}
     \|S_{\mu, V}(t)\|_{{\mathcal L}(M^{p,\ell}(\R^N))}  \leq
    1, \quad t\geq 0 . 
  \end{equation}
    
For  $0<\mu\leq 1$ the exponential type of the semigroup $\{S_{\mu,
  V}(t)\}_{t\geq 0}$ in $M^{p,\ell}(\R^N)$ is defined by
\begin{equation}\label{eq:morrey_exponential_type}
 \omega_{p,\ell} (V) \mydef   \sup \Omega_{p,\ell} (V)
\end{equation}
where
\begin{displaymath}
\Omega_{p,\ell} (V)  \mydef \{\omega \in \R \colon  \|S_{\mu, V} (t)\|_{\mathcal{L}(M^{p,\ell}(\R^N))} \leq
 M e^{-\omega t} \ \mbox{for all $t\geq 0$ and some $M\geq 1$} \} . 
\end{displaymath}
A completely analogous definition holds when we replace
$M^{1,\ell}(\R^{N})$ by $\mathcal{M}^{\ell}(\R^{N})$. The exponential
type is denoted $\omega_{\ell}(V)$ in this case.

Since is dissipative,  $V\leq 0$,  from (\ref{eq:contracting-property-in-Mpl}),  $0\in
\Omega_{p,\ell} (V)$ and then $\omega_{p,\ell} (V)\geq 0$. Therefore, either
$\omega_{p,\ell} (V) >0$ and the semigroup decays exponentially or $\omega_{p,\ell} (V) =0$ and
then $ \|S_{\mu, V}(t)\|_{{\mathcal L}(M^{p,\ell}(\R^N))} =1$ for
all $t\geq 0$ and Corollary \ref{cor:dicothomy_4_decaying_solutions}
applies so there exist slowly decaying solutions. Also,  to prove exponential decay it is enough to prove that for
some $t_{0}>0$ we have
\begin{equation}\label{eq:enough_4_decay}
 \|S_{\mu, V}(t_{0})\|_{{\mathcal  L}(M^{p,\ell}(\R^N))} <1 
\end{equation}
see Lemma \ref{lem:about-decay}.

In what follows, unless some confusion may arise, we will drop the
dependence on $V$ in the exponential type.

\subsection{A  bounded potential}
\label{sec:bounded-potential}

We first consider the case of a bounded dissipative potential $0\geq V \in L^{\infty}(\R^{N})$. In such a case,
following \cite{Arendt-Batty}, we consider the following
definition.

\begin{definition}

  \begin{enumerate}
  \item
    We say that a set contains arbitrarily large balls iff for any
    $r > 0$ there exists $x_0 \in \R^N$ such that the ball of radius
    $r$ around $x_0$ is included in this set.

    The class $\mathcal{G}$ consists of all open subsets of $\R^N$
    containing arbitrarily large balls.

\item 
    We say that the bounded dissipative potential $V\leq 0$ satisfies the
    Arendt and Batty's condition iff
    \begin{equation}\label{eq:Arendt-Batty-condition}
      \int_{G} V\, dx=-\infty \quad \text{ for all } \ G\in
      \mathcal{G} . 
    \end{equation}
      \end{enumerate}
\end{definition}

\begin{remark} \label{rem:equivalent-2-AB-condition}

    From \cite[Proposition 1.4]{Arendt-Batty},
    (\ref{eq:Arendt-Batty-condition}) is equivalent to the existence
    of $c>0$ and $r>0$ such that
    \begin{displaymath}
      \sup_{x\in \R^N} \int_{B(x,r)}  V \leq -c . 
    \end{displaymath}

\end{remark}

Because $V\in L^{\infty}(\R^{N})$ the semigroup $S_{\mu,V}(t)$ can
also be considered in  Lebesgue spaces $L^{p}(\R^{N})$ with $1\leq
p\leq \infty$ (recall that  when  $\ell =N$, $M^{p,\ell}(\R^N) =
L^{p}(\R^N)$).
Then, in a completely analogous way as in
(\ref{eq:morrey_exponential_type})  above we define the exponential
type $\omega_{p}\geq 0$ of the semigroup in $L^{p}(\R^{N})$ for
$1\leq p \leq \infty$. In particular, the results in
\cite[Theorem 4.1]{C-RB-exponential_decay} imply that for $0<\mu \leq 1$, either
all $\omega_{p}>0$ or all are zero, see Remark
\ref{rem:exponential_type_in_Lebesgue}. 
It was actually  proved in \cite{C-RB-exponential_decay} that for $0<\mu \leq
1$,  exponential decay  happens
for some $1\leq p \leq \infty$ if and only if
(\ref{eq:Arendt-Batty-condition})  holds true, see \cite[Theorem 4.6  and
Theorem 4.9]{C-RB-exponential_decay}. Recall that the case $\mu=1$ was
first considered  in  \cite{Arendt-Batty}.
In particular,  condition
(\ref{eq:Arendt-Batty-condition}) is equivalent to $\omega_{\infty}>0$
for $0<\mu \leq 1$.

Hence, (\ref{eq:Arendt-Batty-condition}) characterises the exponential decay
in Lebesgue spaces for a bounded dissipative  potential and in case it
holds there exists $C_0\geq1$, $\omega_{0}>0$ such that for all $1\leq p\leq
\infty$,
\begin{displaymath} %\label{eq:exponential_decay_Lebesgue}
      \|S_{\mu,V}(t) \|_{\mathcal{L}(L^p(\R^N))}\leq C_0 e^{-\omega_0 t}, \quad t\geq0.
\end{displaymath}
Notice that $\omega_{0}$ can be taken independent of $1\leq p\leq
\infty$, see \cite[Theorem 4.1, Theorem 4.6]{C-RB-exponential_decay}.

We show below similar results  for Morrey spaces. Recall that if
$V$ is bounded then the semigroup $S_{\mu, V}(t)$ is well
defined in $M^{p,\ell}(\R^{N})$ for $1\leq p\leq \infty$ and $0<\ell
\leq N$ and in $\mathcal{M}^{\ell}(\R^{N})$ for $0<\ell \leq N$, see Remark
\ref{rem:adding_potentials}.

First we prove condition (\ref{eq:Arendt-Batty-condition}) is
necessary for exponential decay.

\begin{theorem}\label{thm:necessity-of-Arendt-Batty-condition}

  Assume $0<\mu \leq 1$ and $0 \geq V\in L^{\infty}(\R^N)$.

If for some $1\leq p,q\leq \infty$, $0<\ell\leq N$ satisfying
$\frac{s}{q}\leq \frac{\ell}{p}$ we have
\begin{equation}\label{eq:the-estimate-considered-here}
\|S_{\mu, V }(t)\|_{\mathcal{L}(M^{p,\ell}(\R^N),M^{q,s}(\R^N))}\leq \frac{Ce^{-\omega t}}{t^{\frac{1}{2\mu}(\frac{\ell}{p}-\frac{s}{q})}},
  \quad t>0,
\end{equation}
with certain $C,\omega>0$,  then $\omega_{\infty}>0$ and then  $V$ satisfies
(\ref{eq:Arendt-Batty-condition}).

\end{theorem}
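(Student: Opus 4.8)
The plan is to show that the hypothesised smoothing bound forces exponential decay in $L^\infty(\R^N)$, i.e. $\omega_\infty>0$, after which the quoted equivalence (following \cite{C-RB-exponential_decay}, recalled above) that $\omega_\infty>0$ is equivalent to (\ref{eq:Arendt-Batty-condition}) finishes the argument. The whole difficulty lies in passing from decay measured in Morrey (or Lebesgue) spaces, whose elements must carry a prescribed mass distribution, to decay of $\|S_{\mu,V}(t)\|_{\mathcal L(L^\infty)}$, whose extremal datum is the constant function $1$, which does not belong to $M^{q,s}(\R^N)$ when $s>0$.

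First I would upgrade (\ref{eq:the-estimate-considered-here}) to a Lebesgue-to-$L^\infty$ smoothing estimate with exponential decay. Set $p_\ast:=Np/\ell$; by Morrey's embedding (\ref{eq:summary-undotted-Morrey-2}), since $\frac{N}{p_\ast}=\frac\ell p$, one has $L^{p_\ast}(\R^N)=M^{p_\ast,N}(\R^N)\subset M^{p,\ell}(\R^N)$, so (\ref{eq:the-estimate-considered-here}) gives $\|S_{\mu,V}(t)\|_{\mathcal L(L^{p_\ast},M^{q,s})}\le Ce^{-\omega t}t^{-\alpha}$ with $\alpha=\tfrac1{2\mu}(\tfrac\ell p-\tfrac sq)\ge0$. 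On the other hand, since $V\le0$, the domination (\ref{eq:domination}) combined with the free smoothing (\ref{eq:estimates_Mpl-Mqs}) into $L^\infty=M^{\infty,s}(\R^N)$ yields $\|S_{\mu,V}(t)\|_{\mathcal L(M^{q,s},L^\infty)}\le\|S_\mu(t)\|_{\mathcal L(M^{q,s},L^\infty)}\le ct^{-\beta}$ with $\beta=\tfrac{s}{2\mu q}$ and \emph{no} exponential factor, because for a dissipative potential $(V)^+=0$. Composing these two bounds at time $t$ and writing $\tau=2t$ gives $\eta(\tau):=\|S_{\mu,V}(\tau)\|_{\mathcal L(L^{p_\ast},L^\infty)}\le C'e^{-\frac\omega2\tau}\tau^{-(\alpha+\beta)}\to0$.

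Next I would estimate $g(\tau):=\|S_{\mu,V}(\tau)\|_{\mathcal L(L^\infty)}$. Because $S_{\mu,V}(\tau)$ is order preserving and a contraction on $L^\infty$ (Lemma \ref{lem:about-contractions} together with (\ref{eq:domination})), one has $g(\tau)=\|S_{\mu,V}(\tau)1\|_\infty$ and $0\le S_{\mu,V}(\tau)1\le1$. Fixing $x\in\R^N$ and $R>0$, I split $1=\chi_{B(x,R)}+(1-\chi_{B(x,R)})$. The local part is handled by the estimate just derived, $|S_{\mu,V}(\tau)\chi_{B(x,R)}(x)|\le\eta(\tau)\|\chi_{B(x,R)}\|_{L^{p_\ast}}=c\,\eta(\tau)R^{\ell/p}$ (using $N/p_\ast=\ell/p$), while the far part is controlled by (\ref{eq:domination}) and the explicit self-similar kernel (\ref{eq:kernel-heat-smgp})--(\ref{eq:kernel-fractional-heat-smgp}): since $1-\chi_{B(x,R)}\ge0$, $0\le S_{\mu,V}(\tau)(1-\chi_{B(x,R)})(x)\le\int_{|y-x|>R}k_\mu(\tau,x,y)\,dy=\int_{|z|>R\tau^{-1/2\mu}}K_\mu(z)\,dz\le c\,(R\tau^{-1/2\mu})^{-2\mu}$, uniformly in $x$ (using $K_\mu\in L^1$ with the tail $|z|^{-(N+2\mu)}$ for $0<\mu<1$ and the Gaussian tail for $\mu=1$). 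Hence $g(\tau)\le c_1\eta(\tau)R^{\ell/p}+c_2R^{-2\mu}\tau$ for every $R\ge\tau^{1/2\mu}$.

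Finally I would optimise in $R$: balancing the two terms, i.e. taking $R\sim(\tau/\eta(\tau))^{1/(\ell/p+2\mu)}$, gives $g(\tau)\le c\,\eta(\tau)^{\theta}\tau^{1-\theta}$ with $\theta=\tfrac{2\mu}{\ell/p+2\mu}\in(0,1)$, and since $\eta(\tau)$ decays exponentially this tends to $0$ as $\tau\to\infty$. Thus $g(\tau_0)=\|S_{\mu,V}(\tau_0)\|_{\mathcal L(L^\infty)}<1$ for some $\tau_0>0$, so by Lemma \ref{lem:about-decay} we get $\omega_\infty>0$, and the quoted characterisation then yields (\ref{eq:Arendt-Batty-condition}). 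I expect the main obstacle to be exactly this last balancing step, namely making the polynomial growth $R^{\ell/p}$ produced by the norm of the localising cutoff lose against the exponential gain $\eta(\tau)$; the preliminary observation that for a dissipative $V$ the auxiliary smoothing $M^{q,s}\to L^\infty$ carries no exponential growth is what keeps the composed bound exponentially decaying. The case $p=\infty$, where $M^{p,\ell}=L^\infty$ and the cutoff has bounded norm, is immediate from the composition alone.
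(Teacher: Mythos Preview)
Your proof is correct and follows essentially the same approach as the paper: reduce to showing $\omega_\infty>0$ via $\|S_{\mu,V}(t)\|_{\mathcal L(L^\infty)}=\|S_{\mu,V}(t)1\|_\infty$, split $1=\chi_{B(x,R)}+(1-\chi_{B(x,R)})$, control the near part by the hypothesised decay composed with a $M^{q,s}\to L^\infty$ smoothing (picking up $R^{\ell/p}$ from the cutoff), control the far part by domination and the kernel tail, then optimise in $R$. The only differences are cosmetic: the paper works directly with $\chi_R\in M^{p,\ell}$ (noting $\|\chi_R\|_{M^{p,\ell}}=cR^{\ell/p}$ by scaling) rather than detouring through the embedding $L^{p_\ast}\subset M^{p,\ell}$, it splits the semigroup as $S_{\mu,V}(1)S_{\mu,V}(t-1)$ rather than at $t/2$, and for $\mu=1$ it keeps the sharper Gaussian tail bound $e^{-R^2/(2t)}$ (taking $R=t$) instead of your unified $R^{-2\mu}\tau$; none of these affect the argument.
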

\begin{proof}
If (\ref{eq:the-estimate-considered-here}) holds with $\omega>0$ and
$p=q=\infty$, then (\ref{eq:Arendt-Batty-condition}) follows  from the
results in \cite{C-RB-exponential_decay} mentioned above.
Therefore, we assume that we have (\ref{eq:the-estimate-considered-here})
with  $\omega>0$ and some $1\leq p,q\leq \infty$, $0<\ell\leq
N$ satisfying $\frac{s}{q}\leq \frac{\ell}{p}$ and $p\not=\infty$.  We
are going to show that (\ref{eq:the-estimate-considered-here}) implies
that the semigroup decays in $L^{\infty}(\R^{N})$ and so
(\ref{eq:Arendt-Batty-condition}) follows again from
the results in \cite{C-RB-exponential_decay}.

For this observe that  from the  order preserving properties of the
perturbed semigroup
we have
\begin{displaymath}
\|S_{\mu,V}(t)\|_{\mathcal{L}(L^\infty(\R^N))} = \|S_{\mu,V}
(t) 1\|_{L^\infty(\R^N)} .
\end{displaymath}

Then fix $x\in \R^{N}$ and denote by  $\Car_{R}$ the characteristic
function of the ball $B(x,R)$ and decompose, for $t\geq 1$,
\begin{equation}\label{eq:inequality-we-now-consider-here-below}
  (S_{\mu, V} (t) 1 ) (x)= \Big(S_{\mu, V} (1) S_{\mu,
    V} (t-1) \Car_{R}\Big) (x)   +
 \Big( S_{\mu, V} (t) (1-\Car_{R})\Big) (x) .
\end{equation}

As for the first term in
(\ref{eq:inequality-we-now-consider-here-below}), estimate
(\ref{eq:smoothing_one_potential}) gives
\begin{equation}\label{eq:from-Proposition-5.2}
\begin{split}
  \| S_{\mu, V} (1) &S_{\mu, V} (t-1) \Car_{R}\|_{L^\infty(\R^N)}
  \\
  &
  \leq \|S_{\mu, V} (1)\|_{\mathcal{L}(M^{q,s}(\R^N),L^\infty(\R^N))}
  \|S_{\mu, V} (t-1)\|_{\mathcal{L}(M^{p,\ell}(\R^N), M^{q,s}(\R^N))} \|\Car_{R}\|_{M^{p,\ell}(\R^N)}
\end{split}
\end{equation}
and $\|\Car_{R}\|_{M^{p,\ell}(\R^N)}= c R^\frac{\ell}{p}$. To see
this, observe that translations are isometries in Morrey spaces, so we can
  assume $x=0$ and then  $\Car_{R}$ is a dilation of $\Car_{1}$ and $M^{p,\ell}(\R^N)$ is a
  homogeneous space of degree $-\frac{\ell}{p}$, see \cite[Remark
  2.11(i)]{C-RB-scaling1}, so
     $ \|\Car_{R}\|_{M^{p,\ell}(\R^N)}=
     \|\Car_{1}\|_{M^{p,\ell}(\R^N)} R^\frac{\ell}{p}$.

For  the second term in
(\ref{eq:inequality-we-now-consider-here-below}), since $0 \geq
V\in L^{\infty}(\R^N)$, from Theorem
 \ref{thm:monotonicity-with-respect-to-potentials}
we have
\begin{equation}\label{eq:use-in-estimate-below}
0\leq S_{\mu,V}(t) (1-\Car_{R}) \leq S_{\mu}(t) (1-\Car_{R}).
\end{equation}

We next consider the following two cases.

\smallskip

\noindent
\fbox{Case $0<\mu<1$.} In this case using (\ref{eq:use-in-estimate-below}) and the kernel in (\ref{eq:kernel-fractional-heat-smgp})
we get as in \cite[proof of Theorem 4.1]{C-RB-exponential_decay}
\begin{displaymath}
0 \leq \big( S_{\mu, V} (t) (1-\Car_{R})\big) \leq c \frac{t}{R^{2\mu}}.
\end{displaymath}

Using  this and (\ref{eq:from-Proposition-5.2}) in
(\ref{eq:inequality-we-now-consider-here-below})
we see that
\begin{displaymath}
  \|S_{\mu,V}(t)\|_{\mathcal{L}(L^\infty(\R^N))} \leq  c \big( \|S_{\mu, V}
  (t-1)\|_{\mathcal{L}(M^{p,\ell}(\R^N), M^{q,s}(\R^N))} R^{\frac{\ell}{p}} +  \frac{t}{R^{2\mu}} \big)
\end{displaymath}
and after minimising this in $R>0$ we obtain
\begin{equation}\label{eq:estimateLinfty_from_Mpell_mu}
    \|S_{\mu,V}(t)\|_{\mathcal{L}(L^\infty(\R^N))} \leq  c  \|S_{\mu, V}
  (t-1)\|_{\mathcal{L}(M^{p,\ell}(\R^N),M^{q,s}(\R^N))}^{\frac{2p\mu}{\ell+2p\mu}} t^{\frac{\ell}{\ell + 2p\mu}}
  .
\end{equation}

Then since $\omega>0$ in  (\ref{eq:the-estimate-considered-here})  we
get  that the right hand side of (\ref{eq:estimateLinfty_from_Mpell_mu})  tends to $0$ as
$t\to\infty$, we conclude that
$\|S_{\mu,V}(t)\|_{\mathcal{L}(L^\infty(\R^N))}\to 0$ as
$t\to\infty$, and we get (\ref{eq:enough_4_decay}) and then the result.

\noindent
\fbox{Case $\mu=1$.} In this case using (\ref{eq:use-in-estimate-below}) and the kernel in (\ref{eq:kernel-heat-smgp}) we get as in  \cite[proof of Theorem 4.1]{C-RB-exponential_decay}
\begin{displaymath}
  0 \leq \big( S_{\mu, V} (t) (1-\Car_{R})\big) (x) \leq c    e^{-
  \frac{R^2}{2t}} .
\end{displaymath}
Then using this in
(\ref{eq:from-Proposition-5.2}) and
(\ref{eq:inequality-we-now-consider-here-below}) we get
\begin{equation} \label{eq:estimateLinfty_from_Mpell}
  \|S_{\mu,V}(t)\|_{\mathcal{L}(L^\infty(\R^N))} \leq  c \big( \|S_{\mu, V}
  (t-1)\|_{\mathcal{L}(M^{p,\ell}(\R^N)),M^{q,s}(\R^N))} R^{\frac{\ell}{p}} + c    e^{-
  \frac{R^2}{2t}} \big)
\end{equation}
and taking $R=t$ and
since $\omega>0$ in (\ref{eq:the-estimate-considered-here}) we get 
again $\|S_{\mu,V}(t)\|_{\mathcal{L}(L^\infty(\R^N))}\to 0$ as
$t\to\infty$, and so we obtain (\ref{eq:enough_4_decay}) and then  the result.
\end{proof}

Then we prove the following result which in particular proves that
(\ref{eq:Arendt-Batty-condition}) gives exponential decay in Morrey
spaces.

\begin{theorem}\label{thm:eponential-type-in-Morrey}

Assume  $0<\mu\leq 1$ and $0\geq V\in L^\infty(\R^N)$ and  $1\leq
p< \infty$ and $0<\ell\leq N$.

Then
\begin{enumerate}

\item
If $0<\mu <1$ 
  \begin{equation}\label{eq:the-exp-type-when-0<mu<1}
0\leq \frac{\omega_\infty }{\vartheta}\leq \omega_{p,\ell} \leq
(1+\frac{\ell}{2p\mu}) \omega_{\infty}
\end{equation}
where $\vartheta= 1+\frac{N}{2\mu}$ and we can take $\vartheta=1$ when $\ell=N$.

\item
If $\mu=1$, 
  \begin{equation}\label{eq:the-same-exp-type}
    0 \leq \omega_{p,\ell}=\omega_\infty .
  \end{equation}

\end{enumerate}

In particular, for $0<\mu \leq 1$, either all
$ \omega_{p,\ell} = 0 = \omega_{\infty}$ or they are all positive simultaneously and then  $V$ satisfies
(\ref{eq:Arendt-Batty-condition}).  In such a case there
exists a common decay rate independent of $p$ and $\ell$. 

For the semigroup in $\mathcal{M}^{\ell}(\R^{N})$, we have
$\omega_{\ell} = \omega_{1,\ell}$ for $0<\mu \leq 1$.

\end{theorem}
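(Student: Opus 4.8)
The plan is to establish the two displayed inequalities separately and then read off the ``in particular'' conclusions. Throughout I would use three ingredients: the domination $|S_{\mu,V}(t)u_0|\le S_\mu(t)|u_0|$ from Theorem \ref{thm:monotonicity-with-respect-to-potentials}, the contractivity $\|S_{\mu,V}(t)\|_{\mathcal L(M^{p,\ell})}\le1$ from Lemma \ref{lem:about-contractions}, and the pointwise kernel control $0\le k_{\mu,V}(t,x,y)\le k_\mu(t,x,y)$ coming from that domination together with the kernel representation for bounded dissipative potentials (Appendix \ref{sec:expon-decay-uniform}). For the \emph{upper} bounds I would fix $\omega<\omega_{p,\ell}$, so $\|S_{\mu,V}(t)\|_{\mathcal L(M^{p,\ell})}\le Me^{-\omega t}$, and compose with the smoothing estimate (\ref{eq:smoothing_one_potential}) to get $\|S_{\mu,V}(t)\|_{\mathcal L(M^{p,\ell},L^\infty)}\le C'e^{-\omega t}$ for $t\ge2$. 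Feeding this quantity as the factor $A$ into the algebraic inequality (\ref{eq:estimateLinfty_from_Mpell_mu}) (taken with $M^{q,s}=L^\infty$) gives, for $0<\mu<1$, $\|S_{\mu,V}(t)\|_{\mathcal L(L^\infty)}\le c\,e^{-\omega\frac{2p\mu}{\ell+2p\mu}t}\,t^{\frac{\ell}{\ell+2p\mu}}$; for $\mu=1$, choosing $R=\sqrt{2\omega}\,t$ in (\ref{eq:estimateLinfty_from_Mpell}) gives $\|S_{1,V}(t)\|_{\mathcal L(L^\infty)}\le c\,e^{-\omega t}\,t^{\ell/p}$. Letting $\omega\uparrow\omega_{p,\ell}$ yields $\omega_\infty\ge\frac{2p\mu}{\ell+2p\mu}\,\omega_{p,\ell}$, i.e. the upper bound in (\ref{eq:the-exp-type-when-0<mu<1}), and $\omega_\infty\ge\omega_{p,\ell}$ when $\mu=1$.

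The \emph{lower} bound, that $L^\infty$ decay forces Morrey decay, is the main difficulty: pointwise domination by $k_\mu$ carries no decay in $t$, while the $L^\infty$ bound controls only the total mass $\sup_x\int k_{\mu,V}(t,x,y)\,dy\le C_0e^{-\omega'_\infty t}$ and says nothing about the behaviour of $S_{\mu,V}(t)u_0$ at large spatial scales, which is exactly what the Morrey norm at large radii sees. I would fix $\omega'_\infty<\omega_\infty$ and, for a given ball $B(x_0,R)$, split $|u_0|=|u_0|\chi_{B(x_0,2R)}+|u_0|\big(1-\chi_{B(x_0,2R)}\big)=:g_1+g_2$. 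The local piece obeys $\|g_1\|_{L^p}\le\|u_0\|_{M^{p,\ell}}(2R)^{(N-\ell)/p}$, and since the Lebesgue-space results recalled above (\cite[Theorem 4.1, Theorem 4.6]{C-RB-exponential_decay}) give $\|S_{\mu,V}(t)\|_{\mathcal L(L^p)}\le C_0e^{-\omega'_\infty t}$ at a common rate for all $p$, its contribution to the Morrey norm decays at the full rate $\omega'_\infty$, uniformly in $R$. For the tail I would combine the far-field bound $k_{\mu,V}(t,x,y)\le k_\mu(t,x,y)\lesssim t|x-y|^{-(N+2\mu)}$ with the pointwise bound $k_{\mu,V}(t,x,y)\lesssim s^{-N/2\mu}\,e^{-\omega'_\infty(t-s)}$, obtained from $k_{\mu,V}(t,x,y)=\int k_{\mu,V}(t-s,x,z)\,k_{\mu,V}(s,z,y)\,dz\le\big(\sup_z k_\mu(s,z,y)\big)\int k_{\mu,V}(t-s,x,z)\,dz$. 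Decomposing $g_2$ into dyadic annuli $A_j$ around $x_0$, using $\int_{A_j}|u_0|\le c\,\|u_0\|_{M^{p,\ell}}(2^jR)^{N-\ell/p}$, and taking on each annulus whichever kernel bound is smaller, the two sums balance at a crossover radius; optimising produces a tail whose Morrey contribution decays like $e^{-\omega'_\infty t/\vartheta}$ with $\vartheta=1+\frac N{2\mu}$. Letting $\omega'_\infty\uparrow\omega_\infty$ then gives $\omega_{p,\ell}\ge\omega_\infty/\vartheta$.

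Two refinements sharpen the constant. When $\ell=N$ we have $M^{p,N}=L^p$, so the $L^p$ bound above already gives decay at rate $\omega'_\infty$ directly, i.e. $\vartheta=1$. When $\mu=1$ the Gaussian kernel (\ref{eq:kernel-heat-smgp}) decays faster than any power in $|x-y|/\sqrt t$, so the crossover radius stays at the natural scale $\sqrt t$ and the balance loses nothing, giving $\omega_{p,\ell}\ge\omega_\infty$; with the upper bound this yields the equality (\ref{eq:the-same-exp-type}). The dichotomy then follows at once from the two-sided bounds: if $\omega_\infty>0$ then every $\omega_{p,\ell}\ge\omega_\infty/\vartheta>0$, while if some $\omega_{p,\ell}>0$ then $\omega_\infty\ge\frac{2p\mu}{\ell+2p\mu}\,\omega_{p,\ell}>0$; a common positive rate (any number below $\omega_\infty/\vartheta$) works for all $p,\ell$, and the equivalence with (\ref{eq:Arendt-Batty-condition}) follows from \cite{C-RB-exponential_decay} and Theorem \ref{thm:necessity-of-Arendt-Batty-condition}.

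Finally, for the measure space, $M^{1,\ell}(\R^N)\subset\mathcal M^\ell(\R^N)$ isometrically gives $\omega_\ell\le\omega_{1,\ell}$; conversely, for $\nu\in\mathcal M^\ell$ the smoothing estimate (\ref{eq:smoothing_one_potential}) puts $S_{\mu,V}(1)\nu\in M^{1,\ell}$, and writing $S_{\mu,V}(t)\nu=S_{\mu,V}(t-1)\,S_{\mu,V}(1)\nu$ and using that the two norms agree on $M^{1,\ell}$ transfers the $\omega_{1,\ell}$ decay, so $\omega_\ell\ge\omega_{1,\ell}$ and hence $\omega_\ell=\omega_{1,\ell}$. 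The step I expect to be most delicate is the tail estimate in the lower bound: the polynomial spatial decay of $k_\mu$ and the total-mass decay of $k_{\mu,V}$ must be interpolated so that the balance is uniform in $p,\ell$ and reproduces exactly $\vartheta=1+\frac N{2\mu}$ (degenerating to $1$ for $\mu=1$ or $\ell=N$); in particular keeping the exponential rate sharp there, rather than losing the factor $\tfrac12$ that a naive splitting $k(2t)=k(t)\!*\!k(t)$ would introduce, is the crux.
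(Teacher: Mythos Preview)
Your upper-bound argument (feeding the Morrey decay into the inequalities (\ref{eq:estimateLinfty_from_Mpell_mu}) and (\ref{eq:estimateLinfty_from_Mpell}) derived in Theorem~\ref{thm:necessity-of-Arendt-Batty-condition}) is exactly what the paper does in Step~4. The measure-space equality $\omega_\ell=\omega_{1,\ell}$ is also handled the same way (the paper packages it as Corollary~\ref{cor:exponential_type_4_measures}, but the content is your smoothing-plus-embedding argument).

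Where you genuinely diverge is the \emph{lower} bound $\omega_{p,\ell}\ge\omega_\infty/\vartheta$. The paper does not split $u_0$ into a local piece and a dyadic tail. Instead it interpolates the kernel pointwise: from $\|k_{\mu,V}(t,\cdot,\cdot)\|_{L^\infty}\le ce^{-\nu t}t^{-N/2\mu}$ (which follows from $\|S_{\mu,V}(t)\|_{\mathcal L(L^1,L^\infty)}$) and $0\le k_{\mu,V}\le k_\mu$, it writes
\[
k_{\mu,V}(t,x,y)=k_{\mu,V}^{\,\theta}\,k_{\mu,V}^{\,1-\theta}\le \|k_{\mu,V}(t)\|_{L^\infty}^{\theta}\,k_\mu^{\,1-\theta}(t,x,y),\qquad 0<\theta<1.
\]
For $0<\mu<1$ this yields $k_{\mu,V}(t,x,y)\lesssim e^{-\nu\theta t}\,t^{-N/2\mu}I_\mu^{1-\theta}\!\big(|x-y|/t^{1/2\mu}\big)$; one then applies the generalised Minkowski inequality directly in the Morrey norm and notes that $\int_{\R^N}I_\mu^{1-\theta}<\infty$ precisely when $\theta<\frac{1}{1+N/2\mu}$, which is where the constant $\vartheta$ comes from. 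For $\mu=1$ the trick is sharper: a power of a Gaussian is again a Gaussian, $k_1^{1-\theta}(t,\cdot)=c_\theta\,k_1\big((1-\theta)^{-1}t,\cdot\big)$, so the bound becomes $|S_{1,V}(t)u_0|\le c_\theta e^{-\nu\theta t}S_1\big((1-\theta)^{-1}t\big)|u_0|$, and contractivity of $S_1$ in $M^{p,\ell}$ (Lemma~\ref{lem:about-contractions}) finishes it with $\theta\to1$.

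Your near/far decomposition with dyadic annuli does lead to the same constant (the crossover scale is $r_*\sim(te^{\omega'_\infty t})^{1/(N+2\mu)}$, and the balanced sum gives $tr_*^{-2\mu}\sim t^{N/(N+2\mu)}e^{-\omega'_\infty t/\vartheta}$), so the outline is sound. The paper's pointwise-interpolation device is simply more economical: it replaces your two kernel bounds plus annular optimisation by a single inequality, sidesteps the uniformity-in-$R$ bookkeeping entirely, and for $\mu=1$ turns the ``crossover stays at the natural scale'' heuristic into an exact identity. Your worry about losing a factor $\tfrac12$ from a naive $k(2t)=k(t)*k(t)$ splitting never arises in the paper, precisely because the interpolation is in the kernel value, not in time.
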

\begin{proof}
  As observed above $\omega_{p,\ell}\geq 0$. Now the proof is done
  through several steps. 

\noindent   {\bf Step 1.} Here we show that if
$\omega_{\infty}=0$ then $\omega_{p,\ell}=0$ for all $1\leq
 p< \infty$ and $0<\ell\leq N$.

Actually, if  for some $1\leq
 p< \infty$ and $0<\ell\leq N$ we have $\omega_{p,\ell} >0$ then
 Theorem \ref{thm:necessity-of-Arendt-Batty-condition} 
implies that $\omega_{\infty}>0$, which is a contradiction. 

\smallskip

\noindent
{\bf Step 2.} Now  we show that if $\omega_{\infty}>0$ then we have
the lower bounds for $\omega_{p,\ell}$ in terms of $\omega_{\infty}$ in
(\ref{eq:the-exp-type-when-0<mu<1}), (\ref{eq:the-same-exp-type}).

Since  $\omega_\infty>0$, we take any $0<\omega \in \Omega_\infty$ so that we have 
  \begin{equation}\label{eq:omega-in-Omegainfty}
\|S_{\mu, V} (t)\|_{\mathcal{L}(L^\infty(\R^N))} \leq
 M e^{-\omega t}, \quad t> 0 .
\end{equation}
From (\ref{eq:domination}) and the  estimates in Lebesgue spaces from
Section 6 in \cite{C-RB-scaling1}, 
  \begin{displaymath} %\label{eq:estimates_Lp-Lq}
    \|S_{\mu}(t)\|_{{\mathcal L}(L^{p}(\R^N), L^{q} (\R^N))} =
    \frac{c}{ t^{\frac{N}{2\mu}(\frac{1}{p}-\frac{1}q)}}, \quad t>0
  \end{displaymath}
we get that 
$\|S_{\mu ,V}(t)\|_{{\mathcal L}(L^{1}(\R^N), L^\infty(\R^N))} \leq
\frac{c}{  t^{\frac{N}{2\mu}}}$. Using this and
(\ref{eq:omega-in-Omegainfty}) we then see 
that for each $0<\nu<\omega$ there is a constant $c$ such
that \begin{displaymath}%\label{eq:L1-Linfty} 
\|S_{\mu,V}(t)\|_{{\mathcal L}(L^{1}(\R^N), L^\infty(\R^N))} \leq
\frac{c e^{-\nu t}}{  t^{\frac{N}{2\mu }}} , \quad  t>0 .
\end{displaymath}
%(see \cite[Lemma 6.2]{C-RB-morrey_linear_perturbation}).

On the other hand,  from Lemma \ref{lem:kernel-for-SmuV0-in-L1U} 
the semigroup has a nonnegative  kernel $k_{\mu,V}(t,\cdot,\cdot)$ and
\begin{displaymath}%\label{eq:integral-representation-SmuV0}
S_{\mu, V}(t) u_0  = \int_{\R^N} k_{\mu,V} (t,\cdot,y) u_0 (y) \, dy , 
\end{displaymath}
for all $ u_0\in  L^1_U(\R^N)$ and this space contains
$M^{p,\ell}(\R^N)$ for all  $1\leq  p\leq
\infty$, $0<\ell\leq N$.

Restricting to $u_{0}\in L^{1}(\R^{N}) \subset L^{1}_{U}(\R^{N})$ and
using \cite[Theorem in \S7.3.1]{2004-Arendt}, see also \cite[proof of 
Theorem 1.3]{1994_Arendt-Bukhvalov}, we have
\begin{displaymath}%\label{eq:1st-kmuV0-estimate}
\|k_{\mu,V}(t,\cdot,\cdot)\|_{L^\infty(\R^N\times \R^N)} =
\|S_{\mu,V}(t)\|_{{\mathcal L}(L^1(\R^N), L^\infty(\R^N))} \leq
\frac{c e^{-\nu t}}{  t^{\frac{N}{2\mu }}} , \quad  t>0 .
\end{displaymath}

Also, from \cite[Section 3]{C-RB-morrey_linear_perturbation}, it is known that $S_{\mu}(t)$
can be represented by the kernel in (\ref{eq:kernel-heat-smgp}) or
(\ref{eq:kernel-fractional-heat-smgp}), for 
$u_0\in  L^1_U(\R^N) $, and hence 
\begin{displaymath}%\label{eq:integral-representation-Smu}
S_{\mu}(t) u_0 = \int_{\R^N} k_{\mu} (t,\cdot,y) u_0 (y) \, dy,
\end{displaymath}

Therefore, (\ref{eq:domination}) implies 
\begin{displaymath}%\label{eq:2nd-kmuV0-estimate}
0 \leq k_{\mu,V} (t,x,y)\leq k_{\mu}(t,x,y), \quad t>0 .
\end{displaymath}

Then, given any $0<\theta<1$, we have 
\begin{equation}\label{eq:theta_estimate_kernelmuV0}
0\leq k_{\mu,V} (t,x,y) = k_{\mu,V}^\theta (t,x,y)
k_{\mu,V}^{1-\theta} (t,x,y)  \leq
\|k_{\mu,V}(t,\cdot,\cdot)\|_{L^\infty(\R^N\times \R^N)}^\theta
k_{\mu}^{1-\theta} (t,x,y), \quad t>0 . 
\end{equation}

\noindent
\fbox{Case $0<\mu<1$} 
Now the fractional heat kernel satisfies
(\ref{eq:kernel-fractional-heat-smgp}), that is,  
\begin{displaymath} 
  0\leq k_{\mu}(t,x,y)\sim
\frac{1}{t^\frac{N}{2\mu}}
I_{\mu}\left(\frac{x-y}{t^{\frac{1}{2\mu}}}\right) \ \text{ for } x,y\in \R^N 
\quad 
\text{ where } \
I_{\mu}(\cdot) = \frac{1}{(1+|\cdot|^{2})^{\frac{N+2\mu}{2}}} , 
\end{displaymath}
see
\cite{2019Bogdan_fract_laplac_hardy,2017BonforteSireVazquez:_optimal-existence},\cite[Section
6]{C-RB-scaling3-4},  
so using  this in (\ref{eq:theta_estimate_kernelmuV0}) we get 
\begin{displaymath}
0\leq k_{\mu,V} (t,x,y) \leq \left(\frac{c e^{-\nu t}}{
    t^{\frac{N}{2\mu}}}\right)^\theta k_{\mu}^{1-\theta}
(t,x,y) 
= c_{\mu,\theta}\frac{ e^{-\nu \theta t}}{  t^{\frac{N}{2\mu}}} I_{\mu}^{1-\theta}\left(\frac{x-y}{t^{\frac{1}{2\mu}}}\right) , \quad \quad t>0 
\end{displaymath}
for some constant $c_{\mu,\theta}>0$.
Consequently, denoting 
\begin{displaymath}
J_{\mu,\theta,t}(\cdot)=\frac{1}{t^\frac{N}{2\mu}}
 I_{\mu}^{1-\theta}\left(\frac{\cdot}{t^{\frac{1}{2\mu}}}\right) ,
\end{displaymath} 
we have for $u_0\in M^{p,\ell}(\R^N)$, $1\leq p<\infty$, $0<\ell\leq
N$ and $x\in \R^{N}$, 
\begin{displaymath}
|S_{\mu,V}(t) u_{0}| (x) \leq  S_{\mu,V}(t)    |u_{0}|(x)
    \leq c_{\mu,\theta} e^{ -\nu \theta t} \int_{\R^N} J_{\mu,\theta,t}(y)\, |u_0 (x-y)| \, dy , \quad t>0 .
\end{displaymath}
Then, using the  generalized Minkowski's inequality, see \cite[\S18.1]{1985Nikolsky}, we see that in a ball
$B(x_0,R)\subset \R^N$ 
\begin{displaymath}
\begin{split}
  \|S_{\mu, V}(t) u_0\|_{L^p(B(x_0,R))} &\leq
  c_{\mu,\theta} e^{ - \nu \theta t} 
   \left(\int_{B(x_0,R)}\left(\int_{\R^N} J_{\mu,\theta,t}(y)  |u_0(x-y)| \, dy \right)^p dx \right)^\frac{1}{p} 
   \\
   &
   \leq    c_{\mu,\theta} e^{ -\nu \theta  t} 
   \int_{\R^N}\left(\int_{B(x_0,R)} \left(J_{\mu,\theta,t}(y)   |u_0(x-y)| \right)^p\, dx \right)^\frac{1}{p} dy
   , \quad t>0 ,
   \end{split}
\end{displaymath}
which gives
\begin{equation}\label{eq:4nd-kmuV0-estimate} 
R^\frac{\ell-N}{p}
   \|S_{\mu, V}(t) u_0\|_{L^p(B(x_0,R))} \leq c_{\mu,\theta} e^{ -\nu \theta t} 
 \|u_0 \|_{M^{p,\ell}(\R^N)}  \int_{\R^N} J_{\mu,\theta,t}(y) \, dy , \quad t>0 , \ R>0 .
 \end{equation}
If $0<\theta< \frac{1}{1+\frac{N}{2\mu}}$ then $\int_{\R^N} J_{\mu,\theta,t}(y) \, dy <\infty$, and then (\ref{eq:4nd-kmuV0-estimate}) yields
\begin{displaymath}
\|S_{\mu,V}(t)  \|_{\mathcal{L}(M^{p,\ell}(\R^N)) }\leq c_{\mu,\theta}
e^{ - \nu \theta t}, \quad t>0 .
\end{displaymath} 
Therefore, for  $0<\theta< \frac{1}{1+\frac{N}{2\mu}}$ we have 
$\nu \theta \in \Omega_{p,\ell}$, so that $\omega_{p,\ell} \geq \nu\theta$.
Since $0<\nu<\omega$ and
$0<\theta<\frac{1}{1+\frac{N}{2\mu}}$ are arbitrary, we get that $\omega_{p,\ell}\geq
\frac{1}{1+\frac{N}{2\mu}}\omega$ and therefore  $\omega_{p,\ell}\geq
\frac{1}{1+\frac{N}{2\mu}}\omega_\infty$. 

\smallskip

\noindent
\fbox{Case $\mu=1$.} 
Now the heat kernel is  given by (\ref{eq:kernel-heat-smgp}), that is,  
\begin{displaymath} 
  k_{1}(t,x,y)= \frac{e^{-\frac{|x-y|^2}{4t}}}{(4\pi
      t)^\frac{N}{2}}  , \quad t>0 , \ x,y\in \R^N ,
\end{displaymath}
and then (\ref{eq:theta_estimate_kernelmuV0}) gives 
\begin{displaymath}%\label{eq:3nd-k1V0-estimate}
\begin{split}
0\leq k_{1,V} (t,x,y) \leq \left(\frac{c e^{-\nu t}}{  t^{\frac{N}{2}}}\right)^\theta k_{1}^{1-\theta} (t,x,y)
&=c^{\theta} (4\pi)^{\frac{\theta N}{2}}{\scriptstyle(1-\theta)}^{-\frac{N}{2}} e^{ - \nu \theta t}
\frac{e^{-\frac{|x-y|^2}{4{\scriptstyle(1-\theta)^{-1}}t}}}{(4\pi {\scriptstyle(1-\theta)^{-1}}t)^\frac{N}{2}}
\\
&
=c_\theta
e^{ -\nu \theta t}
k_{1} ({\scriptstyle(1-\theta)^{-1}}t, x,y) , \quad \quad t>0
\end{split}
\end{displaymath}
where $c_\theta=c^{\theta} (4\pi)^{\frac{\theta N}{2}} (1-\theta)^{-\frac{N}{2}}$.

Then for $u_0\in M^{p,\ell}(\R^N)$ we have 
\begin{displaymath}
|S_{1,V}(t) u_{0}| \leq  S_{1,V}(t)
    |u_{0}|
    \leq
c_{\theta} e^{ -\nu \theta t} \int_{\R^N} k_{1} ({\scriptstyle(1-\theta)^{-1}}t,\cdot,y)\, |u_0 (y)| \, dy
    =
c_{\theta} e^{ - \nu \theta t} S_{1}({\scriptstyle(1-\theta)^{-1}}t) |u_{0}| , \quad t>0 ,
\end{displaymath}
and using  Lemma \ref{lem:about-contractions} we get 
\begin{displaymath}
\|S_{1,V}(t) u_{0} \|_{M^{p,\ell}(\R^N) }
\leq
c_\theta
e^{ - \nu \theta t} \| S_{1}({\scriptstyle(1-\theta)^{-1}}t) |u_{0}| \|_{M^{p,\ell}(\R^N)}
\leq c_\theta
e^{ - \nu \theta t} \| u_{0}  \|_{M^{p,\ell}(\R^N)}, \quad t>0 .
\end{displaymath}
Therefore  $\nu \theta \in \Omega_{p,\ell}$ and $\omega_{p,\ell} \geq \nu\theta$.
Since $0<\nu<\omega$ and $0<\theta<1$ are arbitrary we get that
$\omega_{p,\ell}\geq \omega$ and therefore $\omega_{p,\ell}\geq
\omega_\infty$.

 \smallskip

\noindent
{\bf Step 3.}  In this step we show that if $\omega_{p,\ell}=0$  for some $1 \leq  p < \infty$ and $0<\ell\leq N$
then $\omega_{p,\ell}=0$ for all $1 \leq  p \leq \infty$ and $0<\ell\leq N$.

For this we first observe that if $\omega_{p,\ell}=0$  for some $1 \leq  p < \infty$ and $0<\ell\leq N$
then $\omega_{\infty}=0$. Otherwise, $\omega_{\infty}>0$ and then by Step 2, $\omega_{p,\ell}>0$ for all $1\leq
 p< \infty$ and $0<\ell\leq N$, a contradiction. 

But then, if $\omega_{\infty}=0$, by Step 1,  then $\omega_{p,\ell}=0$ for all $1\leq
 p< \infty$ and $0<\ell\leq N$. 

\smallskip

\noindent
{\bf Step 4.} In this step, we prove
(\ref{eq:the-same-exp-type}) and (\ref{eq:the-exp-type-when-0<mu<1}).
From Step 2, it is enough to prove the other inequalities and from
Step 1 and 3 it is enough to consider the case
$\omega_{p,\ell}>0$. In such a case we take any $0<\omega \in \Omega_{p,\ell}$ so that we have for some $M\geq 1$
\begin{equation}\label{eq:omega-in-Omegapell} 
\|S_{\mu, V} (t)\|_{\mathcal{L}(M^{p,\ell}(\R^N))} \leq
 M e^{-\omega t}, \quad t> 0 .
\end{equation}

\noindent
\fbox{Case $0<\mu<1$}
We show that if  $\omega_{p,\ell}>0$ then $\omega_\infty \geq \frac{\omega_{p,\ell}}{1+\frac{\ell}{2p\mu}}$.

Then as in the proof of Theorem
\ref{thm:necessity-of-Arendt-Batty-condition} with $q=p$ and $s=\ell$ we get  from
(\ref{eq:estimateLinfty_from_Mpell_mu}) 
\begin{displaymath}
    \|S_{\mu,V}(t)\|_{\mathcal{L}(L^\infty(\R^N))} \leq  c  \|S_{\mu, V}
  (t-1)\|_{\mathcal{L}(M^{p,\ell}(\R^N))}^{\frac{2p\mu}{\ell+2p\mu}} t^{\frac{\ell}{\ell + 2p\mu}} , \quad t>1  .
\end{displaymath}
Then we see that there is a constant $M$ such that for all $t> 1$
\begin{equation}\label{eq:for-mu-estimate-for-tgeq1}
\|S_{\mu, V} (t)\|_{\mathcal{L}(L^\infty(\R^N))}\leq M e^{-\nu t} 
\end{equation}
 for $0<\nu< \frac{2p\mu \omega}{\ell+2p\mu}$. 

Since for $0\leq t \leq 1$ we have from (\ref{eq:domination}) and
Lemma \ref{lem:about-contractions} that
$\|S_{\mu, V} (t)\|_{\mathcal{L}(L^\infty(\R^N))} \leq 1$, increasing $M$ if necessary we see that
(\ref{eq:for-mu-estimate-for-tgeq1}) holds for all $t\geq0$, so that
$\nu\in\Omega_\infty$ and $\omega_\infty\geq \nu$.  Since
$0<\nu<\frac{2p\mu \omega}{\ell+2p\mu}$ is arbitrary we get
$\omega_\infty \geq \frac{2p\mu \omega}{\ell+2p\mu}$, and so
$\omega_\infty \geq \frac{2p\mu \omega_{p,\ell}}{\ell+2p\mu}$.

That  $\vartheta=1$ when  $\ell=N$ was obtained in
\cite{C-RB-exponential_decay} since $M^{p,N}(\R^{N}) =
L^{p}(\R^{N})$.

\noindent
\fbox{Case $\mu=1$} 
We show that if
$\omega_{p,\ell}>0$ then $\omega_\infty \geq \omega_{p,\ell}$. 

Then as in the proof of Theorem
\ref{thm:necessity-of-Arendt-Batty-condition} with $q=p$ and $s=\ell$ we get  from
(\ref{eq:estimateLinfty_from_Mpell}) 
\begin{displaymath}
  \|S_{1,V}(t)\|_{\mathcal{L}(L^\infty(\R^N))} \leq  c \big( \|S_{1, V}
  (t-1)\|_{\mathcal{L}(M^{p,\ell}(\R^N))} R^{\frac{\ell}{p}} + c e^{- \frac{R^2}{2t}} \big), \quad t>1.
\end{displaymath}
for $R>0$.

Taking $R=\sqrt{2\omega} t$ and using  (\ref{eq:omega-in-Omegapell}) we see that 
there is a constant $M$ such that for all $t> 1$
\begin{equation}\label{eq:estimate-for-tgeq1}
\|S_{1, V} (t)\|_{\mathcal{L}(L^\infty(\R^N))}\leq M e^{-\nu t} .
\end{equation}
for  $0<\nu<\omega$. 

Since for $0\leq t \leq 1$ we have from (\ref{eq:domination}) and
Lemma \ref{lem:about-contractions}  that $\|S_{1, V}
(t)\|_{\mathcal{L}(L^\infty(\R^N))} \leq 1$,
increasing $M$ if necessary we see that (\ref{eq:estimate-for-tgeq1}) holds for all $t\geq0$, so that $\nu\in\Omega_\infty$ and $\omega_\infty\geq \nu$.
Since $0<\nu<\omega$ is arbitrary we get
$\omega_\infty \geq \omega$, and so $\omega_\infty \geq
\omega_{p,\ell}$. 

The rest is immediate. 

That for measures, $\omega_{\ell} = \omega_{1,\ell}$ follows from
Corollary \ref{cor:exponential_type_4_measures}. 
\end{proof}

Observe that from Theorem \ref{thm:eponential-type-in-Morrey}  the
exponential decay of solutions in Morrey spaces is somehow determined
by the exponential decay in $L^{\infty}(\R^{N})$. Therefore our goal is now to have a more
precise information on $\omega_{0}$ in the estimate 
\begin{equation}\label{eq:decay-in-Linfty}
    \|S_{\mu,V}(t) \|_{\mathcal{L}(L^\infty(\R^N))}\leq C_0  e^{-\omega_0 t}, \quad t\geq0.
  \end{equation}

  \begin{proposition}
    \label{prop:exponential_decay_Linfty}

   Assume $0<\mu \leq 1$ and $0 \geq V\in
  L^{\infty}(\R^N)$ satisfies  (\ref{eq:Arendt-Batty-condition}).

  Therefore in (\ref{eq:decay-in-Linfty}) we can take $\omega_0$ as
\begin{displaymath}
  %\label{eq:omega0}
  \omega_0=  c \|V \|_{L^\infty(\R^N)}
\end{displaymath}
with
\begin{equation}\label{eq:cmuV0}
c= -\frac{1}{\theta} \ln\left(1-(1-\theta) \inf_{\scriptstyle \R^N}
  \{\Psi_{\theta,\mu,V}  \} \right) >0
  \end{equation}
and $C_0$ as
\begin{equation}\label{eq:C0}
C_0= \frac{1}{1-(1-\theta) \inf_{\scriptstyle \R^N}
  \{\Psi_{\theta,\mu,V}  \} } > 1
  \end{equation}
where $0<\theta<1$ is arbitrary and $\Psi_{\theta,\mu,V}$ is given by
\begin{equation}
  \label{eq:PsiV0}
\Psi_{\theta,\mu,V}(x) =\int_0^{\theta/{\scriptscriptstyle \|V
    \|_{L^\infty(\R^N)}}} S_\mu(s) |V|(x) ds , \qquad x \in \R^{N}
\end{equation}
and satisfies
\begin{equation}\label{eq:about-Psitheta,mu,V0}
0< \inf_{\ \scriptstyle  \R^N} \{\Psi_{\theta,\mu,V} \} \leq
\Psi_{\theta,\mu,V}(x) \leq \theta <1, \qquad x \in \R^{N} .
\end{equation}

\end{proposition}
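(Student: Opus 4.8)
The plan is to work directly with the orbit of the constant function, $w(t):=S_{\mu,V}(t)\mathbf 1$. Because $V\le 0$, Theorem~\ref{thr:Smu-order-preserving}, Theorem~\ref{thm:monotonicity-with-respect-to-potentials} and $S_\mu(t)\mathbf 1=\mathbf 1$ give $0\le w(t)\le \mathbf 1$ pointwise, while the order preserving property yields $\|S_{\mu,V}(t)\|_{\mathcal L(L^\infty(\R^N))}=\|w(t)\|_{L^\infty(\R^N)}$, exactly as in the proof of Theorem~\ref{thm:necessity-of-Arendt-Batty-condition}. Applying the variation of constants formula (\ref{eq:VCF_one_potential}) to $u_0=\mathbf 1\in L^\infty(\R^N)=M^{\infty,\ell}(\R^N)$ and writing $V=-|V|$,
\begin{displaymath}
w(t)=\mathbf 1-\int_0^t S_\mu(t-s)\big(|V|\,w(s)\big)\,ds,\qquad t>0 .
\end{displaymath}
All estimates below are pointwise, using that $S_\mu(\cdot)$ has the nonnegative kernel in (\ref{eq:kernel-heat-smgp})--(\ref{eq:kernel-fractional-heat-smgp}) and that $S_\mu(t)\phi\le\|\phi\|_{L^\infty(\R^N)}$ for $\phi\in L^\infty(\R^N)$. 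The whole argument hinges on controlling $w$ at the single time $\tau_0:=\theta/\|V\|_{L^\infty(\R^N)}$.

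First I would produce a pointwise lower bound on the short-time interval: since $0\le w(r)\le\mathbf 1$ and $S_\mu(s-r)|V|\le\|V\|_{L^\infty(\R^N)}$, the displayed identity gives $w(s)\ge 1-s\|V\|_{L^\infty(\R^N)}\ge 1-\theta$ for all $0\le s\le\tau_0$. Feeding this back into the identity at $t=\tau_0$, and using $|V|w(s)\ge(1-\theta)|V|$ with $|V|,\,S_\mu\ge0$,
\begin{displaymath}
w(\tau_0)(x)\le 1-(1-\theta)\int_0^{\tau_0}S_\mu(\tau_0-s)|V|(x)\,ds
= 1-(1-\theta)\,\Psi_{\theta,\mu,V}(x),
\end{displaymath}
after the substitution $\sigma=\tau_0-s$ that identifies the integral with $\Psi_{\theta,\mu,V}$ in (\ref{eq:PsiV0}). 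Hence $w(\tau_0)\le 1-(1-\theta)\inf_{\R^N}\Psi_{\theta,\mu,V}=:1-\delta$ pointwise.

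Next I would establish (\ref{eq:about-Psitheta,mu,V0}). The upper bound $\Psi_{\theta,\mu,V}\le\theta$ is immediate from $S_\mu(s)|V|\le\|V\|_{L^\infty(\R^N)}$ and $\int_0^{\tau_0}\|V\|_{L^\infty(\R^N)}\,ds=\theta$. The positivity of the infimum is the crux. Here I would invoke the reformulation in Remark~\ref{rem:equivalent-2-AB-condition}: there are $c_0,r>0$ with $\int_{B(x,r)}|V|\,dy\ge c_0$ for every $x$. Since the kernels in (\ref{eq:kernel-heat-smgp})--(\ref{eq:kernel-fractional-heat-smgp}) are translation invariant, strictly positive and self-similar, on the compact set $\{|z|\le r,\ \tfrac{\tau_0}{2}\le s\le\tau_0\}$ they are bounded below by a constant $\beta>0$ depending only on $\mu,N,r,\tau_0$; this is where the cases $\mu=1$ and $0<\mu<1$ are handled separately, via the Gaussian and the $H_\mu$-profile respectively. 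Restricting the $y$-integral to $B(x,r)$ and the $s$-integral to $[\tau_0/2,\tau_0]$ then gives $\Psi_{\theta,\mu,V}(x)\ge \tfrac{\tau_0}{2}\,\beta\,c_0>0$ uniformly in $x$, so $\inf_{\R^N}\Psi_{\theta,\mu,V}>0$ and, with $\inf\Psi\le\theta<1$, one has $0<\delta<1$.

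Finally I would iterate. From $0\le w(\tau_0)\le(1-\delta)\mathbf 1$, the semigroup property together with linearity and order preservation give $w(n\tau_0)=S_{\mu,V}(\tau_0)^n\mathbf 1\le(1-\delta)^n$ by induction; and for $t=n\tau_0+\rho$ with $0\le\rho<\tau_0$, the contractivity (\ref{eq:contracting-property-in-Mpl}) yields $\|w(t)\|_{L^\infty(\R^N)}\le\|w(n\tau_0)\|_{L^\infty(\R^N)}\le(1-\delta)^n$. Using $n> t/\tau_0-1$ this reads $\|S_{\mu,V}(t)\|_{\mathcal L(L^\infty(\R^N))}\le(1-\delta)^{-1}e^{(t/\tau_0)\ln(1-\delta)}=C_0e^{-\omega_0 t}$, and substituting $\tau_0=\theta/\|V\|_{L^\infty(\R^N)}$ and $\delta=(1-\theta)\inf_{\R^N}\Psi_{\theta,\mu,V}$ reproduces $\omega_0=c\|V\|_{L^\infty(\R^N)}$ with $c$ as in (\ref{eq:cmuV0}) and $C_0$ as in (\ref{eq:C0}), both with the asserted sign. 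The main obstacle is precisely the uniform lower bound $\inf_{\R^N}\Psi_{\theta,\mu,V}>0$, since it is the only place where the Arendt and Batty condition enters quantitatively; everything else is bookkeeping once the order preserving structure and the kernel positivity are in place.
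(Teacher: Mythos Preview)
Your proof is correct and follows the same overall architecture as the paper's: estimate $\|S_{\mu,V}(\tau_0)\mathbf 1\|_{L^\infty}$ at the single time $\tau_0=\theta/\|V\|_{\infty}$, show it is at most $\alpha=1-(1-\theta)\inf_{\R^N}\Psi_{\theta,\mu,V}<1$, verify (\ref{eq:about-Psitheta,mu,V0}) via the reformulation in Remark~\ref{rem:equivalent-2-AB-condition} and kernel positivity, and then iterate (the paper packages this last step as Lemma~\ref{lem:about-decay}, which you reprove inline).

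The one genuine difference lies in how the key bound $w(\tau_0)\le 1-(1-\theta)\Psi_{\theta,\mu,V}$ is reached. The paper substitutes the variation of constants identity for $u(s)$ back into its own integral term, obtaining a second-order Duhamel expansion
\begin{displaymath}
u(t)=1-\int_0^t S_\mu(s)|V|\,ds+\int_0^t S_\mu(t-s)|V|\int_0^s S_\mu(s-r)|V|u(r)\,dr\,ds,
\end{displaymath}
and then bounds the double integral crudely by $\|V\|_\infty\, t\int_0^t S_\mu(s)|V|\,ds$. You instead first extract the elementary lower bound $w(s)\ge 1-s\|V\|_\infty\ge 1-\theta$ on $[0,\tau_0]$ directly from the identity and feed that back once. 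Your route is shorter and avoids the iterated integral altogether; the paper's route, on the other hand, is paired with a slightly more explicit lower bound for $\inf\Psi$ (it integrates over the full interval $[0,\tau_0]$ and records the profile $g_\mu(s)$ rather than restricting to $[\tau_0/2,\tau_0]$). Both arguments yield exactly the same constants (\ref{eq:cmuV0})--(\ref{eq:C0}).
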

\begin{proof}
We use  that  $\|S_{\mu,V}(t_0)
\|_{\mathcal{L}(L^\infty(\R^N))}  =  \|S_{\mu,V}(t_0) 1
\|_{L^\infty(\R^N)}$ and so  if $\|S_{\mu,V}(t_0) 1
\|_{L^\infty(\R^N)} \leq \alpha<1$ then $\|S_{\mu,V}(t)
\|_{\mathcal{L}(L^\infty(\R^N))} \leq \frac{1}{\alpha}
e^{-\frac{\ln(1/\alpha)}{t_0} t}$ for $t\geq0$ (see Lemma
\ref{lem:about-decay} below).  Hence in (\ref{eq:decay-in-Linfty}) we
can take $\omega_0=\frac{\ln(1/\alpha)}{t_0}$ and
$C_0=\frac{1}{\alpha}$.

We proceed now  as in the proof of \cite[Theorem
4.6]{C-RB-exponential_decay}.
  Since $V=-|V|$ and $S_{\mu} (t) 1=1$, then $u(t)=
  S_{\mu,V} (t) 1 $ satisfies
  \begin{displaymath}
0\leq u(t)  = 1 -    \int_{0}^{t} S_{\mu} (t-s) |V| u(s) \, ds .
  \end{displaymath}
  Substituting the expresion above for $u(s)$ inside the integral term
  we get
  \begin{displaymath}
u(t) = 1 -   \int_{0}^{t} S_{\mu} (t-s) |V| \, ds +
\int_{0}^{t} S_{\mu} (t-s) |V|  \int_{0}^{s} S_{\mu} (s-r) |V| u(r) \, dr
\, ds .
  \end{displaymath}

  We use that $0\leq u(s) \leq S_{\mu} (s) 1 = 1 $ and
  $0\leq |V|\leq  \|V\|_{\infty} $ and then
  \begin{displaymath}
  \begin{split}
    \int_{0}^{t} S_{\mu} &(t-s) |V|   \int_{0}^{s} S_{\mu} (s-r) |V| u(r) \,
    dr \, ds
    \leq
 \|V\|_{\infty} \int_{0}^{t} s S_{\mu} (t-s) |V|
\, ds
\\&
= \|V\|_{\infty} \int_{0}^{t} (t-s) S_{\mu} (s) |V|
\, ds
\leq \|V\|_{\infty} t \int_{0}^{t} S_{\mu} (s) |V|
\, ds .
\end{split}
  \end{displaymath}

  Therefore
  \begin{displaymath}
0\leq     u(t) \leq   1  + (\|V\|_{\infty} t -1) \int_{0}^{t} S_{\mu} (s)
|V| \, ds .
  \end{displaymath}

  If we fix $\theta\in(0,1)$ and set
$t_0=\frac{\theta}{\|V\|_{\infty}}$ and with
$\Psi_{\theta,\mu,V}$ as in (\ref{eq:PsiV0})
 we get
\begin{displaymath}
0\leq u(t_{0}) \leq 1 - (1-t_0 \| V \|_{\infty}) \Psi_{\theta,\mu,V}
\end{displaymath}
and therefore
\begin{displaymath}
\qquad
    \|S_{\mu, V} (t_0) \|_{\mathcal{L}(L^\infty(\R^N))} =
    \|u (t_0) \|_{L^\infty(\R^N)}
      \leq
    \alpha \mydef   1- (1-\theta)
     \inf_{\scriptstyle \R^N} \{\Psi_{\theta,\mu,V} \} \leq
     1
\end{displaymath}
and we show below that actually $\alpha <1$.

Since the  semigroup $\{S_\mu(t)\}_{t\geq0}$ is of contractions in
$L^{\infty}(\R^{N})$, see e.g. (\ref{eq:estimates_Mpl-Mpl}) with $p=\infty$,
we get
\begin{displaymath}
\|\Psi_{\theta,\mu,V}\|_{L^\infty(\R^N)} \leq
\int_0^{\theta/{\scriptscriptstyle \|V \|_{\infty}}} \| V \|_{\infty}
ds =\theta.
\end{displaymath}

Now, from Remark \ref{rem:equivalent-2-AB-condition} there exists
$R>0$ such that
\begin{displaymath}
 C_{V} \mydef \inf_{x\in \R^{N}} \int_{B(x,R)} |V| (y) dy >0
\end{displaymath}
and using the kernels for $\{S_\mu(t)\}_{t\geq0}$ in
(\ref{eq:kernel-heat-smgp}) or (\ref{eq:kernel-fractional-heat-smgp})
we conclude that for $x\in \R^{N}$
\begin{displaymath}
  \begin{split}
    \Psi_{\theta,\mu,V}(x)& \geq \int_0^{\theta/\|V\|_{\infty}}
    \int_{B(x,R)} \frac{1}{s^\frac{N}{2\mu}}
    k_{0,\mu}\left(\frac{x-y}{s^{\frac{1}{2\mu}}}\right) |V|(y)\, dy
    ds
    \\ &
    \geq \int_0^{\theta/\|V\|_{\infty}} \inf_{|z|\leq R}
    \frac{1}{s^\frac{N}{2\mu}}
    k_{0,\mu}\left(\frac{z}{s^{\frac{1}{2\mu}}}\right) \int_{B(x,R)}
    |V| (y) dy
  \end{split}
\end{displaymath}
and hence
\begin{equation}\label{eq:inf_Psi}
\inf_{\scriptstyle x\in \R^N} \{\Psi_{\theta,\mu,V}(x) \} \geq
 C_{V} \int_0^{\theta/\|V\|_{\infty}} g_\mu(s) \, ds >0
\end{equation}
for
\begin{displaymath}
g_\mu (s) =
    \begin{cases}
     \frac{ e^{-\frac{R^2}{4s}}}{(4\pi s)^{\frac{N}2}} , &
      \mu=1 ,
      \\
     \frac{c s}{(s^\frac{1}{\mu} + R^2)^\frac{N+2\mu}{2}} , &
      0<\mu<1.
    \end{cases}
\end{displaymath}
  In particular  $\Psi_{\theta,\mu,V}$ satisfies
  (\ref{eq:about-Psitheta,mu,V0}) and $\alpha <1$.

Therefore in (\ref{eq:decay-in-Linfty}) we can take
$\omega_0=\frac{\ln(1/\alpha)}{t_0}= c \|V \|_{\infty}$,
with $c$ as in (\ref{eq:cmuV0}),  and $C_0=\frac{1}{\alpha} >1$ as in
(\ref{eq:C0}).
\end{proof}

\begin{remark}
  If the potential $V$ is a negative constant then the estimate in
  Proposition \ref{prop:exponential_decay_Linfty} recovers the optimal
  estimate
    \begin{displaymath}
   \|S_{\mu,V}(t) \|_{\mathcal{L}(L^\infty(\R^N))}\leq  e
   ^{-|V| t}, \quad t\geq0
 \end{displaymath}
that follows from the fact that
  $S_{\mu,V}(t)  = e^{-|V| t}S_{\mu}(t)$.

To see this observe that in this case $S_\mu(s)|V|= |V|
S_\mu(s)1=|V|$ and then in  (\ref{eq:PsiV0}) we get
$\Psi_{\theta,\mu,V}=\theta$ and so  in   (\ref{eq:cmuV0}) and  (\ref{eq:C0}) we have
\begin{displaymath}
  c =    -\frac{1}{\theta}\ln\left(1-(1-\theta) \theta \right),
  \quad
    C_0= \frac{1}{1-(1-\theta) \theta } .
  \end{displaymath}

So we prove that
  \begin{equation}\label{eq:-about-the-right-hand-side-in-{cmuV0}}
    \sup_{0<\theta<1} \{-\frac{1}{\theta}\ln\left(1-(1-\theta) \theta
    \right)\} =\lim_{\theta\to0^+} \frac{-\ln\left(1-(1-\theta) \theta
      \right)}{\theta}= 1
  \end{equation}
and then  $c\to 1$ and $C_0\to 1$ as $\theta\to 0^+$ and  this proves our claim.

Now we prove (\ref{eq:-about-the-right-hand-side-in-{cmuV0}})
  by showing that for every $\theta\in(0,1)$ we have
  $\ln{(1-(1-\theta)\theta)}>-\theta$. This is equivalent to proving
  $\xi(\theta)=\theta^2-\theta+1-e^{-\theta} >0$ for
  $\theta\in(0,1)$ which  follows since   $\xi''(\theta)>0$.

\end{remark}

\begin{remark}
  \begin{enumerate}
  \item
    Observe that in (\ref{eq:inf_Psi}) the term $C_{V}$
    depends on the spatial distribution of the mass of $|V|$ while
    the term $\int_0^{\theta/\|V\|_{\infty}} g_\mu(s) \, ds$
    depend only on the size of $|V|$ in $L^{\infty}(\R^{N})$.

  \item
  Notice that for $x\sim 0$ we have $\ln(1-x) \sim -x$ and therefore
  for $\theta \sim 0$, using (\ref{eq:about-Psitheta,mu,V0}), we have
  \begin{displaymath}
    \frac{-\ln\left(1-(1-\theta) \inf_{\scriptstyle
          \R^N} \{\Psi_{\theta,\mu,V} \} \right)}{\theta} \sim
    \frac{\inf_{\scriptstyle
        \R^N} \{\Psi_{\theta,\mu,V} \} }{\theta} \leq 1 .
  \end{displaymath}
  So if, as in the case of a constant potential, we have
  \begin{equation}\label{eq:addition-condition}
    \liminf_{\theta\to0^+} \frac{\inf_{\scriptstyle
        \R^N} \{\Psi_{\theta,\mu,V} \} }{\theta} =:\omega^*>0
  \end{equation}
then  taking the limit as  $\theta\to0^+$ in the constant
(\ref{eq:cmuV0})  and (\ref{eq:C0}), as actually  $C_0\to 1$ as $\theta\to 0^+$ we get
  \begin{displaymath}
    \|S_{\mu,V}(t) \|_{\mathcal{L}(L^\infty(\R^N))}\leq e ^{-\omega^* {\scriptscriptstyle \|V \|_{\infty}} t}, \quad t\geq0.
  \end{displaymath}
  Then, using that the semigroup in $L^\infty(\R^N)$ is the adjoint of
  the semigroup in $L^1(\R^N)$ (so they have equal norms) and using
  the Riesz-Thorin interpolation theorem
  we see that
for any  $1\leq p \leq \infty$ we have
  \begin{displaymath}
    \|S_{\mu,V}(t) \|_{\mathcal{L}(L^p(\R^N))}\leq e ^{-\omega^*
      {\scriptscriptstyle \|V \|_{\infty}} t}, \quad t\geq0 .
  \end{displaymath}

\item
However notice that for the lower bound in  (\ref{eq:inf_Psi}) we
have,  as
  $\theta \to  0$
  \begin{displaymath}
    \frac{1}{\theta}  \int_0^{\theta/\|V\|_{\infty}} g_\mu(s) \, ds
\to 0
  \end{displaymath}
  since $g_{\mu}(s)$ vanishes at least linearly as $s \to 0$. So this
  lower bound does not imply (\ref{eq:addition-condition}).

\end{enumerate}
\end{remark}

Finally we prove the result used in the proof of Proposition
\ref{prop:exponential_decay_Linfty}.

\begin{lemma}\label{lem:about-decay}
Assume $X$ is a Banach space, $\{S(t)\}_{t\geq0}\subset \mathcal{L}(X)$ is a semigroup of contractions and $\|S(t_0)\|_{\mathcal{L}(X)}\leq \alpha <1$ for some $t_0>0$.

Then
\begin{displaymath}
\|S(t)\|_{\mathcal{L}(X)} \leq \frac{1}{\alpha} e^{-\frac{\ln(1/\alpha)}{t_0} t}, \quad t\geq0.
\end{displaymath}
\end{lemma}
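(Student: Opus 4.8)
The plan is to exploit the semigroup property together with contractivity in order to reduce an arbitrary time $t$ to an integer number of copies of $t_0$. First, given $t\geq 0$, I would write $t = n t_0 + r$ where $n=\lfloor t/t_0\rfloor \in \N\cup\{0\}$ is the integer part and $0\leq r < t_0$ is the remainder. By the semigroup law, $S(t) = S(r)S(t_0)^n$, and since $\{S(t)\}_{t\geq 0}$ consists of contractions we have $\|S(r)\|_{\mathcal{L}(X)}\leq 1$; submultiplicativity of the operator norm together with the hypothesis $\|S(t_0)\|_{\mathcal{L}(X)}\leq\alpha$ then yields
\[
\|S(t)\|_{\mathcal{L}(X)} \leq \|S(r)\|_{\mathcal{L}(X)} \, \|S(t_0)\|_{\mathcal{L}(X)}^{n} \leq \alpha^{n}.
\]

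The second and only mildly computational step is to convert the discrete bound $\alpha^n$ into the claimed continuous exponential. Since $0<\alpha<1$ we have $\ln\alpha<0$, and from $n=\lfloor t/t_0\rfloor \geq \frac{t}{t_0} - 1$ the inequality $n\ln\alpha \leq (\frac{t}{t_0} - 1)\ln\alpha$ follows, the direction flipping precisely because $\ln\alpha<0$. Exponentiating gives
\[
\alpha^{n} = e^{n\ln\alpha} \leq e^{(\frac{t}{t_0} - 1)\ln\alpha} = e^{-\ln\alpha}\, e^{\frac{t}{t_0}\ln\alpha} = \frac{1}{\alpha}\, e^{-\frac{\ln(1/\alpha)}{t_0}\, t},
\]
where the last equality uses $\ln\alpha = -\ln(1/\alpha)$. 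Combining this with the previous display produces exactly the asserted estimate.

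I expect no genuine obstacle here: this is the classical passage from a contraction-at-a-single-time bound to a uniform exponential rate. The only point demanding a little care is the elementary inequality $n\geq \frac{t}{t_0} - 1$ together with the sign of $\ln\alpha$, which is what produces the harmless prefactor $\frac{1}{\alpha}>1$. In particular the bound is consistent at $t=0$, where the left-hand side is $\|S(0)\|_{\mathcal{L}(X)}\leq 1 < \frac{1}{\alpha}$, so no separate treatment of small times is needed.
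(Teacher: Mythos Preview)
Your proof is correct and follows essentially the same approach as the paper: decompose $t = n t_0 + r$ with $0\le r < t_0$, use contractivity and the semigroup property to get $\|S(t)\|_{\mathcal{L}(X)}\le \alpha^n$, and then convert $\alpha^n$ into the stated exponential bound via $n\ge t/t_0 - 1$ (the paper phrases this last step equivalently as $\frac{s}{t_0}<1$).
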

\begin{proof}
Given any $s\in[0,t_0)$ and a nonnegative integer $k$ we see that $\|S(k t_0 + s)\|_{\mathcal{L}(X)}\leq \alpha^{k}$ and letting $t=k t_0 + s$ we get
\begin{displaymath}
\|S(t)\|_{\mathcal{L}(X)}\leq  e^{\frac{t-s}{t_0} \ln{\alpha}}\leq e^{\frac{s}{t_0}\ln{(1/\alpha)}} e^{-\frac{\ln{(1/\alpha)}}{t_0}t} \leq \frac{1}{\alpha} e^{-\frac{\ln{(1/\alpha)}}{t_0}t} ,
\end{displaymath}
where in the last inequality above we used that $\frac{s}{t_0}<1$.
\end{proof}

Now for a general bounded potential we get the following estimate in
Morrey spaces which shows how the positive and negative parts of the
potential influence the behavior of solutions for large times.

\begin{corollary} \label{cor:growth_4_bounded_potential}

   Assume $0<\mu \leq 1$, $ V\in
   L^{\infty}(\R^N)$ and denote $0\leq \omega =
   \omega_{\infty}(V^{-})$ the exponential type for the negative
   part $V^{-}$ of $V$,  $a=
   \|V^+\|_{L^\infty(\R^N)}$ and $\vartheta= 1+\frac{N}{2\mu}$.

Then for
$1\leq p,q\leq \infty$, $0<s\leq \ell\leq N$ such that
$\frac{s}{q}\leq  \frac{\ell}{p}$ and  any $\eps>0$, 
we have 
\begin{displaymath}
  \| S_{\mu, V}(t)\|_{\mathcal{L}(M^{p,\ell}(\R^N), M^{q,s}(\R^N))}
  \leq
  \frac{Ce^{ (a -  \frac{\omega}{\vartheta}+\eps) t }}{t^{\frac{1}{2\mu}(\frac{\ell}{p}-\frac{s}{q})}},
  \quad t>0 . 
\end{displaymath}

\end{corollary}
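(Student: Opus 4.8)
The plan is to reduce to a dissipative potential and then combine the decay rate furnished by Theorem \ref{thm:eponential-type-in-Morrey} with the smoothing estimate (\ref{eq:estimates_Mpl-Mqs}). Write $W=-V^-\le 0$, $a=\|V^+\|_{L^\infty(\R^N)}$ and $\alpha=\frac{1}{2\mu}(\frac{\ell}{p}-\frac{s}{q})\ge 0$. Since $V=V^+-V^-\le a-V^-$, and $V,\,a-V^-\in L^\infty(\R^N)$ are admissible (Remark \ref{rem:adding_potentials}(i)), Theorem \ref{thm:monotonicity-with-respect-to-potentials} together with Proposition \ref{add_exponential_to_semigroup} (applied to $V^0=W$, $V^1=0$ and the constant $c=a$) gives, for every $u_0\in M^{p,\ell}(\R^N)$,
\[
|S_{\mu,V}(t)u_0|\le S_{\mu,a-V^-}(t)|u_0|=e^{at}\,S_{\mu,W}(t)|u_0|,\qquad t>0.
\]
Because the Morrey norm is monotone on nonnegative functions and $\||u_0|\|_{M^{p,\ell}(\R^N)}=\|u_0\|_{M^{p,\ell}(\R^N)}$, this reduces the statement to proving, for the dissipative bounded potential $W$ with $\omega=\omega_\infty(W)$,
\[
\|S_{\mu,W}(t)\|_{\mathcal L(M^{p,\ell}(\R^N),M^{q,s}(\R^N))}\le \frac{C\,e^{(-\frac{\omega}{\vartheta}+\eps)t}}{t^{\alpha}},\qquad t>0 .
\]

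Two ingredients feed this estimate. First, domination by the free semigroup, (\ref{eq:domination}) combined with (\ref{eq:estimates_Mpl-Mqs}), yields the scale-invariant smoothing bound $\|S_{\mu,W}(\tau)\|_{\mathcal L(M^{p,\ell}(\R^N),M^{q,s}(\R^N))}\le c\,\tau^{-\alpha}$ for all $\tau>0$, carrying no exponential factor. Second, Theorem \ref{thm:eponential-type-in-Morrey} applied to $W$ supplies the crucial lower bound $\omega_{q,s}(W)\ge \omega_\infty(W)/\vartheta=\omega/\vartheta$ (with equality to $\omega$, hence $\ge\omega/\vartheta$, when $\mu=1$); this is exactly where the factor $1/\vartheta$ in the statement comes from. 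Consequently, for any $\beta_0<\omega/\vartheta$ there is $M\ge1$ with $\|S_{\mu,W}(\tau)\|_{\mathcal L(M^{q,s}(\R^N))}\le M e^{-\beta_0\tau}$ for all $\tau\ge0$ (for $q=\infty$ this is just the $L^\infty$ bound).

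It remains to fuse the two bounds into a single estimate valid for all $t>0$. If $\omega/\vartheta\le\eps$ the target exponent is nonnegative and the pure smoothing bound already suffices, so I would assume $\omega/\vartheta>\eps$ and set $\beta_0=\omega/\vartheta-\eps/2>0$. The hard part is to retain both the full rate $\omega/\vartheta$ and the polynomial singularity $t^{-\alpha}$ at once: a fixed-time split either halves the rate or erases the $t^{-\alpha}$ factor. The remedy is to split at a time proportional to $t$: for $\delta\in(0,1)$ use $S_{\mu,W}(t)=S_{\mu,W}((1-\delta)t)\circ S_{\mu,W}(\delta t)$, estimating the first factor in $\mathcal L(M^{q,s}(\R^N))$ by the decay bound and the second in $\mathcal L(M^{p,\ell}(\R^N),M^{q,s}(\R^N))$ by the smoothing bound, to get
\[
\|S_{\mu,W}(t)\|_{\mathcal L(M^{p,\ell}(\R^N),M^{q,s}(\R^N))}\le M e^{-\beta_0(1-\delta)t}\cdot\frac{c}{(\delta t)^{\alpha}}=\frac{cM\delta^{-\alpha}}{t^{\alpha}}\,e^{-\beta_0(1-\delta)t},\qquad t>0 .
\]
Choosing $\delta$ small enough that $\beta_0(1-\delta)\ge\omega/\vartheta-\eps$ (for which $\beta_0\delta\le\eps/2$ suffices) turns this into $\frac{C}{t^{\alpha}}e^{(-\frac{\omega}{\vartheta}+\eps)t}$, and multiplying by the factor $e^{at}$ from the reduction step produces the exponent $a-\frac{\omega}{\vartheta}+\eps$, completing the argument.
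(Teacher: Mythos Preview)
Your proof is correct and follows essentially the same strategy as the paper: reduce to the dissipative potential $W=-V^-$ via monotonicity and Proposition~\ref{add_exponential_to_semigroup}, then invoke the lower bound $\omega_{q,s}(W)\ge \omega_\infty(W)/\vartheta$ from Theorem~\ref{thm:eponential-type-in-Morrey}. The only difference is in the final step: the paper observes that $-\omega/\vartheta+\eps$ is a \emph{common} exponential bound for $S_{\mu,W}(t)$ in \emph{all} Morrey spaces and then cites Lemma~6.2 in \cite{C-RB-morrey_linear_perturbation} to pass from the diagonal estimate to the smoothing estimate, whereas you carry out this passage explicitly by the time-splitting $S_{\mu,W}(t)=S_{\mu,W}((1-\delta)t)\circ S_{\mu,W}(\delta t)$, using the $M^{q,s}$ decay on the first factor and the domination-based smoothing bound on the second. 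Your argument is self-contained and in fact slightly more economical, since you only need the exponential type in the single target space $M^{q,s}$ rather than uniformly in all Morrey spaces; the paper's route trades this for brevity by invoking a pre-packaged lemma.
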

\begin{proof}
For  $u_0\in M^{p,\ell}(\R^N)$, from
Theorem \ref{thm:monotonicity-with-respect-to-potentials}
and  Proposition \ref{add_exponential_to_semigroup},  we get
\begin{displaymath}
    |S_{\mu,V}(t) u_{0}|
    \leq S_{\mu,\{-V^{-}+a\}}(t) |u_{0}|
    = e^{a t} S_{\mu,-V^{-}}(t)|u_{0}| , \quad t\geq 0 . 
\end{displaymath}

From Theorem \ref{thm:eponential-type-in-Morrey}, for any $\eps>0$, $- \frac{\omega
}{\vartheta} +\eps$ with  $\vartheta= 1+\frac{N}{2\mu}$, is a common
exponential bound for $S_{\mu,-V^{-}}(t)$ in all Morrey
spaces and then $a+\eps- \frac{\omega}{\vartheta}$ is a common
exponential bound for $S_{\mu,V}(t) $ in all Morrey spaces.

Then we can use Lemma 6.2 in \cite{C-RB-morrey_linear_perturbation} to
get the result. 
\end{proof}

\subsection{An admissible  Morrey Potential}
\label{sec:dissipative-admissible-morrey}

Now we consider a dissipative admissible Morrey potential $0\geq
V\in M^{p_{1},\ell_{1}}(\R^{N})$  as in (\ref{eq:admisible_potential}) with $m=1$. 

Then for $M>0$,  we can truncate at height $-M$ and define 
\begin{displaymath}
0\geq   V_{M}(x) = \max\{V(x), -M\} \geq V(x) , \quad x\in \R^{N}
\end{displaymath}
so $V_{M}^{1} \in L^{\infty}(\R^{N}) \cap M^{p_{1},\ell_{1}}(\R^{N})$
and   from Theorem \ref{thm:monotonicity-with-respect-to-potentials}
for $u_{0} \in M^{p_{1},\ell_{1}}(\R^N)$ with $1\leq p \leq
\infty$, $0<\ell\leq \ell_{1}$ or $u_{0}\in \mathcal{M}^{\ell} (\R^{N})$,  we have
\begin{displaymath} 
  |S_{\mu, V}(t) u_{0}| \leq S_{\mu, V_{M}}(t)|u_{0}|, \quad
  t\geq 0 . 
\end{displaymath}
In particular, $\omega_{p, \ell}(V) \geq \omega_{p,
  \ell}(V_{M}) \geq 0$.

Then we have the following result.

\begin{proposition} \label{prop:decay_4_dissipative_morrey_potential}

  With the notations above,
  
  \begin{enumerate}
  \item
 If for  some $M$,  $V_{M}$ satisfies
(\ref{eq:Arendt-Batty-condition}), then  $\omega_{p, \ell}(V)
>0$. 

\item 
    Assume
    $0\geq V \in M^{p_{1},\ell_{1}}(\R^{N})\subset L^{1}_{U}(\R^{N})$
    can be approximated in $L^{1}_{U}(\R^{N})$ by bounded
    functions. This holds in particular if $V \in
    \dot{L}^{1}_{U}(\R^{N})$, see Appendix \ref{sec:expon-decay-uniform}. 

    Then if $V$ satisfies (\ref{eq:Arendt-Batty-condition}), then
    $\omega_{p, \ell}(V) >0$.
  \end{enumerate}
\end{proposition}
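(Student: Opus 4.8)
The plan is to reduce both statements to the bounded case settled in Theorem \ref{thm:eponential-type-in-Morrey}, using the monotonicity of the semigroup with respect to the potential together with the truncation $V_M$. Part (i) is essentially immediate. Since $V \le V_M$ and $V_M \in L^\infty(\R^N)\cap M^{p_1,\ell_1}(\R^N)$, Theorem \ref{thm:monotonicity-with-respect-to-potentials} gives $|S_{\mu,V}(t)u_0| \le S_{\mu,V_M}(t)|u_0|$, hence $\|S_{\mu,V}(t)\|_{\mathcal{L}(M^{p,\ell}(\R^N))} \le \|S_{\mu,V_M}(t)\|_{\mathcal{L}(M^{p,\ell}(\R^N))}$ and therefore $\omega_{p,\ell}(V) \ge \omega_{p,\ell}(V_M)$, exactly as already noted before the statement. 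As $V_M$ is a bounded dissipative potential satisfying (\ref{eq:Arendt-Batty-condition}), Theorem \ref{thm:eponential-type-in-Morrey} yields $\omega_{p,\ell}(V_M) > 0$, and the conclusion follows.

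For part (ii) the plan is to show that the hypotheses force some truncation $V_M$ to satisfy (\ref{eq:Arendt-Batty-condition}), and then to invoke part (i). First I would establish that $\|V - V_M\|_{L^1_U(\R^N)} \to 0$ as $M \to \infty$. Given a bounded $W$ with $\|V - W\|_{L^1_U(\R^N)}$ small, a pointwise comparison shows $|V - V_M| \le |V - W|$ whenever $M \ge \|W\|_{L^\infty(\R^N)}$: on $\{V < -M\}$ one has $|V - V_M| = |V| - M \le |V| - \|W\|_{L^\infty(\R^N)} \le W - V = |V - W|$, since $V < W$ there, while elsewhere $V_M = V$; hence $\|V - V_M\|_{L^1_U(\R^N)} \le \|V - W\|_{L^1_U(\R^N)}$ and the claim follows from the approximation hypothesis, the case $V \in \dot{L}^1_U(\R^N)$ being covered by Appendix \ref{sec:expon-decay-uniform}. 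Second, I would recast the Arendt and Batty condition for $V$ into the quantitative ball form of Remark \ref{rem:equivalent-2-AB-condition}: if instead $\inf_x \int_{B(x,r)}|V| = 0$ for every $r$, then picking $y_k$ with $\int_{B(y_k,k)}|V| < 2^{-k}$ and setting $G = \bigcup_k B(y_k,k)$ gives an open set containing balls of radius $k \to \infty$, so $G \in \mathcal{G}$, with $\int_G |V| \le \sum_k 2^{-k} < \infty$, contradicting (\ref{eq:Arendt-Batty-condition}); hence there exist $c,r > 0$ with $\inf_x \int_{B(x,r)}|V| \ge c$.

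It then remains to transfer this lower bound to $V_M$. Since $0 \le |V| - |V_M| = (|V| - M)_+ = |V - V_M|$ and any ball $B(x,r)$ is covered by $O(r^N)$ unit balls, one has $\sup_x \int_{B(x,r)}|V - V_M| \le C r^N \|V - V_M\|_{L^1_U(\R^N)} \to 0$; choosing $M$ so large that this is below $c/2$ gives $\inf_x \int_{B(x,r)}|V_M| \ge c/2 > 0$, so $V_M$ satisfies the ball form of Remark \ref{rem:equivalent-2-AB-condition}, which for the bounded potential $V_M$ is equivalent to (\ref{eq:Arendt-Batty-condition}). Applying part (i) with this $V_M$ yields $\omega_{p,\ell}(V) > 0$. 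I expect the main obstacle to be precisely this robustness of the Arendt and Batty condition under truncation: one must pass from the qualitative integral condition on the possibly unbounded $V$ to a translation-uniform ball estimate, and then show that this estimate survives truncation, which is where the $L^1_U$ approximation hypothesis and the supremum-over-translates structure of the uniform norm are essential.
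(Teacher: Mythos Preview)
Your proof is correct and follows the same strategy as the paper: part (i) is identical, and for part (ii) both you and the paper reduce to part (i) by showing that some truncation $V_M$ inherits the Arendt--Batty condition. The only difference is that the paper delegates this inheritance step to external results (Propositions 4.7 and 3.5 of \cite{C-RB-exponential_decay}), whereas you supply a self-contained argument: you prove directly that the $L^1_U$-approximability by bounded functions forces $\|V-V_M\|_{L^1_U}\to 0$, you re-derive the ball form of Remark \ref{rem:equivalent-2-AB-condition} for the possibly unbounded $V$, and you transfer the uniform lower bound to $V_M$ via a covering argument. This makes your proof longer but independent of the companion paper; the underlying route is the same.
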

\begin{proof}
(i) This is immediate as  $\omega_{p, \ell}(V) \geq \omega_{p,
  \ell}(V_{M}) \geq 0$ and (\ref{eq:Arendt-Batty-condition}) and
Theorem \ref{thm:eponential-type-in-Morrey} imply $\omega_{p,
  \ell}(V_{M})>0$.  
  
\noindent (ii)
  From the assumption on $V$, from Proposition 4.7 in
\cite{C-RB-exponential_decay} we get that if  $V$ satisfies
(\ref{eq:Arendt-Batty-condition}), then for some $M$,  $V_{M}$
also satisfies (\ref{eq:Arendt-Batty-condition}), and then we get the
result.

When $V \in    \dot{L}^{1}_{U}(\R^{N})$, from Proposition 3.5 in
\cite{C-RB-exponential_decay} we get that $V$ satisfies the assumption
in (ii). 
\end{proof}

\begin{remark} \label{rem:exponential_type_in_Lebesgue}
  
  \leavevmode
  
\begin{enumerate}
\item

If
$0\geq V\in M^{p_{1},\ell_{1}}(\R^{N}) \subset
L^{p_{1}}_{U}(\R^{N})$ is a dissipative  admissible potential as in
(\ref{eq:admisible_potential}) with $m=1$ but additionally
satisfies that $p_{1}> \frac{N}{2\mu} \geq \frac{\ell_{1}}{2\mu}$
then, from \cite{C-RB-exponential_decay}, the semigroup
$S_{\mu, V}(t)$ is also well defined in the Lebesgue spaces
$L^{p}(\R^{N})$ with $1\leq p\leq \infty$. In particular, the
exponential type in this spaces, $\omega_{p}$, can be defined as
in the beginning of  Section \ref{sec:exponential-decay}. This happens in particular
if $V$ is bounded as in Section \ref{sec:bounded-potential}.

Then it was proved in \cite[Theorem 4.1]{C-RB-exponential_decay}.  For
$0<\mu <1$ and $1\leq p\leq \infty$, the exponential type
satisfies
\begin{displaymath}
\omega_{2} \geq  \omega_{p}\geq
\omega_{\infty}  \geq \frac{ \omega_{2} }{1+\frac{N}{4\mu}}
\geq 0.
\end{displaymath}
while for $\mu=1$, $\omega_{p} = \omega_{2}$ for all
$1\leq p \leq \infty$, see also \cite{simon82:_schroed} for this
latter case. Compare this with Theorem
\ref{thm:eponential-type-in-Morrey}. Also, notice that the
inequalities above for $0<\mu<1$ can be combined with the ones in
Theorem \ref{thm:eponential-type-in-Morrey}. 

In particular, either $ \omega_{p}=0$ for all $1\leq p\leq \infty$
or they are all positive simultaneously. In such a case there
exists a common decay rate for the semigroup
$\{S_{\mu, V}(t)\}_{t\geq0}$ independent of the Lebesgue space.

It was also proved in \cite[Theorem 4.6]{C-RB-exponential_decay} that
for $0 < \mu\leq 1$ 
\begin{displaymath}
\omega_{2} =  \inf\{\int_{\R^{N}} |(-\Delta)^{\frac{\mu}{2}} \phi|^2
    + \int_{\R^{N}} |V| |\phi|^2 \colon \phi\in C^\infty_c(\R^N), \
    \|\phi\|_{L^2(\R^N)}=1 \}  \geq 0.
  \end{displaymath}

\item
If $V$ is as in (i) above, assume $\omega_{\infty}>0$ and then for
some constants $C_{0}$ and $\omega_{0}>0$ 
\begin{displaymath} 
    \|S_{\mu,V}(t) \|_{\mathcal{L}(L^\infty(\R^N))}\leq C_0  e^{-\omega_0 t}, \quad t\geq0.
  \end{displaymath}
  
    Then for every $1\leq p,q\leq \infty$, $0<s\leq \ell\leq N$
    satisfying $\frac{s}{q}< \frac{\ell}{p}$ and $\eps>0$, 
    \begin{displaymath}
      \|S_{\mu, V
      }(t)\|_{\mathcal{L}(M^{p,\ell}(\R^N),M^{q,s}(\R^N))}\leq
      \frac{Ce^{(-\omega +\eps) t}}{t^{\frac{1}{2\mu}(\frac{\ell}{p}-\frac{s}{q})}},
  \quad t>0 
    \end{displaymath}
with $\omega = (1-\frac{ps}{q \ell})\omega_0 >0$.  Notice that this
estimate gets worse as $(q,s) \to (p,\ell)$. 
    
To see this, for $1\leq p<\infty$ and  $q,s,\ell$ as in the statement, we take
  $r> p$ such that $\frac{s}{q}= \frac{\ell}{r}< \frac{\ell}{p}$,
  which implies $r\geq q$ and $M^{r,\ell}(\R^N)\subset M^{q,s}(\R^N)$,
  see \cite[formula (2.3)]{C-RB-linear_morrey}.

  Using this, \cite[inequality (2.6)]{C-RB-linear_morrey} and
  (\ref{eq:smoothing_one_potential}), we get that for
  $u_0\in M^{p,\ell}(\R^N)$ and $t\geq1$
  \begin{displaymath}
    \begin{split}
      \| S_{\mu,V}(t) u_0 & \|_{M^{q,s}(\R^N)} \leq C
      \|S_{\mu,V}(t) u_0 \|_{M^{r,\ell}(\R^N)} \leq
      C\|S_{\mu,V}(t) u_0 \|_{L^{\infty}(\R^N)}^{1-\frac{p}{r}} \|
      S_{\mu,V}(t) u_0\|_{M^{p,\ell}(\R^N)}^\frac{p}{r} 
      \\
      & \leq C (\big\|S_{\mu,V}(t-1)
      \|_{\mathcal{L}(L^{\infty}(\R^N))} \|S_{\mu,V}(1)
      u_0\|_{L^{\infty}(\R^N)}\big)^{1-\frac{p}{r}} \|S_{\mu,V}(t)
      u_0\|_{M^{p,\ell}(\R^N)}^\frac{p}{r}
    \end{split}
  \end{displaymath}
  where $C=|B(0,1)|^{1-\frac{q}{r}}= |B(0,1)|^{1-\frac{s}{\ell}}$.

Again   (\ref{eq:smoothing_one_potential}) with $q=\infty$
  gives
  $\|S_{\mu,V}(t) \|_{{\mathcal L}(M^{p,\ell}(\R^N),
    L^{\infty}(\R^N))} \leq c$ for $t\geq 1$ and then from the decay
  assumption,  (\ref{eq:contracting-property-in-Mpl}),
  and the equality $\frac{1}{r}= \frac{s}{q \ell}$ we get 
  \begin{equation}\label{eq:to-define-omega}
    \| S_{\mu,V}(t) u_0 \|_{M^{q,s}(\R^N)}
    \leq C
    e^{-(1-\frac{ps}{q \ell})\omega_0 t} \|u_0\|_{M^{p,\ell}(\R^N)}, \quad t\geq1.
  \end{equation}

  On the other hand, for $0<t<1$ we have from
  (\ref{eq:smoothing_one_potential}) 
  $\| S_{\mu,V}(t)\|_{\mathcal{L}(M^{p,\ell}(\R^N),
    M^{q,s}(\R^N))} \leq
  \frac{C}{t^{\frac{1}{2\mu}(\frac{\ell}{p}-\frac{s}{q})}}$ and using
  this and (\ref{eq:to-define-omega}) we get the result.

\end{enumerate}

\end{remark}

\section{Exponential bounds  for  the  perturbed semigroups. The case of two potentials}
\label{sec:expon-bounds-2-potentials}

Our goal in this section  is to provide estimates on the exponential type of
the perturbed semigroup $\{S_{\mu, \{V^{0}, V^{1}\}}(t)\}_{t\geq 0}$ in terms of
the sizes of the positive and negative parts of the potentials that,
in particular,  will give sufficient conditions for the exponential
decay of the solutions. Our strategy for this is as follows. On the
one hand, the negative parts of the potentials are the ones responsible
for the eventual exponential decay of the solutions. On the other
hand, the positive parts are the ones responsible for the tendency to
produce exponential growth, see Theorem
\ref{thm:exponential-estimate-of-S{mu,V0,V1}}. Hence our goal is to
find a balance among them which will enable us to assert that if the
negative parts are  somehow larger than the positive ones, the
perturbed semigroup decays exponentially.

Now we analyse the situation with the perturbations using two
potentials.
We begin with the following propositions.

  \begin{proposition}
    \label{prop:the-general-prop}
    Assume $0<\mu \leq 1$ and admissible potentials as in
(\ref{eq:for-thm-with-2potentials}) with $m=1$.
Assume also $p\geq p'_{0}$, $0<\ell\leq \ell_0$ and
\begin{displaymath}
\|S_{\mu, V^{1} }(t)\|_{\mathcal{L}(M^{p,\ell}(\R^N))}\leq Ce^{a t} ,  
\end{displaymath}
for some $C>0$, $a\in \R$.

Then we have, for any $\eps>0$, 
\begin{displaymath}%\label{eq:what-we-should-prove}
  \| S_{\mu,\{V^{0}, V^{1}\}}(t)\|_{\mathcal{L}(M^{p,\ell}(\R^N))}  \leq C e^{(a
    +\eps +\tilde{a} )t}
\end{displaymath}
and  for
$1\leq p,q\leq \infty$, $0<s\leq \ell\leq \ell_{0}$ such that
$\frac{s}{q}\leq  \frac{\ell}{p}$ 
we have 
\begin{displaymath}
  \| S_{\mu,\{V^{0}, V^{1}\}}(t) \|_{\mathcal{L}(M^{p,\ell}(\R^N), M^{q,s}(\R^N))}
  \leq
  \frac{Ce^{ (a +\eps + \tilde{a}) t }}{t^{\frac{1}{2\mu}(\frac{\ell}{p}-\frac{s}{q})}},
  \quad t>0
\end{displaymath} 
where $\tilde{a}= c  \|(V^{0})^+\|_{M^{p_0,\ell_0}(\R^N)}^{\frac{1}{1-\kappa_{0}}} $
for some constant $c>0$ with $\kappa_0=\frac{\ell_0}{2\mu p_0}<1$.

  \end{proposition}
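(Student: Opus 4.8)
The plan is to reduce to a nonnegative potential and then treat $\{S_{\mu,\{V^{0},V^{1}\}}(t)\}$ as a perturbation of the base semigroup $\{S_{\mu,V^{1}}(t)\}$ by $V^{0}$, re-running the argument behind the single-potential growth bound (\ref{eq:growth_bound}) with $S_{\mu,V^{1}}$ in the role of $S_{\mu}$. First I would invoke monotonicity: since $V^{0}\le (V^{0})^{+}$ and $V^{1}\le V^{1}$, Theorem \ref{thm:monotonicity-with-respect-to-potentials} gives $|S_{\mu,\{V^{0},V^{1}\}}(t)u_{0}|\le S_{\mu,\{(V^{0})^{+},V^{1}\}}(t)|u_{0}|$ pointwise, and since the Morrey norm is monotone with respect to $|\cdot|$ this yields $\|S_{\mu,\{V^{0},V^{1}\}}(t)\|_{\mathcal{L}(M^{p,\ell}(\R^{N}))}\le\|S_{\mu,\{(V^{0})^{+},V^{1}\}}(t)\|_{\mathcal{L}(M^{p,\ell}(\R^{N}))}$. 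As $(V^{0})^{+}\ge 0$ is still admissible in $M^{p_{0},\ell_{0}}(\R^{N})$ with the same $\kappa_{0}$ and $\|(V^{0})^{+}\|_{M^{p_{0},\ell_{0}}(\R^{N})}\le\|V^{0}\|_{M^{p_{0},\ell_{0}}(\R^{N})}$, it suffices to prove the estimate for $V^{0}\ge 0$, which I rename $V^{0}$; then $\tilde{a}=c\|V^{0}\|_{M^{p_{0},\ell_{0}}(\R^{N})}^{1/(1-\kappa_{0})}$.

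Next I would use the Duhamel representation of Lemma \ref{lem:fixed-point-lem}(ii), writing $u(t):=S_{\mu,\{V^{0},V^{1}\}}(t)u_{0}=S_{\mu,V^{1}}(t)u_{0}+\int_{0}^{t}S_{\mu,V^{1}}(t-\tau)V^{0}u(\tau)\,d\tau$, and estimate its $M^{p,\ell}(\R^{N})$ norm. By the multiplication property \cite[Lemma 5.1]{C-RB-morrey_linear_perturbation}, $V^{0}u(\tau)\in M^{z,\nu}(\R^{N})$ with $\frac{1}{z}=\frac{1}{p}+\frac{1}{p_{0}}$, $\frac{\nu}{z}=\frac{\ell}{p}+\frac{\ell_{0}}{p_{0}}$ and $\|V^{0}u(\tau)\|_{M^{z,\nu}(\R^{N})}\le\|V^{0}\|_{M^{p_{0},\ell_{0}}(\R^{N})}\|u(\tau)\|_{M^{p,\ell}(\R^{N})}$; here the hypothesis $p\ge p_{0}'$ is exactly what ensures $z\ge 1$, and one checks $\ell\le\nu\le\ell_{0}$. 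Because $\frac{\nu}{z}-\frac{\ell}{p}=\frac{\ell_{0}}{p_{0}}$, the smoothing estimate (\ref{eq:smoothing_one_potential}) for $S_{\mu,V^{1}}\colon M^{z,\nu}(\R^{N})\to M^{p,\ell}(\R^{N})$ carries the singular factor $(t-\tau)^{-\kappa_{0}}$ with $\kappa_{0}=\frac{\ell_{0}}{2\mu p_{0}}<1$. Using the hypothesis $\|S_{\mu,V^{1}}(t)\|_{\mathcal{L}(M^{p,\ell}(\R^{N}))}\le Ce^{at}$ together with the decomposition $S_{\mu,V^{1}}(s)=S_{\mu,V^{1}}(s-1)S_{\mu,V^{1}}(1)$ for $s\ge 1$ (the mechanism of \cite[Lemma 6.2]{C-RB-morrey_linear_perturbation}), I can upgrade the exponential factor so that, for every $\delta>0$ and all $s>0$, $\|S_{\mu,V^{1}}(s)\|_{\mathcal{L}(M^{z,\nu}(\R^{N}),M^{p,\ell}(\R^{N}))}\le C_{\delta}\,s^{-\kappa_{0}}e^{(a+\delta)s}$.

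Then, setting $w(t)=e^{-(a+\delta)t}\|u(t)\|_{M^{p,\ell}(\R^{N})}$, the two bounds reduce the Duhamel identity to the singular integral inequality $w(t)\le C\|u_{0}\|_{M^{p,\ell}(\R^{N})}+C\|V^{0}\|_{M^{p_{0},\ell_{0}}(\R^{N})}\int_{0}^{t}(t-\tau)^{-\kappa_{0}}w(\tau)\,d\tau$. The singular Gronwall lemma \cite[Lemma 7.1.1]{1981_Henry} with $\beta=1-\kappa_{0}$ gives $w(t)\le C\|u_{0}\|_{M^{p,\ell}(\R^{N})}E(\theta_{1}t)$, where $\theta_{1}=\big(c\|V^{0}\|_{M^{p_{0},\ell_{0}}(\R^{N})}\Gamma(1-\kappa_{0})\big)^{1/(1-\kappa_{0})}$ is a fixed multiple of $\|V^{0}\|_{M^{p_{0},\ell_{0}}(\R^{N})}^{1/(1-\kappa_{0})}$ and the Mittag-Leffler-type function $E$ obeys $E(\theta_{1}t)\le Ce^{\theta_{1}t}$. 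Choosing the constant $c$ in $\tilde{a}$ so that $\theta_{1}\le\tilde{a}$, for every $\delta>0$ I obtain $\|S_{\mu,\{V^{0},V^{1}\}}(t)\|_{\mathcal{L}(M^{p,\ell}(\R^{N}))}\le C_{\delta}e^{(a+\tilde{a}+\delta)t}$; taking $\delta=\eps$ is the first assertion.

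Finally I would pass to the smoothing estimate. Running the previous steps with $\delta=\eps/2$, for $t\ge 1$ I decompose $S_{\mu,\{V^{0},V^{1}\}}(t)=S_{\mu,\{V^{0},V^{1}\}}(1)\,S_{\mu,\{V^{0},V^{1}\}}(t-1)$ and combine the just-proved growth bound on the second factor with (\ref{eq:smoothing_two_potentials}) at time $1$ on the first; for $0<t<1$ I invoke (\ref{eq:smoothing_two_potentials}) directly, the factor $t^{-\frac{1}{2\mu}(\frac{\ell}{p}-\frac{s}{q})}$ providing the singularity. Because on $[1,\infty)$ the leftover $e^{(\eps/2)t}$ dominates the polynomial $t^{\frac{1}{2\mu}(\frac{\ell}{p}-\frac{s}{q})}$, the two regimes fit together into the claimed bound with rate $a+\tilde{a}+\eps$. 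The principal difficulty throughout is precisely this bookkeeping of rates — upgrading the smoothing exponent of $S_{\mu,V^{1}}$ to $a+\delta$, taming the subexponential corrections in the Mittag-Leffler asymptotics, and absorbing the polynomial smoothing factor at large times — so that the contributions $a$, $\tilde{a}$ and the successive $\eps$-losses coalesce into the single clean rate $a+\eps+\tilde{a}$.
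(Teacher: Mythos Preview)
Your proposal is correct and follows essentially the same route as the paper: reduce to $(V^{0})^{+}$ via Theorem \ref{thm:monotonicity-with-respect-to-potentials}, use the Duhamel representation of Lemma \ref{lem:fixed-point-lem}(ii) with $S_{\mu,V^{1}}$ as base, combine the Morrey multiplication lemma with the $M^{z,\nu}\to M^{p,\ell}$ smoothing of $S_{\mu,V^{1}}$ (upgraded to exponent $a+\eps$ via \cite[Lemma 6.2]{C-RB-morrey_linear_perturbation}) to obtain a singular integral inequality, and close with Henry's singular Gronwall lemma; the final $M^{p,\ell}\to M^{q,s}$ smoothing is again \cite[Lemma 6.2]{C-RB-morrey_linear_perturbation}, which is precisely the decomposition mechanism you describe. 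Your verification $\ell\le\nu\le\ell_{0}$ is slightly sharper than the paper's $\nu\le\ell_{1}$ but of course implies it.
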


  \begin{proof}
  By Theorem \ref{thm:monotonicity-with-respect-to-potentials} for $u_{0} \in M^{p,\ell}(\R^N)$
we have $|S_{\mu,\{V^{0},V^{1}\}}(t) u_{0}| \leq S_{\mu,\{(V^{0})^+,V^{1}\}}(t)|u_{0}|$, $t\geq 0$,
whereas by Lemma \ref{lem:fixed-point-lem} $u(t) = S_{\mu,\{(V^{0})^+, V^{1}\}}(t)u_0$ with $u_0 \in M^{p,\ell}(\R^N)$ satisfies
\begin{displaymath}
  u(t)   = S_{\mu,V^{1}}(t)
u_0 +  \int_0^t S_{\mu,V^{1}}(t-\tau)  (V^{0})^+ u(\tau) \,  d\tau .
\end{displaymath}

Now, since $V^{1}$ is admissible, the semigroup $S_{\mu, V^{1} }(t)$
is well defined in the spaces $M^{z,\nu}(\R^N)$ for $1\leq z\leq
\infty$ and $0<\nu \leq \ell_{1}$. In particular, it is well defined
in $M^{p,\ell}(\R^{N})$. 

Define now  $z$,$\nu$  by
$\frac{1}{z}=\frac{1}{p}+\frac{1}{p_0}$ and
$\frac{\nu}{z}=\frac{\ell}{p}+\frac{\ell_0}{p_0}$. Then we claim that  $1\leq z\leq
\infty$ and $0<\nu \leq \ell_{1}$ and 
\begin{equation}\label{eq:two-needed-estimates}
\|S_{\mu, V^{1}}(t)\|_{\mathcal{L}(M^{z,\nu}(\R^N),M^{p,\ell}(\R^N))}\leq
\frac{Ce^{(a +\eps) t}}{t^{\frac{1}{2\mu}(\frac{\nu}{z}-\frac{\ell}{p})}}
\quad t>0 . 
\end{equation}
To see this, first since $p\geq p_{0}'$ we get $1\leq z \leq
\infty$. Second, since $\ell\leq \ell_{0}\leq \ell_{1}$ then
$\frac{\nu}{z} \leq \frac{\ell_{1}}{z}$ which implies $\nu \leq
\ell_{1}$. Finally, to prove the estimate above, observe
that from the smoothing properties, the semigroup $S_{\mu, V^{1}}(t)$
smoothes from  $M^{z,\nu}(\R^N)$  into  $M^{p,\ell}(\R^N)$ provided
$1\leq p\leq \infty$, $\ell \leq \nu$ and $\frac{\ell}{p}\leq \frac{\nu}{z}$,
see (\ref{eq:smoothing_one_potential}). Then observe that the latter
condition is clearly satisfied, while since $\ell\leq \ell_{0}$ then
$\frac{\nu}{z} \geq \frac{\ell}{z}$ which implies $\nu
\geq\ell$. Finally, (\ref{eq:two-needed-estimates}) follows then from
Lemma 6.2 in \cite{C-RB-morrey_linear_perturbation}.

Using (\ref{eq:two-needed-estimates}) and multiplication properties in Morrey spaces in \cite[Lemma
5.1]{C-RB-morrey_linear_perturbation} we get
\begin{displaymath}
\begin{split}
\|S_{\mu, V^{1} }(t-\tau) (V^{0})^+ u(\tau) \|_{M^{p,\ell}(\R^N)}
&\leq \frac{Ce^{(a +\eps) (t-\tau)}}{(t-\tau)^{\frac{1}{2\mu}(\frac{\nu}{z}-\frac{\ell}{p})}}
\|(V^{0})^+ u(\tau) \|_{M^{z,\nu}(\R^N)}
\\
&
\leq
\frac{C \|(V^{0})^+
\|_{M^{p_0,\ell_0}(\R^N)} e^{(a +\eps) (t-\tau)}}{(t-\tau)^{\frac{\ell_0}{2\mu p_0}}}
\|u(\tau) \|_{M^{p ,\ell}(\R^N)}
\end{split}
\end{displaymath}
and then 
    \begin{displaymath}
      \| u(t) \|_{M^{p,\ell}(\R^N)} \leq C  e^{a t}
      \|u_0\|_{M^{p,\ell}(\R^N)} + \int_0^t \frac{C \|(V^{0})^+
\|_{M^{p_0,\ell_0}(\R^N)} e^{(a +\eps) (t-\tau)}}{(t-\tau)^{\frac{\ell_0}{2\mu p_0}}}
\|u(\tau) \|_{M^{p ,\ell}(\R^N)} \, d\tau, \quad t>0 .
    \end{displaymath}
Hence $v(t) = e^{-(a +\eps) t} \|u(t) \|_{M^{p,\ell}(\R^N)}$
    satisfies
    \begin{displaymath}
      v(t)  \leq C   \|u_0\|_{M^{p,\ell}(\R^N)}  + \int_0^t\frac{C \|(V^{0})^+\|_{M^{p_0,\ell_0}(\R^N)} }{(t-\tau)^{\frac{\ell_0}{2\mu p_0}}}
v(\tau)  \, d\tau, \quad t>0 ,
    \end{displaymath}
so \cite[Lemma 7.1.1]{1981_Henry} gives
$v(t) \leq C   \|u_0\|_{M^{p,\ell}(\R^N)}  e^{ \tilde{a} t}$, $t\geq0$, with
$\tilde{a}$ as in the statement and the result is proved. 

The rest comes from Lemma 6.2 in
\cite{C-RB-morrey_linear_perturbation}. 
\end{proof}

In particular for a single admissible potential we get the following
result, see Corollary \ref{cor:growth_4_bounded_potential}. Also
notice that Proposition
\ref{prop:decay_4_dissipative_morrey_potential} gives conditions to
have  $a<0$ below. 

\begin{corollary} \label{cor:growth_4_morrey_potential}
      Assume $0<\mu \leq 1$ and $V\in M^{p_{1},\ell_{1}}(\R^{N})$
      admissible as
      in (\ref{eq:admisible_potential}) with $m=1$ and denote by
      $V^{+}$ and $V^{-}$ the  positive and negative  parts of $V$. 
Assume also $p\geq p'_{1}$, $0<\ell\leq \ell_1$ and
\begin{displaymath}
\|S_{\mu, V^{-} }(t)\|_{\mathcal{L}(M^{p,\ell}(\R^N))}\leq Ce^{a t} ,  
\end{displaymath}
for some $C>0$, $a\in \R$.

Then, for any $\eps>0$,  we have 
\begin{displaymath}
  \| S_{\mu, V}(t)\|_{M^{p,\ell}(\R^N)} \leq C e^{(a+\eps +\tilde{a} )t}
\end{displaymath}
and  for
$1\leq p,q\leq \infty$, $0<s\leq \ell\leq \ell_{1}$ such that
$\frac{s}{q}\leq  \frac{\ell}{p}$ 
we have 
\begin{displaymath}
  \| S_{\mu, V}(t)\|_{\mathcal{L}(M^{p,\ell}(\R^N), M^{q,s}(\R^N))}
  \leq
  \frac{Ce^{ (a+\eps +\tilde{a}) t }}{t^{\frac{1}{2\mu}(\frac{\ell}{p}-\frac{s}{q})}},
  \quad t>0
\end{displaymath} 
where $\tilde{a}= c  \|V^{+}\|_{M^{p_1,\ell_1}(\R^N)}^{\frac{1}{1-\kappa_{1}}} $
for some constant $c>0$ with $\kappa_1=\frac{\ell_1}{2\mu p_1}<1$.

In particular, if $a<0$ and $V^{+}$ is small in
$M^{p_1,\ell_1}(\R^N)$ then $S_{\mu, V}(t)$ decays exponentially in
$M^{p,\ell}(\R^N)$. 
\end{corollary}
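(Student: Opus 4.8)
The plan is to obtain this as an immediate consequence of Proposition \ref{prop:the-general-prop}, splitting $V$ into its positive and negative parts and viewing them as the two potentials $V^{0},V^{1}$ appearing there. The substantive analytic work---the singular Gronwall argument controlling the feedback of the positive part through the (possibly decaying) semigroup generated by the dissipative part---has already been carried out in that proposition, so here only some bookkeeping remains.

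First I would record that, since $V\in M^{p_1,\ell_1}(\R^N)$ and $|V^{+}|,|V^{-}|\leq |V|$ pointwise, both $V^{+}$ and $V^{-}$ belong to $M^{p_1,\ell_1}(\R^N)$, with $\|V^{\pm}\|_{M^{p_1,\ell_1}(\R^N)}\leq \|V\|_{M^{p_1,\ell_1}(\R^N)}$; in particular each is an admissible potential in the sense of (\ref{eq:admisible_potential}) with the same $p_1,\ell_1,\kappa_1$. I then set $V^{0}:=V^{+}$ and $V^{1}:=V^{-}$, both lying in $M^{p_1,\ell_1}(\R^N)$, so that (\ref{eq:for-thm-with-2potentials}) holds with $p_0=p_1$, $\ell_0=\ell_1$, $\kappa_0=\kappa_1$, and $V^{0}+V^{1}=V$.

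Next, because the two potentials lie in the very same Morrey space, Remark \ref{rem:adding_potentials}(ii) identifies the two-potential semigroup with the single-potential one: $S_{\mu,\{V^{0},V^{1}\}}(t)=S_{\mu,V^{0}+V^{1}}(t)=S_{\mu,V}(t)$ for $t>0$. The hypotheses of Proposition \ref{prop:the-general-prop} are then precisely the standing assumptions here: $p\geq p_1'=p_0'$, $0<\ell\leq \ell_1=\ell_0$, and the assumed bound $\|S_{\mu,V^{1}}(t)\|_{\mathcal{L}(M^{p,\ell}(\R^N))}=\|S_{\mu,V^{-}}(t)\|_{\mathcal{L}(M^{p,\ell}(\R^N))}\leq Ce^{at}$. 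Applying the proposition and noting $(V^{0})^{+}=(V^{+})^{+}=V^{+}$ produces both displayed estimates with $\tilde{a}=c\,\|V^{+}\|_{M^{p_1,\ell_1}(\R^N)}^{1/(1-\kappa_1)}$, exactly as claimed.

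Finally, for the concluding assertion I would observe that if $a<0$ and $\|V^{+}\|_{M^{p_1,\ell_1}(\R^N)}$ is small, then $\tilde{a}$ is correspondingly small, so one may fix $\eps>0$ small enough that $a+\eps+\tilde{a}<0$, whence $S_{\mu,V}(t)$ decays exponentially in $M^{p,\ell}(\R^N)$. There is no genuine obstacle here, since the analytic content is inherited from Proposition \ref{prop:the-general-prop}; the only point requiring care is the notational alignment of the sign conventions for $V^{\pm}$, so that $V^{0}+V^{1}=V$ and the growth contribution $\tilde{a}$ depends only on the positive part $V^{+}$ while the decay is carried entirely by the negative part through the hypothesis on $S_{\mu,V^{-}}(t)$.
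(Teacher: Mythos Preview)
Your proposal is correct and follows essentially the same approach as the paper's own proof: set $V^{0}=V^{+}$, $V^{1}=V^{-}$ with $p_{0}=p_{1}$, $\ell_{0}=\ell_{1}$, invoke Remark~\ref{rem:adding_potentials}(ii) to identify $S_{\mu,\{V^{0},V^{1}\}}(t)=S_{\mu,V}(t)$, and then apply Proposition~\ref{prop:the-general-prop}. Your added remarks that $V^{\pm}\in M^{p_1,\ell_1}(\R^N)$ and $(V^{0})^{+}=V^{+}$ are the only extra bookkeeping, and they are correct.
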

\begin{proof}
  We take $V^{1} = V^{-}$ and $V^{0}= V^{+}$, the negative and
  positive parts of $V$ and $p_{0}=p_{1}$, $\ell_{0}=\ell_{1}$ and
  apply Proposition \ref{prop:the-general-prop} observing that
  $S_{\mu,\{V^{0}, V^{1}\}}(t) = S_{\mu, V}(t)$, see Remark
  \ref{rem:adding_potentials}. 

  The rest comes from Lemma 6.2 in
  \cite{C-RB-morrey_linear_perturbation}. The statement about the decay
  is immediate. 
\end{proof}

\begin{corollary}\label{cor:some-implication-of-Arendt-Batty-condition}

  Assume $0<\mu \leq 1$ and $V^{0}\in M^{p_0,\ell_0}(\R^N)$ is an
  admissible potential as in (\ref{eq:admisible_potential}) with $m=1$. Assume
  also $V^1 \in L^{\infty}(\R^{N})$ and the negative part
  $(V^{1})^{-}$ satisfies    (\ref{eq:Arendt-Batty-condition}). Let $\omega =
  \omega_{\infty}((V^{1})^{-}) >0$ the exponential type  of the
  negative part of $V^{1}$ and $\vartheta= 1+\frac{N}{2\mu}$. 

Then given $1\leq p,q\leq \infty$, $0<s\leq \ell\leq \ell_0$ such that
$\frac{s}{q} \leq  \frac{\ell}{p}$, $p\geq p_0'$ and $\eps>0$ 
we have
\begin{displaymath}%\label{eq:via-Arendt-Batty-condition2}
\|  S_{\mu,\{V^{0},V^{1}\}}(t)  \|_{\mathcal{L}(M^{p,\ell}(\R^N),M^{q,s}(\R^N))}
\leq   \frac{Ce^{ (a +\eps - \frac{\omega}{\vartheta}) t }}{t^{\frac{1}{2\mu}(\frac{\ell}{p}-\frac{s}{q})}},
  \quad t>0
\end{displaymath}
with
\begin{displaymath}
a=  \|(V^{1})^+\|_{L^\infty(\R^N)}+
c_{0}\|(V^{0})^+\|_{M^{p_0,\ell_0}(\R^N)}^{\frac{1}{1-\kappa_{0}}} ,
\end{displaymath}
$\kappa_0=\frac{\ell_{0}}{2\mu p_{0}}<1$, some $c_{0}$ independent of
$V^{0}$.

\end{corollary}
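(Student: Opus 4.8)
The plan is to obtain the result by combining the bounded-potential estimate of Corollary \ref{cor:growth_4_bounded_potential}, applied to $V^{1}$, with the general two-potential bound of Proposition \ref{prop:the-general-prop}, applied with $V^{0}$ as the admissible ``outer'' potential. The exponential decay will come entirely from the negative part of $V^{1}$, while the growth contributions are the positive parts of both potentials.

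First I would treat the bounded potential $V^{1}\in L^{\infty}(\R^{N})$. Since $(V^{1})^{-}$ satisfies the Arendt and Batty condition and $\omega=\omega_{\infty}((V^{1})^{-})>0$, Corollary \ref{cor:growth_4_bounded_potential} applied to $V=V^{1}$ with $q=p$ and $s=\ell$ (so that the power of $t$ is trivial) gives, for every $\eps_{1}>0$, a constant $C>0$ with
\begin{displaymath}
\|S_{\mu,V^{1}}(t)\|_{\mathcal{L}(M^{p,\ell}(\R^{N}))} \leq C\, e^{(\|(V^{1})^{+}\|_{L^{\infty}(\R^{N})}-\frac{\omega}{\vartheta}+\eps_{1})\,t},\quad t>0,
\end{displaymath}
where $\vartheta=1+\frac{N}{2\mu}$. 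This is precisely a single-space bound of the form required as a hypothesis of Proposition \ref{prop:the-general-prop}.

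Next I would apply Proposition \ref{prop:the-general-prop}. By part (i) of Remark \ref{rem:adding_potentials} the bounded potential $V^{1}$ is admissible with exponent $p_{1}=\infty$ and any $\ell_{1}\in[0,N]$; choosing $\ell_{1}=\ell_{0}$ puts us in the setting (\ref{eq:for-thm-with-2potentials}) with $\ell_{0}\leq\ell_{1}$, and the hypotheses $p\geq p_{0}'$, $0<\ell\leq\ell_{0}$ of the Proposition are exactly those of the Corollary. Feeding the displayed bound into Proposition \ref{prop:the-general-prop} (with its parameter $a$ taken equal to $\|(V^{1})^{+}\|_{L^{\infty}}-\frac{\omega}{\vartheta}+\eps_{1}$) yields, for every $\eps_{2}>0$,
\begin{displaymath}
\|S_{\mu,\{V^{0},V^{1}\}}(t)\|_{\mathcal{L}(M^{p,\ell}(\R^{N}),M^{q,s}(\R^{N}))} \leq \frac{C\,e^{(\|(V^{1})^{+}\|_{L^{\infty}}-\frac{\omega}{\vartheta}+\eps_{1}+\eps_{2}+\tilde{a})\,t}}{t^{\frac{1}{2\mu}(\frac{\ell}{p}-\frac{s}{q})}},\quad t>0,
\end{displaymath}
with $\tilde{a}=c_{0}\|(V^{0})^{+}\|_{M^{p_{0},\ell_{0}}(\R^{N})}^{\frac{1}{1-\kappa_{0}}}$ and $c_{0}$ independent of $V^{0}$.

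Finally, given $\eps>0$ I would choose $\eps_{1}=\eps_{2}=\eps/2$ and set $a=\|(V^{1})^{+}\|_{L^{\infty}(\R^{N})}+c_{0}\|(V^{0})^{+}\|_{M^{p_{0},\ell_{0}}(\R^{N})}^{\frac{1}{1-\kappa_{0}}}$, so that the exponent collapses to $a-\frac{\omega}{\vartheta}+\eps$, which is exactly the claimed estimate. I do not expect a genuine obstacle here: the argument is essentially bookkeeping of constants and of the two arbitrarily small parameters $\eps_{1},\eps_{2}$. The only points requiring a little care are the verification that the hypotheses of Proposition \ref{prop:the-general-prop} are met --- in particular that $V^{1}\in L^{\infty}$ qualifies as an admissible potential via Remark \ref{rem:adding_potentials} with $p_{1}=\infty$, $\ell_{1}=\ell_{0}$ --- and checking that the single-space bound for $S_{\mu,V^{1}}(t)$ produced by the bounded-potential theory is in exactly the form ingested by the Proposition.
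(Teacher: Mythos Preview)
Your proposal is correct and follows essentially the same route as the paper: obtain an exponential bound on $S_{\mu,V^{1}}(t)$ in $M^{p,\ell}(\R^{N})$ from the bounded-dissipative-potential theory, and then feed that bound into Proposition \ref{prop:the-general-prop}. The only cosmetic difference is that you invoke Corollary \ref{cor:growth_4_bounded_potential} as a black box, whereas the paper re-derives that step inline via monotonicity, Proposition \ref{add_exponential_to_semigroup}, and Theorem \ref{thm:eponential-type-in-Morrey} before applying Proposition \ref{prop:the-general-prop}.
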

\begin{proof}
  For  $u_0\in M^{p,\ell}(\R^N)$, setting
$a_1=\|(V^{1})^{+}\|_{L^\infty(\R^N)}$, from
Theorem \ref{thm:monotonicity-with-respect-to-potentials}
and  Proposition \ref{add_exponential_to_semigroup},  we get
\begin{displaymath}%\label{eq:starting-estimate}
  |S_{\mu,\{V^{0},V^{1}\}}(t) u_{0}| \leq S_{\mu,\{
    -(V^{1})^{-}+a_1,(V^{0})^+\}}(t)|u_{0}|= e^{a_1 t}
  S_{\mu,\{-(V^{1})^-,(V^{0})^+\}}(t)|u_{0}| , \quad t\geq 0 . 
\end{displaymath}

From Theorem \ref{thm:eponential-type-in-Morrey}, for any $\eps>0$, $- \frac{\omega
}{\vartheta} +\eps$ with  $\vartheta= 1+\frac{N}{2\mu}$, is a common
exponential bound for $S_{\mu,-(V^{1})^{-}}(t)$ in all Morrey
spaces and then 
\begin{displaymath}
\|S_{\mu, -(V^{1})^{-} }(t)\|_{\mathcal{L}(M^{p,\ell}(\R^N))}\leq Ce^{(\eps-\frac{\omega}{\vartheta}) t} ,  
\end{displaymath}
Now applying Proposition \ref{prop:the-general-prop} we get the
result. 
\end{proof}

We close this section with the following useful remark. 

\begin{remark}
It is worth noticing that from  Lemma 6.2 in
  \cite{C-RB-morrey_linear_perturbation}, that we have used several
  times above, for $0<\mu \leq 1$ and admissible potentials as in
(\ref{eq:for-thm-with-2potentials}) with $m=1$,  if 
    \begin{displaymath}%\label{eq:what-we-should-prove}
      \| S_{\mu,\{V^{0}, V^{1}\}}(t)\|_{\mathcal{L}(M^{p,\ell}(\R^N))}
      \leq C e^{a t}
    \end{displaymath}
    then for $1\leq p,q\leq \infty$, $0<s\leq \ell\leq \ell_0$ such that
    $\frac{s}{q}\leq \frac{\ell}{p}$ and $\eps>0$ we have
    \begin{displaymath}
      \| S_{\mu,\{V^{0}, V^{1}\}}(t) \|_{\mathcal{L}(M^{p,\ell}(\R^N), M^{q,s}(\R^N))}
      \leq
      \frac{Ce^{ (a +\eps ) t }}{t^{\frac{1}{2\mu}(\frac{\ell}{p}-\frac{s}{q})}},
      \quad t>0 . 
    \end{displaymath} 
\end{remark}

\appendix

\section{Proof of Lemma \ref{lem:fixed-point-lem}.}
\label{app:technicalities}

As noted in Section \ref{sec:perturbed_equation_Morrey}, Lemma
\ref{lem:fixed-point-lem} applies to fractional powers of
uniformly  elliptic $2m$ order operator of the form
\begin{displaymath}
    A_0 = \sum_{|\zeta|= 2m} (-1)^{m} a_\zeta \partial^\zeta
    \quad
\text{ with constant real coefficients } \ a_{\zeta} .
\end{displaymath}
Hence, we  adopt below all the notations and techniques in \cite[Sections
6 and 7]{C-RB-morrey_linear_perturbation}.
We write
$X^{\gamma}=M^{p,\ell}(\R^{N})$ (or $\mathcal{M}^{\ell}(\R^N)$ if $p=1$) with
\begin{equation}\label{eq:parameters_4_Morrey}
(p,\ell) \longmapsto \gamma= \gamma (p,\ell)  = \left(\frac{1}{p},\frac{\ell}{2m\mu p}\right) \in \mathbb{J}
\end{equation}
with
\begin{displaymath}
     \mathbb{J} =  \mathbb{J}_{*}  \cup \{(0,0)\},  \qquad   \mathbb{J}_{*} =\{(\gamma_1,\gamma_2)\in (0,1]\times
    \Big(0,\frac{N}{2m\mu}\Big] \colon \ \frac{\gamma_2}{\gamma_1}
    \leq \frac{N}{2m\mu}\}
\end{displaymath}
which is a planar triangle with vertices $(0,0)$, $(0,1)$ and
$\left(1,\frac{N}{2m\mu}\right)$, so all points in
$\gamma= (\gamma_1,\gamma_2) \in \mathbb{J}_{*}$, have slopes $0\leq
\frac{\gamma_2}{\gamma_1} =\frac{\ell}{2m\mu p}\leq
  \frac{N}{2m\mu}$. Also notice that for $p=\infty$ all Morrey spaces $M^{\infty,\ell}(\R^{N})$
  are equal to $L^{\infty}(\R^{N})$ and they are mapped by
  (\ref{eq:parameters_4_Morrey}) into
  $(0,0)$. For any $(\gamma_1,\gamma_2) \in
  \mathbb{J}_{*}$ there exist a unique $1\leq p<\infty$
  and $0<\ell\leq N$
  such that $X^{\gamma}=M^{p,\ell}(\R^{N})$.

With these notations we have that the admissible potentials in
(\ref{eq:for-thm-with-2potentials}) satisfy
    \begin{displaymath}
      \text{
        $V^{i} \in X^{\gamma^i}$,  \quad  $\gamma^i
        =\left(\frac{1}{p_{i}},\frac{\ell_{i}}{2m\mu p_{i}}\right) \in
        \mathbb{J}$}.
          \end{displaymath}
and for  $\alpha \in \mathbb{J}$, the multiplication
    operator defined by $V^{i}$ in Morrey spaces is linear and bounded
    \begin{displaymath}
    V^{i}: X^{\alpha} \to
      X^{\beta_{i}}, \quad \beta_{i} \in \mathbb{J}, \quad \beta_{i} = \alpha +
      \gamma^{i}
    \end{displaymath}
    provided      $\alpha_{1}+ \gamma_{1}^{i} \leq 1$, see \cite[Lemma
    7.1]{C-RB-morrey_linear_perturbation}. Notice in particular that
    if $V^{i}$  is bounded, then $\gamma^{i}=0$ and $\beta_{i}=
    \alpha$.

Given $p,\ell$ and $u_0$ as in the statement we fix
$\gamma=\gamma(p,\ell)$ accordingly to
(\ref{eq:parameters_4_Morrey}).

From \cite[proof of part (i) in Theorem
7.5]{C-RB-morrey_linear_perturbation} we see that if
$\alpha, \beta_i\in \mathbb{J}$ are given by
  \begin{displaymath}
    \alpha=\begin{cases}\gamma & \text{ if } \gamma_1\leq \theta \\
    (\theta, \frac{\gamma_2}{\gamma_1} \theta) & \text{ if } \gamma_1>
    \theta
    \end{cases} \quad \text{ for } \ \theta:=\min\{1-\gamma_1^0,
    1-\gamma_1^1\}, \qquad \beta_i=\alpha+\gamma^i \ \text{ for } \ i=0,1
\end{displaymath}
then we have that the operators of multiplication by $V^{i}$ are
admissible in these sense of  \cite[Definition
6.3(i)]{C-RB-morrey_linear_perturbation} applied with
$S(t)=S_\mu(t)$, and  satisfy $P_i
=V^i\in \mathscr{P}_{\beta_0,R}$, for some $R>0$. So $\{P_{0},
P_{1}\}$, that abusing of the notations, we identify with $\{V^{0},
V^{1}\}$ is an admissible pair for the abstract results in
\cite[Section 6]{C-RB-morrey_linear_perturbation}, see the proof of \cite[Theorem
7.5]{C-RB-morrey_linear_perturbation}.

Also $\gamma$ belongs to the set
$\mathcal{E}_{\alpha}=\{\gamma\in\mathbb{J}\colon \alpha_2\leq
\gamma_2 < \alpha_2+1 \text{ and } \frac{\alpha_2}{\alpha_1} \leq
\frac{\gamma_2}{\gamma_1} \}$, see \cite[Step 3 in the proof of Theorem
7.5]{C-RB-morrey_linear_perturbation}. This set is the corresponding
for Morrey spaces to the set
$\mathcal{E}_\alpha$ in the abstract result  \cite[Theorem
6.5]{C-RB-morrey_linear_perturbation}.

With these settings, since $\alpha$ above corresponds by (\ref{eq:parameters_4_Morrey}) to some Morrey
space with parameters $w,\kappa$ as
in (\ref{eq:omega-and-kappa}),
then the set $\mathcal{K}_{T,K_0,\theta}^{p,\ell,w,\kappa}$ in
(\ref{eq:space-for-fixed-point}) is nothing but the set appearing in
the proof of Theorem 6.5 in   \cite{C-RB-morrey_linear_perturbation} where
existence of solutions is derived. So from this result for $T>0$,
choosing $K_0$ and $\theta$ large enough we get the existence of
$S_{\mu,\{V^{0}, V^{1}\}}(t)u_0$ on $(0,T]$ as the unique fixed point
of
\begin{displaymath}
  \varphi\mapsto  {\mathcal F}(\varphi,u_0) (t)
= S_{\mu}(t) u_0 +  \int_0^t S_{\mu}(t-\tau)  V^{0} \varphi(\tau) \,
d\tau +  \int_0^t S_{\mu}(t-\tau)  V^{1} \varphi(\tau) .
\end{displaymath}
Similar results can be obtained for three or more admissible
potentials from the abstract results in  \cite[Section
6]{C-RB-morrey_linear_perturbation}. Observe that the set
$\mathcal{E}_{\alpha}$ only depend on the space $X^{\alpha}$ which is
the same for all perturbations.

The proof of each part of Lemma \ref{lem:fixed-point-lem}
corresponds precisely to the specific choice of a third perturbation
with a suitable admissible potential combined with  some further results in
\cite{C-RB-morrey_linear_perturbation}. Among these, we will use in an
essential way \cite[Theorem 6.8]{C-RB-morrey_linear_perturbation} that
states that perturbations can be applied successively in any order and
the resulting fixed points are unchanged.

\noindent {\bf Proof of part (i).}
Now we take $V^{2}=c$ as the third potential. Then the arguments above
give that $\{S_{\mu,\{V^{0}, V^{1}, c\}}(t)\}_{t\geq0}$ is the unique fixed
point of  $\varphi\mapsto {\mathcal F}(\varphi,u_0)$ in
$\mathcal{K}_{T,K_0,\theta}^{p,\ell,w,\kappa}$ (for large enough
$\theta$, $K_0$).

Now, \cite[Corollary 6.14]{C-RB-morrey_linear_perturbation} concludes
that $S_{\mu,\{V^{0}, V^{1}, c\}}(t) u_{0} =
e^{ct} S_{\mu,\{V^{0}, V^{1}\}}(t) u_{0}$. So, part (i) is
proved.

\medskip
\noindent {\bf Proof of part (ii).}
In this case, from \cite[Theorem 6.8]{C-RB-morrey_linear_perturbation}
we obtain
 $S_{\mu,\{V^{0}, V^{1}\}}(t) u_{0} =
 \Big(S_{\mu,V^{1}}\Big)_{ V^{0}} (t) u_{0}$ which is precisely
 what part (ii) states.

\medskip
\noindent {\bf Proof of part (iii).}
If $V^{0}\in L^\infty(\R^N)$  then we take in
(\ref{eq:for-thm-with-2potentials})   $\ell_{0}=\ell_{1}$, see Remark
\ref{rem:adding_potentials}, and then
from \cite[Theorem 6.8]{C-RB-morrey_linear_perturbation}
we obtain that
 $S_{\mu,\{V^{0}, V^{1}\}}(t) u_{0} =
 \Big(S_{\mu,V^{0}}\Big)_{ V^{1}} (t) u_{0}$ which is precisely
 what part (iii) states.

\medskip
\noindent {\bf Proof of part (iv).}
If $\tilde{V}^0\in M^{p_0,\ell_0}(\R^N)$ then from the results quoted
above
\begin{displaymath}
 S_{\mu,\{V^{0}, \tilde{V}^0 , V^{1}\}}(t) u_{0} = \Big(S_{\mu,\{V^{0}, V^1 \}}\Big)_{ \tilde{V}^{0}} (t) u_{0}
\end{displaymath}
while at the same time
\begin{displaymath}
 S_{\mu,\{V^{0}, \tilde{V}^0 , V^{1}\}}(t) u_{0} =
 \Big(S_{\mu,\{V^{0}, \tilde{V}^0 \}}\Big)_{ V^{1}} (t) u_{0}
\end{displaymath}
 and also $S_{\mu,\{V^{0}, \tilde{V}^0 \}}(t) = S_{\mu, V^{0}+
   \tilde{V}^0  }(t)$ see Remark
\ref{rem:adding_potentials}, so
\begin{displaymath}
 S_{\mu,\{V^{0}, \tilde{V}^0 , V^{1}\}}(t) u_{0} =
 \Big(S_{\mu, V^{0} + \tilde{V}^0 }\Big)_{ V^{1}} (t) u_{0} .
\end{displaymath}
Equating the first and third expressions above proves part (iv).

\medskip
\noindent {\bf Proof of part (v).}
    If $\tilde{V}^1\in M^{p_1,\ell_1}(\R^N)$  then from the results quoted
above
\begin{displaymath}
 S_{\mu,\{V^{0}, V^{1}, \tilde{V}^1 \}}(t) u_{0} =
 \Big(S_{\mu,\{V^{0}, V^1 \}}\Big)_{ \tilde{V}^{1}} (t) u_{0} .
\end{displaymath}
At the same time
\begin{displaymath}
 S_{\mu,\{V^{0}, \tilde{V}^1 , V^{1}\}}(t) u_{0} =
 \Big(S_{\mu,\{V^{1}, \tilde{V}^1 \}}\Big)_{ V^{0}} (t) u_{0}
\end{displaymath}
and also $S_{\mu,\{V^{1}, \tilde{V}^1 \}}(t) = S_{\mu, V^{1}+
   \tilde{V}^1  }(t)$ see Remark
 \ref{rem:adding_potentials}. Hence
 \begin{displaymath}
 S_{\mu,\{V^{0}, \tilde{V}^1 , V^{1}\}}(t) u_{0} =
 \Big(S_{\mu, V^{1}+ \tilde{V}^1 }\Big)_{ V^{0}} (t) u_{0} =
 S_{\mu,\{V^{0}, V^{1}+ \tilde{V}^1\}} (t) u_{0}
\end{displaymath}
Equating the first and third expressions above proves part (v). $\blacksquare$

\section{A general result on the exponential type}
\label{sec:result-exponential-type}

Assume $X$ is a Banach space and     $\{S(t)\}_{t\geq0}\subset
\mathcal{L}(X)$ is a semigroup such that     $\sup_{t\in(0,\delta)}\|S(t)\|_{\mathcal{L}(X)}<\infty$ for some
$\delta>0$. The exponential type of the semigroup is
\begin{displaymath}
  \omega(X) = \sup \Omega(X)
\end{displaymath}
where 
\begin{displaymath}
\Omega(X) \mydef\{\omega \in \R \colon   \|S (t)\|_{\mathcal{L}(X)} \leq
 M e^{-\omega t}, \ \mbox{for all} \ t\geq 0,  \ \mbox{for some $M\geq 1$}\} . 
\end{displaymath}

\begin{lemma}\label{lem:same_exponential_type}
  
    Assume for $X\in\{Y,Z\}$ that  $X$ is a Banach space and     $\{S(t)\}_{t\geq0}\subset
\mathcal{L}(X)$ is a semigroup such that     $\sup_{t\in(0,\delta)}\|S(t)\|_{\mathcal{L}(X)}<\infty$ for some
$\delta>0$. 

If $Z\subset Y$ continuously, and for any $t>0$ we have $S(t)\in \mathcal{L}(Y,Z)$ then 
\begin{displaymath}
\omega(Y) = \omega(Z) . 
\end{displaymath}
\end{lemma}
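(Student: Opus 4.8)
The plan is to prove the sharper statement that the two sets $\Omega(Y)$ and $\Omega(Z)$ actually coincide, from which $\omega(Y)=\omega(Z)$ follows at once by taking suprema (this also sidesteps any worry about whether the suprema are finite). The engine of the whole argument is to factorise $S(t)$ through the smoothing operator $S(1)\in\mathcal{L}(Y,Z)$, so that an exponential bound valid on one space can be transported to the other \emph{with the same exponential rate}, only the multiplicative constant changing. Throughout I would write $g_Y(t)=\|S(t)\|_{\mathcal{L}(Y)}$, $g_Z(t)=\|S(t)\|_{\mathcal{L}(Z)}$, $h(t)=\|S(t)\|_{\mathcal{L}(Y,Z)}$, and let $c_0$ denote the embedding constant, so $\|y\|_Y\le c_0\|y\|_Z$ for $y\in Z$. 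Note that the hypotheses give $h(1)<\infty$, and that local boundedness on $(0,\delta)$ together with the semigroup property makes $g_Y$ and $g_Z$ bounded on every compact interval.

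First I would record two factorisation inequalities. For $y\in Y$ and $t>1$, writing $S(t)y=S(t-1)S(1)y$ and using $S(1)\colon Y\to Z$, then $S(t-1)$ acting on $Z$, and finally the embedding $Z\hookrightarrow Y$, one gets $g_Y(t)\le c_0\,h(1)\,g_Z(t-1)$. Symmetrically, for $z\in Z$ and $t>2$, writing $S(t)z=S(1)S(t-2)S(1)z$, estimating the middle factor in $\mathcal{L}(Y)$ while sandwiching it between $S(1)z\in Z\hookrightarrow Y$ and the final smoothing $S(1)\colon Y\to Z$, one obtains $g_Z(t)\le c_0\,h(1)\,g_Z(1)\,g_Y(t-2)$. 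Both inequalities are routine and use only the stated hypotheses; the only care needed is to place each of the three factors in the correct space.

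Next I would use these to establish $\Omega(Y)=\Omega(Z)$. If $\omega\in\Omega(Z)$, so $g_Z(t)\le Me^{-\omega t}$ for all $t\ge0$, the first inequality gives $g_Y(t)\le c_0\,h(1)\,M\,e^{\omega}\,e^{-\omega t}$ for $t>1$; since $g_Y$ is bounded on $[0,1]$ and $e^{-\omega t}$ is bounded below there, the bound extends to all $t\ge 0$ after enlarging the constant, whence $\omega\in\Omega(Y)$. Conversely, if $\omega\in\Omega(Y)$, the second inequality yields $g_Z(t)\le c_0\,h(1)\,g_Z(1)\,M\,e^{2\omega}\,e^{-\omega t}$ for $t>2$, and the same small-time extension puts $\omega\in\Omega(Z)$. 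Thus $\Omega(Y)=\Omega(Z)$, and therefore $\omega(Y)=\sup\Omega(Y)=\sup\Omega(Z)=\omega(Z)$.

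There is no serious obstacle here: the entire content is the observation that the smoothing property combined with the continuous embedding lets one switch the space in which the semigroup norm is measured while the exponential exponent $\omega$ is preserved \emph{exactly}, with only the constant degrading. The two points demanding attention are the correct bookkeeping of the three factors in each factorisation and the routine passage from a bound valid for large $t$ to one valid for all $t\ge0$, both of which are controlled by the local boundedness hypothesis $\sup_{t\in(0,\delta)}\|S(t)\|_{\mathcal{L}(X)}<\infty$.
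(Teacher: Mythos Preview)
Your proof is correct and rests on the same factorisation idea as the paper: route $S(t)$ through the smoothing operator $S(1)\in\mathcal{L}(Y,Z)$ and the embedding $Z\hookrightarrow Y$ to compare $g_Y$ and $g_Z$. The paper packages this slightly differently, invoking the formula $\omega(X)=\lim_{t\to\infty}\frac{-\ln\|S(t)\|_{\mathcal{L}(X)}}{t}$ from Engel--Nagel and comparing the two limits, whereas you prove the stronger statement $\Omega(Y)=\Omega(Z)$ directly and so avoid that external reference. One small simplification: in your second inequality you may replace the three-factor decomposition $S(1)S(t-2)S(1)$ by $S(1)S(t-1)$, using the embedding on $z$ itself rather than on $S(1)z$, which gives $g_Z(t)\le c_0\,h(1)\,g_Y(t-1)$ for $t>1$ as in the paper.
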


\begin{proof} 
By   \cite[Proposition 2.2, p. 251]{Engel-Nagel_2000} 
\begin{equation}\label{eq:expressions-for-Omega(X)}
    \sup\Omega(X) = \sup_{t>0} \frac{-\ln{\|S(t)\|_{\mathcal{L}(X)}}}{t}
    = \lim_{t\to\infty} \frac{-\ln{\|S(t)\|_{\mathcal{L}(X)}}}{t}  \quad  \text{ for } \ X\in\{Y,Z\}.
\end{equation}

Since $Z\subset Y$, if $u_0\in Y$ and $t>1$ then, for some constant
$c>0$, 
$\|S(t) u_0\|_{Y}
\leq c \|S(t) u_0\|_{Z}
\leq
c \|S(t-1)\|_{\mathcal{L}(Z)}  \|S(1) \|_{\mathcal{L}(Y,Z)} \| u_0\|_{Y}$, and so
\begin{displaymath}
\|S(t)\|_{\mathcal{L}(Y)}
\leq
c \|S(t-1)\|_{\mathcal{L}(Z)}  \|S(1) \|_{\mathcal{L}(Y,Z)} .
\end{displaymath}
Hence 
\begin{displaymath}
\frac{-\ln{\|S(t)\|_{\mathcal{L}(Y)}}}{t}
\geq \frac{-\ln(c \|S(1) \|_{\mathcal{L}(Y,Z)})}{t} +
\frac{-\ln\|S(t-1)\|_{\mathcal{L}(Z)}}{t-1} \cdot \frac{t-1}{t}, \quad t>1,
\end{displaymath}
and then  (\ref{eq:expressions-for-Omega(X)}) gives  $\omega(Y) \geq \omega(Z)$.

If $u_0\in Z$ and $t>1$ then
$\|S(t) u_0\|_{Z}
\leq
\|S(1)\|_{\mathcal{L}(Y,Z)}
\|S(t-1)\|_{\mathcal{L}(Y)} c \|u_0\|_{Z}$, 
and so
\begin{displaymath}
\|S(t) \|_{\mathcal{L}(Z)}
\leq c
\|S(1)\|_{\mathcal{L}(Y,Z)}
\|S(t-1)\|_{\mathcal{L}(Y)}  .
\end{displaymath}
Hence
\begin{displaymath}
\begin{split}
\frac{-\ln{\|S(t) \|_{\mathcal{L}(Z)}}}{t}
\geq \frac{-\ln{(c \|S(1)\|_{\mathcal{L}(Y,Z)})}}{t} +
\frac{-\ln{\|S(t-1)\|_{\mathcal{L}(Y)}}}{t-1}
\cdot \frac{t-1}{t} , \quad t>1 ,
\end{split}
\end{displaymath}
and then  (\ref{eq:expressions-for-Omega(X)}) gives $\omega(Z) \geq \omega(Y)$.
\end{proof}

In particular, for  the fractional Schrödinger semigroup in the space of Morrey measures, by Lemma
\ref{lem:same_exponential_type} with $Y=M^\ell(\R^N)$,
$Z=M^{1,\ell}(\R^N)$, we get the following consequence.

  \begin{corollary} \label{cor:exponential_type_4_measures}
Assume  $0<\mu\leq 1$ and $V\in M^{p_{1}, \ell_{1}}(\R^N)$ an
admissible potential as in (\ref{eq:admisible_potential}). 

Then for any $0<\ell\leq \ell_{1}$ we have
\begin{displaymath} %\label{eq:omega{ell}=omega{1,ell}}
  \omega_{\ell} = \omega_{1,\ell} .
\end{displaymath}
\end{corollary}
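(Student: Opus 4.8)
The plan is to read the identity $\omega_{\ell}=\omega_{1,\ell}$ off the abstract Lemma \ref{lem:same_exponential_type}, applied to the semigroup $\{S_{\mu,V}(t)\}_{t\geq0}$ with the choices $Y=\mathcal{M}^{\ell}(\R^N)$ and $Z=M^{1,\ell}(\R^N)$. With the notation of Appendix \ref{sec:result-exponential-type}, the exponential types $\omega_{\ell}$ and $\omega_{1,\ell}$ are by definition exactly $\omega(Y)$ and $\omega(Z)$, so the conclusion will be immediate once the three hypotheses of that lemma are verified: that $\{S_{\mu,V}(t)\}$ acts as a semigroup on each of the two Banach spaces and is bounded near $t=0$, that $Z\subset Y$ continuously, and that $S_{\mu,V}(t)\in\mathcal{L}(Y,Z)$ for every $t>0$.

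First I would dispose of the routine hypotheses. The inclusion $M^{1,\ell}(\R^N)\subset\mathcal{M}^{\ell}(\R^N)$ is isometric (hence continuous), as recorded in the Introduction, so $Z\subset Y$ continuously. That $\{S_{\mu,V}(t)\}$ is a well-defined semigroup on both $M^{1,\ell}(\R^N)$ and $\mathcal{M}^{\ell}(\R^N)$ follows from Section \ref{sec:perturbed_equation_Morrey} together with the standing convention that for $p=1$ every statement there holds with $M^{1,\ell}(\R^N)$ replaced by $\mathcal{M}^{\ell}(\R^N)$; moreover these two actions are compatible, the $M^{1,\ell}(\R^N)$ semigroup being the restriction of the $\mathcal{M}^{\ell}(\R^N)$ one, so Lemma \ref{lem:same_exponential_type} genuinely applies to a single semigroup. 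Boundedness near the origin is furnished by (\ref{eq:Mp,ell-estimate-of-S{mu,V}(t)}) (and its measure analogue), which gives $\|S_{\mu,V}(t)\|_{\mathcal{L}(X)}\leq Ce^{\omega t}$ and hence $\sup_{t\in(0,\delta)}\|S_{\mu,V}(t)\|_{\mathcal{L}(X)}<\infty$ for $X\in\{Y,Z\}$.

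The only step carrying real content is the mapping property $S_{\mu,V}(t)\in\mathcal{L}\big(\mathcal{M}^{\ell}(\R^N),M^{1,\ell}(\R^N)\big)$ for $t>0$, i.e. the regularization of Morrey measures into locally integrable functions. I would extract this from the smoothing estimate (\ref{eq:smoothing_one_potential}) read with source space $\mathcal{M}^{\ell}(\R^N)$ (the $p=1$ replacement) and target exponents $q=1$, $s=\ell$. These are admissible since $\frac{s}{q}=\ell=\frac{\ell}{p}$, so the inequality $\frac{s}{q}\leq\frac{\ell}{p}$ holds with equality, the negative power of $t$ in (\ref{eq:smoothing_one_potential}) has vanishing exponent, and the estimate reduces to $\|S_{\mu,V}(t)\|_{\mathcal{L}(\mathcal{M}^{\ell}(\R^N),M^{1,\ell}(\R^N))}\leq Ce^{at}$ for $t>0$. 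The point to emphasize is that, although the two Morrey indices coincide, the target is the \emph{function} space $M^{1,\ell}(\R^N)$: for $t>0$ the variation-of-constants formula (\ref{eq:VCF_one_potential}) exhibits $S_{\mu,V}(t)u_0$ as an honest function, since the leading term $S_{\mu}(t)u_0=\int_{\R^N}k_{\mu}(t,\cdot,y)\,du_0(y)$ is a function by the explicit kernels (\ref{eq:kernel-heat-smgp})--(\ref{eq:kernel-fractional-heat-smgp}), while the Duhamel integral only involves $S_{\mu,V}(s)u_0$ for $s>0$, which is already a Morrey function by smoothing into $M^{q,s}(\R^N)$ with $q>1$. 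Thus the image really lands in $M^{1,\ell}(\R^N)$ and not merely in $\mathcal{M}^{\ell}(\R^N)$.

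With the three hypotheses in hand, Lemma \ref{lem:same_exponential_type} yields $\omega(\mathcal{M}^{\ell}(\R^N))=\omega(M^{1,\ell}(\R^N))$, that is $\omega_{\ell}=\omega_{1,\ell}$. The main obstacle is conceptual rather than computational: one must recognize that the measure-to-function regularization needed for $\mathcal{M}^{\ell}\to M^{1,\ell}$ sits precisely at the endpoint $\frac{s}{q}=\frac{\ell}{p}$ of the smoothing estimate, where no $t$-singularity appears, and one must justify by the kernel representation that the output of $S_{\mu,V}(t)$ is genuinely a function lying in the function Morrey space, so that the continuity $S_{\mu,V}(t)\in\mathcal{L}(Y,Z)$ required by the abstract lemma really holds.
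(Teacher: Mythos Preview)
Your proposal is correct and follows exactly the approach of the paper, which simply states that the result follows from Lemma \ref{lem:same_exponential_type} with $Y=\mathcal{M}^{\ell}(\R^N)$ and $Z=M^{1,\ell}(\R^N)$. You have fleshed out the verification of the three hypotheses, and your emphasis on why $S_{\mu,V}(t)$ genuinely lands in the function space $M^{1,\ell}(\R^N)$ (via the smoothing estimate (\ref{eq:smoothing_one_potential}) at the endpoint $q=1$, $s=\ell$, reading the measure version) is the one point that deserves comment and is handled correctly.
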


\section{Schrödinger semigroup and exponential decay in uniform spaces}
\label{sec:expon-decay-uniform}

For  $1\leq p_{0}<\infty$ the  uniform space $L_U^{p}(\R^N)$,    is composed
of the functions $\phi \in L_{loc}^{p} (\R^N)$ such that there exists
$C>0$ such that for all $x_0\in\R^N$
\begin{displaymath}
\int_{B(x_0,1)} |\phi|^{p} \leq C
\end{displaymath}
endowed with the norm
\begin{displaymath}
\|\phi \|_{L_U^{p} (\R^N)}=\sup_{x_0\in\R^N}\|\phi\|_{L^{p}
  (B(x_0,1))} .
\end{displaymath}
For $p=\infty$ we define  $L_U^\infty(\R^{N})=L^\infty(\R^{N})$.

Also for $p=1$ we have the space of uniform measures
$\mathcal{M}_{U}(\R^{N})$ consisting of Radon measures $\mu$ such that
\begin{displaymath}
  |\mu|(B(x_0,1))  \leq C
\end{displaymath}
endowed with the norm
\begin{displaymath}
\|\mu \|_{\mathcal{M}_U (\R^N)}=\sup_{x_0\in\R^N}   |\mu|(B(x_0,1)) . 
\end{displaymath}

The dotted uniform  space $\dot L^{p}_{U}(\R^N)$ or  $\dot{\mathcal{M}}_{U}(\R^{N})$ is defined
as the subspace of the uniform spaces in which translations
$\tau_y\phi(\cdot)=\phi(\cdot-y)$ in $\R^N$ are continuous in the
sense that
\begin{displaymath}
\tau_y \phi -\phi \to 0 \ \text{ as } \ y\to 0, \quad \text{ in
  $L_{U}^{p}(\R^N)$ or $\mathcal{M}_{U}(\R^{N})$} . 
\end{displaymath}

Our  goal
in this section is to show, on the one hand, that for a 
bounded potential $V\in L^{\infty}(\R^{N})$, the integral
representation of the solutions of the evolution problem 
\begin{displaymath}
    \begin{cases}
    u_{t} + (-\Delta)^{\mu} u =V(x) u,  & x \in \R^{N}, \quad t>0, \\
    u(0,x)=u_0(x), & x\in \R^N,
  \end{cases}
\qquad \text{ where $0< \mu \leq 1$},
\end{displaymath}
that is, functions satisfying the  variations of constants formula
     \begin{equation} \label{eq:VCF_uniform}
       u(t) = S_{\mu}(t) u_{0} + \int_{0}^{t}  S_{\mu}(t-s) V u(s) \,
       ds, \quad  t>0
\end{equation}
define a semigroup in  the uniform spaces $L^{p}_{U}(\R^{N})$, for $1\leq p\leq \infty$,  by $u(t)= S_{\mu, V}(t)u_{0}$.

On the other hand, we prove that for a dissipative  potential $0\geq
V^{0}\in L^{\infty}(\R^{N})$, the Arendt and Batty condition
(\ref{eq:Arendt-Batty-condition}) also gives exponential decay in
the uniform spaces $L^{p}_{U}(\R^{N})$, $1\leq p \leq \infty$.

Recall that the semigroup  $S_{\mu}(t) $ was considered in uniform
spaces, in  Section 3
in \cite{C-RB-morrey_linear_perturbation} and it was proved that
\begin{displaymath}
        \|S_{\mu}(t) \|_{\mathcal{L}(L^{p}_U(\R^N), L^{q}_U(\R^N))}
      \leq C \big( 1+
      \frac{1}{t^{\frac{N}{2 \mu}(\frac{1}{p}-\frac{1}{q})}}\big),
      \quad t>0  .
\end{displaymath}
In  Lebesgue spaces, it was proved in  Section 6 in
\cite{C-RB-scaling1} that
\begin{displaymath}
        \|S_{\mu}(t) \|_{\mathcal{L}(L^{p}(\R^N), L^{q}(\R^N))}
      \leq
      \frac{C}{t^{\frac{N}{2 \mu}(\frac{1}{p}-\frac{1}{q})}},
      \quad t>0  .
\end{displaymath}
For Morrey spaces see Section \ref{sec:homogeneous-lin-eq} in this
paper, see in particular  (\ref{eq:estimates_Mpl-Mqs}).

\begin{proposition}
\label{prop:semigroup_4_boundedpotential}

  If $ V\in L^\infty(\R^N)$ and $0<\mu \leq 1$ then
  (\ref{eq:VCF_uniform}) defines   an order preserving  semigroup in $L^p_U(\R^N)$, $1\leq p\leq \infty$, given by
\begin{displaymath}
S_{\mu, V}(0)u_0=u_0 \ \text{ and } \ S_{\mu, V}(t)u_{0} =u(t) ,
\end{displaymath}
and for any $1\leq p\leq q\leq \infty$
\begin{displaymath}
(0,\infty)\times L^p_U(\R^N) \ni (t,u_0)\to S_{\mu, V}(t) \in L^q_U(\R^N)
\ \text{ is continuous,}
\end{displaymath}
\begin{equation}\label{eq:perturbed-in-LpU-2}
      \|S_{\mu, V}(t) \|_{\mathcal{L}(L^{p}_U(\R^N), L^{q}_U(\R^N))}
      \leq C e^{\omega t}\big( 1+
      \frac{1}{t^{\frac{N}{2\mu}(\frac{1}{p}-\frac{1}{q})}}\big),
      \quad t>0
\end{equation}
for some constants $C$, $\omega$ and for  $u_0\in L^p_U(\R^N)$
\begin{equation}\label{eq:perturbed-in-LpU-3}
\lim_{t\to0^+} \|S_{\mu, V}(t) u_0 - S_{\mu}(t) u_0\|_{L^{p}_U(\R^N)}=0.
\end{equation}
For $p=1$ the result holds true replacing $L^1_U(\R^N)$ by  $\mathcal{M}_U(\R^N)$.

  If $u_{0}\in L^{p}(\R^{N})$ then all these results remain true
  replacing the uniform spaces  $L^p_U(\R^N)$ by Lebesgue ones
  $L^p(\R^N)$. If $p=1$ we can also take  $u_{0}\in
  \mathcal{M}_{BTV}(\R^{N})$.

If  $u_{0}\in M^{p,\ell}(\R^{N})$  or  $u_{0}\in
\mathcal{M}^{\ell}(\R^{N})$ then the semigroup above coincides
with the one in Section \ref{sec:perturbed_equation_Morrey}.

\end{proposition}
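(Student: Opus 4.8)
The plan is to construct the semigroup by a contraction argument on the integral equation (\ref{eq:VCF_uniform}), exploiting that the hypothesis $V\in L^\infty(\R^N)$ makes the perturbation especially benign: the multiplication operator $\phi\mapsto V\phi$ maps each $L^p_U(\R^N)$ into itself with norm $\|V\|_{L^\infty(\R^N)}$, \emph{without any shift of indices}. First I would fix $1\le p\le\infty$ and $u_0\in L^p_U(\R^N)$ and consider the map $\varphi\mapsto \mathcal{F}(\varphi)(t)=S_\mu(t)u_0+\int_0^t S_\mu(t-s)V\varphi(s)\,ds$ on $C([0,T];L^p_U(\R^N))$. Since $\sup_{0\le t\le T}\|S_\mu(t)\|_{\mathcal{L}(L^p_U(\R^N))}<\infty$ by the $p=q$ case of the estimate recalled above, the integral term is a \emph{non-singular} (bounded) perturbation, so $\|\mathcal{F}(\varphi_1)-\mathcal{F}(\varphi_2)\|\le C\|V\|_{L^\infty(\R^N)}T\,\|\varphi_1-\varphi_2\|$ and $\mathcal{F}$ is a strict contraction for $T$ small. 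This yields a unique local solution; iterating over consecutive intervals gives a global one and defines $S_{\mu,V}(t)u_0=u(t)$, with $S_{\mu,V}(0)u_0=u_0$ and the semigroup property following from uniqueness of the fixed point. Gronwall's inequality applied to $\|u(t)\|_{L^p_U(\R^N)}\le C\|u_0\|_{L^p_U(\R^N)}+C\|V\|_{L^\infty(\R^N)}\int_0^t\|u(s)\|_{L^p_U(\R^N)}\,ds$ produces the same-space bound $\|S_{\mu,V}(t)\|_{\mathcal{L}(L^p_U(\R^N))}\le Ce^{\omega t}$ with $\omega\sim\|V\|_{L^\infty(\R^N)}$.

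The main technical step is the smoothing estimate (\ref{eq:perturbed-in-LpU-2}) for the full range $1\le p\le q\le\infty$. Inserting the same-space bound into (\ref{eq:VCF_uniform}) and using the smoothing of the \emph{unperturbed} semigroup recalled above, $\|S_\mu(\tau)\|_{\mathcal{L}(L^p_U(\R^N),L^q_U(\R^N))}\le C(1+\tau^{-\frac{N}{2\mu}(\frac1p-\frac1q)})$, I would estimate $\|S_{\mu,V}(t)u_0\|_{L^q_U(\R^N)}$ by the sum of $\|S_\mu(t)u_0\|_{L^q_U(\R^N)}$ and a convolution of the singular kernel against $\|S_{\mu,V}(s)u_0\|_{L^p_U(\R^N)}$. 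When $\frac{N}{2\mu}(\frac1p-\frac1q)<1$ the singular Gronwall lemma \cite[Lemma 7.1.1]{1981_Henry} closes the estimate directly; for a larger smoothing gap I would chain finitely many such steps through intermediate uniform spaces using the semigroup property $S_{\mu,V}(t)=S_{\mu,V}(t/n)^n$. Equivalently, once the same-space bound is known, the passage to (\ref{eq:perturbed-in-LpU-2}) is precisely the content of \cite[Lemma 6.2]{C-RB-morrey_linear_perturbation}, whose proof transfers verbatim to the uniform scale since it uses only the smoothing of $S_\mu$ and the $L^\infty$-boundedness of $V$. The joint continuity statement then follows from this estimate together with continuity of $S_\mu$ and dominated convergence in the integral term, while (\ref{eq:perturbed-in-LpU-3}) is immediate from $\|u(t)-S_\mu(t)u_0\|_{L^p_U(\R^N)}\le C\|V\|_{L^\infty(\R^N)}\int_0^t\|u(s)\|_{L^p_U(\R^N)}\,ds\le C'\,t\,e^{\omega t}\|u_0\|_{L^p_U(\R^N)}\to0$.

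For order preservation I would reduce to the case $V\ge0$ by writing $S_{\mu,V}(t)=e^{-ct}S_{\mu,V+c}(t)$ with $c$ chosen so that $V+c\ge0$; this identity itself follows from uniqueness of the fixed point, exactly as in Proposition \ref{add_exponential_to_semigroup}. For $u_0\ge0$ and $V+c\ge0$ the Picard iterates $U_{n+1}(t)=S_\mu(t)u_0+\int_0^t S_\mu(t-s)(V+c)U_n(s)\,ds$, with $U_1(t)=S_\mu(t)u_0$, remain nonnegative because $S_\mu$ has a nonnegative kernel (Theorem \ref{thr:Smu-order-preserving}); passing to the limit gives $S_{\mu,V+c}(t)u_0\ge0$, hence $S_{\mu,V}(t)u_0\ge0$.

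Finally, every argument above uses only the boundedness of $V$ and the smoothing and positivity of $S_\mu$, which hold verbatim for $p=1$ with $\mathcal{M}_U(\R^N)$ in place of $L^1_U(\R^N)$, and for the Lebesgue spaces $L^p(\R^N)$ (and $\mathcal{M}_{BTV}(\R^N)$ when $p=1$) using the Lebesgue smoothing estimate recalled above; so the same construction delivers those versions with no change. The coincidence with the Morrey semigroup of Section \ref{sec:perturbed_equation_Morrey} is a uniqueness statement: since $M^{p,\ell}(\R^N)\subset L^p_U(\R^N)$ continuously (take $R=1$ in the Morrey norm) and likewise $\mathcal{M}^\ell(\R^N)\subset\mathcal{M}_U(\R^N)$, for $u_0\in M^{p,\ell}(\R^N)$ the Morrey solution of (\ref{eq:VCF_one_potential}) also solves (\ref{eq:VCF_uniform}) in $L^p_U(\R^N)$, whence uniqueness of the fixed point in $L^p_U(\R^N)$ forces the two to agree. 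I expect the smoothing estimate for large $p$-to-$q$ gaps to be the only place requiring genuine care; everything else is a routine transcription of the bounded-perturbation theory to the uniform scale.
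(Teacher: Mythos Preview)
Your proposal is correct and follows essentially the same route as the paper: a contraction on short time intervals (with Lipschitz constant $\sim T\|V\|_{L^\infty}$), iteration to globality, Gronwall for the same-space bound, then the smoothing estimate from the unperturbed $S_\mu$ combined with $V\in L^\infty$, order preservation via reduction to $V\ge0$ and Picard iteration, and coincidence with the Morrey semigroup by uniqueness. One small technical correction: since $t\mapsto S_\mu(t)u_0$ need not be continuous at $t=0$ in the $L^p_U(\R^N)$ norm for general $u_0$ (only for $u_0\in\dot L^p_U(\R^N)$), the paper runs the contraction in $L^\infty((0,T_0),X)$ rather than in $C([0,T];L^p_U(\R^N))$; with that adjustment your argument is the paper's.
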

\begin{proof}
The proof is rather standard so we give the main arguments. Denote by $X$ either one of the spaces $L^{p}_{U}(\R^{N})$,
$\mathcal{M}_U(\R^N)$,  $L^{p}(\R^{N})$, $\mathcal{M}_{BTV}(\R^N)$  or
$M^{p,\ell}(\R^{N})$,  $\mathcal{M}^{\ell}(\R^N)$. Observe then  that for
  $u_{0}\in X$ and $T_{0} \| V\|_{L^{\infty}} <1$ the
  mapping
  \begin{displaymath}
    \mathcal{F}(u,u_{0})(t) =  S_{\mu}(t) u_{0} + \int_{0}^{t}  S_{\mu}(t-s) V u(s) \,
    ds
  \end{displaymath}
  defines a contraction in $L^{\infty}((0,T_{0} ),X)$ as
  \begin{displaymath}
  \| \mathcal{F}
    (u, u_{0})(t)\|_{X} \leq \|u_{0}\|_{X} + \int_{0}^{t} \|
    V\|_{L^{\infty}} \|u(s)\|_{X} \, ds
  \end{displaymath}
  so $\mathcal{F} (u) \in L^{\infty}((0,T_{0} ), X)$
  and if   $u_{0},v_{0} \in X$
  \begin{displaymath}
    \begin{split}
      \| \mathcal{F} (u, u_{0})(t) - \mathcal{F} (v, v_{0})(t)\|_{X}
&      \leq c\|u_{0}-v_{0}\|_{X} + \int_{0}^{t} \| V\|_{L^{\infty}}
\|u(s)- v(s)\|_{X} \, ds
\\ &
\leq c\|u_{0}-v_{0}\|_{X} + T_{0} \|
      V\|_{L^{\infty}} \sup_{0\leq t \leq T_{0} }\|u(s) - v(s)\|_{X} .
    \end{split}
  \end{displaymath}

Since the contraction time is independent of the initial data, it is
easy, by solving the fixed point in $[T_{0} ,2T_{0}]$ with initial
time $u(T_{0})$ and so on, to construct a fixed point of $\mathcal{F}$
    in $L^{\infty}((0,kT_{0} ), X)$ for any $k\in \N$.
    The  uniqueness of the fixed point for $\mathcal{F}$ in
  $L^{\infty}((0,T ), X
  )$ for $T>0$, follows from
  Gronwall's lemma since for two such fixed points and $t>0$
  \begin{displaymath}
    \| u(t) - v(t)\|_{X} \leq       \int_{0}^{t}  \| V\|_{L^{\infty}} \|u(s)- v(s)\|_{X} \,
    ds .
  \end{displaymath}

  With all these, the semigroup property follows from
  uniqueness. Also, estimate (\ref{eq:perturbed-in-LpU-3}) follows
  since the solution is in $L^{\infty}((0,T ), X)$.  The estimate
  (\ref{eq:perturbed-in-LpU-2}) follows from the one for $S_{\mu}(t)$
  in $X$ and $V\in L^{\infty}(\R^{N})$. Also, if   $u_{0}\in M^{p,\ell}(\R^{N})$   then the fixed point above is the
same as the one in Lemma \ref{lem:fixed-point-lem}.

Finally, the  order preserving property is like in Proposition
\ref{prop:order_preserving_bounded_perturbation}, since  $S_{\mu}(t)$
is order preserving in $X$.
\end{proof}

Now we prove that if $V$ is dissipative,   the semigroup $S_{\mu,
  V}(t)$ has an associated integral kernel for  initial data in $L^{1}_{U}(\R^{N})$.

\begin{lemma}
  \label{lem:kernel-for-SmuV0-in-L1U}
Assume  $0<\mu\leq 1$ and $0\geq V\in L^\infty(\R^N)$.

Then there is a nonnegative kernel,  $0\leq k_{\mu,V}(t,\cdot,\cdot)$, such that for any $u_0\in  L^1_U(\R^N)$
\begin{equation}\label{eq:integral-formula-foir-mu-to-prove}
S_{\mu, V}(t) u_0 = \int_{\R^N} k_{\mu,V} (t,\cdot,y) u_0 (y) \, dy .
\end{equation}
  
\end{lemma}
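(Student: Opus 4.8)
The plan is to construct the kernel first for initial data in $L^1(\R^N)$, where compactness of support and global integrability make a standard kernel theorem applicable, and then to extend the representation to $L^1_U(\R^N)$ by a monotone truncation argument controlled by domination.

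First I would observe that, by Proposition \ref{prop:semigroup_4_boundedpotential}, the semigroup $\{S_{\mu,V}(t)\}$ is well defined on $L^1(\R^N)=M^{1,N}(\R^N)$, and by the domination (\ref{eq:domination}) coming from Theorem \ref{thm:monotonicity-with-respect-to-potentials} we have $|S_{\mu,V}(t)u_0|\leq S_\mu(t)|u_0|$. Since $\|S_\mu(t)\|_{\mathcal L(L^1(\R^N),L^\infty(\R^N))}=\|k_\mu(t,\cdot,\cdot)\|_{L^\infty(\R^N\times\R^N)}<\infty$ for $t>0$ by (\ref{eq:estimates_Mpl-Mqs}), it follows that $S_{\mu,V}(t)\in\mathcal L(L^1(\R^N),L^\infty(\R^N))$. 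A bounded operator from $L^1$ to $L^\infty$ is an integral operator with an $L^\infty(\R^N\times\R^N)$ kernel (the Dunford--Pettis/Arendt kernel theorem, \cite[Theorem in \S7.3.1]{2004-Arendt}, which is already invoked in the proof of Theorem \ref{thm:eponential-type-in-Morrey}); call it $k_{\mu,V}(t,\cdot,\cdot)$, so that (\ref{eq:integral-formula-foir-mu-to-prove}) holds for $u_0\in L^1(\R^N)$.

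Next I would pin down the sign and the pointwise bound. Testing against nonnegative $u_0\in L^1(\R^N)$ and using that $S_{\mu,V}(t)$ is order preserving together with $0\leq S_{\mu,V}(t)u_0\leq S_\mu(t)u_0$ (again (\ref{eq:domination})), one gets $\int_E k_{\mu,V}(t,x,y)\,dy\geq 0$ and $\int_E (k_\mu-k_{\mu,V})(t,x,y)\,dy\geq 0$ for a.e.\ $x$ and every measurable $E$; letting $E$ shrink around $y$ (Lebesgue differentiation) yields $0\leq k_{\mu,V}(t,x,y)\leq k_\mu(t,x,y)$ for a.e.\ $(x,y)$. In particular $k_{\mu,V}(t,x,\cdot)$ inherits the decay of $k_\mu(t,x,\cdot)$, so it pairs with $L^1_U(\R^N)$.

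Finally I would extend (\ref{eq:integral-formula-foir-mu-to-prove}) to $u_0\in L^1_U(\R^N)$ by monotone truncation. Writing $u_0=u_0^+-u_0^-$ with $u_0^\pm\geq 0$ in $L^1_U(\R^N)$, set $w_R=\chi_{B(0,R)}u_0^+\in L^1(\R^N)$, for which the kernel formula already holds; as $R\to\infty$ its right-hand side converges to $\int_{\R^N}k_{\mu,V}(t,\cdot,y)u_0^+(y)\,dy$ by monotone convergence, the integral being finite since $k_{\mu,V}\leq k_\mu$ and $S_\mu(t)$ maps $L^1_U(\R^N)$ into $L^\infty(\R^N)$ (by the uniform-space estimate recalled just before Proposition \ref{prop:semigroup_4_boundedpotential}, together with the kernel representation of $S_\mu(t)$ on $L^1_U$ from Section 3 of \cite{C-RB-morrey_linear_perturbation}). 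For the left-hand side, combining the variation of constants formula (\ref{eq:VCF_uniform}) (with $V\leq 0$) with the order preservation in $L^1_U(\R^N)$ from Proposition \ref{prop:semigroup_4_boundedpotential} yields $0\leq S_{\mu,V}(t)(u_0^+-w_R)\leq S_\mu(t)(u_0^+-w_R)$, and the latter tends to $0$ pointwise a.e.\ by dominated convergence applied to the finite integral $\int k_\mu(t,x,y)u_0^+(y)\,dy$; hence $S_{\mu,V}(t)w_R\to S_{\mu,V}(t)u_0^+$ a.e. Combining the two limits gives (\ref{eq:integral-formula-foir-mu-to-prove}) for $u_0^+$, hence for $u_0^-$, hence for $u_0$. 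I expect the main obstacle to be precisely this last passage to the limit on the left-hand side: one must control the tail $S_{\mu,V}(t)(u_0^+-w_R)$, and it is the domination by $S_\mu(t)$ together with the known kernel representation of $S_\mu(t)$ on $L^1_U(\R^N)$ that makes this step rigorous.
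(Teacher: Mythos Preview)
Your proof is correct and follows essentially the same overall architecture as the paper's: establish the kernel on $L^1(\R^N)$ via the $\mathcal L(L^1,L^\infty)$ kernel theorem, then extend to $L^1_U(\R^N)$ by truncating nonnegative data with $\chi_{B(0,R)}u_0^+$ and using monotone convergence on the integral side, and finally split $u_0=u_0^+-u_0^-$.

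The one substantive difference is in how you handle the left-hand limit $S_{\mu,V}(t)w_R\to S_{\mu,V}(t)u_0^+$. The paper passes to the limit directly in the variation of constants formula (using monotone convergence in both the $S_\mu(t)\phi_n$ term and the integral term), obtains a limit function $v$ satisfying the same VCF, and then invokes a Gronwall uniqueness argument in $L^1_U$ to identify $v$ with $S_{\mu,V}(t)u_0^+$. Your route is shorter: from the VCF with $V\leq 0$ and order preservation you read off $0\leq S_{\mu,V}(t)(u_0^+-w_R)\leq S_\mu(t)(u_0^+-w_R)$, and the right-hand side goes to zero pointwise by the known kernel representation of $S_\mu(t)$ on $L^1_U$. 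This avoids both the limit passage inside the VCF and the uniqueness step. Your additional observation that $k_{\mu,V}\leq k_\mu$ pointwise (which the paper establishes only later, in the proof of Theorem \ref{thm:eponential-type-in-Morrey}) is what guarantees finiteness of the limiting integral and makes the monotone convergence clean; this is a nice streamlining.
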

\begin{proof}
We proceed in four steps.

\smallskip

\noindent{\bf Step 1.} 
In this step we show that if $0\leq u_0 \in L^1_U(\R^N)$ then there is
a sequence $\{\phi_{n}\}\subset L^1(\R^N)$ satisfying 
\begin{equation}\label{eq:-2-to-use-below}
\phi_{n} \leq \phi_{n+1} \leq u_0, \quad n\in\N 
\end{equation}
and having pointwise in $\R^N$ limit
\begin{equation}\label{eq:-1-to-use-below}
\lim_{n\to \infty}\phi_n = u_0 .
\end{equation}

For this it suffices to take $\phi_{n}=u_0 \chi_{B(0,n)}$, $n\in\N$,
where $\chi_{B(0,n)}$ is the characteristic function of the ball
$B(0,n)\subset \R^N$ of radius $n$ around zero. 

\smallskip

\noindent{\bf Step 2.}  In this step given any $0\leq u_0\in
L^1_U(\R^N)$ and taking $\{\phi_n\}$ as in Step 1 above we show for
each $t>0$ that  $S_{\mu,V}(t) \phi_n\to S_{\mu,V}(t) u_0$ as $n\to \infty$ pointwise in $\R^N$. 

Due to monotonicity of $\{\phi_n\}$ and order preserving properties of
the perturbed semigroup, see Proposition  \ref{prop:semigroup_4_boundedpotential}, 
we have that the functions $v_{n}(t) \mydef S_{\mu,V}(t)\phi_{n}$
satisfy for $t\geq 0$, 
\begin{displaymath}
v_{n}(t)  \leq v_{n+1}(t)\leq S_{\mu,V}(t) u_0, \quad n\in\N .
\end{displaymath}
Hence for any $t>0$, the pointwise  limit 
\begin{equation}\label{eq:0-to-use-below}
v(t)\mydef \lim_{n\to\infty} v_{n}(t), \quad t>0 
\end{equation}
exists in $\R^{N}$. 

We recall from Proposition \ref{prop:semigroup_4_boundedpotential} 
that $S_{\mu,V}(t)\phi_{n}$ satisfies variation of constants formula
(\ref{eq:VCF_uniform}), so that we have  
\begin{equation}\label{eq:0.15-to-use-below}
    v_{n}(t) = S_{\mu}(t)\phi_{n}  + \int_{0}^{t} S_{\mu}(t-s) V v_{n}(s) \, ds  , \quad t>0 .
  \end{equation}

We also recall that $S_{\mu}(t)$ has for any $t>0$ kernel $k_{\mu}(t,\cdot,\cdot)$ such that  
\begin{displaymath}%\label{eq:0.25-to-use-below}
S_\mu(t) \phi = \int_{\R^N} k_\mu(t,\cdot,y) \phi (y) \, dy, \quad \phi\in L^1_U(\R^N)  
\end{displaymath}
see \cite[Section 3]{C-RB-morrey_linear_perturbation}, see also 
\cite{2019Bogdan_fract_laplac_hardy,2017BonforteSireVazquez:_optimal-existence},
\cite[Section 6]{C-RB-scaling3-4}.

Using (\ref{eq:-2-to-use-below}), (\ref{eq:-1-to-use-below}) and Lebesgue's monotone convergence theorem,
we thus have the  pointwise in $\R^N$ limit
\begin{displaymath}%\label{eq:0.5-to-use-below}
\lim_{n\to\infty}  S_\mu(t) \phi_n = \lim_{n\to\infty} \int_{\R^N} k_\mu(t,\cdot,y) \phi_n (y) \, dy 
= \int_{\R^N} k_\mu(t,\cdot,y) u_0 (y) \, dy  = S_\mu(t) u_0
\end{displaymath}
From this, (\ref{eq:0-to-use-below}) and (\ref{eq:0.15-to-use-below})
we have the  pointwise in $\R^N$ limit
\begin{equation}\label{eq:1st-to-use-below}
\lim_{n\to \infty}  \int_{0}^{t} S_{\mu}(t-s) V v_{n}(s) \, ds  =
v(t) - S_{\mu}(t)u_{0}  , \quad t>0. 
\end{equation}

On the other hand, using the  kernel $k_{\mu}(t,\cdot,\cdot)$, we get in $\R^{N}$ 
\begin{displaymath}%\label{eq:2nd-to-use-below}
   \int_{0}^{t} S_{\mu}(t-s) V v_{n}(s) \, ds =  \int_{0}^{t}
   \int_{\R^{N}} k_{\mu} (t-s,\cdot,y ) V(y) v_{n}(s)(y) \, dy \, ds , \quad t>0 .
 \end{displaymath}
Now from  (\ref{eq:0-to-use-below}) we have for arbitrarily fixed $0<s<t$ and $x,y\in \R^N$ 
 \begin{displaymath}
   \lim_{n\to\infty} k_{\mu} (t-s,x,y ) V(y) v_{n}(s) (y) = k_{\mu} (t-s,x,y ) V(y) v(s) (y) ,
 \end{displaymath}
then  Lebesgue's monotone convergence theorem gives the  pointwise
limit in $\R^N$
\begin{equation}\label{eq:3rd-to-use-below}
 \lim_{n\to\infty} \int_{0}^{t}
   \int_{\R^{N}} k_{\mu} (t-s,\cdot,y ) V(y) v_{n}(s)(y) \, dy \, ds 
  = \int_{0}^{t}    \int_{\R^{N}} k_{\mu} (t-s,\cdot,y ) V(y) v(s)
  (y) \, dy \, ds . 
 \end{equation}

Since the right hand side above equals 
\begin{displaymath}
\int_{0}^{t} S_{\mu}(t-s) V v(s) \, ds   , 
\end{displaymath}
 from  (\ref{eq:1st-to-use-below}) and (\ref{eq:3rd-to-use-below}) we get in $\R^N$
\begin{displaymath}%\label{eq:4th-to-use-below}
  \int_{0}^{t} S_{\mu}(t-s) V v(s) \, ds  =v(t) -  S_{\mu}(t)u_{0} , \quad t>0 
\end{displaymath}
that is, $v$ satisfies the variation of constants formula. 

Now we prove that $v(t)$ equals 
$u(t)=S_{\mu,V}(t) u_0$ from Proposition
\ref{prop:semigroup_4_boundedpotential}. For this, from the  variation of constants formula
we obtain  
\begin{displaymath}
\|u(t)-v(t)\|_{L^1_U(\R^N)} \leq \int_0^t  \|V\|_{L^\infty(\R^N)} \|u(s)-v(s)\|_{L^1_U(\R^N)} , \ t>0 .
\end{displaymath}
and,  as in the proof of Proposition
\ref{prop:semigroup_4_boundedpotential}, 
we conclude via Gronwall's lemma that $u=v$.
 
\smallskip

\noindent{\bf Step 3.} In this step we get (\ref{eq:integral-formula-foir-mu-to-prove}) for $0\leq u_0\in L^1_U(\R^N)$.

First, from  (\ref{eq:perturbed-in-LpU-2})
using the embedding $L^1(\R^N)\subset L^1_U(\R^N)$, we see that $S_{\mu, V}(t)\in \mathcal{L}(L^1(\R^N),L^\infty(\R^N))$.
Then from  \cite[Theorem 1.3]{1994_Arendt-Bukhvalov}  for each
$t>0$ there is a kernel, which we denote by $k_{\mu,V}(t,\cdot,\cdot)$,  such that 
\begin{displaymath}%\label{eq:4.5-to-use-below}
S_{\mu, V}(t) u_0  = \int_{\R^N} k_{\mu,V} (t,\cdot,y) u_0 (y) \, dy, \quad u_0\in   L^{1}(\R^N) .
\end{displaymath}
Now we prove that this expression holds also for $0\leq u_0\in
L^1_U(\R^N)$.  

For this observe that since the semigroup in Proposition 
\ref{prop:semigroup_4_boundedpotential} is order preserving then 
$k_{\mu,V}(t,\cdot,\cdot)\geq 0$. 
Therefore, given any $0\leq u_0\in L^1_U(\R^N)$ and taking the
sequence $\{\phi_n\}$ as in Step 1 above, we have
\begin{displaymath}%\label{eq:5th-to-use-below}
S_{\mu, V}(t) \phi_n  = \int_{\R^N} k_{\mu,V} (t,\cdot,y) \phi_n (y) \, dy , \quad n \in \N .
\end{displaymath}
Then, from Step 2, passing to the limit as $n\to\infty$ pointwise in
$\R^N$ and using monotone convergence in the integral, we get 
\begin{displaymath}
S_{\mu, V}(t) u_0  = \int_{\R^N} k_{\mu,V} (t,\cdot,y) u_0
(y) \, dy . 
\end{displaymath}

\smallskip

\noindent{\bf Step 4.} In this step we show that (\ref{eq:integral-formula-foir-mu-to-prove}) holds for any $u_0\in L^1_U(\R^N)$.

Actually, if  $u_0\in L^1_U(\R^N)$ we write  $u_{0}= u_0^{+}
-u_0^{-}$, the  positive and negative parts. Since by Proposition
\ref{prop:semigroup_4_boundedpotential}, 
 $S_{\mu, V}(t)$ is linear and Step 3 above,  we get
\begin{displaymath}
S_{\mu, V}(t) u_0 = 
S_{\mu, V}(t)u_0^{+} - S_{\mu, V}(t) u_0^{-} = \int_{\R^N} k_{\mu,V} (t,\cdot,y) u_0^{+} (y) \, dy -
\int_{\R^N} k_{\mu,V} (t,\cdot,y) u_0^{-} (y) \, dy 
\end{displaymath}
and this equals $\int_{\R^N} k_{\mu,V} (t,\cdot,y) u_0 (y) \, dy$.
\end{proof}

According to Appendix \ref{sec:result-exponential-type},  we define the exponential type of the semigroup
$\{S_{\mu, V} (t)\}_{t\geq 0} $ in $X= L^p_{U}(\R^N)$, $1\leq p\leq
\infty$, or even $X=\mathcal{M}_U(\R^N)$,  as
\begin{displaymath}
 \omega(X) \mydef   \sup\{a \in \R, \ \|S_{\mu, V} (t)\|_{\mathcal{L}(X)} \leq
 M e^{-at}, \ \mbox{for all} \ t\geq 0,  \ \mbox{for some $M\geq 1$} \}  ,
\end{displaymath}

\begin{theorem}
%\label{thr:exponential_type_uniform}

For $V\in L^{\infty}(\R^{N})$ we have
\begin{displaymath}
  \omega(\mathcal{M}_U) = \omega(L^p_U)  , \quad 1\leq p \leq \infty.
\end{displaymath}

In particular, if  $0\geq V\in L^\infty(\R^N)$ satisfies  the Arendt and Batty condition
(\ref{eq:Arendt-Batty-condition}), then  the semigroup $\{S_{\mu,
  V}(t)\}_{t\geq0}$ decays asymptotically in $L^p_U(\R^N)$ and in 
$\mathcal{M}_U(\R^N)$. 

\end{theorem}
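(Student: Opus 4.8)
The plan is to derive the equality of all the exponential types from the single auxiliary result Lemma \ref{lem:same_exponential_type}, using the smallest space $L^\infty(\R^N) = L^\infty_U(\R^N)$ as a common pivot. First I would record the chain of continuous embeddings among the uniform spaces: for $1\le p \le q \le \infty$, H\"older's inequality on each unit ball gives $L^q_U(\R^N) \subset L^p_U(\R^N)$ continuously, and every function of $L^1_U(\R^N)$ defines a measure isometrically, so $L^1_U(\R^N) \subset \mathcal{M}_U(\R^N)$. Thus
\[
L^\infty(\R^N) = L^\infty_U(\R^N) \subset L^p_U(\R^N) \subset L^1_U(\R^N) \subset \mathcal{M}_U(\R^N), \quad 1\le p \le \infty ,
\]
so $L^\infty$ is the smallest and $\mathcal{M}_U$ the largest among these spaces. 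By Proposition \ref{prop:semigroup_4_boundedpotential} the family $\{S_{\mu,V}(t)\}_{t\ge0}$ is a semigroup on each $L^p_U(\R^N)$ and on $\mathcal{M}_U(\R^N)$, and taking $p=q$ in (\ref{eq:perturbed-in-LpU-2}) yields $\sup_{t\in(0,\delta)}\|S_{\mu,V}(t)\|_{\mathcal{L}(X)}<\infty$ for each such space $X$, so the hypotheses of Lemma \ref{lem:same_exponential_type} are satisfied in every case.

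Next I would apply Lemma \ref{lem:same_exponential_type} with the smaller space fixed as $Z = L^\infty(\R^N)$. For $Y = L^p_U(\R^N)$, $1\le p<\infty$, the embedding $Z\subset Y$ is continuous, while estimate (\ref{eq:perturbed-in-LpU-2}) with $q=\infty$ gives $S_{\mu,V}(t)\in \mathcal{L}(L^p_U(\R^N),L^\infty(\R^N))$ for every $t>0$; hence the lemma yields $\omega(L^p_U) = \omega(L^\infty)$. Likewise, taking $Y = \mathcal{M}_U(\R^N)$, the $p=1$ version of (\ref{eq:perturbed-in-LpU-2}) with $q=\infty$ shows $S_{\mu,V}(t)\in\mathcal{L}(\mathcal{M}_U(\R^N),L^\infty(\R^N))$, so $\omega(\mathcal{M}_U) = \omega(L^\infty)$. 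Combining these equalities gives $\omega(\mathcal{M}_U) = \omega(L^p_U)$ for all $1\le p\le\infty$, which is the first assertion.

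For the second assertion I would invoke that, for a bounded dissipative potential $V\le 0$, the Arendt and Batty condition (\ref{eq:Arendt-Batty-condition}) is equivalent to $\omega_\infty>0$, the exponential type in $L^\infty(\R^N)$, by the results of \cite{C-RB-exponential_decay} quoted in Section \ref{sec:bounded-potential}. Since $L^\infty(\R^N) = L^\infty_U(\R^N)$, this is exactly $\omega(L^\infty)>0$; by the first part every $\omega(L^p_U)$ and $\omega(\mathcal{M}_U)$ equals this positive number, so $\|S_{\mu,V}(t)\|_{\mathcal{L}(X)}\le M e^{-\omega t}$ with $\omega>0$ in each $X = L^p_U(\R^N)$ and in $\mathcal{M}_U(\R^N)$, i.e. exponential decay in all these spaces.

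Essentially all the work is packaged inside Lemma \ref{lem:same_exponential_type}, so I do not expect a serious obstacle. The one point requiring care is matching the directions: the smoothing in (\ref{eq:perturbed-in-LpU-2}) must run from the larger space \emph{into} the smaller space $L^\infty$, which is precisely the hypothesis $S(t)\in\mathcal{L}(Y,Z)$ with $Z\subset Y$ demanded by the lemma, and one should check that the local boundedness hypothesis holds on the full scale of spaces rather than only on the dotted subspaces where strong continuity is available.
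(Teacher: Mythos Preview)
Your proposal is correct and follows essentially the same approach as the paper: both invoke Lemma \ref{lem:same_exponential_type} together with the chain of embeddings $L^\infty_U \subset L^p_U \subset \mathcal{M}_U$ and the smoothing estimates (\ref{eq:perturbed-in-LpU-2}), then deduce the second assertion from the known equivalence of (\ref{eq:Arendt-Batty-condition}) with $\omega_\infty>0$ in $L^\infty(\R^N)=L^\infty_U(\R^N)$. Your version is simply more explicit in fixing $Z=L^\infty$ as the pivot and in checking the local-boundedness hypothesis of the lemma.
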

\begin{proof}
  Since for $1\leq p < q\leq \infty$ we have $L^{q}_{U}(\R^{N})
  \subset L^{p}_{U}(\R^{N}) \subset \mathcal{M}_U(\R^N)$ and the
  estimates (\ref{eq:perturbed-in-LpU-2}), the result follows from
  Lemma \ref{lem:same_exponential_type}. 

If (\ref{eq:Arendt-Batty-condition}) holds, the exponential decay is
from the one in $L^{\infty}(\R^{N}) =
    L^{\infty}_U(\R^N)$. 
\end{proof}

\bibliographystyle{plain}

\end{document}